\newcommand{\commentMR}[1]{}
\newcommand{\verGrB}[1]{#1}
\newcommand{\verNmanifold}[1]{}
\newcommand{\grB}{{graded bundle}}
\newcommand{\grBs}{{graded bundles}}
\newcommand{\GrBs}{{Graded bundles}}
\newcommand{\new}{}
\newcommand{\newMR}{}
\newcommand{\thh}[1]{#1^{\mathrm{th}}} %k-th
\newcommand{\nd}[1]{#1^{\mathrm{nd}}} %2-nd
\def\rel{{-\!\!\!-\!\!\rhd}}
\newcommand{\R}{\mathbb{R}}
\newcommand{\Z}{\mathbb{Z}}
\newcommand{\N}{\mathbb{N}}
\newcommand{\wt}[1]{\widetilde{#1}} %wide tilde
\newcommand{\wh}[1]{\widehat{#1}} %wide hat
\DeclareFontFamily{U}{mathx}{}
\DeclareFontShape{U}{mathx}{m}{n}{<-> mathx10}{}
\DeclareSymbolFont{mathx}{U}{mathx}{m}{n}
\DeclareMathAccent{\widehat}{0}{mathx}{"70}
\DeclareMathAccent{\widecheck}{0}{mathx}{"71}
\newcommand{\wch}[1]{\widecheck{#1}} %\widecheck colission with \underbrace
\newcommand{\und}{\underline}
\newcommand{\plus}{\boldsymbol{\pmb{+}}} %action of the core bundle
\newcommand{\minus}{\boldsymbol{\pmb{-}}}
\newcommand{\LL}{\lambda} % functor
\newcommand{\LLv}{\lambda^{\mathrm{v}}}
\newcommand{\ideal}[1]{\langle {#1} \rangle} %Brackets used for algebroid lifts
\newcommand{\pair}[1]{\langle {#1} \rangle} %Pairing of VB and its dual
\newcommand{\pullback}[1]{ {#1}^{*}} %pullback of a vector bundle
\newcommand{\coreVF}[1]{{#1^{\uparrow}}} % the vector field associated with a section
\newcommand{\Rk}{\mathrm{\Theta}} %Map associated with a HA, \At{k}\to  E^k, change from \mathrm{R} to \Theta
\newcommand{\curvR}{R} %the symbol used for basic curvature
\newcommand{\Qhat}[1]{\wh{#1}} % used as structure functions \Qhat{Q}^a_{ij} in \At[2]
\newcommand{\Qcheck}[1]{\wch{#1}} % used as structure
\newcommand{\sym}[1]{{#1}^{\mathrm{sym}}} % symetric part
\newcommand{\alt}[1]{{#1}^{\mathrm{alt}}} % a skew-symetric part
\newcommand{\pa}{\partial}
\newcommand{\ad}{\operatorname{ad}}
\newcommand{\Jac}{\operatorname{Jac}}
\newcommand{\Sec}{\operatorname{\Gamma}}
\newcommand{\rank}{\operatorname{rank}}
\newcommand{\graphW}{\operatorname{graph}}
\newcommand{\id}{\operatorname{id}}
\newcommand{\End}{\operatorname{End}}
\newcommand{\Hom}{\operatorname{Hom}}
\newcommand{\Sym}{\operatorname{Sym}}
\newcommand{\op}{\op} %opposite multiplication
\newcommand{\pt}{\operatorname{pt}}
\newcommand{\dd}{\mathrm{d}}
\newcommand{\V}{\mathbf{V}}  %vertical
\newcommand{\ra}{\rightarrow}
\newcommand{\mZM}{\Rightarrow}
\newcommand{\veps}{\varepsilon}
\newcommand{\At}[1]{A^{[#1]}} %algebroid prolongations jet tower
\newcommand{\alift}[2]{{#1}^{[#2]}} %algebroid lifts
\newcommand{\aliftB}[2]{{#1}^{\langle{#2}\rangle}} %algebroid lifts
\newcommand{\T}{\mathrm{T}} %tangent functor
\newcommand{\jet}[1]{\bm{\mathrm{t}}^{#1}} %higher velocity
\newcommand{\tclass}[2]{[#2]_{#1}} %(k-th) tangent class of a curve #1
\newcommand{\dbydt}{\frac{\mathrm{d}}{\mathrm{d}t}}
\newcommand{\dbydtk}[1]{\frac{\mathrm{d}^{#1}}{\mathrm{d}t^{#1}}}
\newcommand{\core}[1]{\widehat{#1}}  %core of a (single) graded bundle
\newcommand{\ZMmap}[1]{{\overrightarrow{#1}}} %map on sections induced by a comorphism
\newcommand{\K}{{\new \alpha}}
\newcommand{\KL}{{\new \beta}}  %indeksy przy liftach
\newcommand{\pr}{\operatorname{pr}} %canonical projection from a multi-graded bundles
\newcommand{\catVB}{\mathcal{VB}}
\newcommand{\catZM}{\mathcal{VBC}}
\newcommand{\catGB}{\mathcal{GB}}
\newcommand{\derEnd}{\mathfrak{D}} %derivative endomorphisms
\newcommand{\w}{\mathrm{w}} %the weight
\newcommand{\RHS}{RHS}
\newcommand{\LHS}{LHS}
\newcommand{\ip}{{i'}}
\newcommand{\kp}{{k'}}
\newcommand{\G}{\mathcal{G}} %grupoid
\newcommand{\Cf}{\mathcal{C}^\infty}
\newcommand{\g}{\mathfrak{g}} %Lie algebra
\newcommand{\RR}{\mathcal{R}} %reduction map
\newcommand{\cA}{\mathcal{A}} %symbol for an integralbe algebroid
\newcommand{\VF}{\mathfrak{X}} %to correct for the symbol for space of vector fields like in Voronov
\newcommand{\VFvert}{\VF^\V} %vertical vector fields
\newcommand{\ev}{\operatorname{ev}}
\newcommand{\sgn}{\operatorname{sgn}}
\newcommand{\comp}{\bm{\wedge}}
\newcommand{\VZM}{\V} %the vertical lift as a ZM
\def\pLinr{{\operatorname{lin}}} %partial linearisation functor
\newcommand{\lin}{\ell} %for linear, e.g. linear section
\newcommand{\reduction}[1]{#1\!\!\!\!\downarrow}
\newcommand{\VBC}{{\textrm{VB} comorphism }}
\newcommand{\iM}[1]{i^{#1}_M} %canonical inclusion of T^{k+l}M to T^{k} T^l M
\newcommand{\iMM}[2]{i^{#1}_{#2}} %canonical inclusion of T^{k+l} #2 to T^{k} T^l #2 (version of \iM but with two arguments)
\newcommand{\jM}[1]{j^{#1}_M} %canonical isomorphism TM with the core of T^k M
\newcommand{\jE}[1]{\jmath^{#1}_{E}} %canonical isomorphism E with the  core of T^{k-1}E,
\newcommand{\jEE}[2]{\jmath^{#1}_{#2}} %(version of \jE but with two arguments) E is a VB
\newcommand{\iA}[1]{\imath^{#1}_{A}} %canonical inclusion of an algebroid A^{[k]} to T A^{[k-1]} or to T^k A^{[l]}
\newcommand{\iAA}[2]{\imath^{#1}_{#2}} %%(version of \iA but with two arguments), A is an algebroid
\newcommand{\jAc}[1]{\jmath^{[#1]}_{A}} %canonical isomorphism A with the core \At{k} (induced by algebroid structure)
\newcommand{\jAAc}[2]{\jmath^{[#1]}_{#2}} %(version of \iA but with two arguments), A is an algebroid
\newcommand{\diag}{\mathrm{diag}} %diagomalization
\newcommand{\curv}{\mathrm{curv}} %curvature of an A-connection
\newcommand{\bas}{\mathrm{bas}} %basic (connection)
\def\relto{{\rightarrow\!\!\vartriangleright}} %relacje Trzeba dać w {} by było niepodzielne
\def\<#1>{\left\langle #1\right\rangle}
\def\(#1){\left( #1\right)}
\newcommand{\quotient}[2]{{\raisebox{.2em}{$#1$}\left/\raisebox{-.2em}{$#2$}\right.}}
\providecommand*{\xrightrightarrows}[2][]{%
  \ext@arrow 0359\rightrightarrowsfill@{#1}{#2}%
}
\numberwithin{equation}{section} %zmienia numeracjê równañ
\theoremstyle{plain}
\newtheorem{thm}{Theorem}[section]
\newtheorem{prop}[thm]{Proposition}
\newtheorem{lem}[thm]{Lemma}
\newtheorem{conj}[thm]{Conjecture}
\theoremstyle{definition}
\newtheorem{df}[thm]{Definition}
\newtheorem{ex}[thm]{Example}
\theoremstyle{remark}
\newtheorem{rem}[thm]{Remark}
\newtheorem{cor}[thm]{Corollary}
\def\namedlabel#1#2{\begingroup
    #2%
    \def\@currentlabel{#2}%
    \phantomsection\label{#1}\endgroup
}
\newcommand{\leqnomode}{\tagsleft@true}
\newcommand{\reqnomode}{\tagsleft@false}
\title{Exploring the Structure of Higher Algebroids\footnote{This research was supported by the Polish National Science Center under the grant DEC-2012/06/A/ST1/00256.}} %\commentMR{W ostatnim informatorze IM, pisali by dawać afiliację Institute of Mathematics UoW. Do sprawdzenia.}}}
\author{Miko\l aj Rotkiewicz\footnote{\emph{Institute of Mathematics,  University of Warsaw} (email: \texttt{mrotkiew@mimuw.edu.pl})}}
\date{}
\begin{document}
\maketitle

%% Abstract
\begin{abstract}
The notion of a \emph{higher-order algebroid}, as introduced by Jóźwikowski and Rotkiewicz in their work \emph{Higher-order analogs of Lie algebroids via vector bundle comorphisms} (SIGMA, 2018),  generalizes the concepts of a higher-order tangent bundle $\tau^k_M: \mathrm{T}^k M \to M$ and a (Lie) algebroid. This idea is based on a (vector bundle) comorphism approach to (Lie) algebroids and
the reduction procedure of homotopies from the level of Lie groupoids to that of Lie algebroids.
In brief, an alternative description of a Lie algebroid $(A, [\cdot, \cdot], \sharp)$  is  a vector bundle comorphism $\kappa$, defined as the dual of the Poisson map $\varepsilon: \mathrm{T}^\ast A \ra \mathrm{T} A^\ast$ associated with the  Lie algebroid $A$. The framework of comorphisms has proven to be a suitable language for describing higher-order analogues of Lie algebroids from the perspective of the role played by (Lie) algebroids in geometric mechanics.
In this work, we uncover the classical algebraic structures underlying the somewhat mysterious  description of higher-order algebroids through  comorphisms. For the case $k=2$, we establish  a one-to-one correspondence between higher-order Lie algebroids and pairs consisting of a two-term representation (up to homotopy) of a Lie algebroid and a morphism to the adjoint representation of this algebroid.
\end{abstract}
\paragraph*{MSC 2020:}{58A20, 58A50, 17B66, 17B70 }
%58A20 Jets in global analysis
%58A50 Supermanifolds and graded manifolds
%17B66Lie algebras of vector fields and related (super) algebras
%17B70 Graded Lie (super)algebras

%% Keywords
\paragraph*{Keywords:}{ higher algebroids, representations up to homotopy, graded manifolds, graded bundles, VB-algebroids, Lie algebroids}

%\bibliographystyle{alpha}
%\bibliography{references}

\small
\small

\tableofcontents

%\normalsize

%\input{aha-introduction.tex}%%%%%%%%%%%%%%%%%%%%%%%%%%%%%%%%%%%%%%%%%%%%%%%%%%%%%%%%%%%%%%%%%%%%%%%%%%%%%%%%%%%%%%%%%

\section{Introduction}
In  \cite{MJ_MR_HA_comorph_2018} the notion of a higher algebroid was introduced,
  based on extensive studies of examples we would like to refer to as higher algebroids (HAs, for short) \cite{MJ_MR_HA_var_calc_2013, MJ_MR_models_high_alg_2015}.
Our intuitive thinking was that a higher algebroid should represent a geometric and algebraic structure that generalizes higher-order tangent bundles in a similar manner as algebroids generalize tangent bundles.
In the first order, the algebroid structure is defined on a vector bundle (VB, for short), with the most obvious example being the tangent bundle $\tau_M: \T M \rightarrow M$. In higher orders, $\tau_M$ is replaced by the higher-order tangent bundle $\tau^k_M: \T^k M \rightarrow M$, which for $k>1$ is no longer a vector bundle but a
%case $\N$-graded manifolds
{\new \emph{graded bundle}, in the terminology introduced in \cite{JG_MR_gr_bund_hgm_str_2011}. }
%%%************************ GRADED BUNDLE vs. $N$-graded manifold
%%% wersja N-graded --
\verNmanifold{It is referred to as an $\N$-graded manifold in \cite{MJ_MR_HA_comorph_2018}, and we shall follow this terminology as this is a special case of the concept of a \emph{graded manifold} introduced by Voronov \cite{Voronov_2002}.
}
%% wersja graded bundle
\verGrB{It is referred to as an $\N$-graded manifold in \cite{MJ_MR_HA_comorph_2018}.
}
%{
%\grB\ \cite{JG_MR_gr_bund_hgm_str_2011} (referred to as an $\N$-graded manifold in \cite{MJ_MR_HA_comorph_2018}).
%}
In a \grB\ there exists a distinguished class of \emph{graded fiber coordinates}, taking over linear coordinates, with transition functions represented as homogeneous polynomials. In a particular case of polynomials of degree one (linear maps), one gets vector bundles as a special case.
From various perspectives discussed in \cite{MJ_MR_HA_comorph_2018}, it became apparent that the structure of a higher algebroid should be defined on a \grB.

The most common way to motivate the concept of a Lie algebroid comes from the reduction of the tangent bundle $\T \G$ of a Lie groupoid $\G$. %\xRightarrow{\alpha, \beta} M$.
{As a geometric object, the Lie algebroid of $\G$ is} the set $\cA(\G) := \T_M\G^\alpha$, { consisting of  tangent vectors in the direction of the source fibration $\G^\alpha$ of $\G$ and based at $M$ -- the base of $\G$.}
{\new The structure of the tangent bundle  $\T \G$, induces a certain structure on $\T_M \G^\alpha$, leading to the notion of a Lie algebroid. }
%The properties of $\T_M\G^\alpha$ leads to the notion of a Lie algebroid.
Typically, the structure of a Lie algebroid is expressed by means of a bracket operation $[\cdot, \cdot]$  { on the space of sections of a vector bundle $\sigma: A\ra M$} and a VB morphism $\sharp: A\ra \T M$ called \emph{the anchor map}. However, it is obvious that this approach has no direct generalization to higher-order case, because there is no bracket operation on the space of sections of $\tau^k_M:\T^k M\ra M$, since  $\tau^k_M$ is not a vector bundle for  $k>1$ and, in particular, its sections cannot be added.

In light of the above, it is  natural to consider the reduction $\T^k_M\G^\alpha$ of the $\thh{k}$-order tangent bundle $\T^k \G$ as a prototype of a higher-order algebroid of order $k$. The reduction map $\RR^k: \T^k \G^\alpha \rightarrow \cA^k(\G) :=\T^k_M\G^\alpha $ takes a $\thh{k}$-velocity $\tclass{k}{g}$ represented by a curve $g: \R \rightarrow \G$, lying in a single fiber of the foliation $\G^\alpha$, to the $\thh{k}$-velocity based at a point in $M$. A natural problem arises: how to characterize the structure on $\cA^k(\G)$ inherited from the groupoid multiplication. {\new In \cite{MJ_MR_models_high_alg_2015} we proposed an answer to this question by reducing the natural map $\kappa^k_{\G}: \T^k \T \G\rightarrow \T \T^k \G$. }
%A solution comes from the reduction of a natural map $\kappa^k_{\G}: \T^k \T \G\rightarrow \T \T^k \G$ and was extensively discussed in \cite{MJ_MR_models_high_alg_2015}.

In the first order, one reduces the canonical involution  $\kappa_{\G}$ which  results in a relation $\kappa\subset \T \cA(\G) \times \T \cA(G)$ and leads to an alternative definition of the structure of a Lie algebroid as a pair $(A, \kappa)$ consisting of a vector bundle $\sigma: A\rightarrow M$ and a relation $\kappa \subset \T A\times \T A$ of a special kind . This viewpoint on Lie algebroids was first introduced in \cite{JG_PU_Algebroids_1999}.
It turns out that $\kappa$ is the dual of the Poisson map $\veps: \T^\ast A \rightarrow \T A^\ast$ associated with the linear Poisson tensor on $A^\ast$.\footnote{Linear Poisson structures on the dual bundle $A^\ast$ are in  a one-to-one correspondence with Lie algebroid structures on~$A$.} As $\veps$ is a VB morphism, the dual $\kappa = \veps^\ast$ is a VB comorphism, see Definition~\ref{df:VBC}.
The comorphism  approach to (Lie) algebroids is also very natural from the perspective of variational calculus. The relation $\kappa$ was recognized as a 'tool' in constructing admissible variations in geometric mechanics \cite{GG_var_calc_gen_alg_2008}.

Based on the properties of the reduction of $\kappa^k_\G$ and its potential applications in variational calculus, we introduced higher algebroids in \cite{MJ_MR_HA_comorph_2018} as pairs $(E^k, \kappa^k)$ consisting of a \grB\ $\sigma^k: E^k\rightarrow M$ of order $k$, equipped with a vector bundle comorphism $\kappa^k$. This comorphism relates the vector bundles $\T^k E^1\rightarrow \T^k M$ and $\T E^k\rightarrow E^k$ and satisfies certain natural axioms.\footnote{In the comorphism approach, it is natural to consider generalizations of the notion of a Lie algebroid obtained by relaxing its axioms. In the literature on geometric mechanics, these generalizations are known as 'almost Lie' algebroids (where the Jacobi identity is not assumed), 'skew' algebroids (where neither the Jacobi identity nor the anchor-bracket compatibility is required), and 'general' algebroids (where, in addition, the skew-symmetry of the bracket is not required).} We recall from \cite{MJ_MR_HA_comorph_2018}   a detailed  formulation   of these axioms in Definition~\ref{df:higher_algebroid}.

The definition of HAs given in \cite{MJ_MR_HA_comorph_2018}, which we consider to be very natural from many perspectives, also appears to be quite mysterious. The goal of the present work was to unveil the vector bundle morphisms, brackets, and other operations hidden within the comorphism description of HAs. A complete solution is achieved in the case of $k=2$.

Our solution situates Lie HAs within the realm of \emph{representations up to homotopy} (representations u.t.h., in short) of Lie algebroids, the concept introduced in \cite{AbCr2012}. The idea is to represent Lie algebroids using cochain complexes of vector bundles. Such a complex is given 'an action' of a Lie algebroid represented  by an $A$-connection which is flat only 'up to homotopy' governed by higher order homotopy operators. When the complex consists of only one term, this notion reduces to a genuine representation of a Lie algebroid on a vector bundle. %Such representations offer greater flexibility.
An important example  for us is the notion of the \emph{adjoint representation} whose proper generalization from the field of Lie algebras to that of Lie algebroids is found within the framework of representations up to homotopy.
As explained in \cite{AbCr2012}, the adjoint representation of a Lie algebroid $A$ is manifested by 'an action up to homotopy' on the two-term complex $\sharp: A \rightarrow \T M$, where $\sharp$ is the anchor map.  On the other hand, it was found in \cite{Gr-Meh2010} that 2-term representations u.t.h. have an elegant   description by means of VB-algebroids ---   Lie algebroid objects in the category of vector bundles.  It this correspondence the adjoint representation of a Lie algebroid $A$ is nothing more but the VB-algebroid structure on $\T A$ -- the tangent prolongation of the Lie algebroid $A$. Our solution also recognizes this point of view.

\paragraph{Our results.}

The main result is presented in Theorem~\ref{t:Rep_to_HA} and Corollary~\ref{cor:HA_VB-alg} where we establish a one-to-one %~\ref{l:HA_to_Rep}
correspondence between higher algebroids of order two and morphisms between representations u.t.h. of Lie algebroids of a specific nature as presented in the diagram: %\commentMR{Trzeba dorzucić informację o korespondencji z VB-algebroidami}

$$\xymatrix{*+[F]{\txt{Order-two Lie higher algebroids \\ $(E^2, \kappa^2)$}}\ar@{<->}[rrr]^{\text{one-to-one}}_ {\text{correspondence}}&&&*+[F]{\txt{Representations u.t.h. of a Lie algebroid $A$ \\ on a two-term complex $A\ra C$ \\ together with a morphism $\Phi$ (of a special form) \\ to the adjoint representation $\ad_\nabla$ of $A$}}
}
$$
where $\nabla$ is a fixed linear connection on a vector bundle $A\ra M$.
%\commentMR{Popraw strzałkę - ma być  w obie strony.}
On the left is a Lie HA structure defined on a \grB\ $E^2 \rightarrow M$ characterised by a special type of relation denoted as $\kappa^2$, which is a subset of $\T^2 E^1 \times_M \T E^2$. In this correspondence, $A=E^1$ is the reduction of the \grB\ $E^2$ to degree $1$, with the Lie algebroid structure inherited from $\kappa^2$.  Furthermore, the vector bundle $C\to M$ is introduced as the \emph{ core} of $E^2$, as explained in Section~\ref{sec:pre}.
On the right-hand side, we have a representation u.t.h. of the Lie algebroid  $A$  defined on a two-term cochain complex $A \rightarrow C$. Additionally, there is a morphism denoted as $\Phi$ that connects this representation to the adjoint representation $\ad_\nabla$ of $A$ in the sense of \cite[Definition~3.3]{AbCr2012}, {and further, $\Phi$ is of special type:} the 1-form component of  $\Phi$  vanishes and so $\Phi$ is  a map of cochain complexes and, moreover, $\Phi$ is the identity on $A$ in degree 0.

%It is well-known that 2-term representations correspond with VB-alegbroids
Following the ideas of \cite{Gr-Meh2010, DJO15}, we found that such a morphism $\Phi$ corresponds to a VB-algebroid morphism to the adjoint representation of $A$ represented as the VB-algebroid $\T A$ (see Corollary~\ref{cor:HA_VB-alg}). This construction makes the choice of a linear connection $\nabla$ unnecessary. In summary, order-two Lie HAs are characterised by VB-algebroid morphisms $\Psi: D\to \T A$ from a VB-algebroid $D$ to the tangent prolongation of a Lie algebroid $A$,  such that $\Psi$ is the identity on the underlying algebroid $A$, and  on the core bundle, which is also identified with the vector bundle $A$.

These results are obtained in a few steps which we discuss below.

 \subparagraph{Map $\Rk^k$.} In any order $k$, we discover a canonical morphism of \grBs\, denoted by $\Rk^k: \At{k} \ra E^k$  (see Definition~\ref{df:Rk}), which is associated with any almost Lie $\thh{k}$-order algebroid\footnote{In the case when $k=2$, the existence of a morphism $\Rk^2$ is already guaranteed by the weaker assumption that $A$ is a skew algebroid.}
 $(E^k, \kappa^k)$. Here,  $A$ is the almost Lie algebroid (AL algebroid, for short) $(E^1, \kappa^1)$ obtained from $(E^k, \kappa^k)$ by means of the reduction to order one, and $(\At{k}, \kappa^{[k]})$ is the $\thh{k}$-order prolongation of $A$ -- a \grB\ with the HA structure naturally induced from the AL algebroid structure on $A$ (see \eqref{df:A[k]} and \eqref{df:kappa[k]}).      The existence of this map is of crucial importance as it allows to relate properties of an abstract higher algebroid with much better recognized   HA $(\At{k}, \kappa^{[k]})$  studied in details in \cite{MJ_MR_models_high_alg_2015}.  We recall that if $A=\cA(\G)$ is the Lie algebroid of a Lie groupoid $\G$ then  $\At{k} = \cA^k(\G)=\T^k_M\G^\alpha$ is the $\thh{k}$-order HA of  $\G$. We conjecture (in Conjecture~\ref{conj:Rk}) that if  $(E^k, \kappa^k)$ is a  Lie HA then the structure map $\Rk^k: \At{k} \ra E^k$ is a morphism of HAs. We were able to prove  this in the case  $k=2$.

 \subparagraph{The structure of the \grB\ of a HA $(E^2, \kappa^2$).}  In general, a \grB\ of order two is obtained from its components:  the vector bundle $E^1$ (the order-one reduction of $E^2$), and its   core vector bundle, denoted by $\core{E^2}$, by gluing transition functions that are homogeneous polynomials of degree 2. In what follows, \textbf{the vector bundles $E^1$ and $\core{E^2}$, are denoted by $A$ and $C$, respectively.}

  With the help of the map $\Rk^2$ we can recover  the  \grB\       $E^2\to M$ as the  quotient of the \grB\  $\At{2}\times_M C_{[2]}$, see Lemma~\ref{l:structureE2}.  Here,  $\At{2}$ is the second-order prolongation of $A$ and $C_{[2]}$ denotes the graded bundle of order 2 obtained from the vector bundle $C$ by assigning weight two to the linear functions on $C$.

 \subparagraph{Structure maps of HAs.}
 %We explain in Theorem~\ref{th:skew_HA}
 By focusing solely on  the \grB\ structure of $\kappa^2$  we encounter equations \eqref{e:local_kappa2}. Our objective is to attribute a geometrical interpretation of the local structure functions { $Q^a_i$, $Q^a_{ij}$, $Q^\mu_{ij, k}$, etc. present in \eqref{e:local_kappa2}.}  It turned out that the functions $Q^\mu_{ij, k}$ do  not  correspond to any geometric object, highlighting that such an interpretation is not always straightforward.
 However,  when we combine  $Q^\mu_{ij, k}$ with $Q^k_{ij}$ there emerges a three-argument operation, denoted by $\delta$,  on the space $\Gamma(A)$ of sections of the vector bundle $A$ with values in $\Sec(C)$, where $C$ is the  core of $E^2$,  see \eqref{df:Qmuijk} and \eqref{df:delta}.
The meaning of the other structure functions\footnote{We will use the symbol $Q^{\cdots}_{\cdots}$ to  refer to the structure functions $Q^a_i$, $Q^a_{ij}$, etc. present in \eqref{e:local_kappa2}.} $Q^{\cdots}_{\cdots}$ proved to be more straightforward.  These include: \begin{inparaenum}[(i)] \item a  skew-symmetric bracket $[\cdot, \cdot]$ on $\Sec(A)$ and a VB morphism $\sharp: A\to \T M$ defining a skew algebroid structure on $A$, \item a morphism of \grBs\ (the second-order anchor map) $\sharp^2: E^2 \ra \T^2 M$, which is the base of the comorphism $\kappa^2$, \item a vector bundle morphisms $\pa: A\ra C$,
\item a map $\Box: \Sec(A) \times \Sec(C) \ra \Sec(C)$, \item a skew symmetric map $\beta:\Sec(A)\times \Sec(A) \ra \Sec(C)$.
\end{inparaenum}

There is also another interesting structure map $\psi: \Sec(A)\times \Sec(A) \to \VF(M)$  (see \eqref{df:psi}), which becomes relevant when studying tensorial properties of the aforementioned structure maps.  Moreover, the symmetric part of $\psi$, denoted by  $\sym{\psi}$, together with the VB morphisms $\sharp^1 = \sharp$ and $\core{\sharp^2}$ (the core of $\sharp^2$), allows us to recover the second-order anchor map $\sharp^2$, see Lemma~\ref{l:grBA[2]_to_C} and Theorem~\ref{th:skew_HA}. This resolves the problem of presenting axioms of a skew order-two HA entirely in terms of VB morphisms and VB differential operators.

 Definitions of all these structure maps are given in Subsection~\ref{sSec:HA_order_two}. Most of them are obtained  through algebroid lifts  $\Sec(A) \ra \VF(E^2)$, $s\mapsto \aliftB{s}{\K}$, associated with the HA $(E^2, \kappa^2)$  and  Lie brackets of vector fields on  $E^2$. The definition of algebroid lifts, as seen in \eqref{df:algebroid_lift},  relies on the  characteristic property of  a VB comorphism: unlike a typical VB morphism, it induces a map between the spaces of sections.

 We also introduce a  structure map $\omega: \Sec(A)\times \Sec(A)\times \Sec(A) \ra \Sec(C)$,
being some modifications of the map $\delta$, see \eqref{df:omega}.  While it carries the same information as $\delta$, it has better algebraic properties and helps to formulate our results more concisely.
Additionally, we define maps $\xi$, $\veps$, $\veps_0$, $\veps_1$ in Definition~\ref{df:epss}, which appear in the Leibniz-type formulas for above-mentioned structure maps (Theorem~\ref{th:skew_HA}).  The vanishing of these maps is also included as  an axiom of AL HAs or Lie HAs, see Theorems~\ref{th:AL_HA_str_maps} and~\ref{th:Lie_axiom_maps}. Moreover, for greater precision in formulating certain results, we found it useful to decompose some of these maps, such as $\delta$, $\omega$, and $\psi$, into their symmetric and anti-symmetric parts.

 Most of the structure maps mentioned above are multi-differential operators on certain vector bundles. We provide a detailed description of the tensorial properties of
these structure maps and prove that  a system of such maps allows to reconstruct the skew HA $(E^2, \kappa^2)$, see Theorem~\ref{th:skew_HA}. This approach can also be extended  to $\thh{k}$-order HAs  for
$k>2$, as discussed in Remark~\ref{r:str_maps_Ek}.

In the next step, we characterize the axioms of an almost Lie HA (Theorem~\ref{th:AL_HA_str_maps}) and Lie HA (Theorem~\ref{th:Lie_axiom_maps}) in terms of the above mentioned structure maps. In other words, we formulate necessary and sufficient conditions that the structure maps $\beta$, $\square$, $\omega$ etc. should satisfy for the related HAs to be, respectively, almost Lie and Lie. Throughout our analysis, we heavily rely on Theorem~\ref{th:HA_axioms_and_lifts}, which provides characterizations of AL and Lie axioms for higher algebroids through  algebroid lifts.

\subparagraph{HAs over a point.}
In case when the base $M$ is a point we find a complete description of order-two skew and Lie higher algebroids, see Theorem~\ref{thm:HA_point}:  An order-two skew  HA over a point has to split, meaning $E^2 = \g_{[1]} \times C_{[2]}$ where $\g=E^1$ and $C=\core{E^2}$. Furthermore, in the Lie case,  there is a one-to-one correspondence between such Lie HAs and Lie module morphisms $\pa: {\g} \to {C}$ from the adjoint module of the Lie algebra $\g$ to the $\g$-module $C$.
\subparagraph{Main result.}
The reformulated axioms of Lie HAs and the description of order-two HAs over a point by means of representations of Lie algebras may suggest  a relation between HAs and representations of Lie algebroids. Note however that there is no concept of the adjoint representation within the framework of representations on  VBs.
It is the setting of representations u.t.h. of Lie algebroids in which the correct generalization of the concept of the adjoint representation of a Lie algebra is possible.
The construction of a Lie algebroid representation out of a Lie HA $(E^2, \kappa^2)$
imitates the construction of the adjoint representation given in \cite{AbCr2012}. It is obtained by means of the structure maps of a Lie HA mentioned earlier. There exist also an obvious map $\Phi$ between the complexes $A\to C$ and $A\to \T M$ which, thanks to the properties of the structure maps of a Lie HA, turns  out to be a morphism to the adjoint representation of $A$.
Conversely, if a representation u.t.h. of $A$ on a two-term complex of the form  $A\to C$ is given, and  a morphism of complexes $\Phi: (A\to C) \ra (A\to \T M)$ is given that also serves as a morphism of representations, then we can extract the structure maps from it and construct a skew HA structure on the  \grB\ described in Lemma~\ref{l:structureE2}. It can be then verified that these maps satisfy the axioms
of AL and Lie HA given in Theorems~\ref{th:AL_HA_str_maps} and ~\ref{th:Lie_axiom_maps}.

\subparagraph{Examples.} Given a Lie algebroid $A$, there are two natural  morphisms to the adjoint representation of a Lie algebroid $A$. One is the identity morphism $\Phi$ on the adjoint representation.  The other one is obtained from  \emph{the double of a vector bundle}, described in \cite{AbCr2012},  which is a representation  of the Lie algebroid  $A$ on a 2-term complex  of the form $E\xrightarrow{\id} E$. %\commentMR{Popracuj nad tym, bo tu za $E$ biorę $A$}
We illustrate HAs corresponding to these two cases in Examples~\ref{ex:HA_from_id}  and \ref{ex:repr_from_A2}, respectively.

 \paragraph{Organization of the paper.}
Section~\ref{sec:pre} begins by collecting notations and fundamental constructions
concerning \grBs, double vector bundles and VB-algebroids.  We also introduce a functor, denoted by $\LL$, which is a generalization of the linearisation functor discovered in \cite{Bruce_Grabowska_Grabowski_2016} and which is used in the definition of the morphism $\Rk^k$ (Definition~\ref{df:Rk}).  We recall also basic definitions from~\cite{MJ_MR_HA_comorph_2018, MJ_MR_models_high_alg_2015} (VB comorphism,  higher-order algebroid, prolongations of AL algebroids) and give a definition of algebroid lifts in a slightly different way than in~\cite[Definition~4.8]{MJ_MR_HA_comorph_2018}, more convenient for computations which we perform in Section~\ref{sec:app}. Theorem~\ref{th:HA_axioms_and_lifts} extends Proposition~4.9 from~\cite{MJ_MR_HA_comorph_2018} to the AL case.  In Lemma~\ref{l:comp_alg_lifts}  we express the compatibility of algebroid lifts obtained by means of HAs $\kappa^k$ and the reduction of $\kappa^k$ to a lower weight.  We also list a few  canonical inclusions used in the paper  and describe their relationships.

Section~\ref{sec:str} is devoted to a detailed analysis of mathematical structures   standing behind a comorphism $\kappa^k$ that defines a HA structure. W begin with the definition and properties of the map $\Rk^k$ , which connects an  arbitrary HA
$(E^k, \kappa^k)$ with the $\thh{k}$-order prolongation of its  first-order reduction $(E^1, \kappa^1)$. We provide coordinate formulas for $\Rk^2$ and $\Rk^3$, see Example~\ref{ex:Theta2_and_3}.
%We propose a   conjecture, that $\Rk^k$ is a HA morphism, which we were able to prove only in the case $k=2$.

From this point on, we focus solely on the case
$k=2$. In Lemma~\ref{l:structureE2} we find an explicit construction of a \grB\ $E^2\to M$ which hosts an order-two HA $(E^2, \kappa^2)$. Subsequently,
we introduce  several canonical maps associated with this HA  referring to  them as \emph{"the structure maps of $(E^2, \kappa^k)$"}. Most of these maps are differential operators defined   on  (the product of) the spaces of sections   $\Sec(A)$ or $\Sec(C)$ with values in $\Sec(C)$.  %Here, and throughout the Subsection~\ref{sSec:HA_order_two}, $A = E^1$ (first-order reduction of $E^2$), $C=\core{E^2}$ ( core of $E^2$) are  the building blocks of an  order-two \grB\  $E^2\ra M$.
The term \emph{"structure functions"} is reserved to  functions $Q^a_i$, $Q^a_{ij}$, etc. which are given in Example~\ref{ex:kappa2_coord} as a local representation  of a general order-two HA.  These functions depend on the chosen coordinate system on the \grB\ $E^2$.  Although we   work  in the case $k=2$  we present analogs of   the structure maps in any order, see Remark~\ref{r:str_maps_Ek}.
In Theorem~\ref{th:skew_HA}  we provide an equivalent description of skew, order-two HAs
in terms of the aforementioned structure maps.  %$\pa$, $\beta$, $\Box$, $\sym{\omega}$,  the bracket and the anchor map of $A$.
In Theorem~\ref{thm:HA_point} we  discuss the special case when the base $M$ of $E^2$ is a single point (an order-two analog of a (Lie) algebra) and give a characterization of such structures. It turns out that the Lie condition  is very rigorous and all such Lie HAs correspond to morphisms $\pa: C \to A$ of Lie algebra modules, where $A$ represents the adjoint module of the  Lie algebra $A$.

We subsequently examine  the conditions in Definition~\ref{df:higher_algebroid} characterizing AL and Lie HA and translate them to the level of the structure maps, see Theorem~\ref{th:AL_HA_str_maps} and Theorem~\ref{th:Lie_axiom_maps}.
 Moving forward, in Lemma~\ref{l:HA_to_Rep} we recognize that data describing order-two Lie HAs gives rise to a representation u.t.h. of the Lie algebroid $A$ on the structure map $\pa: A \to C$ considered as a two-term complex of vector bundles and also induces a morphism $\Phi$ to the adjoint representation of $A$. Remarkably, this data is also sufficient for recovering a HA structure $(E^2, \kappa^2)$, as demonstrated in  Theorem~\ref{t:Rep_to_HA}. Furthermore, we formulate VB-algebroid version of this correspondence in Corollary~\ref{cor:HA_VB-alg} and illustrate the obtained relationship in Examples~\ref{ex:HA_from_id} and \ref{ex:repr_from_A2}. We also briefly recall the correspondence between representations u.t.h. and VB-algebroids —- providing the necessary facts on this subject to demonstrate our results.

 %The final subsection, \ref{sSec:pre_alg}, is devoted to pre-algebroids as mentioned above.

In Appendix~\ref{sec:app} we give proofs for various results, including part (a) of Theorem~\ref{th:skew_HA}, Theorem~\ref{thm:HA_point}, Conjecture~\ref{conj:Rk} in the case $k=2$ and complete the proof of Theorem~\ref{th:AL_HA_str_maps} , where more detailed calculations, including those in coordinates, are carried out. Some of these calculations are supported by additional lemmas. One can also find there a brief recollection on representations up to homotopy, guided by \cite{AbCr2012}. For more in-depth information, interested readers should refer to the existing literature \cite{AbCr2012, Gr-Meh2010, BGV18, GJMM18}.

\paragraph{Historical remarks.} The studies on HAs, as understood in this paper, were initiated by M. Jóźwikowski and the author of present manuscript in \cite{MJ_MR_HA_var_calc_2013}, and  continued in \cite{MJ_MR_models_high_alg_2015, MJ_MR_HA_comorph_2018}. Prior to this,  higher-order analogues of Lie algebroids was the subject of \cite{Voronov_Q_mfds_high_lie_alg_2010} by  T. Voronov who proposed that such analogues should be $Q$-manifolds of spacial kind generalizing Vaintrob approach to  Lie algebroids \cite{Vaintrob_1997}. The most recent studies are due to A. Bruce, K. Grabowska and J. Grabowski \cite{Bruce_Grabowska_Grabowski_2016} whose idea was to imitates the canonical inclusion $\T^k M\subset \T \T^{k-1} M$ on the abstract level of \grBs\, having $\T^k M$ as a prominent example of $\thh{k}$-order analogue of a Lie algebroid. As we pointed in \cite{MJ_MR_HA_comorph_2018}, all these approaches lead to different mathematical objects. This distinctiveness is further  evident from the classification of order-two (Lie) HAs given in this work.

\subsection*{Acknowledgments.} The author is grateful to Michał Jóźwikowski for his insightful comments on the organization and editing of this work, and for many fruitful discussions regarding the research.

\section{Preliminaries}\label{sec:pre}

\subsection{\GrBs\ }

We shall review basic constructions associated with \emph{\grBs\ } that will be used in the present work.  For further details, we  refer to \cite{JG_MR_gr_bund_hgm_str_2011} and additional  works \cite{BGR, MJ_MR_HA_comorph_2018, Voronov_2002}.

A fundamental example of a \grB\ is the $\thh{k}$-order tangent bundle $\tau^k_M: \T^k M\ra M$. The elements of $\T^k M$ are $\thh{k}$-order tangency classes $\tclass{k}{\gamma}$ of curves $\gamma$ in $M$.\footnote{\new $\tclass{k}{\gamma}$ is also called the \emph{$k$-velocity} represented by the curve $\gamma$} Then $\T^1 M = \T M$ is the tangent bundle of $M$ but for $k>1$, $\tau^k_M$ is not a vector bundle; however, the fibers are still equipped  with a special structure, namely,  a natural action of the monoid of real numbers $(\R, \cdot)$,
 $$
    h: \R \times \T^k M \ra \T^k M, \quad (s, \tclass{k}{\gamma}) \mapsto \tclass{k}{\gamma_s}
 $$
 where $\gamma_s(t) = \gamma(st)$.
 Thus, in the terminology of \cite{JG_MR_gr_bund_hgm_str_2011}, $\T^k M$ is a \emph{homogeneity structure}, i.e.,  a manifold  equipped with a smooth action of  $(\R, \cdot)$.
On the other hand, local coordinates $(x^a)$ on $M$ induce \emph{adapted coordinates} \footnote{{$f^{(\K)}\in \Cf(T^k M)$, given by  $f^{(\K)} (\tclass{k}{\gamma}) = \dbydtk{k}_{t=0} f(\gamma(t))$, denotes the $(\K)$-lift of $f\in \Cf(M)$, see \cite{Morimoto_Lifts}. Hence,
$x^{a, (\K)} = (x^a)^{(\K)}$ denotes the $(\K)$-lift of the coordinate  function $x^a$.}
} {\new $(x^{a, (\K)})_{0\leq \K\leq k}$ }  on $\T^k M$ which are naturally graded by numbers $0, 1, \ldots, k$.  On $T^2M$ they transform as
$$
x^{a'}=x^{a'}(x), \quad \dot{x}^{a'} =   \frac{\partial x^{a'}}{\pa x^b} \, \dot{x}^b, \quad
\ddot{x}^{a'} = \frac{\pa x^{a'}}{\pa x^b} \, \ddot{x}^b +  \frac{\pa^2 x^{a'}}{\pa x^b\pa x^c}\dot{x}^b \dot{x}^c,
$$
{where  $\dot{x}^a = (x^a)^{(1)}$, $\ddot{x}^a =  (x^a)^{(2)}$.}
In general, the gradation of coordinates leads to the concept of a \grB\, i.e.,  a smooth fiber bundle $\sigma^k: E^k \ra M$ in which we are given a distinguished class of fiber coordinates, called graded coordinates. Each graded coordinate is assigned its \emph{weight} and transition functions preserve this gradation. An important assumption is made that weights are non-negative integers. (The index $k$ in $E^k$ indicates that all weights are $\leq k$, in which case we say that the \grB\ $\sigma^k$ is of \emph{order} $k$.  \GrBs\ of order $1$ are nothing more than vector bundles.)

It  has been shown that both the concept of a homogeneity structure and a \grB\ are equivalent \cite{JG_MR_gr_bund_hgm_str_2011}.  A \grB\ associated with a homogeneity  structure $(E, h)$ can be  conveniently encoded by means of the \emph{weight vector field} defined as
$\Delta(p)  =  \dbydt \big |_{t=1}h_t(p)$. In graded coordinates $(x^a, y^i_{w})$ \footnote{This notation means that $(x^a)$ are functions defined (locally) on the base $M$ of the \grB\ $h_0: E\ra M$ while   $(y^i_{w})$ are fiber coordinates in this bundle.  Moreover, the (abundant) notation $y^i = y^i_{w}$ indicates that the function $y^i$ has weight $w$, i.e.,  is a homogeneous function (with respect to $h$) of weight $w$.} we have $\Delta  = \sum_{i}w^i y^i_{w}\pa_{y^i_{w}}$. { A morphism $f$ between \grBs\ $E$ and $F$, colloquially described as a map preserving the gradation of coordinates, can be given a short, precise meaning as a smooth map $f: E \ra F$ such that the corresponding weight vector fields, $\Delta_{E}$ and $\Delta_{F}$, are $f$-related. Equivalently, this can be described as a smooth map intertwining the corresponding homogeneity structures, i.e.,    $f \circ h_t^E  = h_t^F \circ f$ for  every $t\in \R$, where $h_t = h(t, \cdot)$. }

{ In this work, we frequently encounter multi-graded structures like $\T \T^k M$, $\T E^k$ (the tangent bundle of a \grB\ of order $k$) or $\T^k E$ ($\thh{k}$-order tangent bundle of $E$, where $\sigma: E\ra M$ is a vector bundle). In all these examples, there are present  two (compatible) \grB\ structures. Such structures can be described as $(F; \Delta_1, \Delta_2)$ -- a manifold $F$ equipped with two weight vector fields $\Delta_1, \Delta_2$, and
the condition of compatibility can be expressed as  $[\Delta_1, \Delta_2] =0$. Equivalently, the last condition can be stated as $h_t^1\circ h_s^2 = h_s^2 \circ h_t^1$ for any $t, s\in \R$, where $(F, h^i)$ are the homogeneity structures with weight vector field $\Delta_i$, for $i=1, 2$.  Moreover, the bases of the \grBs\ $(F, \Delta_i)$, where $i=1, 2$,  carry induced \grB\ structures. In this paper, we shall mostly encounter the case when one of these \grB\ structures has order 1 (like in $\T E^k$ ot $T^k E$) and will refer to them as \emph{weighted vector bundles.}\footnote{Weighted structures, e.g. weighted algebroids,  where intensively studied in  \cite{BGG_gr_bndls_Lie_grpds15, Bruce_Grabowska_Grabowski_2016}. }% $(F; \Delta_1, \Delta_2)$.
They can be  presented as  a diagram like }
\begin{equation}\label{diag:weighted_VB}
      \xymatrix{
        F^k \ar[r]^{\sigma^k} \ar[d]_{\pi^k} & F^0 \ar[d]^{\pi^0} \\
        \und{F}^k  \ar[r]^{\und{\sigma}^k} & \und{F}^0.
      }
 \end{equation}
where $k$ indicates the order of the \grB\ $\sigma^k:F^k\ra F^0$; %$(F, \Delta^2)$
$\sigma^k$ is a VB morphism and $\pi^k$ is {\new a morphism of \grBs.} %(F^0, \Delta^1|$ and $\ind{F}^k$ are ba
%$\pi^k = h^{\Delta_1}_0$ and ${\pi}^0 = h^{\Delta_1}_0|_{F^0} $ are vector bundle projections while
%$\sigma^k$, $\und{\sigma}^k$ are  the projections in the \grBs\ $(F, \Delta_2)$ and in the \grB\ $(F, \Delta_2)$ of order $k$, respectively.
In the special case $k=1$, we recover the notion of a \emph{double vector bundle} (DVB, in short), e.g. \cite{Mackenzie_lie_2005}.

Given a \grB\ $\sigma^k: E^k\ra M$ of order $k$ and an integer $0\leq j\leq k$ we may consider   a natural projection, denoted by $\sigma^k_j: E^k \ra E^j$, where $E^j$   is a \grB\ of order $j$ over $M$ obtained from $E^k$ by \emph{removing from the atlas for $E^k$ all coordinates of weights greater than $j$}. The graded bundle $E^j$ obtained this way is denoted by $E^k[\Delta\leq j]$   \cite{BGR} and  called \emph{the reduction of $E^k$ to order $j$}. Taking $j=k-1, k-2, \ldots, 0$ we arrive at the \emph{tower of affine bundle projections}
$$
E^k\xrightarrow{\sigma^k_{k-1}} E^{k-1}\xrightarrow{\sigma^{k-1}_{k-2}} E^{k-2}\xrightarrow{\sigma^{k-2}_{k-3}} \ldots \xrightarrow{\sigma^2_1} E^1\xrightarrow{\sigma^1_0} M=E^0.
$$
We have $\sigma^k_j = \sigma^{j+1}_j\circ \ldots \circ \sigma^k_{k-1}$, and we write shortly $\sigma^j$ for $\sigma^j_0$.

A complementary construction is obtained by setting to zero all fiber coordinates in the bundle $\sigma^k: E^k\ra M$ of weight less than a given number  $1<j\leq k$. The resulting submanifold, denoted by $E^k[\Delta\geq j]$, is a graded subbundle of $E^k$ with the same base $M$.  In case $j=k$, $E^k[\Delta\geq k]$ is called the \emph{ core} of $E^k$ and denoted by $\core{E^k}$. The  core can be endowed with a natural VB structure. This way we obtain a functor $\core{\cdot}:\catGB[k] \ra \catVB$, where $\catGB[k]$ is the category of \grBs\ of order $k$, and  $\catVB = \catGB[1]$ is the category of vector bundles. In the case of multi-graded structures $(F; \Delta_1, \ldots, \Delta_n)$, we write $F\in \catGB[k_1, \ldots, k_n]$,  indicating that $(F, \Delta_i) \in \catGB[k_i]$ and $[\Delta_i, \Delta_j]=0$ for $i\neq j$. The  core of the \grB\ $(F, \Delta_1+ \ldots + \Delta_n)$ %is called the \emph{ultracore} of the $n$-tuple {\new \grB\ $F$} and
is denoted in the same way {\new as } $\core{F}$. (It will be usually  clear which weight field of $F$ we are referring to.)

\commentMR{A paragraph on $F[X\geq 0]$ removed, as it is not used in this work.}

{ There is an obvious \grB\ structure on the product $E\times F$ of the \grBs\ $E$ and $F$, defined by $h^{E\times F}_t = h^E_t\times h^F_t$ where $t\in \R$.  If $E, F$ have the same base $M$, then $E\times_M F$ is a graded subbundle of $E\times F$.

{ Given  a positive integer $k$ and a vector bundle $E\ra M$ we write $E_{[k]}$ for the \grB\ $(E, k \cdot \Delta)$, where $\Delta$ is the Euler vector field of $E$. Then, for example, $E_{[1]}\times_M F_{[2]}$ refers to  a \grB\ of order two.  It is the \grB\ associated with the graded vector bundle $E_{[1]}\oplus F_{[2]}$, where  $E$, $F$ are VBs over $M$.}
}

\subsection{Double vector bundles and VB-algebroids}

As we already mentioned, a  structure of a DVB on a manifold $D$ is a pair of VBs $\sigma_E: D\to E$ and $\sigma_A: D\to A$ such that for any $x\in D$ and  $t, s\in \R$ holds
$$
    t\cdot_E (s \cdot_A x) = s\cdot_A (t \cdot_E x)
$$
where $\cdot_E$ (respectively, $\cdot_A$) denotes the multiplication by scalars in the vector bundle $\sigma_E$ (resp., $\sigma_A$).
The bases $E$ and $A$ carry induced VB structures over a common base $M$ giving rise to a diagram
\begin{equation}\label{e:VBalg}
\xymatrix{
        D \ar[r]^{\sigma_A} \ar[d]_{\sigma_E} & A \ar[d]^{\sigma^A_M} \\
        E \ar[r]^{\sigma^E_M} & M.
      }
\end{equation}
There is also a third vector bundle over $M$, known as \emph{the core} of the DVB $(D, \sigma_E, \sigma_A)$, defined as the intersection of the kernels of the VB morphisms $\sigma_E$ and $\sigma_A$, $C  = \ker \sigma_E \cap \ker \sigma_A$. From the perspective of graded manifolds, DVBs are $\Z\times \Z$-graded manifolds admitting coordinates only in weights $(0, 0)$, $(1, 0)$, $(0, 1)$ and $(1, 1)$. From this perspective, the core $C$ is the  core of the \grB\ $(D, \Delta_E + \Delta_A)$, where   $\Delta_E$ (resp.,  $\Delta_A$) is the Euler vector field of the vector bundle $\sigma_E$ (resp., $\sigma_A$), and it will be denoted simply as $C = \core{D}$.

{  There is a well-defined action $D \times_M C \ra D$, denoted by $(d, c)\mapsto d\plus c$,  which arises from the affine bundle structure of $D$ over its order-one reduction, $E\times_M A$.
     A section $c\in \Sec(C)$ gives a so-called \emph{core section} $c^\dag$ of the VB $\sigma_E: D \to E$, given by $\sigma(e_m) = e_m \plus c(m)$ where $m\in M$, $e_m \in (\sigma^E_M)^{-1}(m)$.}
     A section $s \in \Sec_E(D)$ is called \emph{linear} if it is a VB morphism from $(E, \sigma^E_M)$  to $(D, \sigma_A)$. The subspace of linear sections (resp. core sections) is denoted  by $\Sec^{\lin}_E(D)$ (resp., $\Sec^{c}_E(D)$).

A \emph{decomposition} of a DVB $D$ as in \eqref{e:VBalg}  is a DVB morphism from $D$ to its \emph{split form} $\bar{D} := E\times_M A\times_M C$
    which is the identity on each component: the {\emph side bundles} $E$, $A$ and the core  $C$. Decompositions are in bijective correspondence with \emph{inclusions} (also referred to as  decompositions)  $\sum: A\times_M E \to D$, which are  DVB morphisms  inducing  the identity on the side bundles $A$ and $E$. %They set a VB structure on the affine bundle $D \to A\times_M E$.
    Decompositions are also in bijective correspondence with \emph{horizontal lifts} $\theta_A: \Sec(A) \to \Sec^{\lin}_E(D)$, which are defined as splittings of the short exact sequence
    \begin{equation}\label{e:ses_Sec_lin}
        0 \to \Hom(E, C) \to \Sec^{\lin}_E(D) \to \Sec(A) \to 0
    \end{equation}
    { of $\Cf(M)$-modules where $s\in \Sec^{\lin}_E(D)$ projects to its base map, which turns out to be a section of $\sigma^A_M$.}

The foundation on DVBs   were laid by J. Pradines \cite{Pradines1977}. Double structures such as DVBs, as well as  double Lie groupoids and algebroids where extensively studied by K. C. H. Mackenzie and his collaborators (see \cite{Mackenzie_lie_2005} and references therein). In this paper, we shall deal with VB-algebroids -- a pair of  an algebroid and a VB structures,  in compatibility, defined on a common manifold.

The compatibility condition can be stated in various equivalent ways, presenting such a structure as a Lie algebroid object in the category of vector bundles (the origins of the notion of VB-algebroids) or as a vector bundle object in the category of Lie algebroids (LA-vector bundles). See \cite{Gr-Meh2010} for definitions and the equivalence of both concepts.

Following the ideas from \cite{JG_MR_hi_VB_2009}, one can formulate the compatibility condition as follows:  \emph{a VB-algebroid structure on a manifold $D$} is a pair of  VBs $\sigma_A:D\ra A$, $\sigma_E: D\ra E$, and a Lie algebroid structure on the vector bundle $\sigma_E$, such that for each $t\in \R$ the map $x\mapsto t\cdot_A x$, $x\in D$, is an algebroid morphism, see \cite[Definition~2.10]{BGV18}.

It follows that $(D, \sigma_E, \sigma_A)$ is a DVB;  $\sigma^A_M: A\to M$ carries an induced algebroid structure. Moreover, the anchor map $\sharp^D: D\to \T E$ is  a DVB morphism, and $\Sec^{c}_E(D) \oplus \Sec^{\lin}_E(D)$ is a graded Lie algebra, concentrated in degrees $-1$ (the space of   core sections)  and $0$ (the space of linear sections), with respect to the Lie bracket on $\Sec(\sigma_E)$.

\subsection{Linearisation  of graded bundles and the functor \texorpdfstring{$\LL$}{Lambda}} %\texorpdfstring{
We define a functor $\LL: \catGB[k, 1] \ra \catGB[k-1, 1]$. It is slightly more general then %the construction of
the functor of linearisation $\pLinr:\catGB[k] \ra \catGB[k-1,1]$ introduced in \cite{Bruce_Grabowska_Grabowski_2016}. Actually, $\pLinr = \LL \circ \T$ is the composition of the tangent functor $\T: \catGB[k] \ra \catGB[k, 1]$ with the functor $\LL$. The construction of the functor $\LL$ is given in two steps.
In the first step, we set to zero all coordinates for $F^k\in \catGB[k,1]$ of weight $(0,1)$. { After shifting in weight by $(-1, 0)$, the target $\LL(F^k)$ is obtained from the latter by removing coordinates of  weight  $(k, 0)$.}

\begin{df}[Functor $\LL$]  \label{df:Lambda} Let $(F^k; \Delta^k_1, \Delta^1_2)$ be a weighted  vector bundle
    as in \eqref{diag:weighted_VB}, { where $(F^k, \Delta^k_1) \in \catGB[k]$ and $(F^k, \Delta^1_2)$ is a vector bundle.}
    Let $\LLv$ denote the kernel of the VB morphism $\sigma^k: F^k\to F^0$.
    Although $\Delta := \Delta_1^k-\Delta_2^1$ is not a combination
with  non-negative coefficients, it is a weight vector field on the submanifold $\LLv\subseteq F^k$ .
{We define the \grB\ $\LL(F^k)$ as
the reduction of the \grB\ $(\LLv, \Delta|_{\LLv})$ from order $k$ to $k-1$, }
\begin{equation}\label{e:Lambda}
   {  \LL(F^k) := \LLv[\Delta|_{\LLv}\leq k-1], \text{ where } \LLv = \ker  \sigma^k.}
\end{equation}
\end{df}
In other words,  we set to zero the coordinates of weight $(0, 1)$ and then we remove the coordinates of weight $(k, 0)$.
{ Consider the following diagrams:
$$
    \xymatrix{F^k\ar[d]  & \ar@<0.5ex>@{_{(}->}[l] \ker \sigma^k \ar[d]  \ar[r] & \LL(F^k) \ar[d] \\
    \und{F}^k & \ar[l]_{=} \und{F^k} \ar[r]^{\und{\sigma}^k_{k-1}} & \und{F}^{k-1}
    }, \quad \quad
    \xymatrix{
        \LL(F^k)\ar[r] \ar[d] & \LL(F^1) \simeq \core{F^1} \ar[d] \\
        \und{F}^{k-1} \ar[r] & M
      }
$$
}
{
In the diagram on the left, the projection  $\ker \sigma^k \to  \LL(F^k)$ is a fiber-wise linear isomorphism, so $\ker \sigma^k$ is the pullback of the vector bundle $\LL(F^k)$ with respect to the projection $\und{\sigma}^k_{k-1}$.

In the diagram on the right, $\LL(F^k)\in  \catGB[k-1, 1]$ is recognized as a weighted vector bundle whose weight vector fields are inherited from $\Delta$ and $\Delta_2^1$. The base of the \grB\  $\LL(F^k)$  is identified as the core  of the DVB $F^1\in\catGB[1,1]$. } {\newMR If $(x^a, y^i_{(\alpha, \beta)})$ are graded  coordinates on the weighted VB $F^k$, then the adapted coordinates on $\LL(F^k)$ are obtained by omitting those  $y^i_{(\alpha, \beta)}$ with $(\alpha, \beta) \in \{(k, 0), (0, 1)\}$, and the coordinates of weight $(w, 1)$ are assigned a new weight $(w-1, 1)$.}

\begin{lem}\label{l:LL} Let $\sigma^k: E^k \ra M$ be a \grB\ of order $k$. There are canonical isomorphism of weighted vector bundles:
\begin{enumerate}[(i)]
\item \label{item:LL_TEk} If $\sigma^k: E^k \ra M$ is a \grB\ of order $k$, then $\LL(\T E^k) \simeq \pLinr(E^k)$.
\item \label{item:LL_TkE} If $\sigma: E\ra M$ is a vector bundle, then  $\LL(\T^k E) \simeq \T^{k-1} E$.
\end{enumerate}
\end{lem}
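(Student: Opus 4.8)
The plan is to prove both isomorphisms by recording the bi-weights of adapted coordinates, tracking them through the two steps that define $\LL$, and then promoting the resulting coordinate bijections to canonical, coordinate-free isomorphisms of weighted vector bundles. For part \eqref{item:LL_TEk}, let $(x^a, y^i_w)_{1\le w\le k}$ be adapted coordinates on $E^k$, with $y^i_w$ of weight $w$. The tangent functor produces coordinates $(x^a, y^i_w, \dot x^a, \dot y^i_w)$ on $\T E^k$, bigraded by the complete lift of the weight field (order $k$, this is $\Delta_1^k$) and the Euler field of $\tau_{E^k}$ (order $1$, this is $\Delta_2^1$); their bi-weights are $(0,0)$, $(w,0)$, $(0,1)$ and $(w,1)$, respectively. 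The first step of $\LL$ (setting the weight-$(0,1)$ coordinates to zero) removes $\dot x^a$, exhibiting $\LLv=\ker\T\sigma^k_0\subset\T E^k$; the second step (reduction to order $k-1$ with respect to $\Delta=\Delta_1^k-\Delta_2^1$) removes the unique top-weight coordinate $y^i_k$. The surviving coordinates $(x^a;\ y^i_w,\ w\le k-1;\ \dot y^i_w,\ w\le k)$ are exactly adapted coordinates for $\pLinr(E^k)$ together with its two projections $\pr_1$ onto $E^{k-1}$ and $\pr_2$ onto $E^1$. Since $\pLinr=\LL\circ\T$ is precisely the assertion, the only real task is to check that this weight-matching identification coincides with the one furnished by the defining diagram of $\pLinr$ in \cite{Bruce_Grabowska_Grabowski_2016}; functoriality of $\T$ and of $\LL$ then makes the identification canonical.

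For part \eqref{item:LL_TkE} I would avoid a purely coordinate argument and give an invariant construction of the isomorphism. For a vector bundle $\sigma\colon E\to M$, the projection $\T^k E\to E$ sending $\tclass{k}{\gamma}\mapsto\gamma(0)$ is a VB morphism for the linear structure lifted from $E$, and by the definition of $\LL$ in \eqref{df:Lambda} its kernel is $\LLv=\ker(\T^k E\to E)$, consisting of those velocities $\tclass{k}{\gamma}$ whose base point lies in the zero section, i.e.\ $\gamma(0)=0_{c(0)}$ with $c=\sigma\circ\gamma$. For such $\gamma$, using the fiberwise scalar multiplication $\cdot_E$ in $E$, the curve $t\mapsto t^{-1}\cdot_E\gamma(t)$ extends smoothly across $t=0$ by Hadamard's lemma. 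I would then define $I^k_E\colon \LL(\T^k E)\to\T^{k-1}E$ to be induced by $\tclass{k}{\gamma}\mapsto\tclass{k-1}{t^{-1}\cdot_E\gamma(t)}$, with inverse the fiberwise dilation $\tclass{k-1}{\eta}\mapsto\tclass{k}{t\cdot_E\eta(t)}$.

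Two checks complete this. First, well-definedness on $k$-jets: if $\gamma,\gamma'$ have the same $\thh{k}$-jet and both vanish at $0$, then fiberwise $\gamma(t)-\gamma'(t)=O(t^{k+1})$, so $t^{-1}\cdot_E(\gamma-\gamma')=O(t^{k})$ has vanishing $\thh{(k-1)}$-jet; hence the value depends only on $\tclass{k}{\gamma}$. Second, in adapted coordinates $(x^{a,(\alpha)},\xi^{i,(\alpha)})_{0\le\alpha\le k}$ on $\T^k E$ (bi-weights $(\alpha,0)$ and $(\alpha,1)$) the map is the identity on the base $\thh{(k-1)}$-jet $(x^{a,(\alpha)})_{\alpha\le k-1}$ and the shift $\xi^{i,(\alpha)}\mapsto\xi^{i,(\alpha-1)}$ on fiber coordinates; in particular it forgets exactly the weight-$k$ coordinate $x^{a,(k)}$, so it factors through the order-$(k-1)$ reduction defining $\LL(\T^k E)$. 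A direct weight count shows $I^k_E$ intertwines both the $\Delta$-gradation and the linear $\Delta_2^1$-structure, so it is an isomorphism of weighted vector bundles, and the coordinate shift confirms bijectivity.

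The genuinely substantive point is part \eqref{item:LL_TkE}: establishing that the fiberwise rescaling is smooth at $t=0$ (Hadamard's lemma, with smoothness being a local, trivialization-independent property) and that it descends to a representative-independent map of jet spaces. By contrast, part \eqref{item:LL_TEk} is essentially bookkeeping—once the bi-weights of the adapted coordinates on $\T E^k$ are recorded, the two steps of $\LL$ visibly reproduce the coordinate model of the linearization $\pLinr(E^k)$, and the only care needed is to confirm this agrees with the canonical diagram defining $\pLinr$, i.e.\ that the generalized functor $\LL$ indeed satisfies $\pLinr=\LL\circ\T$.
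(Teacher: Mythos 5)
Your proof is correct and takes essentially the same approach as the paper: the paper realizes $\LLv(\T^k E)\subset\T^k E$ as the image of $(\tclass{k-1}{a},\tclass{k}{\gamma})\mapsto\tclass{k}{t\mapsto t\cdot_E a(t)}$ and reads off the isomorphism with $\T^{k-1}E$ from the projection forgetting $x^{a,(k)}$, while you construct exactly the inverse map (division by $t$, justified by Hadamard's lemma), and both treat part (i) as immediate from the construction of $\LL$ and the linearization functor. One minor imprecision: the fiber-coordinate formula is not a pure shift but carries a combinatorial factor, $\pullback{(I^k_E)}(y^{i,(\KL)})=\KL\,\dot y^{i,(\KL-1)}$ as in \eqref{e:IkE-coord}; this does not affect the validity of the lemma, but the precise coefficient is used later in the paper.
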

\begin{proof} Only \eqref{item:LL_TkE} needs a proof, as  \eqref{item:LL_TEk} %and \eqref{item:LL_TF}
follows directly from the construction of $\LL$ and the linearisation functor.

 For the proof of \eqref{item:LL_TkE}, observe   that the inclusion $\LLv(\T^k E) \subset \T^k E$ is realized by the mapping
\begin{equation}\label{e:LL_incl}
\T^{k-1} E\times_{\T^{k-1}M}\T^k M \hookrightarrow \T^k E, \quad (\tclass{k-1}{a}, \tclass{k}{\gamma})\mapsto \tclass{k}{t\mapsto t a(t)},
\end{equation}
 where curves $a:\R\ra E$ and $\gamma: \R\ra M$ are such that $\sigma\circ a = \gamma$.  Indeed,  the image of the mapping \eqref{e:LL_incl} is the subbundle $\T^k_M E \subset \T^k E$. In the standard local coordinates $(x^a, y^i)$  on $E$, { it }is given by the vanishing coordinates of weight $(0,1)$, i.e., {\new   $\LLv(\T^k E) = \T^k_M E = \{(x^{a, (\K)}, y^{i, (\KL)}):  y^{i, (0)}=0\}$.}

Finally, we realize that the canonical projection $\LLv(\T^k E) \ra \LL(\T^k E)$ defined locally by  removing coordinates of weight $(k, 0)$, i.e.,  the coordinates $x^{a, (k)}$,  coincides with the projection
$$
\T^{k-1} E\times_{\T^{k-1}M}\T^k M \ra \T^{k-1} E, \quad (\tclass{k-1}{a}, \tclass{k}{\gamma})\mapsto \tclass{k-1}{a}.
$$
{\new From \eqref{e:LL_incl} we easily find that the obtained isomorphism $\T^{k-1} E \ra \LL(\T^k E)$, denoted by $I^k_E$,  has the formula \begin{equation}\label{e:IkE-coord}
    {\new
    \pullback{(I^k_E)}(x^{a, (\K)}) = x^{a, (\K)}, \quad  \pullback{(I^k_E)}(y^{i, (\KL)}) = \KL\dot y^{i, (\KL-1)}
    }
    %(x^{a, (\K)}, y^{i, (\KL)}) \mapsto \left(x^{a, (\K)}; \und{\dot{y}}^i = y^{i, (0)},   \und{\ddot{y}}^i = 2 \dot{y}^{i, (1)}, \ldots, \und{y}^{i, (k)} = k y^{i, (k-1)}\right).
\end{equation}
where $(x^{a, (\K)}, y^{i, (\KL)})$, $0\leq \K \leq k-1$, $1\leq \KL \leq k$, are the coordinates for $\LL(\T^k E)$ inherited from  $\T^k E$.}
\end{proof}
\subsection{Natural inclusions and isomorphisms I}
For later use, we shall fix the natural inclusions:
\begin{equation}\label{e:i_kM}
\jM{k}: \T M \ra \core{\T^k M} \subset \T^k M, \tclass{1}{\gamma} \mapsto \tclass{k}{t\mapsto \gamma(t^k/k!)},
\end{equation}
and
\begin{equation}\label{e:iM_kl}
\iM{k,l}: \T^{k+l} M \ra \T^k \T^l M, \tclass{k+l}{\gamma} \longmapsto \tclass{k}{t\mapsto \tclass{l}{s\mapsto \gamma(t+s)}},
\end{equation}
so $\iM{k, l}(\tclass{k+l}{\gamma}) = \tclass{k}{\jet{l}\gamma} = {\left(\jet{k}\jet{l} \gamma\right)(0)}$ where $\jet{l}\gamma: \R \ra \T^l M$ is the $l$-th tangent lift of the curve $\gamma$.
In coordinates,
\begin{equation}\label{e:jMk_coord}
\jM{k}(x^a, \dot{x}^a) = (x^a, 0, \ldots 0,  \dot{x}^a),
\end{equation}
\begin{equation}\label{e:iMkl_coord}
(\iM{k, l})^\ast(x^{a, { (\K, \KL)}}) = x^{a, (\K+\KL)}.
\end{equation}
{ where $x^{a, (\K, \KL)} =  \left(x^{a, (\K)}\right)^{(\KL)}$. } In addition to {  $\jM{k}$}, given a vector bundle $\sigma: E\ra M$, { there is a canonical VB isomorphism of the  core bundle of $(\T^{k-1} E, \dd_{\T^{k-1}} \Delta_E + \Delta_{\T^{k-1} E})$ and the vector bundle $(E, \Delta_E)$} which is defined by
\begin{equation}\label{df:jEk}
    \jE{k}: E \xrightarrow{\simeq} \core{\T^{k-1} E} \subset \T^{k-1} E, \quad v \mapsto \tclass{k-1}{t\mapsto \frac{t^{k-1}}{(k-1)!} v}
\end{equation}
The compatibility with the map { $\jM{k}$} is expressed by the commutative diagram
\begin{equation}\label{e:comp_jMjTM}
\xymatrix{
\T M  \ar[rr]^{\jM{k}} \ar[rrd]^{\jmath^{k-1}_{\T M}} &&  \T^k M \ar[d]^{\iM{k-1, 1}} \\
&& T^{k-1}\T M
}
\end{equation}
A \grB\ $(E^k, \Delta)$ embeds naturally into its linearisation via the digitalisation map
\begin{equation}\label{e:diag}
    \diag^k: E^k \hookrightarrow \pLinr{E^k}, \quad  (\diag^k)^{*}(\dot{y}^i_w) =  w y^i_w, \, 1\leq w\leq k
\end{equation}
{ in the adapted coordinates $(x^a, y^{i'}_{w'}; \dot{y}^{i}_{w})$, where $1\leq w\leq k$, $1\leq w'\leq k-1$,  on $\pLinr(E^k)= \LL(\T E^k)$ induced from $\T E^k$, as mentioned earlier. } Moreover, $\diag^k$ covers  the identity over $E^{k-1}$.
This map is induced from the weight vector field considered as a map $\Delta: E^k\ra \V E^k$, {\new where $\V E^k = \ker \T \sigma^k$ denotes the \emph{vertical} subbundle of $\T E^k$.} In other words, the weight vector field $\Delta$ is projectable with respect to the canonical projection $\V E^k \ra \pLinr(E^k)$.
Moreover, in the special case $E^k = \T^k M$,  the map $\diag^k$ coincides with  $\iM{1, k-1}: \T^k M \ra \T \T^{k-1} M$ composed with  the inverse of the isomorphism $I: \pLinr(\T^k M) = \LL(\T \T^k M) \xrightarrow{\LL(\kappa^k_M)} \LL(\T^k \T M) \xrightarrow{I^k_{\T M}}  \T^{k-1} \T M  \simeq  \T \T^{k-1} M $, where $I^k_{\T M}$ is the isomorphism  established in  {\new Lemma~\ref{l:LL} (\ref{item:LL_TkE})}. The isomorphism $I: \pLinr(\T^k M) \ra \T \T^{k-1} M $  coincides with the isomorphism found in (\cite[Example 2.2.3]{BGG_2015}, \cite{BGR}) and is  given by
$$
I^\ast (\dd x^{a, (\K)}) = \frac{1}{\K+1} \dd x^{a, (\K+1)}
$$
for $\K=0, 1, \ldots, k-1$.

\begin{lem}\label{l:plin_incl} Let $\sigma_i: E^k_i\ra M_i$, for $i=1, 2$, be \grBs\ of order $k$ and let $\phi: E^k_1\ra E^k_2$ be a $\catGB[k]$-morphisms. Then the linearisation of $\phi$ intertwines the canonical inclusions $\diag_i: E_i^k \hookrightarrow \pLinr(E^k_i)$:
$$
\xymatrix{\pLinr(E^k_1) \ar[r]^{\pLinr(\phi)} & \pLinr(E^k_2) \\
E^k_1 \ar@{^{(}->}[u]^{\diag_1} \ar[r]^\phi & E^k_2 \ar@{^{(}->}[u]^{\diag_2}
}
$$
\end{lem}
\begin{proof}
The map $\pLinr(\phi)$ is the unique map which makes the following diagram commutative:
 $$
\xymatrix{
\V E^k_1 \ar[d] \ar[r]^{\T \phi} & \V E^k_2 \ar[d] \\
\pLinr(E^k_1) \ar[r]^{\pLinr(\phi)} & \pLinr(E^k_2)
}
$$
{\newMR where $\V E^k_i = \ker \T \sigma_i \subseteq \T E^k_i$. } { The weight vector fields $\Delta_1$, $\Delta_2$ are $\phi$-related as $\phi: E^k_1\ra E^k_2$ is a $\catGB[k]$-morphism ( \cite[Theorem~2.3]{JG_MR_gr_bund_hgm_str_2011}). Hence $\T \phi \circ \Delta_1 = \Delta_2 \circ \phi$, and the thesis follows directly from the definition of the diagonalisation map.}
\end{proof}

\subsection{Vector bundle comorphisms}

We shall recall the definition of a comorphism between vector bundles from \cite{MJ_MR_HA_comorph_2018} where one can also find more information  and references on the origins and generalizations  of this concept.

\begin{df} \label{df:VBC} A \emph{vector bundle comorphism} (VB comorphism, for short), from a vector bundle $\sigma_1: E_1\ra M_1$ to a vector bundle $\sigma_2: E_2\ra M_2$, is a relation $r\subset E_1\times E_2$,  for which there exist a \emph{base map} $\und{r}: M_2\ra M_1$ and a VB morphism $r^{!}: \pullback{\und{r}} E_1 \ra E_2$ covering the identity on $M_2$ such that
$$r = \{(v, r^{!}(v, y)):  v\in E_1, y\in M_2,  \sigma_1(v) = \und{r}(y)\}$$
where $\pullback{\und{r}} E_1 \subset E_1 \times M_2$ is the pullback of the vector bundle $\sigma_1$ with respect to the map $\und{r}$.
We say that the base map $\und{r}: M_2\ra M_1$ (which is uniquely defined) \emph{covers} $r$, and we depict this in the following diagram:
$$
\xymatrix{E_1 \ar[d]^{\sigma_1} \ar@{-|>}[r]^{r} & E_2 \ar[d]^{\sigma_2}  \\ %
M_1  & M_2 \ar[l]_{\und{r}}  }.
$$
\end{df}
Thus, $r$ is { the } union of graphs of linear maps $r_y: (E_1)_{\und{r}(y)} \ra (E_2)_{y}$ between
the corresponding fibers, where $y$ varies in $M_2$.
{\new There is a one-to-one correspondence between VB comorphisms $\sigma_1\rel \sigma_2$ and VB morphisms $\sigma_2^\ast \ra \sigma_1^\ast$ between the dual bundles.}
A \VBC $r: \sigma_1\rel \sigma_2$ gives rise to a mapping between the spaces of sections,
$$
\ZMmap{r}:\Sec(\sigma_1)\ra \Sec(\sigma_2), \quad \ZMmap{r}(s)(y) = r_y(s(\und{r}(y))).
$$
The map $\ZMmap{r}$ satisfies
\begin{equation} \label{e:hat_r_axiom}
\ZMmap{r}(s + s')  =\ZMmap{r}(s) + \ZMmap{r}(s'), \quad \quad \ZMmap{r}(f\cdot s) = \und{r}^\ast(f) \cdot \ZMmap{s}
\end{equation}
 and any such map gives rise to a \VBC $r: \sigma_1 \rel \sigma_2$.

{ VB comorphisms }form a category denoted by $\catZM$. A morphism from  $r\in \catZM$ to $r'\in\catZM$, where
$r: \sigma_1\rel \sigma_2$ and $r': \sigma_1'\rel \sigma_2'$ are \VBC and $\sigma_i: E_i\ra M_i$, $\sigma_i': E_i'\ra M_i'$ are vector bundles, is given by a pair $(\phi_1, \phi_2)$ of VB morphisms $\phi_i: E_i\ra E_i'$ such that $(\phi_1 \times \phi_2)(r) \subset r'$ (\cite[Definition 2.3 and Proposition 2.6]{MJ_MR_HA_var_calc_2013}). {\new  It is denoted by $(\phi_1, \phi_2): r \Rightarrow r'$.}

A \VBC $r: \sigma_1\rel \sigma_2$ is \emph{weighted} of order $k$ if the total spaces $E_1, E_2$ are given a structure of a \grB\ of order $k$ with respect to which $r$ is a graded subbundle of the product $E_1\times E_2$.

We shall need the following result in Section~\ref{sec:str}. Roughly speaking, it states that  $\LL$ is also a functor on  the category of weighted vector bundle comorphisms.
\begin{lem} \label{lem:lambda_of_ZM}
   Let $F_1^k, F_2^k\in \catGB[k,1]$ be weighted, order $k$, vector bundles and let $\pi_i: F_i^k \ra \und{F}_i^k$ denotes the corresponding VB projections.  Let $r^k: \pi_1 \relto \pi_2$ { be } a weighted, order $k$, \VBC covering $\und{r}^k: \und{F}_2^k \ra \und{F}_1^k$. Then
$\LL(r^k): \LL(F_1^k) \relto \LL(F_2^k)$ is a \VBC covering $\und{r}^{k-1}: \und{F}_2^{k-1} \ra \und{F}_1^{k-1}$:
$$
\xymatrix{
\LL(F_1^k) \ar[d] \ar@{-|>}[rr]^{\LL(r^k)} && \LL(F_2^k) \ar[d] \\
\und{F}_1^{k-1} && \und{F}_2^{k-1} \ar[ll]_{\und{r}^{k-1}}
}
$$
{ Moreover, if $(\phi_1, \phi_2): r \Rightarrow r'$ is  a morphism between weighted VB comorphisms $r: F_1 \relto F_2$ and $r':{F_1'}\relto {F_2'}$ then $(\LL(\phi_1), \LL(\phi_2)): \LL(r) \Rightarrow \LL(r')$ is the  same.  }
\end{lem}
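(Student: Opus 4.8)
The plan is to check that the two elementary operations out of which $\LL$ is built---restriction to the subbundle $\ker\sigma^k$ and the subsequent reduction from order $k$ to $k-1$---both send weighted VB comorphisms to VB comorphisms, and then to read off functoriality on morphisms from the fact that $\LL$ already preserves graded subbundles and products. Throughout I work in adapted bigraded coordinates $(x^a; z^A; \xi^I)$ on $F^k_i$, where the $z^A$ have $\Delta_2$-weight $0$ (base coordinates for $\pi_i$) and the $\xi^I$ have $\Delta_2$-weight $1$ (linear fibre coordinates for $\pi_i$), each carrying a definite $\Delta_1$-weight $w_1(\cdot)$. In these coordinates $r^k$ is described by a base map $\und{r}^k$, with $\pullback{(\und{r}^k)}(z_1^A)$ a $\Delta_1$-homogeneous function of $z_2$ of the same $\Delta_1$-weight as $z_1^A$, together with fibre-linear equations $\eta^J=R^J_I(z_2)\,\xi^I$; since $r^k$ is a graded subbundle of the product with respect to $\Delta_1$, each coefficient $R^J_I$ is $\Delta_1$-homogeneous of weight $w_1(\eta^J)-w_1(\xi^I)$.

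First I would treat the restriction step. Recall that $\ker\sigma^k_i$ is cut out inside $F^k_i$ by the vanishing of the fibre coordinates of weight $(0,1)$, and is a sub-VB of $\pi_i$ over the same base $\und{F}^k_i$. The fibre maps of $r^k$ send $\ker\sigma^k_1$ into $\ker\sigma^k_2$: the coordinates $\xi^I$ surviving in $\ker\sigma^k_1$ are exactly those with $w_1(\xi^I)\ge 1$, and if such a $\xi^I$ contributed to an output coordinate $\eta^J$ of weight $(0,1)$, i.e.\ with $w_1(\eta^J)=0$, then $R^J_I$ would have $\Delta_1$-weight $-w_1(\xi^I)<0$, which is impossible as $R^J_I$ is a function of the non-negatively graded $z_2$. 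Hence no weight-$(0,1)$ output is produced and $r^k$ restricts to a VB comorphism $\ker\sigma_1^k\relto\ker\sigma_2^k$ covering the same base map $\und{r}^k$.

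Next I would carry out the reduction step using the pullback description established above, namely $\ker\sigma^k_i=\pullback{p_i}\LL(F^k_i)$ with $p_i:=\und{\sigma}^k_{i,k-1}:\und{F}^k_i\to\und{F}^{k-1}_i$, together with the naturality square $p_1\circ\und{r}^k=\und{r}^{k-1}\circ p_2$ (the reduction functor $\catGB[k]\to\catGB[k-1]$ applied to the graded base map $\und{r}^k$). Passing to the VB-morphism form $\pullback{(\und{r}^k)}\ker\sigma^k_1\to\ker\sigma^k_2$ of the restricted comorphism and substituting the pullback identities gives a VB morphism over $\id_{\und{F}^k_2}$ between
\[
\pullback{(\und{r}^k)}\ker\sigma^k_1=\pullback{p_2}\,\pullback{(\und{r}^{k-1})}\LL(F^k_1)\quad\text{and}\quad\ker\sigma^k_2=\pullback{p_2}\LL(F^k_2).
\]
This descends to a VB morphism $\pullback{(\und{r}^{k-1})}\LL(F^k_1)\to\LL(F^k_2)$ over $\id_{\und{F}^{k-1}_2}$ provided its coefficients $R^J_I$ are independent of the base coordinates removed by $p_2$, which are precisely those of weight $(k,0)$. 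This is forced by a weight count: for fibre coordinates of $\LL(F^k_i)$ one has $w_1(\eta^J)\le k$ and $w_1(\xi^I)\ge 1$, so $w_1(R^J_I)=w_1(\eta^J)-w_1(\xi^I)\le k-1<k$, whereas any monomial in $z_2$ involving a weight-$(k,0)$ coordinate has $\Delta_1$-weight at least $k$. The descended morphism, together with $\und{r}^{k-1}$, is the comorphism $\LL(r^k):\LL(F^k_1)\relto\LL(F^k_2)$ covering $\und{r}^{k-1}$.

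Finally, for the morphism statement I would use that $\LL$ commutes with products (both defining operations do) and preserves inclusions of graded subbundles, so that $\LL(F_1\times F_2)\cong\LL(F_1)\times\LL(F_2)$, $\LL(\phi_1\times\phi_2)=\LL(\phi_1)\times\LL(\phi_2)$, and $\LL(r)$ is a graded subbundle of $\LL(F_1)\times\LL(F_2)$. Applying $\LL$ to the $\catGB[k,1]$-morphism $\phi_1\times\phi_2$, which restricts to $r\to r'$, yields a morphism restricting to $\LL(r)\to\LL(r')$, i.e.\ $(\LL(\phi_1)\times\LL(\phi_2))(\LL(r))\subseteq\LL(r')$, as claimed. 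I expect the descent in the reduction step---verifying that the comorphism's coefficient functions cannot involve the removed top-weight coordinates---to be the only genuine point; everything else is the naturality of the reduction and pullback constructions and the coordinate normalisation set up at the start.
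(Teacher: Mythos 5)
Your proof is correct and follows essentially the same route as the paper's: trace the two steps of the construction of $\LL$ (restriction to $\ker \sigma^k$, then reduction of the base to order $k-1$), check that each step still yields a VB comorphism covering the appropriate base map, and deduce the morphism statement from the fact that $\LL$ preserves products and inclusions of graded subbundles. The only difference is presentational: your explicit weight counts (a coefficient $R^J_I$ of negative $\Delta_1$-weight must vanish, hence the kernels are preserved; $w_1(R^J_I)\leq k-1$, hence independence of the removed weight-$(k,0)$ coordinates) supply exactly the justifications that the paper compresses into ``hence'' and ``since the projections are fiber-wise linear isomorphisms.''
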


{
\begin{proof}
    Note that $r^k$ is  a weighted vector subbundle of $F_1^k\times F_2^k$, hence $\LL(r^k)$ is a weighted vector subbundle of $\LL(F_1^k\times F_2^k) = \LL(F_1^k)\times \LL(F^2_k)$. Let us trace the subsequent steps of the construction of the weighted vector bundle $\LL(r^k)$, as in  Definition~\ref{df:Lambda}. We have $\LLv(r^k) = r^k \cap (\ker \sigma^k_1 \times \ker  \sigma^k_2)$ where $\sigma_i^k: F_i^k \ra F^0_i$ are as in \eqref{diag:weighted_VB} for $i=1, 2$. Hence, $\LLv(r^k)$ is a VB comorphism $\ker \sigma^k_1 \relto \ker \sigma^k_2$ covering $\und{r}^k: \und{F}_2^k \ra \und{F}_1^k$. The goal  $\LL(r^k)$ is obtained from $\LLv(r^k)$ by the reduction to order $k-1$ of the base map $\und{r}^k$. Since  the projections $\LLv(F^k_i) \ra \LL(F^k_i)$ are fiber-wise linear isomorphisms, $\LL(r^k)$ remains a VB comorphism.

    For the last part of Lemma, we have already noticed that the functor $\LL$ preserves the products and inclusions. By \cite[Proposition~2.6]{MJ_MR_HA_comorph_2018} $(\phi_1\times \phi_2)(r)\subseteq  r'$, hence $(\LL(\phi_1) \times \LL(\phi_2))(\LL(r)) \subseteq r'$, so $(\LL(\phi_1), \LL(\phi_2))$ is  a morphism in the category $\catZM$.
\end{proof}
}

The  core $\core{E^k}$ acts naturally on {the \grB\ $E^k$.}
{ This action
$E^k\times_M \core{E^k} \to E^k$ is denoted by $(a^k, v)\mapsto a^k\plus v\in E^k$ and gives rise to a VB comorphism,}
\begin{equation}\label{df:corePlus}
\xymatrix{
v \in \core{E^k}  \ar[d] \ar@{-|>}[rr] && \T E^k \ni \coreVF{v}(a^k) \ar[d] \\ % (\corePlus_{a^k})(v)\ar[d] \\
M && E^k\ni a^k \ar[ll]_{\sigma^k}
}
\end{equation}
{\new where  $\coreVF{v}(a^k)\in \T_{a^k} E^k$  is the vector represented by the curve $t\mapsto a^k \plus (t v)$.}
 In coordinates $(x^a, y^i_w, z^\mu_k)$ on $E^k$, where $y^i_w$'s have weights $1\leq w \leq k-1$, { and $\w(z^\mu_k) =k$,}  the associated map on sections is given by   $v\mapsto \coreVF{v}   =  v^\mu(x) \pa_{z^\mu_k}$, where $v = \sum_\mu v^\mu(x) c_\mu  $ and $(c_\mu)$ is  a local frame of $\Sec(\core{E^k})$. { Since $E^k\ra E^{k-1}$ is  an affine bundle modelled on the pullback of the core $\core{E^k}\ra M$, there is}  a map
 \begin{equation}\label{df:minus}
 E^k \times_{E^{k-1}} E^k \ra \core{E^k}, \quad (a', a) \mapsto a'\minus a,
 \end{equation}
 where $a'-a$ is the unique vector $v\in \Sec(\core{E^k})$ such that $a \plus v =a'$.
 \begin{lem}\label{l:VF-k}
 The  mapping associated with the \VBC \eqref{df:corePlus},
\begin{equation}\label{e:Sec_Ek VFk}
    %\ZMmap{\corePlus}:
    \Sec(\core{E^k})\rightarrow \VF_{-k}(E^k), v\mapsto \coreVF{v},
 \end{equation}
is a {$\Cf(M)$-module isomorphism. }
Moreover, if $\sigma_i: E^k_i\ra M_i$, for $i=1, 2$, are \grBs\ of order $k$ and  $\phi: E^k_1\ra E^k_2$ is a $\catGB[k]$-morphism  then weight $-k$ vector fields $X_i\in \VF_{-k}(E^k_1)$   are  $\phi$-related if and only if
the corresponding sections $v_i\in \Sec(\core{E^k_i})$ are $\core{\phi}$-related. { If } $M_1=M_2$ and  $\phi$ covers the identity, then the last condition means that  $v_2 = \core{\phi}\circ  v_1$.
\end{lem}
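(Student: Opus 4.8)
The plan is to dispatch the three assertions separately: the isomorphism claim by a weight count in graded coordinates, and the two relatedness claims by differentiating the compatibility of a $\catGB[k]$-morphism with the core action. First I would observe that a weight-$0$ function on $E^k$ is forced to be a pullback $(\sigma^k)^\ast f$ of a function on $M$ (letting $t\to 0$ in the homogeneity $g\circ h_t = g$ carries every point into the base $E^0=M$), so that $\VF_{-k}(E^k)$ is a $\Cf(M)$-module via $f\cdot X := (\sigma^k)^\ast f\cdot X$, which indeed preserves weight. Next I would write a general vector field in coordinates $(x^a, y^i_w, z^\mu_k)$ and impose $[\Delta, X] = -k\, X$: the coefficient of $\pa_{x^a}$ would have to carry weight $-k$, that of $\pa_{y^i_w}$ weight $w-k<0$, and that of $\pa_{z^\mu_k}$ weight $0$; since only non-negative weights occur on $E^k$, the first two families vanish and $X = \sum_\mu h^\mu(x)\,\pa_{z^\mu_k}$. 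This is exactly $\coreVF{v}$ for $v=\sum_\mu h^\mu c_\mu$, giving surjectivity; injectivity is immediate from the same formula; and additivity together with $\coreVF{f v} = (\sigma^k)^\ast f\cdot\coreVF{v}$, a special case of \eqref{e:hat_r_axiom} applied to the comorphism \eqref{df:corePlus}, shows the map is a $\Cf(M)$-module homomorphism, hence an isomorphism.

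For the relatedness statement the key step I would establish is the identity
$$
\phi\bigl(a^k \plus v\bigr) = \phi(a^k)\plus \core{\phi}(v), \qquad a^k\in E^k_1,\ v\in (\core{E^k_1})_{\sigma^k_1(a^k)}.
$$
This holds because a $\catGB[k]$-morphism is affine along the top affine fibration $E^k\to E^{k-1}$: in adapted coordinates $\phi^\ast(z^\mu_k)$ is a weight-$k$ function, so it has the form $\sum_\nu A^\mu_\nu(x)\,z^\nu_k + P^\mu(x,y)$ with $(A^\mu_\nu)$ the matrix of $\core{\phi}$, and replacing $z$ by $z+v$ shifts $\phi^\ast(z^\mu_k)$ by $\sum_\nu A^\mu_\nu v^\nu = (\core{\phi}\,v)^\mu$. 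Differentiating the displayed identity in $t$ at $t=0$ along the curve $a^k\plus t v$ then yields $\T\phi\bigl(\coreVF{v}(a^k)\bigr) = \coreVF{\core{\phi}(v)}(\phi(a^k))$ for every core element $v$, which is the infinitesimal form of the compatibility and the heart of the argument.

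To finish part two I would apply this with $v = v_1(m)$, where $m=\sigma^k_1(a^k)$, and compare with $\coreVF{v_2}(\phi(a^k)) = \coreVF{v_2(\und{\phi}(m))}(\phi(a^k))$, using $\sigma^k_2\circ\phi = \und{\phi}\circ\sigma^k_1$. Thus $\coreVF{v_1}$ and $\coreVF{v_2}$ are $\phi$-related precisely when $\coreVF{\core{\phi}(v_1(m))}(\phi(a^k)) = \coreVF{v_2(\und{\phi}(m))}(\phi(a^k))$ for all $a^k$; since $w\mapsto\coreVF{w}(b)$ is fibrewise injective (in coordinates $w\mapsto w^\mu\pa_{z^\mu_k}|_b$), this is equivalent to $\core{\phi}\circ v_1 = v_2\circ\und{\phi}$, i.e. to $\core{\phi}$-relatedness of the sections. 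Part three is then the specialization $M_1=M_2$, $\und{\phi}=\id$, which reduces the last equality to $v_2 = \core{\phi}\circ v_1$. The main obstacle is establishing the affineness/compatibility identity $\phi(a\plus v)=\phi(a)\plus\core{\phi}(v)$ cleanly (ideally coordinate-freely, from the affine structure of $E^k\to E^{k-1}$ and the definition of the top-core functor); once that is in hand, the remaining content is a single differentiation and the pointwise injectivity of the core action.
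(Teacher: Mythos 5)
Your proof is correct and, for the main (relatedness) claim, takes essentially the same route as the paper: both arguments hinge on the compatibility identity $\phi(a\plus v)=\phi(a)\plus\core{\phi}(v)$ and then differentiate along the curve $t\mapsto a\plus t\,v$. The difference is one of completeness rather than method. The paper simply asserts that identity ("Note that ..."), proves only the forward implication, and leaves the converse "to the reader," while never addressing the $\Cf(M)$-module isomorphism claim at all (it is implicit in the coordinate formula $v\mapsto v^\mu(x)\,\pa_{z^\mu_k}$ given just before the lemma). You, by contrast, justify the identity in coordinates via the affine form $\phi^\ast(z^\mu_k)=A^\mu_\nu(x)z^\nu_k+P^\mu(x,y)$ of a $\catGB[k]$-morphism, prove the isomorphism claim by the weight count $[\Delta,X]=-kX$ forcing $X=h^\mu(x)\,\pa_{z^\mu_k}$, and dispose of both directions of the equivalence at once through the fibrewise injectivity of $w\mapsto\coreVF{w}(b)$ — a cleaner closure than the paper's one-directional argument.
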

{
\begin{proof} Let $v_i\in \Sec(\core{E^k_i})$, $X_i= \coreVF{v_i}$ for $i=1,2$.  The  vector fields $X_i$ are represented by the families of curves $t\mapsto a_i \plus t v_i(\und{a_i})$ where $a_i\in E_i^k$ and $\und{a_i} = \sigma^k_i(a_i)\in M_i$.
The sections $v_1, v_2$ are $\core{\phi}$-related if and only if $v_2(m_2) = \core{\phi}(v_1(m_1))$ for any pair  $(m_1, m_2)$ such that $\und{\phi}(m_1) = m_2$. Note that $\phi(a_1 \plus t v_1(\und{a_1})) = \phi(a_1) \plus  t \core{\phi}(v_1(\und{a_1}))$, hence if $v_1, v_2$ are $\core{\phi}$-related then $(\T\phi)X_1(a_1) = X_2(\phi(a_2))$. Thus, $X_1, X_2$ are $\phi$-related. The proof in the converse direction is very similar { and is left to the reader. }
\end{proof}}

\subsection{Higher algebroids}
It is  well-known that a Lie algebroid $(\sigma: A\to M, [\cdot, \cdot], \sharp)$ can be represented as a linear Poisson tensor on $A^\ast$, the total space of the dual vector bundle. This, in turn,   gives rise to a VB morphism $\veps: \T^\ast A \to \T A^\ast$ which is a Poisson map and retains  all the information about the algebroid structure on $A$. The dual of $\veps$ is a VB comorphism $\kappa: \T \sigma \relto \tau_A$ which was a starting point in the concept of HAs originated in \cite{MJ_MR_HA_var_calc_2013}.

A general  algebroid structure on a vector bundle $\sigma: E\ra M$ can be encoded as a \VBC $\kappa: \T \sigma \rel \tau_{E}$ of a special kind, see \cite{MJ_MR_HA_comorph_2018}, Proposition~2.15. In this correspondence
$\kappa$ should be also a vector subbundle of $\tau_E \times \T \sigma$, and the induced VB morphism between the  core bundles should be the identity, {
\begin{equation} \label{e:id_core}
    \core{\kappa} = \id_{\core{\T E}} % \cap (E \times E) = \graph{\id_E}.
\end{equation}
Let us recall that the core  of the  DVB
$\T E$ is  the subbundle $\V_M E$ of the vertical bundle $\V E$ of $E$, and it is naturally identified with the vector bundle $E$ itself. \commentMR{The vertical subbundle for a \grB\ was introduced just after \eqref{e:diag}.}
Moreover,  algebroid morphisms $\phi: (E_1, \kappa_1)\ra (E_2, \kappa_2)$ are in a one-to-one correspondence with $\catZM$-morphisms $(\T \phi, \T \phi):\kappa_1\mZM \kappa_2$.
The above concept of an algebroid has a direct analogue in higher-order, which we shall recall now.

\begin{df}\cite{MJ_MR_HA_comorph_2018} \label{df:higher_algebroid} A general ($\thh{k}$-order) higher algebroid (HA, in short) is a
   \grB\ $\sigma^k: E^k\ra M$ of order $k$ together with a weighted \VBC $\kappa^k \subset \T^k E^1\times \T E^k$ from $\T^k \sigma^1$ to $\tau_{E^k}$ (covering a  mapping $\sharp^k: E^k \ra \T^k M$) such that  the relation $\kappa^1:\T \sigma^1\relto \T \tau_{E^1}$, being the reduction to order one of $\kappa^k$, equips $\sigma^1:E^1\ra M$ with an algebroid structure:
\begin{equation}
\label{diag:HA}
\xymatrix{\T^kE^1\ar[d]^{\T^k\sigma^1} \ar@{-|>}[rr]^{\kappa^k}&&\T E^k\ar[d]^{\tau_{E^k}}\\
\T^kM &&\ar[ll]_{\sharp^k} E^k
}
\end{equation}
In addition:
\begin{enumerate}[(i)]
    \item If { $\kappa^1$} is a symmetric relation, then the HA $(E^k, \kappa^k)$ is called  \emph{skew}.\footnote{This is equivalent to saying that the bracket  $[\cdot, \cdot]$ on $\Sec(E^1)$ is skew-symmetric, see \cite{MJ_MR_HA_comorph_2018}.}
    \item\label{i:dHA:AL_axiom}  If $(\sigma^1, \kappa^1)$ is skew {and, in addition, the diagram}
\begin{equation}\label{diag:AL}
\xymatrix{\T^kE^1\ar[d]^{\T^k\sharp^1} \ar@{-|>}[rr]^{\kappa^k}&&\T E^k\ar[d]^{\T \sharp^k}\\
\T^k\T M \ar[rr]^{\kappa^k_M} && \T \T^k M
}
\end{equation}
is commutative, i.e.,  $(\T^k \sharp^1, \T\sharp^k): \kappa^k \mZM \kappa^k_M$ is a morphism in $\catZM$, then we call $(\sigma^k, \kappa^k)$ an \emph{almost Lie higher algebroid};
 \item\label{i:dHA:Lie_axiom} Both vector bundles, $\T^k \sigma^1$ and $\tau_{E^k}$ in the diagram \eqref{diag:HA}, carry a canonical algebroid structure.\footnote{ The $k$-tangent lift of $(\sigma^1, \kappa^1)$ gives an algebroid structure on $\T^k \sigma^1$.}   {\new If $(E^k, \kappa^k)$ is a skew HA and  $\kappa^k$ }  is a subalgebroid of the product of these algebroids then $(\sigma^k,\kappa^k)$ is called a \emph{Lie} HA.\footnote{\new This condition can be restated as the dual VB morphism $\veps^k: \T^\ast E^k \ra \T^k (E^1)^\ast$ is a Poisson map, e.g. \cite{JG_mod_class_skew_alg_rel_2012}. Moreover, a Lie HA has to be AL, i.e., the condition \eqref{i:dHA:Lie_axiom} implies \eqref{i:dHA:AL_axiom}, see \cite{MJ_MR_HA_comorph_2018}. }
\end{enumerate}
 A \emph{morphism} between higher algebroids  $(\sigma^k_E:E^k\ra M,  \kappa^{k,E})$ and  $(\sigma^k_F:F^k\ra N, \kappa^{k,F})$ is a morphism of \grBs\ $\phi^k: E^k\ra F^k$ such that $(\T^k \phi^1, \T \phi^k): \kappa^{k,E}\mZM \kappa^{k,F}$ is a $\catZM$-morphism. { Higher algebroids with $\catZM$-morphisms } form a category.  %which we denote by $\catHA$.
 The reduction of a  HA $(E^k, \kappa^k)$  to a lower  order $j$, $1\leq j<k$ gives a HA denoted by $(E^j, \kappa^j)$ which is skew (resp. AL, Lie) if $(E^k, \kappa^k)$ was so.
\end{df}

\begin{ex}[\cite{MJ_MR_HA_comorph_2018} HAs of order $2$, in coordinates] \label{ex:kappa2_coord}
Let $(x^a, y^i, z^\mu)$ be  local graded coordinates on a  \grB\ $\sigma^2: E^2\ra M$  of order $2$.
Taking into account only the graded bundle structure of $\kappa^2$, we obtain the following system of equations for $\kappa^2 \subset \T^2 E^1 \times_M \T E^2 \in\catGB[1,2]$. (We have underlined the coordinates on $\T E^2$ in order to distinguish them from the coordinates on  $\T^2 E^1$.)
\begin{equation}\label{e:local_kappa2}
\kappa^2:
\begin{cases}
\dot{x}^a = & Q^a_i \,\und{y}^i \\
\ddot{x}^a = & \frac{1}{2}\, Q^a_{ij}\,\und{y}^i\und{y}^j + Q^a_\mu\,\und{z}^\mu, \quad \text{ where } Q^a_{ij}=Q^a_{ji}, \\
\dot{\und{x}}^a = & \tilde{Q}^{a}_i\,y^i
\\
\dot{\und{y}}^i = & Q^i_j \dot{y}^j +  Q^i_{jk}\,\und{y}^j y^k, \quad \text{ where } Q^i_j =\delta^i_j,
\\
\dot{\und{z}}^\mu = & Q^\mu_i\,\ddot{y}^i + Q^\mu_{ij} \, \und{y}^i \dot{y}^j + Q^\mu_{\nu i}\,\und{z}^\nu y^i + \frac{1}{2} Q^\mu_{ij,k} \und{y}^i\und{y}^j y^k, \quad \text{ where } Q^\mu_{ij,k}=Q^\mu_{ji,k},
\end{cases}
\end{equation}
for some \emph{structure functions} $Q^{\cdots}_{\cdots}$.  {\new The condition $Q^i_j =\delta^i_j$ corresponds to  \eqref{e:id_core} and   it ensures that the order-one reduction of $\kappa^2$ gives a (general) algebroid structure on $A=E^1$. }   If $(E^2, \kappa^2)$ is a skew HA then $\tilde{Q}^{a}_i=Q^a_i$ and $Q^i_{jk} =  - Q^i_{kj}$  since {  $\kappa^1$ is a symmetric relation. } The structure functions satisfy certain equations reflecting the axioms of a higher-order algebroid. These equations are derived in Appendix, Subsection~\ref{sSec:AL_and_Lie_HA_eqns}.
\end{ex}
{
\begin{ex} \label{ex:kappa^k_M} The natural %exchange
diffeomorphism $\kappa^k_M : \T^k \T M\ra \T \T^k M$ defines a Lie, order $k$ HA on $\tau^k_M: \T^k M\ra M$. {\new Indeed,  $(\T^k M, \kappa^k_M)$ satisfies the Lie condition (Definition~\ref{df:higher_algebroid}\eqref{i:dHA:Lie_axiom}) because  $\veps^k_M : = (\kappa^k_M)^\ast: \T^\ast \T^k M \ra \T^k \T^\ast M$ is a Poisson map.}  It also comes from a more general result, see \cite[Proposition~4.13]{MJ_MR_HA_comorph_2018}.
\end{ex}
}

\subsection{Reformulation of the definition of  a HA  in terms of algebroid lifts}

{
We shall review the construction of higher lifts $s^{(\K)}$ of sections of  a vector bundle. This notion is used in various parts of this work, such as in the definition of algebroid lifts $\aliftB{s}{\K-k}\in \VF(E^k)$ (see \eqref{df:algebroid_lift}), which facilitate the convenient description of the axioms of HAs (see Theorem~\ref{th:HA_axioms_and_lifts}).
}

{\newMR Fix $k\in \N$ and let $s\in \Sec(\sigma)$ be a section of a vector bundle $\sigma: E\to M$. We can interpret $s$ as a linear function $\iota(s)$ on $E^\ast$, the linear dual of $E$. Let $0\leq \K\leq k$. Then the $(\K)$-lift of $\iota(s)$ is a function on $\T^k E^\ast$, commuting with $h^{\T^k E^\ast}_t = \T^k (h^{E^\ast}_t)$, the homogeneity structure on $\T^k E^\ast$.  Therefore, $\iota(s)^{(\K)}$ can be interpreted as a section of the linear dual of the vector bundle $\T^k \sigma^\ast:  \T^k E^\ast \ra \T^k M$, which is identified with the vector bundle $\T^k \sigma: \T^k E \ra \T^k M$ via the non-degenerate pairing
\begin{equation}\label{e:T^k_pair}
\pair{\cdot, \cdot}_{\T^k \sigma} : \T^k E^\ast \times_{\T^k M} \T^k E \simeq \T^k (E^\ast \times_M E) \xrightarrow{\pair{\cdot, \cdot}_\sigma^{(k)}} \R,
\end{equation}
obtained as $(k)$-lift  of the pairing $\pair{\cdot, \cdot}_\sigma: E^\ast\times_M E \ra \R$. The section of $\T^k \sigma$ obtained this way is denoted by $s^{(\alpha)}$ and called \emph{the $(\K)$-lift of the section $s$}.  In standard coordinates $(x^a, y^i)$ on $E$, and $(x^a, \xi_i)$ on $E^\ast$, where $\xi_i = \iota(e_i)$,  the $(k)$-lift of the function $\pair{\cdot, \cdot}_\sigma = y^i \xi_i$ is obtained using the general Leibniz rule, and has the form
$$
    \pair{(x^{a, (\K)}, \xi_i^{(\KL)}), (x^{a, (\K)}, y^{i, (\KL)})}_{\T^k \sigma} = \sum_{\K = 0}^k \binom{k}{\K} \xi_i^{(\K)} y^{i, (k-\K)}.
$$
}
It follows that the family $(e_i^{(\K)})$, where $0 \leq \alpha\leq k$ and $\iota(e_i^{(\K)}) =  \xi_i^{(\K)}$, forms a local frame of sections of the vector bundle $\T^k \sigma$. Moreover,   $\pair{y^{j, (\beta)}, e_i^{(k-\alpha)}}_{\T^k \sigma}  = \delta^i_j\delta^{\alpha}_{\beta} \binom{k}{\alpha}$, hence
\begin{equation}\label{e:e_i_lifts}
y^{i, (\K)} \circ e_i^{(k-\K)} = \binom{k}{\K}^{-1},
\end{equation}
as the composition of functions $e_i^{(k-\K)}:\T^k M \ra \T^k E$ and $y^{i, (\K)}: T^k E\ra \R$. From this it is  straightforward to verify that this construction of
$s^{(\K)}$  is equivalent to the one presented in \cite{MJ_MR_HA_comorph_2018}.  We have
\begin{equation}\label{e:fs_beta}
(f\cdot s)^{(\beta)}= \sum_{\alpha=0}^\beta \binom{\beta}{\alpha} f^{(\alpha)}s^{(\beta-\alpha)}
\end{equation}
for $\beta = 0, 1, \ldots, k$, $f\in \Cf(M)$ and $s\in \Sec(E)$. This is simply  the Leibniz rule for the iterated derivative.}

\begin{df}[Vertical lifts]\label{df:sec_lifts} Let $0\leq \alpha\leq k$. We define a \VBC $\VZM^{k}_{\alpha}$,
\begin{equation}\label{diag:vertical_lift_ZM}
\xymatrix{\T^{\alpha} E \ar[d] \ar@{-|>}[r]^{\VZM^k_{\alpha}} & \T^k E \ar[d]  \\ %
\T^{\alpha} M  & \T^k M \ar[l]_{\tau^k_{\alpha}}}
\end{equation}
covering the natural projection $\tau^k_{\alpha}: \T^k M\ra
\T^{\alpha} M$ by
\begin{equation}\label{df:V}
    \left(\VZM^k_{\alpha}\right)_{\tclass{k}{\gamma}}(\tclass{\alpha}{a}) = \tclass{k}{t\mapsto \frac{\alpha!}{k!} t^{k-\alpha} a(t)}
\end{equation}
where $\gamma$ is a curve in $M$ and  $a$ is a curve in $E$  such that $\tclass{k}{\und{a}} = \tclass{k}{\gamma}$ where $\und{a} = \sigma\circ a$.
\end{df}
Note that for $\alpha=k-1$ we recover the map \eqref{e:LL_incl}.  It is clear that \eqref{df:V} does not depend on the choice of representatives $\gamma$ and $a$.

{\newMR The $(\alpha)$-lift $s^{(\alpha)}$ of the section $s\in\Sec(E)$ can be  presented as the composition of the complete lift $\T^{\alpha} s$ with the vertical lift $\ZMmap{\VZM}^k_{\alpha}$:
\begin{equation}\label{df:alpha_lifts}
s^{(\alpha)}= \ZMmap{\VZM}^k_{\alpha}(\T^{\alpha}s).
\end{equation}
A  simple coordinate-based proof is left to the reader.
}

{\new A (general) algebroid structure  $\kappa$ on the vector bundle $\sigma: E\ra M$ can be lifted by means of $\thh{k}$-tangent functor to the vector bundle $\T^k\sigma: \T^k E\ra \T^k M$ (see \cite{MJ_MR_HA_comorph_2018}). The lifted structure is called \emph{$\thh{k}$-order tangent lift of $(\sigma, \kappa)$} and denoted as $(\T^k \sigma,\dd_{\T^k}\kappa)$.
The algebroid bracket $[\cdot,\cdot]_{\T^k \sigma}$ on $\T^k\sigma$ satisfies
\begin{equation}\label{e:algebroid_Tk_E_item_bracket_formula}
[ \frac{k!}{(k-\alpha)!} s_1^{(k-\alpha)}, \frac{k!}{(k-\beta)!}  s_2^{(k-\beta)}]_{\T^k \sigma} = \frac{k!}{(k-\alpha-\beta)!} ([s_1, s_2]_{\sigma})^{(k-\alpha-\beta)}\ ,
\end{equation}
for any integers $\alpha,\beta=0,1,\hdots,k$ such that $\alpha + \beta \leq k$,  and any sections $s_1,s_2\in\Sec(E)$.
Additionally,  $[s_1^{(k-\alpha)}, s_2^{(k-\beta)}]_{\T^k \sigma} = 0$ if $\alpha+\beta>k$. Moreover,  if $(\sigma,\kappa)$ is a skew/AL/Lie algebroid, then so is $(\T^k \sigma,\dd_{\T^k}\kappa)$.
}

Assume  that $(\sigma, \kappa)$ { is Lie.}
From \eqref{e:algebroid_Tk_E_item_bracket_formula}, we observe that assigning the weight $\alpha-\beta$ to a section of the form $f^{(\alpha)} s^{(k-\beta)}$, where $f\in \Cf(M)$ and $s\in \Sec(E)$, turns the Lie subalgebra of $\Sec(\T^k \sigma)$ generated by homogeneous sections into a graded Lie algebra concentrated in weights $\geq -k$. This Lie algebra  has a Lie subalgebra $\Sec_{\leq0}(\T^k \sigma)$ generated by homogeneous sections of non-positive weights.  It is of finite rank over $\Cf(M)$.

Using the structure of a higher algebroid on a \grB\ $\sigma^k: E^k \ra M$ one can define %\emph{$(k-\alpha)$ {
\emph{algebroid lifts} of a section $s\in\Sec_M(E^1)$ as  follows:
{\begin{equation}\label{df:algebroid_lift}
\aliftB{s}{-\alpha}:= \frac{k!}{(k-\alpha)!} \ZMmap{\kappa^k}(s^{(k-\alpha)})\in \VF_{-\alpha}(E^k), \quad  {\newMR -k\leq -\alpha\leq 0.}
\end{equation}}
\commentMR{Zamieniliśmy: $\aliftB{s}{-\alpha} :=  \frac{k!}{(k-\alpha)!} \alift{s}{-\alpha}$. Dla $k=2$ zamieniamy $\aliftB{s}{0} = \alift{s}{0}$ oraz $\aliftB{s}{-\alpha} = 2 \alift{s}{-\alpha}$ dla $\alpha=1, 2$.}
{ The notation is slightly different from that in \cite{MJ_MR_HA_comorph_2018} where the algebroid $(k-\alpha)$-lift of a section $s$ was
denoted by $\alift{s}{k-\alpha}$ and it is related as $\aliftB{s}{-\alpha} = \frac{k!}{(k-\alpha)!} \alift{s}{k-\alpha}$.
Thanks to this correction, the vector field $\aliftB{s}{-\alpha}$ has weight $-\alpha$ and the equation (4.6) in \cite{MJ_MR_HA_comorph_2018} simplifies to
\begin{equation} \label{e:Lie_alifts}
    [\aliftB{s_1}{\K}, \aliftB{s_2}{\KL}] = \aliftB{[s_1, s_2]}{\K+\KL}
\end{equation}
for any $s_1, s_2\in \Sec(E^1)$ and $\K, \KL \leq 0$ such that $-k \leq \K + \KL$.}

Using \eqref{e:fs_beta} we get
\begin{equation}\label{e:fs_lift}
 \aliftB{(fs)}{-\alpha} = \sum_{\beta = 0}^{k-\alpha} \frac{1}{\beta!} (\sharp^k)^\ast f^{(\beta)} \, \aliftB{s}{-\alpha-\beta}.
%    \alift{(fs)}{-\alpha} = \sum_{\beta = 0}^{k-\alpha} { \binom{k-\alpha}{\beta}} (\sharp^k)^\ast f^{(\beta)} \, \alift{s}{-\alpha-\beta}.
\end{equation} % \commentMR{Is the coefficient $\binom{k-\alpha}{\beta}$ correct?}
In particular,
    \begin{equation}\label{e:fs_k}
        \aliftB{(fs)}{-k} =  f  \, \aliftB{s}{-k}.
     \end{equation}

{\new Any vector field $X \in \VF_{0}(E^k)$  of weight $0$ has a form
 $$
    X = X^a(x) \pa_{x^a} + \sum_i X^i(x, y) \pa_{y^i_w},
 $$ and has a well defined   projection on $M$,  denoted by $\reduction{X}^k_0 =  X^a(x) \pa_{x^a}\in \VF(M)$. Similarly,
 a vector field $Y \in \VF_{-1}(E^k)$  of weight $-1$  is projectable onto $E^1$, the projection is denoted by  $\reduction{Y}^k_1 \in \VF_{-1}(E^1) \simeq \Sec(E^1)$, see Lemma~\ref{l:structure_VF_Ek}.  }
   Below  is a reformulation of  axioms of higher-order algebroids in terms of algebroid lifts.
\begin{thm}\label{th:HA_axioms_and_lifts}  Let $\sigma^k: E^k \ra M$ be a \grB\ of order $k$.
%Let $(E^k, \kappa^k)$ be a skew higher algebroid.
\begin{enumerate}[(i)]
    \item \label{i:genHA} {\new Assume that the order-one reduction of $\sigma^k$  is a trivial VB of rank $n$, i.e., it admits a trivialization $E^1 \simeq M\times \R^n$, and
    let $(e_i)_{i=1, \ldots, n}$ be the corresponding  frame of  $\sigma^1: E^1\ra M$.  A general HA is provided by a \grB\ morphism  $\sharp^k: E^k \ra \T^k M$ and a collection of homogeneous vector fields $X_{i, {\K}} \in \VF_{\K}(E^k)$, where $-k\leq \K \leq 0$, $1\leq i \leq n$,   such that the projection of each vector field $X_{i, -1} \in \VF_{-1}(E^k)$ onto $E^1$ coincides with $e_i \in \Sec(E^1) \simeq \VF_{-1}(E^1)$. Moreover, the vector fields which define $\kappa^1$ -- the order-one reduction  of $\kappa^k$, are the projections of $X_{i,0}$ and $X_{i,-1}$ onto  $E^1$.}
    \item \label{i:p:AL_axiom} A skew HA $(E^k, \kappa^k)$ is almost Lie if and only if for any section $s\in \Sec(E^1)$ and $-k\leq \K \leq 0$ the vector fields $\aliftB{s}{\K} \in \VF_{\K}(E^k)$ and $\aliftB{\left(\sharp^1 s\right)}{\K}\in\VF_{\K}(\T^k M)$ are $\sharp^k$-related.
    \item \label{i:p:Lie_axiom} \cite[Proposition~4.9]{MJ_MR_HA_comorph_2018} An almost Lie HA $(\sigma^k, \kappa^k)$ is Lie if and only if
$$
\ZMmap{\kappa^k}|_{\Sec_{\leq0}(\T^k \sigma^1)}: \Sec_{\leq 0}(\T^k\sigma^1)\ra \VF_{\leq 0}(E^k)
$$
is a Lie algebra homomorphism.
\end{enumerate}
\end{thm}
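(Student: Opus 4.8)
The statement (iii) is exactly \cite[Proposition~4.9]{MJ_MR_HA_comorph_2018}, so I concentrate on (i) and (ii), which are new.

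For (i), I would start from the general correspondence recalled in \eqref{e:hat_r_axiom}: a weighted comorphism $\kappa^k:\T^k\sigma^1\relto\tau_{E^k}$ covering $\sharp^k$ is the same datum as the base map $\sharp^k:E^k\ra\T^k M$ together with the induced section map $\ZMmap{\kappa^k}\colon\Sec(\T^k\sigma^1)\ra\VF(E^k)$, which is additive and $\sharp^k$-semilinear. Since $\{e_i^{(\K)}\}_{0\le\K\le k,\,1\le i\le n}$ is a frame of $\T^k\sigma^1$ (global, as $E^1$ is trivial), the map $\ZMmap{\kappa^k}$ is completely determined by its values on this frame, and by \eqref{df:algebroid_lift} these values are, up to the scalar $\frac{(k+\K)!}{k!}$, exactly the algebroid lifts $X_{i,\K}:=\aliftB{e_i}{\K}=\frac{k!}{(k+\K)!}\ZMmap{\kappa^k}(e_i^{(k+\K)})$, where $\K$ runs through $-k,\dots,0$ as $k+\K$ runs through $0,\dots,k$. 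Thus a HA structure on $E^k$ is equivalent to the data $(\sharp^k,\{X_{i,\K}\})$; that $\kappa^k$ is weighted translates into the homogeneity $X_{i,\K}\in\VF_{\K}(E^k)$, and conversely any homogeneous collection reconstructs a weighted comorphism by $\sharp^k$-semilinear extension over the frame.

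It then remains to encode the requirement in Definition~\ref{df:higher_algebroid} that the order-one reduction $\kappa^1$ be a genuine algebroid, i.e.\ the core condition \eqref{e:id_core}. By Lemma~\ref{l:comp_alg_lifts} the order-one algebroid lifts defining $\kappa^1$ are the reductions to $E^1$ of $X_{i,0}$ and $X_{i,-1}$; using Lemma~\ref{l:structure_VF_Ek} the weight $-1$ field $X_{i,-1}$ has a well-defined projection to $\VF_{-1}(E^1)\simeq\Sec(E^1)$, and the condition $\core{\kappa^1}=\id_{\core{\T E^1}}$ of \eqref{e:id_core} is precisely that this projection equals $e_i$. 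This yields the stated projection conditions and finishes (i).

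For (ii), I would first rewrite the almost Lie axiom \eqref{diag:AL} as the statement that $(\T^k\sharp^1,\T\sharp^k)$ is a $\catZM$-morphism $\kappa^k\mZM\kappa^k_M$, i.e.\ $(\T^k\sharp^1\times\T\sharp^k)(\kappa^k)\subseteq\kappa^k_M$. Two facts drive the translation. First, since $\sharp^1\colon E^1\ra\T M$ covers $\id_M$, the lift construction of Definition~\ref{df:sec_lifts} is natural, so $s^{(k-\alpha)}$ and $(\sharp^1 s)^{(k-\alpha)}$ are $\T^k\sharp^1$-related for every $s\in\Sec(E^1)$ and every $\alpha$. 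Second, by the section-map characterization of $\catZM$-morphisms (\cite[Proposition~2.6]{MJ_MR_HA_comorph_2018}) the containment above is equivalent to $\ZMmap{\kappa^k}(s^{(k-\alpha)})$ and $\ZMmap{\kappa^k_M}((\sharp^1 s)^{(k-\alpha)})$ being $\T\sharp^k$-related; since $\T\sharp^k$-relatedness of sections of $\tau_{E^k}$ and $\tau_{\T^k M}$ is by definition $\sharp^k$-relatedness of the corresponding vector fields, this is exactly the relatedness of $\aliftB{s}{-\alpha}$ and $\aliftB{\sharp^1 s}{-\alpha}$. Concretely, evaluating the containment on $w=s^{(k-\alpha)}(\sharp^k p)$ and applying naturality gives $\sharp^k$-relatedness directly for all $s$ in the forward direction; conversely, the fibres of $\T^k\sigma^1$ are $\R$-spanned by the values of the frame $\{e_i^{(\K)}\}$, and both $\kappa^k_p$ and $\T\sharp^k$ are linear, so relatedness for the frame sections $e_i$ upgrades to the containment. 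This gives the equivalence claimed in (ii).

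The step requiring the most care is this last translation in (ii): passing rigorously between the containment of relations $(\T^k\sharp^1\times\T\sharp^k)(\kappa^k)\subseteq\kappa^k_M$ and the fibrewise, frame-by-frame relatedness of vector fields. This hinges on keeping track of the two different base maps ($\sharp^k$ for $\kappa^k$ versus $\id_{\T^k M}$ for $\kappa^k_M$) and on the naturality of the higher lifts under the anchor; checking that these base maps are compatible (so that $(\T^k\sharp^1,\T\sharp^k)$ is admissible as a $\catZM$-morphism at all) is a routine but necessary preliminary. In (i) the only delicate point is the identification of the core condition \eqref{e:id_core} with the weight $-1$ projection, for which Lemmas~\ref{l:structure_VF_Ek} and~\ref{l:comp_alg_lifts} do the work.
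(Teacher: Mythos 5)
Your proposal is correct and follows essentially the same route as the paper: part (i) by reconstructing the weighted comorphism from its values on the frame $\{e_i^{(\K)}\}$ via the section-map description \eqref{e:hat_r_axiom} and identifying \eqref{e:id_core} with the weight $-1$ projection condition, and part (ii) by reformulating the $\catZM$-morphism condition \eqref{diag:AL} as $\sharp^k$-relatedness of section images, specializing to lifted sections in the forward direction, and upgrading from frame lifts in the converse. The only (immaterial) difference is that your converse in (ii) upgrades by fibrewise $\R$-linearity, whereas the paper upgrades by $\Cf(\T^k M)$-semilinearity of $\ZMmap{\kappa^k}$ and the fact that the lifts $s^{(k-\K)}$ span $\Sec(\T^k\sigma^1)$ as a $\Cf(\T^k M)$-module — two phrasings of the same generation argument.
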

\begin{proof}
    \begin{enumerate}[(i)]
        \item The sections $(e_i^{(\K)})$ form  a frame for $\T^k E^1\ra \T^k M$, hence their pullbacks $\left((\sharp^k)^\ast e_i^{(\K)}\right)$ form a frame for the pullback vector bundle $(\sharp^k)^\ast \T^k \sigma: \T^k E^1 \times_{(\T^k \sigma, \sharp^k)} E^k \ra E^k$.  { To set a comorphism $\kappa^k: \T^k \sigma \relto \tau_{E^k}$, this amounts to defining a VB morphism from the VB  $(\sharp^k)^\ast \T^k \sigma$ to the tangent bundle of $E^k$, covering the identity $\id_{E^k}$.} This is done by assigning vector fields to the sections from the local frame.  We send { $(\sharp^k)^\ast e_i^{(\K)}$ to $X_{i, \K}$. In other words, $\ZMmap{\kappa^k}(e_i^{(\K)}) =  X_{i, \K}$.} Then the obtained comorphism $\kappa^k$ is  weighted, as the vector fields $X_{i, \K}$ are homogeneous and $\sharp^k$ preserves the weight.

            {  The condition \eqref{e:id_core} corresponds to the fact that $e_i\in \Sec(E^1) \simeq \VF_{-1}(E^1)$ coincides with  the projection of $X_{i, 1}$ onto $E^1$. } %$\reduction{(X_{i, 1})\,}^k_1 \in \simeq$.

        %\item It is enough to consider the order-one reduction  of $\kappa^k$. We have $\sharp^1_R (e_i) = (X_i^0)_M$ so the left and right anchors coincide if $\sharp^1(e_i) = (X_i^0)_M$. Besides, the linear mapa
    \item The commutativity of the { diagram} \eqref{diag:AL}, corresponding to  the almost Lie axiom, can be reformulated as follows: For any section $s\in \Sec(\T^k \sigma^1)$, the vector fields $\ZMmap{\kappa^k}(s) \in \VF(E^k)$ and $\kappa^k_M \circ \T^k \sigma^1(s)\in \VF(\T^k M)$ are $\sharp^k$-related (see the proof of \cite[Proposition~4.9]{MJ_MR_HA_comorph_2018}). In particular, in any AL HA $(E^k, \kappa^k)$, for any section $s\in\Sec(E^1)$ and $\K\geq -k$, the vector fields $\aliftB{s}{\K}\in\VF(E^k)$  and $\aliftB{(\sharp s)}{\K}\in \VF(\T^k M)$ are $\sharp^k$-related. (The latter are algebroid lifts with respect to the HA  structure on $\tau^k_M: \T^k M\ra M$.)     On the other hand, if $f\in \Cf(\T^k M)$ and $s\in \Sec(\T^k \sigma^1)$ then
    $$
        \ZMmap{\kappa^k}(f \cdot s) =  (\sharp^k)^\ast(f) \cdot  \ZMmap{\kappa^k}(s), \quad  \kappa^k_M \circ \T^k \sharp^1(f\cdot s) = f \cdot \kappa^k_M \circ \T^k \sharp^1(s).
     $$
     Therefore, if the vector fields $\ZMmap{\kappa^k}(s)$ and  $\kappa^k_M \circ \T^k \sharp^1(s)$ are  $\sharp^k$-related, then the same is true if we replace $s$ with $f \cdot s$. Hence,  the thesis  \eqref{i:p:AL_axiom} holds since sections of the form $s^{(k-\K)}$, where $0\leq \K\leq k$, span $\Sec(\T^k \sigma^1)$ as $\Cf(\T^k M)$-module. The proof of \eqref{i:p:Lie_axiom} is presented in \cite{MJ_MR_HA_comorph_2018}. %\commentMR{Compare with (4.6) in  \cite{MJ_MR_HA_comorph_2018}}
     \end{enumerate}
\end{proof}

\begin{rem} \label{r:Lie_axiom} It  suffices to verify the conditions given in Theorem~\ref{th:HA_axioms_and_lifts} locally.  Moreover,  it is sufficient to take  the sections of the  vector bundle $\sigma^1:E^1\ra M$ to be the elements of a frame $(e_i)$ of local sections. In this way, the almost Lie axiom and Lie axiom can be reduced (locally) to a finite number of equations:
\begin{enumerate}
\item[(AL axiom)]\label{i:AL_axiom} The vector fields $\aliftB{e_i}{-\K}$  and  $\aliftB{(\sharp e_i)}{-\K}$  are $\sharp^k$-related for any $0\leq \K\leq k$.
\item[(Lie axiom)]\label{i:Lie_axiom} %$[\frac{k!}{(k-\K)!}\alift{e_i}{-\K}, \frac{k!}{(k-\beta)!}\alift{e_j}{-\beta}]_{\T M} = \frac{k!}{(k-\K-\beta)!} \alift{[e_i, e_j]_E}{-\K-\beta}$
   { $[\aliftB{e_i}{-\K}, \aliftB{e_j}{-\beta}]_{\tau_{E^k}} =  \aliftB{[e_i, e_j]_{\sigma^1}}{-\K-\beta}$} for any $0\leq \K, \beta$ such that $\alpha + \beta \leq k$.
\end{enumerate}
\end{rem}

\begin{rem} There is also a dual construction of the algebroid lifts $\aliftB{s}{-\K}$ associated with a HA $(E^k, \kappa^k)$, which coincides with the construction presented in \cite{JG_PU_Algebroids_1999} for Lie algebroids, i.e.,  when $k=1$. Given a section $s\in\Sec(E)$  considered as a linear function $\iota(s)$ on $E^\ast$, we have $(\K)$-lifts $\iota(s)^{(\K)} \in \Cf(\T^k E^\ast)$ for $\K = 0, 1, \ldots, k$. As we mentioned (see \eqref{e:T^k_pair}), the vector bundles $\T^k \sigma: \T^k E \ra \T^k M$ and $\T^k \sigma^\ast: \T^k E^\ast\ra \T^k M$ are in natural duality, hence the dual of $\kappa^k$ is  a  weighted vector bundle morphism $\veps^k$ of the form
$$
\xymatrix{\T^\ast E^k\ar[d]^{\tau^\ast_{E^k}} \ar[rr]^{\veps^k} && \T^k E^\ast \ar[d]^{\T^k \sigma^\ast} \\
E^k \ar[rr]^{\sharp^k} && \T^k M
}
$$
By pulling back $\iota(s)^{(\K)}$ via $\veps^k$ we obtain  linear functions on $\T^\ast E^k$, thus vector fields on $E^k$.
{It is evident (by working fiberwise) that  this way we recover our algebroid lifts, i.e,}
$$
 \langle \aliftB{s}{-\K}, \cdot \rangle_{\tau_{E^k}} = (\veps^k)^\ast \iota(s)^{(k-\K)}
$$
\end{rem}

{ Let $(E^k, \kappa^k)$ be a HA and $(E^j, \kappa^j)$ be its reduction to order $j$, where $1\leq j<k$. The following lemma states that algebroid lifts ${s}^{\langle \alpha \rangle_{\kappa^k}}$ and ${s}^{\langle \alpha \rangle_{\kappa^j}}$ obtained using $\kappa^k$ and $\kappa^j$, respectively, are compatible in some natural sense.
\begin{lem}\label{l:comp_alg_lifts} Let $s\in \Sec(E^1)$ and $0\leq \alpha\leq j<k$. Then the vector field ${s}^{\langle  {\new - }\alpha \rangle_{\kappa^k}} \in \VF(E^k)$ is projectable onto $E^j$ and its projection is ${s}^{\langle -\alpha \rangle_{\kappa^j}}$.
\end{lem}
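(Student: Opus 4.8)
The plan is to realise the reduction as a morphism in the category $\catZM$ and to transport it through the section maps $\ZMmap{\cdot}$ that define the algebroid lifts in \eqref{df:algebroid_lift}. First I would write $\tau^k_j\colon \T^k E^1\ra \T^j E^1$ for the canonical projection of the $\thh{k}$-order tangent bundle of $E^1$ onto its order-$j$ reduction, and $\T\sigma^k_j\colon \T E^k\ra \T E^j$ for the tangent of the graded-bundle projection $\sigma^k_j\colon E^k\ra E^j$; both are vector bundle morphisms. Since $\kappa^j$ is by definition the reduction (with respect to the order-$k$ weight) of the weighted comorphism $\kappa^k$, and reduction of graded bundles commutes with products, I would argue that $(\tau^k_j\times \T\sigma^k_j)(\kappa^k)=\kappa^j$, while the anchors satisfy $\tau^k_j\circ\sharp^k=\sharp^j\circ\sigma^k_j$ because $\sharp^j$ is the reduction of $\sharp^k$. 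This identifies $(\tau^k_j,\T\sigma^k_j)\colon \kappa^k\Rightarrow\kappa^j$ as a $\catZM$-morphism.

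The engine of the argument will be a general compatibility of comorphisms with their morphisms: for a $\catZM$-morphism $(\phi_1,\phi_2)\colon r\Rightarrow r'$, if sections $u$, $u'$ of the respective source bundles are $\phi_1$-related, then $\ZMmap{r}(u)$ and $\ZMmap{r'}(u')$ are $\phi_2$-related. I would establish this pointwise: writing $r,r'$ as unions of graphs of fibrewise-linear maps $r_y,r'_{y'}$, the inclusion $(\phi_1\times\phi_2)(r)\subseteq r'$ unwinds to $\phi_2\circ r_y=r'_{\und{\phi_2}(y)}\circ\phi_1$ over the commuting square of base maps, and substituting $\phi_1\circ u=u'\circ\und{\phi_1}$ into $\ZMmap{r}(u)(y)=r_y\big(u(\und{r}(y))\big)$ gives $\phi_2\big(\ZMmap{r}(u)(y)\big)=\ZMmap{r'}(u')(\und{\phi_2}(y))$.

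It then remains to check that the rescaled section lifts $\tfrac{k!}{(k-\alpha)!}\,s^{(k-\alpha)}\in\Sec(\T^k\sigma^1)$ and $\tfrac{j!}{(j-\alpha)!}\,s^{(j-\alpha)}\in\Sec(\T^j\sigma^1)$ are $\tau^k_j$-related. By $\Cf(M)$-linearity and the Leibniz rule \eqref{e:fs_beta} I would reduce to the case $s=f\,e_i$ with $e_i$ a frame section and $f\in\Cf(M)$. From \eqref{e:e_i_lifts} the projection $\tau^k_j$ sends $e_i^{(k-\beta)}$ to $\tfrac{\binom{j}{\beta}}{\binom{k}{\beta}}\,e_i^{(j-\beta)}$ for $\beta\le j$ and annihilates it for $\beta>j$, whereas a function lift $f^{(l)}$ descends to $f^{(l)}$ precisely when $l\le j$. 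Expanding $(f e_i)^{(k-\alpha)}$ by \eqref{e:fs_beta}, the terms surviving the reduction are exactly those with $l\le j-\alpha$, and a short binomial identity shows that the factor $\tfrac{k!}{(k-\alpha)!}$ converts each such term into the corresponding term of $\tfrac{j!}{(j-\alpha)!}(f e_i)^{(j-\alpha)}$, the discarded terms being precisely those absent from the order-$j$ lift.

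Combining the three ingredients, $s^{\langle-\alpha\rangle_{\kappa^k}}=\ZMmap{\kappa^k}\big(\tfrac{k!}{(k-\alpha)!}s^{(k-\alpha)}\big)$ will be $\T\sigma^k_j$-related to $\ZMmap{\kappa^j}\big(\tfrac{j!}{(j-\alpha)!}s^{(j-\alpha)}\big)=s^{\langle-\alpha\rangle_{\kappa^j}}$, and $\T\sigma^k_j$-relatedness of vector fields is exactly $\sigma^k_j$-projectability with the named projection, which is the assertion of the lemma. I expect the only genuine obstacle to lie in the third step—tracking which terms of the higher lift survive the reduction and confirming the binomial identity—which is precisely where the normalisation $\tfrac{k!}{(k-\alpha)!}$ of \eqref{df:algebroid_lift} is indispensable; the first two steps are essentially formal once the reduction is recognised as a $\catZM$-morphism.
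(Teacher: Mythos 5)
Your proof is correct, and its skeleton is the same as the paper's: both arguments reduce the lemma to (a) compatibility of the reduction with the comorphisms and (b) compatibility of the rescaled section lifts with the reduction, which the paper packages as the two squares of a single commutative diagram factoring $s^{\langle-\alpha\rangle_{\kappa^k}}$ through $\id_{E^k}\times(\xi^k_\alpha\circ\sharp^k)$ and the induced VB morphism $(\kappa^k)^{!}:E^k\times_{\T^k M}\T^k E^1\ra\T E^k$. The genuine divergence is in how (b) is established. You verify it in a local frame, expanding $(f e_i)^{(k-\alpha)}$ via \eqref{e:fs_beta} and checking the coefficient identity $\tfrac{k!}{(k-\alpha)!}\binom{k-\alpha}{l}\binom{j}{\alpha+l}\binom{k}{\alpha+l}^{-1}=\tfrac{j!}{(j-\alpha)!}\binom{j-\alpha}{l}$, which does hold; the paper bypasses all coefficients by observing that the normalization $\tfrac{k!}{(k-\alpha)!}$ turns the lift into the geometric section $\xi^k_\alpha:\tclass{k}{\gamma}\mapsto\tclass{k}{t\mapsto t^\alpha\,s(\gamma(t))}$, whose compatibility with the projections $\T^k E^1\ra\T^j E^1$ and $\T^k M\ra\T^j M$ is literally truncation of jets. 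So the factor you rightly call indispensable is exploited there at the level of curves rather than binomial bookkeeping: the curve formulation buys frame-independence and a one-line verification, while your computation makes the combinatorial role of the normalization explicit. A further merit of your write-up is the general statement in your second step — that a $\catZM$-morphism $(\phi_1,\phi_2):r\Rightarrow r'$ carries $\phi_1$-related sections to $\phi_2$-related sections under $\ZMmap{\cdot}$, including the observation that $(\phi_1\times\phi_2)(r)\subseteq r'$ already forces the base maps to commute; the paper never isolates this reusable fact, absorbing it instead into the commutativity of the right-hand square of its diagram.
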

\begin{proof}
{We shall use the construction of $(\K)$-lifts of a section $s$, as defined  in \eqref{df:alpha_lifts}. We have}
$${s}^{\ideal{- \alpha}_{\kappa^k}} =  \frac{k!}{(k-\alpha)!} \, \ZMmap{\kappa^k}\left( \ZMmap{\VZM}^k_{k-\alpha}(\T^{k-\alpha}s)\right) = \ZMmap{\kappa^k}(\xi^k_\alpha),$$
where the section $\xi^k_\alpha : \T^k M\ra \T^k E^1$  is defined by $\tclass{k}{\gamma} \mapsto \tclass{k}{t\mapsto  t^\alpha \cdot s(\gamma(t))}$ where $\gamma$ is a curve in $M$.
Hence, the vector field ${s}^{\ideal{-\alpha}_{\kappa^k}}$ is the composition of maps $\id_{E^k} \times (\xi^k_\alpha \circ \sharp^k): E^k \ra E^k\times_{\T^k M} \T^k E^1$ with the VB morphism $(\kappa^k)^{!}: E^k\times_{\T^k M} \T^k E^1 \ra \T E^k$ induced by $\kappa^k$. The thesis follows from the commutativity of the  diagram
$$
\xymatrix{E^k \ar[d]^{\sigma^k_j} \ar[rr]^-{\id_{E^k} \times (\xi^k_\alpha \circ \sharp^k)} &&  E^k\times_{\T^k M}\T^k E^1 \ar[d] \ar[rr]^-{(\kappa^k)^{!}} && \T E^k \ar[d]^{\T \sigma^k_j} \\
E^j  \ar[rr]^-{\id_{E^j} \times (\xi^j_\alpha \circ \sharp^j)} &&  E^j\times_{\T^j M}\T^j E^1  \ar[rr]^-{(\kappa^j)^{!}} && \T E^j
}
$$
\end{proof}
}

%\begin{rem} \label{r:Sec_Ek VFk}

\subsection{Prolongations of an almost Lie algebroid}  Let $\G$ be a Lie groupoid with source and target maps denoted by $\alpha, \beta: \G\ra M$, respectively.  We consider a foliation $\G^\alpha$ on $\G$ defined by $\alpha$-fibers $\G_x^\alpha = \{g\in \G: \alpha(g) =x\}$, the distribution $\T \G^\alpha \subset \T \G$ tangent to the leaves of $\G^\alpha$, related objects like $\T^k \G^\alpha$   and the right action of $\G$ on itself, $R_g: h\mapsto h g$ where $h\in \G^\alpha_{\beta(g)}$.
The Lie algebroid of $\G$ is usually defined as the vector bundle $\sigma: \cA(\G):= \T_M \G^\alpha \ra M$ equipped with a map $\sharp: \cA(\G)\ra \T M$ called the anchor, defined as $\sharp = \T \beta|_{\cA(\G)}$ and the Lie bracket on $\Sec(\cA(\G))$ inherited from the Lie bracket of right-invariant vector fields on $\G$. (Such vector fields are  in a one-to-one correspondence with sections of $\sigma$.) {Another, yet equivalent } construction of the Lie algebroid structure on $\cA(\G)$, is provided by the \emph{reduction map} $\RR^1$,
\begin{equation}\label{eqn:A_1}
\xymatrix{
\T\G^\alpha\ar[rr]^{\RR^1}\ar[d]^{\tau_\G} && \cA(G)\ar[d]^\tau\\
\G\ar[rr]^\beta && M,}\end{equation}
which is a fiber-wise VB isomorphism  obtained from the collection of maps $\T R_{g^{-1}}: \T_g\G^\alpha \ra \T_{\beta(g)} \G^\alpha$. The Lie algebroid structure on $\cA(\G)$ is defined by means of the VB comorphism $\kappa: \T \cA(\G) \relto \T\cA(\G)$ which is obtained as the reduction  of $\kappa_\G: \T \T \G \ra \T \T \G$.  The advantage of the latter over the standard construction of  the Lie functor is that it can be easily generalized to higher orders. This is obtained by means of the higher-order reduction map $\RR^k: \T^k\G^\alpha \ra \T^k_M \G^\alpha$, defined analogously to $\RR^1$,  by the collection of maps  $\T^k R_{g^{-1}}: \T^k_g\G^\alpha \ra \T^k_{\beta(g)} \G^\alpha$.
\begin{df}
\cite[Definition 3.3, Lemma 3.4]{MJ_MR_models_high_alg_2015} The $\thh{k}$-order Lie algebroid of a Lie groupoid $\G$ is the \grB\ $\cA^k(\G) := \T^k_M \G^\alpha$ together with a \VBC $\kappa^k :=  (\T^k \RR^1, \T \RR^k)(\wt{\kappa}^k_{\G})$ where   $\wt{\kappa}^k_\G$ is the restriction of $\kappa^k_\G$ to $(\T^k\T) \G^\alpha\times (\T\T^k) \G^\alpha$ subject to the natural inclusions $(\T^k \T) \G^\alpha \subset \T^k (\T \G^\alpha)$ and $(\T\T^k) \G^\alpha \subset \T (\T^k \G^\alpha)$.
\end{df}
Actually, $(\cA^k(\G), \kappa^k)$ is a  Lie HA in the sense of Definition \ref{df:higher_algebroid} (\cite[Proposition~4.13]{MJ_MR_HA_comorph_2018} and \cite[Section 5]{MJ_MR_models_high_alg_2015}).

In \cite{MJ_MR_models_high_alg_2015} we introduced a slightly bigger class of examples of HAs obtained by means of the construction called \emph{the prolongation} of an  almost Lie algebroid $(A, \kappa)$.  We will  outline this construction, highlighting a possible more general context for certain constructions.

A pair of a vector bundle $\sigma: A\ra M$ and a VB morphism $\sharp: A\to \T M$ covering the identity $\id_M$ is called an \emph{anchored vector bundle}. A curve $a:\R\ra A$ is called \emph{admissible} if the tangent lift of the curve $\und{a} = \sigma\circ a: \R\ra M$ coincides with the curve  $\sharp \circ a$, i.e.,
$\jet{}\und{a} = \sharp \circ a$.  {\newMR The subset $\At{k}$ of  $\T^{k-1} A$, defined as
\begin{equation} \label{df:A[k]}
    \At{k} = \{\tclass{k-1}{a} \mid  a: \R \to A \text{ is an  admissible curve }\},
\end{equation}
is called the \emph{$\thh{k}$-order prolongation of the anchored vector bundle  $A$} (see \cite{P04}). According to \cite[Theorem~2.2.7]{BGG_2015}, we have
\begin{equation}\label{e:A[2]}
    \At{2} = \{X \in \T  A: (\T \sigma) X = \sharp \, \tau_A(X)\},
\end{equation}
and
$$
    \At{k} = (\T^{k-1} \sharp)^{-1}(\T^k M)
$$
} {\newMR
where $\T^k M$ is considered as a subset  of $\T^{k-1} \T M$ via $\iM{k-1, 1}$.
It follows that the constructions of the sets $\At{k}$ here and $E^k$ in \cite[Definition~4.1]{MJ_MR_models_high_alg_2015} are equivalent, see also \cite{BGG_2015} or \cite[Theorem~4.5 (viii)]{MJ_MR_models_high_alg_2015}, or \cite{Martinez_2015}. In particular,   $\At{k+1} = \T \At{k} \cap \T^k A$, considered as subsets of $\T \T^{k-1} A$.

Define $\thh{k}$-order anchor map $\sharp^{[k]}: \At{k}\ra \T^k M$ as $\sharp^{[k]} = (\T^{k-1} \sharp)|_{\At{k}}$. It is a \grB\ morphism.

An AL algebroid structure on the vector bundle $A$ can be prolonged to a  HA structure on $\At{k}$ by means of the comorphisms $\kappa^{[k]}: \T^k A \relto \T \At{k}$, covering $\sharp^{[k]}$, defined as
\begin{equation} \label{df:kappa[k]}
\kappa^{[k]} = (\kappa_A^{k-1} \circ \T^{k-1} \kappa) \cap (\T^k A \cap \T \At{k}),
\end{equation}
see \cite[Proposition~4.6]{MJ_MR_models_high_alg_2015}. The comorphism $\kappa^{[k]}$ can also be defined inductively as it is presented in \cite[Definition~4.2]{MJ_MR_models_high_alg_2015}.
\commentMR{I deleted jet-style definition of $\kappa^{[k]}$ as we do not use it in the work.}
}

\subsection{Canonical inclusions II}  Here, we highlight some natural embeddings induced by the anchored bundle structure on a vector bundle $A\ra M$.

 In addition to the  inclusion $ \At{k} \subseteq \T^{k-1} A$ from the definition of $\At{k}$ \eqref{df:A[k]}, there are inclusions $\iA{k, l}: \At{k+l} \to \T^k \At{l}$ defined by the restriction of {\new $\iMM{k, l-1}{A}$ to $\At{k+l}$: }
 $$
 \xymatrix{\T^{k+l-1}A \ar[rr]^{\iMM{k, l-1}{A}} && \T^k \T^{l-1} A \\
 \At{k+l} \ar@{^{(}->}[u] \ar[rr]^{\iA{k, l}} && \T^k \At{l} \ar@{^{(}->}[u]
 }
 $$
 We should prove that the image $\iAA{k, l}{A}(\At{k+l})$ is in $\T^k \At{l}$, considered as a subset of $\T^k \T^{l-1} A$. Let $\tclass{k+l-1}{a} \in \At{k+l}$ where $a$ is an admissible path in $A$. Then $\iAA{k, l}{A}(\tclass{k+l-1}{a}) =  \jet{k}_{t=0} \jet{l-1}_{s=0} a(t+s)$. For any $t\in \R$, the path $s\mapsto a(t+s)$ is admissible,  hence the curve $t\mapsto \jet{l-1}_{s=0} a(t+s)$ lies in $\At{l-1}$, so $\iAA{k, l}{A}(\tclass{k+l-1}{a}) \in \T^k \At{l-1}$ as we claimed. {\new Using \eqref{e:iMkl_coord} we find that
 \begin{equation}  \label{e:iAkl-coord}
   (\iA{k, l})^\ast (y^{i,(\K, \KL)}) = y^{i, (\K+\KL)}
 \end{equation}
where $\alpha \leq k$, $\KL\leq l-1$. Recall,  $(x^a, y^{i, (\K)})$, $0\leq \K\leq k-1$ is a  coordinate chart for $\At{k}$ induced from $\T^{k-1} A$.}

 The rank of the \grB\ $\At{k}$ is $(r, r, r, \ldots, r)$ where $r = \rank A$.    Since  $\iA{k-1, 1}$ is an inclusion and the ranks of the VBs $\core{\At{k}}$ and  $\core{\T^{k-1} A}$ are the same, it induces an isomorphism $\core{\iA{k-1, 1}}: \core{\At{k}} \ra \core{\T^{k-1} A}$ of the  core bundles. We define an  isomorphism $\jAc{k}: A \xrightarrow{\simeq} \core{\At{k}} \subset \At{k}$ using the diagram
\begin{equation}\label{df:jAc}
\xymatrix{
A  \ar@{-->}[r]^{\jAc{k}} \ar[rd]^{\jEE{k}{A}} & \core{\At{k}} \subset \At{k} \ar[d]^{\new \iA{k-1, 1}} \\
& \core{\T^{k-1} A} \subset \T^{k-1} A.
}
\end{equation}
i.e.,  {$\iA{k-1, 1} \circ \jAc{k}: A \ra \T^{k-1} A$} coincides with $\jEE{k}{A}: A \xrightarrow{\simeq} \core{\T^{k-1} A} \subset \T^{k-1} A$. In the special case $A=\T M$, the map $\jAAc{k}{\T M}$ coincides with $\jM{k}: \T M\xrightarrow{\simeq} \core{\T^k M}$, due to \eqref{e:comp_jMjTM}.

 The following statement concerns the structure of the prolongation of an AL algebroid:

\begin{lem}\label{l:VF-k_and_core} Let $(A, \kappa)$ be an AL algebroid. The following diagram of {\new isomorphisms } is commutative
$$
    \xymatrix{\Sec(A) \ar[rr]^{\jAc{k}} \ar[rrd]_{s\mapsto \frac{1}{k!} \aliftB{s}{-k}}&& \Sec(\core{\At{k}}) \ar[d]^{v\mapsto \coreVF{v}} \\
    && \VF_{-k}(\At{k}).
    }
$$
In particular, for $X\in\VF(M)$ we have a commutative diagram
$$
    \xymatrix{ \VF(M) \ar[rr]^{\jM{k}} \ar[rrd]_{X\mapsto {\new \frac{1}{k!}} \aliftB{X}{-k}}&& \Sec(\core{\T^k M}) \ar[d] \\
    && \VF_{-k}(\T^k M),
    }
$$
where $\aliftB{X}{-k}$ is  the algebroid $(-k)$-lift of the vector field $X$,  associated with the HA $(\T^k M, \kappa^k_M)$. { Moreover, the  core of the anchor map $\sharp^{[k]}: \At{k} \ra \T^k M$ can be identified with $\sharp$ under the isomorphisms $\jAc{k}: A\ra \core{\At{k}}$ and $\jM{k}: \T M \ra \core{\T^k M}$.}
\end{lem}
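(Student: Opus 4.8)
The plan is to verify the commutativity of the first triangle by establishing, for every $s\in\Sec(A)$, the identity
$$\coreVF{\jAc{k}(s)} = \tfrac{1}{k!}\,\aliftB{s}{-k}\quad\text{in }\VF_{-k}(\At{k}),$$
and then to deduce the remaining two assertions from it. Both sides are $\Cf(M)$-linear in $s$: the left-hand side because $\jAc{k}$ is a vector bundle morphism and $v\mapsto\coreVF{v}$ is a $\Cf(M)$-module isomorphism by Lemma~\ref{l:VF-k}, and the right-hand side by the top-lift rule \eqref{e:fs_k}, which gives $\aliftB{(fs)}{-k}=f\,\aliftB{s}{-k}$. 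Hence it suffices to test the identity on a local frame $(e_i)$ of $\sigma:A\ra M$. First I would unwind the two isomorphisms on this frame: from \eqref{df:jAc} and \eqref{df:jEk}, $\jAc{k}(e_i)$ is the element of $\core{\At{k}}$ whose only nonzero adapted coordinate is the top-weight one, $y^{i,(k-1)}=1$, so by the coordinate description accompanying \eqref{df:corePlus} we get $\coreVF{\jAc{k}(e_i)}=\pa_{y^{i,(k-1)}}$. It therefore remains to prove that $\aliftB{e_i}{-k}=k!\,\pa_{y^{i,(k-1)}}$, equivalently $\ZMmap{\kappa^{[k]}}(e_i^{(0)})=\pa_{y^{i,(k-1)}}$.

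The crux is this last computation, and it is where I expect the main difficulty. By \eqref{e:e_i_lifts} the $0$-lift $e_i^{(0)}$ is supported entirely in the top tangent-lift direction (only $y^{i,(k)}=1$), so it may be represented by the curve $a(t)=\tfrac{t^k}{k!}\,e_i(\und\gamma(t))$ in $A$. Feeding this into the defining formula \eqref{e:kappa_k_jets} for $\kappa^{[k]}$, the tangent prolongation $\jet{}a(t)$ has leading term $\tfrac{t^{k-1}}{(k-1)!}$ times the vertical (core) lift of $e_i$. Now the first-order core identity \eqref{e:id_core}, $\core{\kappa}=\id$, says that $\kappa$ acts as the identity on core (vertical) vectors; propagating this leading term through the canonical flip $\kappa^{k-1}_A$ and taking the $(k-1)$-jet produces exactly the top-core vector field on $\At{k}$, with the factorials combining to give $k!\,\pa_{y^{i,(k-1)}}$. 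This establishes the first triangle, and since $\jAc{k}$ and $v\mapsto\coreVF{v}$ are isomorphisms, all three arrows are isomorphisms as claimed. As a consistency check that moreover settles the case of injective anchor directly, one can instead invoke the almost Lie relatedness of Theorem~\ref{th:HA_axioms_and_lifts}\eqref{i:p:AL_axiom}, the relatedness criterion of Lemma~\ref{l:VF-k}, and the anchor part of the present lemma.

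For the second triangle I would specialize to $A=\T M$, noting that then $\At{k}=\T^k M$ by \eqref{e:A_k} and the prolongation recovers the canonical HA $(\T^k M,\kappa^k_M)$ of Example~\ref{ex:kappa^k_M}, while $\jAAc{k}{\T M}=\jM{k}$ by the compatibility \eqref{e:comp_jMjTM} recorded after \eqref{df:jAc}; the statement is then the first triangle read for this algebroid. Finally, for the top-core identification of the anchor I would compute $\core{\sharp^{[k]}}$ in coordinates: writing the anchor of $A$ as $\sharp^a_i$ and using admissibility $\dot x^a=\sharp^a_i\,y^i$, the highest-weight component of $\sharp^{[k]}:\At{k}\ra\T^k M$ reads $x^{a,(k)}=\sharp^a_i\,y^{i,(k-1)}+(\text{terms of lower weight})$, so that $\core{\sharp^{[k]}}$ is represented by the matrix $\sharp^a_i$. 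Since the top-core component of $\jM{k}\circ\sharp$ is represented by the same matrix (by \eqref{e:jMk_coord} together with $\sharp(e_i)=\sharp^a_i\,\pa_{x^a}$) and $\jAc{k}(e_i)$ is the top-core generator $y^{i,(k-1)}$, we conclude $\jM{k}\circ\sharp=\core{\sharp^{[k]}}\circ\jAc{k}$, which is the asserted identification.
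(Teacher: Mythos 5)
Your proposal is correct and follows essentially the same route as the paper's proof: reduce to a local frame via $\Cf(M)$-linearity (\eqref{e:fs_k} and Lemma~\ref{l:VF-k}), identify $\coreVF{\jAc{k}(e_i)}$ with $\pa_{y^{i,(k-1)}}$, and establish $\ZMmap{\kappa^{[k]}}(e_i^{(0)})=\pa_{y^{i,(k-1)}}$ using exactly the two facts the paper uses, namely that $\kappa$ is the identity on the core bundle and that $\kappa^{k-1}_A$ is the identity on the top core. The only cosmetic differences are that the paper runs the crux computation through the description of $\kappa^{[k]}$ as the restriction of $\kappa^{k-1}_A\circ\T^{k-1}\kappa$ rather than the curve formula \eqref{e:kappa_k_jets}, and it proves the anchor statement abstractly from $\sharp^{[k]}=\T^{k-1}\sharp|_{\At{k}}$ together with functoriality of top cores, where you instead perform an equivalent coordinate computation.
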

\begin{proof}
In view of \eqref{e:fs_k}, it suffices to check that the first diagram is commutative for  sections  from the local frame $(e_i)$ of $\Sec(A)$. Due to the definition of $\jAc{k}$, this problem  reduces to verifying  that the vector fields $\frac{1}{k!} \aliftB{s}{-k} \in \VF_{-k}(\At{k})$ and $\coreVF{\left(\jEE{k}{A}(s)\right)} \in \VF_{-k}(\T^{k-1} A)$ are $\iA{k-1, 1}$-related, where we can take $s=e_i\in \Sec(A)$. From \eqref{df:jEk}, we see that $\coreVF{\left(\jEE{k}{A}(e_i)\right)} =\pa_{y^{i, (k-1)}}$. {\newMR The vector field $\aliftB{s}{-k}$ denotes the algebroid lift of $s$ with respect to $(\At{k}, \kappa^{[k]})$, the $\thh{k}$-order prolongation of the algebroid $(A, \kappa)$. From the definition of algebroid lifts and \eqref{e:e_i_lifts} we obtain
    $$
        \frac{1}{k!} \aliftB{e_i}{-k} = \ZMmap{\kappa^{[k]}}(e_i^{(0)}) = \ZMmap{\kappa^{[k]}}(\ZMmap{\VZM^k_0} e_i) = \pa_{y^{i, (k-1)}},
$$
The last equality follows from the fact that $\kappa$ is the identity on the core bundle; hence, the same holds for $\T^{k-1} \kappa$, as well as for $\kappa_A^{k-1}: \T^{k-1} \T A \to \T \T^{k-1} A$ and $\kappa^{[k]}$.
}

For the last statement, concerning the case $A=\T M$,  note that the inclusions $\At{k}\to \T^{k-1} A$ and $\T^k M \ra \T^{k-1} \T M$ induce the identity on the  cores.
Hence, $\core{\sharp^{[k]}}$ coincides with $\core{\T^{k-1} \sharp}$, which can be identified with $\sharp: A\ra \T M$, as  claimed.
\end{proof}

\section{Structure of higher algebroids}\label{sec:str}
In this section $(E^k, \kappa^k)$ is a HA of order $k$ and $(A = E^1, \kappa= \kappa^1)$ is its reduction to order one.

%\subsection{Morphism $\Rk^k:  \At{k}\ra E^k$.} \label{sSec:Morphism_Rk} %\texorpdfstring{$\Theta$}{Theta}
\subsection{Morphism \texorpdfstring{$\Rk^k:  \At{k}\ra E^k$}{Theta}.}
We shall construct a canonical VB morphism from $\thh{k}$-order prolongation $\At{k}$ of an AL algebroid $A$ (see  Preliminaries) to a given $\thh{k}$-order HA $(E^k, \kappa^k)$ whose order-one reduction coincides with $A$.

\begin{df}\label{df:Rk} Let $(E^k, \kappa^k)$ be a HA of order $k$. {\new We apply } the functor $\LL$ to the relation $\kappa^k\in\catGB[k,1]$ and define the relation $\Rk^k$ to be the intersection of $\At{k}\times E^k$ with $\LL(\kappa^k)$ subject to the natural inclusions and isomorphisms: $\iA{k-1, 1}: \At{k} \hookrightarrow  \T^{k-1}A \simeq \LL(\T^k A)$, and $\diag^k: E^k \hookrightarrow  \pLinr(E^k) = \LL(\T E^k)$ (defined in Preliminaries):
$$
\xymatrix{
\T^{k-1} A \simeq \LL(\T^k A) \ar@{-|>}[rr]^{\LL(\kappa^k)} && \LL(\T E^k) {=} \pLinr(E^k) \\
\At{k} \ar@{^{(}->}[u] \ar@{--|>}[rr]^{\Rk^k} && E^k \ar@{^{(}->}[u]
}
$$
\end{df}

\begin{thm}\label{thm:Rk} Let $(\sigma^k: E^k\ra M, \kappa^k)$ be an AL HA and let $(A, \kappa)$ be its order-one reduction. Then
\begin{enumerate}[(a)]
\item \label{it:Rk_GBk} $\Rk^k$ is (the graph of) a $\catGB[k]$-morphism, $\Rk^k: \At{k} \ra E^k$,
\item \label{it:Rk_rho} $\Rk^k$ intertwines the anchor morphisms: {\newMR $\sharp^k\circ \Rk^k = \sharp^{[k]}$. }
\end{enumerate}
\end{thm}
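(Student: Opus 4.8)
The plan is to prove the two parts together, establishing first the anchor compatibility in relational form and then using it to show that the relation $\Rk^k$ is single-valued. As structural input, note that by Lemma~\ref{lem:lambda_of_ZM} combined with the identifications $\LL(\T^k A)\simeq\T^{k-1}A$ and $\LL(\T E^k)=\pLinr(E^k)$ of Lemma~\ref{l:LL}, the relation $\LL(\kappa^k)$ is a genuine VB comorphism $\T^{k-1}A\relto\pLinr(E^k)$ covering the order-$(k-1)$ reduction $\sharp^{k-1}\colon E^{k-1}\to\T^{k-1}M$ of $\sharp^k$. Thus $\Rk^k$ is exactly the intersection of this comorphism with the graded subbundles $\iA{k-1,1}(\At{k})$ and $\diag^k(E^k)$, and the content of (a) is that the intersection is single-valued, defined over all of $\At{k}$, and graded.

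For the anchor statement I would apply $\LL$ to the almost Lie morphism $(\T^k\sharp^1,\T\sharp^k)\colon\kappa^k\Rightarrow\kappa^k_M$ of diagram~\eqref{diag:AL}; by the last part of Lemma~\ref{lem:lambda_of_ZM} this yields a $\catZM$-morphism $(\T^{k-1}\sharp,\pLinr(\sharp^k))\colon\LL(\kappa^k)\Rightarrow\LL(\kappa^k_M)$. Restricting to the graded subbundles, \eqref{e:A_k} identifies $\T^{k-1}\sharp|_{\At{k}}$ with $\sharp^{[k]}$, taking values in $\T^k M\subset\T^{k-1}\T M$, while Lemma~\ref{lem:comm_plin_and inclusion} gives $\pLinr(\sharp^k)\circ\diag^k=\diag^k\circ\sharp^k$. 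Hence $(\sharp^{[k]}\times\sharp^k)(\Rk^k)\subseteq\Rk^k_{\T M}$, the analogous relation for the canonical higher algebroid $(\T^k M,\kappa^k_M)$ of Example~\ref{ex:kappa^k_M}. A direct computation in the model, using the explicit link between $\diag^k$, $\iM{1,k-1}$ and the isomorphism $I\colon\pLinr(\T^k M)\to\T\T^{k-1}M$ recorded after \eqref{e:diag}, shows $\Rk^k_{\T M}=\graphW(\id_{\T^k M})$; therefore $\sharp^{[k]}(v)=\sharp^k(w)$ for every $(v,w)\in\Rk^k$, which is the relational form of (b) and, once (a) is known, reads $\sharp^k\circ\Rk^k=\sharp^{[k]}$.

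With the anchor compatibility in hand I would prove (a) by induction on $k$. The base case $k=1$ is immediate, since $\At{1}=A=E^1$ and the core-identity condition \eqref{e:id_core} forces $\Rk^1=\id_A$. For the inductive step the key is reduction compatibility: the order-$(k-1)$ reduction of $\Rk^k$ coincides with $\Rk^{k-1}$ attached to the reduced higher algebroid $(E^{k-1},\kappa^{k-1})$. This rests on $\LL$ commuting with reduction to lower order and on the compatibility of $\iA{k-1,1}$, $\diag^k$ with the affine projections $\sigma^k_{k-1}$, the sections-level instance of which is Lemma~\ref{l:comp_alg_lifts}. Granting this, for $v\in\At{k}$ any partner $w$ has lower-order part $\sigma^k_{k-1}(w)=\Rk^{k-1}(\sigma^{[k]}_{k-1}(v))$, uniquely determined by the inductive hypothesis; the anchor compatibility guarantees the base condition $\sharp^{k-1}(\sigma^{k}_{k-1}(w))=\tau^{k-1}(\iA{k-1,1}(v))$ needed to apply the comorphism, so that with $\xi:=\iA{k-1,1}(v)$ the element $\LL(\kappa^k)^{!}(\sigma^k_{k-1}(w),\xi)\in\pLinr(E^k)$ is determined.

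The main obstacle is the remaining top-weight ($w=k$) check: one must verify that $\LL(\kappa^k)^{!}(p,\xi)$ actually lies in the image $\diag^k(E^k)$ — i.e. that the comorphism applied to admissible-velocity data never leaves the diagonal cut out by $\dot y^i_w=w\,y^i_w$ for $w<k$ — so that it defines a unique $w\in E^k$ through the embedding $\diag^k$, and that $w$ varies smoothly with $v$. I would carry this out in adapted graded coordinates, describing $\ZMmap{\kappa^k}$ on the frame $(e_i^{(k-\alpha)})$ by the algebroid lifts $\aliftB{e_i}{-\alpha}$ of \eqref{df:algebroid_lift} and using the coordinate formulas \eqref{e:IkE-coord} and \eqref{e:iAkl-coord} for the isomorphisms involved. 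Gradedness of the resulting map $\Rk^k\colon\At{k}\to E^k$ is then automatic, because $\LL(\kappa^k)$ is a weighted comorphism and $\iA{k-1,1}$, $\diag^k$ are graded embeddings; this completes (a), and combining with the previous paragraph gives (b).
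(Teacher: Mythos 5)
Your proposal matches the paper's own proof in all essentials: you establish anchor compatibility first in relational form by applying $\LL$ to the almost-Lie square \eqref{diag:AL} (via Lemmas~\ref{l:LL}, \ref{lem:lambda_of_ZM} and \ref{lem:comm_plin_and inclusion}, with your computation $\Rk^k_{\T M}=\graphW(\id_{\T^k M})$ being equivalent to the paper's use of the fact that $\kappa^{k-1}_M$ intertwines $\iM{k-1,1}$ and $\iM{1,k-1}$), and you then prove single-valuedness by induction on $k$ exactly as the paper does — lower-order part fixed by the inductive hypothesis and reduction compatibility, top part forced by the comorphism property of $\LL(\kappa^k)$, gradedness automatic from the weighted embeddings. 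The only real difference is the final membership check $\LL(\kappa^k)^{!}(v,\xi)\in\diag^k(E^k)$, which you propose to verify by a direct coordinate computation, whereas the paper packages it structurally as the characterization $E^k=\{X\in\pLinr(E^k):\pLinr(\sigma^k_{k-1})(X)=\diag^{k-1}(\pi(X))\}$ (itself checked in coordinates) combined with the inductive hypothesis — the same content, differently organized.
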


\begin{proof}
First, we shall prove that if $(U, V)\in \Rk^k$ then $\sharp^{[k]}(U) = \sharp^k(V)$. Then we shall show that $\Rk^k$ is a mapping, and this  will complete the proof of \eqref{it:Rk_GBk} and \eqref{it:Rk_rho}. Indeed, in vie of the characterisation of \grB\ morphisms \cite{JG_MR_gr_bund_hgm_str_2011},  we only need to add that the relation $\Rk^k$ is invariant with respect to  the homogeneity structure  on $\At{k}\times E^k$. This is because  the inclusions $\At{k} \hookrightarrow \LL(\T^k A)$, $E^k\hookrightarrow \LL(\T E^k)$ are \grB\ morphisms.

Take $(U, V) \in \Rk^k$, let us denote by $\tilde{U}$ (resp., $\tilde{V}$) the images of $U$ (resp. $V$) in $\T^{k-1} A$ (resp., $\pLinr(E^k)$) and consider the diagram
$$
\xymatrix{
& \tilde{U}\in \T^{k-1} A \ar[ddr]^(.3){\T^{k-1}\sharp^1}  \ar@{-|>}[rrr]^{\LL(\kappa^k)} &&& \pLinr(E^k)\ni \tilde{V} \ar[ddl]_(.3){\pLinr(\sharp^k)} \\
U\in \At{k} \ar@{^{(}->}[ur] \ar[ddr]^{\sharp^{[k]}}  \ar@{--|>}[rrrrr]^{\Rk^k} &&&&& E^k\ni V \ar@{_{(}->}[ul] \ar[ddl]_{\sharp^k} \\
&& \T^{k-1}\T M \ar[r]^{\kappa_M^{k-1}} & \T\T^{k-1} M && \\
& \T^k M \ar@{^{(}->}[ur]^{\iM{k-1, 1}} \ar[rrr]^{=} &&& \T^k M \ar@{_{(}->}[ul]_{\iM{1, k-1}}
}
$$
\begin{itemize}
\item {\new The top trapezoid in the middle commutes (i.e., $(\T^{k-1}\sharp^1, \pLinr(\sharp^k)): \LL(\kappa^k) \Rightarrow \kappa_M^{k-1}$ is a morphism in the category $\catZM$)  because it is obtained by applying the functor $\LL$ to the diagram \eqref{diag:AL}, which is commutative since $(E^k, \kappa^k)$ is almost Lie.   Here we used Lemmas~\ref{l:LL} and~\ref{lem:lambda_of_ZM}. }
\item The parallelogram on the left also commutes as $\sharp^{[k]}= \T^{k-1}\sharp^1|_{\At{k}}$, see \cite[Theorem~4.5~(ix)]{MJ_MR_models_high_alg_2015}.
\item The parallelogram on the right also commutes. This follows from a more general
Lemma~\ref{l:plin_incl}.
\end{itemize}
As $(U, V) \in \Rk^k$, so $(\tilde{U}, \tilde{V}) \in  \LL(\kappa^k)$, hence $\T^{k-1}\sharp^1(\tilde{U})\in \T^{k-1}\T M$ and  $\pLinr(\sharp^k)(\tilde{V})\in \T \T^{k-1} M$ are related by means of $\kappa_M^{k-1}$. However, due to the commutativity of the left and right parallelograms, both $\T^{k-1}\sharp^1(\tilde{U})$ and  $\pLinr(\sharp^k)(\tilde{V})$ are images of $\sharp^{[k]}(U)$ and $\sharp^k(V)$, respectively, under the canonical inclusions of $\T^k M$ into $\T^{k-1}\T M$ and $\T \T^{k-1} M$, respectively. Moreover, these images are $\kappa_M^{k-1}$-related, due to the commutativity of the top trapezoid. Since $\kappa^{k-1}_M$ intertwines the canonical inclusions, $\kappa_M^{k-1} \circ \iM{k-1, 1} = \iM{1, k-1}$,
  we get $\sharp^{[k]}(U)  =\sharp^k(V)$, as was claimed.

 Now we shall prove \eqref{it:Rk_GBk}, i.e.,  that the relation $\Rk^k$ is a  mapping. We shall proceed by induction on $k$.

 Obviously, $\Rk^1: A \ra A$ is the identity mapping.
Let $k>1$ and assume that $\Rk^{k-1}: A^{[k-1]} \ra E^{k-1}$ is a mapping. The graph of $\Rk^{k-1}$ is  invariant with respect to the  homogeneity structure of $A^{[k-1]}\times E^{k-1}$, hence $\Rk^{k-1}$ is   a  morphism of \grBs.

\noindent \emph{Step A. } We shall fist prove that for any $U^k \in \At{k}$ there is at least one $V^k\in E^k$ such that $(U^k, V^k)\in \Rk^k$.

 We know  from Lemma~\ref{lem:lambda_of_ZM} that $\LL(\kappa^k)$ is a VB comorphism covering $\sharp^{k-1}: E^{k-1}\ra \T^{k-1}M$. Set $\tilde{V}^k = \LL(\kappa^k)_v(\tilde{U}^k) \in \pLinr(E^k)$, where $v := \Rk^{k-1}(U^{k-1}) \in E^{k-1}$ and $U^{k-1} = \sigma^k_{k-1} (U^k)\in A^{[k-1]}$. Consider  the diagram
$$
\xymatrix{
\tilde{U}^k\in \T^{k-1} A \ar[dd]_{\T^{k-1}\sigma^1}  \ar@{-|>}[rrr]^{\LL(\kappa^k)} &&& \pLinr(E^k)\ni \tilde{V}^k \ar[dd] \\
& U^k \in \At{k} \ar[d] \ar@{}[l]|-{\circlearrowleft}  \ar@{_{(}->}[ul] \ar@{--|>}[r]^{\Rk^k} & E^k \ar[d] \ar@{^{(}->}[ur] & \\
\T^{k-1} M & \At{k-1} \ar[l]_{\sharp^{[k-1]}} \ar[r]^{\Rk^{k-1}} & E^{k-1} \ar[r]^{=} & E^{k-1}\ni v \ar@/^1pc/[lll]^{\sharp^{k-1}}
}
$$
We shall check first that the definition of $\tilde{V}^k$ is correct, i.e.,
\begin{equation}\label{eqn:thm_Rk_1}
\sharp^{k-1}(v) = \T^{k-1}\sigma^1(\tilde{U}^k).
\end{equation}
This amounts to show that the compositions $\At{k} \to \At{k-1} \xrightarrow{\Rk^{k-1}} E^{k-1} \xrightarrow{\sharp^{k-1}} \T^{k-1} M$ and  $\At{k} \xhookrightarrow{} \T^{k-1} A \to \T^{k-1} M$ coincide.
According to our inductive hypothesis, $\sharp^{k-1}\circ \Rk^{k-1}$ is equal  to $\sharp^{[k-1]}$, hence \eqref{eqn:thm_Rk_1} reduces to the commutativity of the square diagram on the left (pointed by the circular arrow $\circlearrowleft$).
{ The map $\sharp^{[k-1]}$ is the restriction of $\T^{k-2} \sharp$ to $\At{k-1}\subset \T^{k-2} A$, see \cite[Theorem~4.5~(ix)]{MJ_MR_models_high_alg_2015}, hence it suffices to prove that the following diagram  is commutative. }
$$
\xymatrix{
\At{k} \ar@{^{(}->}[d] \ar[r] & \At{k-1} \ar@{^{(}->}[r] & \T^{k-2} A \ar[d]^{\T^{k-2} \sharp}\\
\T^{k-1} A \ar[r]^{\T^{k-1} \sigma}  & \T^{k-1} M \ar@{^{(}->}[r] & \T^{k-2}\T M
}
$$
The inclusions $\At{k} \xhookrightarrow{} \T^{k-1} A$ are compatible with projections $\T^{k+1} A \ra \T^k A$, i.e.,  the following diagram on the left is commutative:
$$
   \xymatrix{
\At{k} \ar@{^{(}->}[d] \ar[r] & \At{k-1} \ar@{^{(}->}[d] \\
\T^{k-1} A \ar[r] & \T^{k-2} A,
}
\quad \quad \quad
\xymatrix{
\T^{k-1} A \ar[d]^{\T^{k-1}\sigma} \ar[r] & \T^{k-2} A \ar[d]^{\T^{k-2} \sharp}\\
\T^{k-1} M \ar[r] & \T^{k-2}\T M
}
$$
The diagram on the right is not commutative in general, however for $X \in  \At{k} \subset \T^{k-1} A$ we have (by \cite[Theorem~4.5~(viii)]{MJ_MR_models_high_alg_2015}):
$ \iM{k-2, 1} \circ(\T^{k-1}\sigma) (X)  = (\T^{k-2} \sharp)\circ \tau^{k-1}_{k-2}(X)$. This is enough for our claim \eqref{eqn:thm_Rk_1}.

Now we prove that $\tilde{V}^k\in\pLinr(E^k)$ is   in $E^k\subset \pLinr(E^k)$. Consider the diagram
$$
\xymatrix{
E^k \ar@{^{(}->}[r]^(.3){\diag^k}  & \pLinr(E^k)\ni \tilde{V}^k\ar[d]^\pi \ar[dr]^{\pLinr(\sigma^k_{k-1})}
&  \\
& v\in E^{k-1} \ar@{^{(}->}[r]^{\diag^{k-1}} & \pLinr(E^{k-1})
}
$$
The subset of $\pLinr(E^k)$ of those elements $X$ for which $\pLinr(\sigma^k_{k-1})(X) = \diag^{k-1} (\pi(X))$  coincides with $E^k$.  We are given $v = \Rk^{k-1}(U^{k-1}) \in E^{k-1}$ and $\tilde{V}^k\in \pLinr(E^k)$ such that $\pi(\tilde{V}^k) = v$ and $\diag^{k-1}(v) = \pLinr(\sigma^k_{k-1})(\tilde{V}^k)$. It follows that $\tilde{V}^k \in E^k$.

\noindent \emph{Step B.} We shall prove that for a given $U^k\in \At{k}$ there is at most one $V^k\in E^k$ such that $(U^k, V^k)\in \Rk^k$. This will finish the proof of \eqref{it:Rk_GBk} and \eqref{it:Rk_rho}.

Assume $(U^k, V^k_i)$  are in $\Rk^k$ for $i=1,2$. Using the inductive hypothesis, we know that $V^{k-1}_1 = V^{k-1}_2 \in E^{k-1}$, hence $V^k_i$, considered as elements of $\pLinr(E^k)$, are in the same fiber of the vector bundle $\pi: \pLinr(E^k)\ra E^{k-1}$. As $(U^k, V^k_i)\in \LL(\kappa^k)$ and $\LL(\kappa^k)$ is a VB comorphism over $\sharp^{k-1}: E^{k-1}\ra \T^{k-1} M$, we must have $V^k_1 = \LL(\kappa^k)_v(U^k) = V^k_2$ as we claimed.
\end{proof}

\begin{ex}[$\Rk^2$ and $\Rk^3$ in coordinates] \label{ex:Theta2_and_3}
We shall provide an explicit coordinate expression for $\Rk^2: \At{2}\ra E^2$, assuming $\kappa^2$ is given a general  local form as in  \eqref{e:local_kappa2}. %It will turn out that the assumption that $(E^2, \kappa^2)$ is AL can be relaxed --  it suffices to assume only that it is a skew HA.
According to the procedure given in the Definition~\ref{df:Lambda}, to get $\LL(\kappa^2)$, in  the first step we set $y^i=0$ and $\und{x}^a=0$.  Then we eliminate coordinates of weight $(2,0)$, i.e.,   the coordinates $\ddot{x}^a$ and $\und{z}^\mu$. In this way  we arrive at { the VB comorphism $\LL(\kappa^2): \LL(\T^2 A) \relto \pLinr(E^2)$ over $\sharp: A\ra \T M$ given by}
\begin{equation}\label{e:LL_kappa2}
\LL(\kappa^2):
\begin{cases}
\dot{x}^a = & Q^a_i \,\und{y}^i, \\
\dot{\und{y}}^i = & \dot{y}^i,\\
\dot{\und{z}}^\mu = & Q^\mu_i\,\ddot{y}^i + Q^\mu_{ij} \, \und{y}^i \dot{y}^j.
\end{cases}
\end{equation}
Let $(U^2, V^2)\in \At{2}\times E^2$, %$U^2\sim (x^a, Y^i, \dot{Y}^i)$, $V^2\sim (x^a, \und{Y}^i, \und{Z}^\mu)$
and let $(\tilde{U}^2, \tilde{V}^2 ) \in \LL(\T^2 A) \times \pLinr(E^2)$ be the image of $(U^2, V^2)$ under
 the canonical inclusions (see \eqref{e:iAkl-coord}, \eqref{e:IkE-coord}, \eqref{e:diag}):
$$
I^2_E \circ \iA{1,1}: \At{2} \xhookrightarrow{} \T A  \simeq \LL(\T^2 A), \quad
%(x^a, \dot{x}^a, \dot{y}^i, \ddot{y}^i) (= (x^a, Q^a_i y^i,
(x^a, \dot{x}^a, \dot{y}^i, \ddot{y}^i)(\tilde{U}^2) = (x^a, Q^a_i y^i, y^i, 2 \dot{y}^i)(U^2)
%(x^a, Y^i, \dot{Y}^i)\mapsto (x^a, \dot{x}^a = Q^a_i Y^i, \dot{y}^i = Y^i, \ddot{y}^i = 2 \dot{Y}^i)
$$
and
$$
\diag^2: E^2 \hookrightarrow \pLinr(E^2),\quad
(\und{x}^a, \und{y}^i,  \und{\dot{y}}^i, \und{\dot{z}}^\mu)(\tilde{V}^2) = (\und{x}^a, \und{y}^i, \und{y}^i, 2 \und{z}^\mu)(V^2).$$
(Recall that $(x^{a, (\K)}, y^{i, (\KL)})$, $0\leq \K \leq k-1$, $1\leq \KL \leq k$, are coordinates for $\LL(\T^k A)$ inherited from the adapted coordinates on  {\new $\T^k A$}. The coordinate system for $\pLinr(E^k) = \LL(\T E^k)$  is inherited from the adapted coordinate system on the tangent bundle of $E^k$.)
%\und{\dot{y}}^i = \und{y}^i = \und{Y}^i, \und{\dot{z}}^\mu = 2 \und{Z}^\mu)
%(\und{x}^a, \und{Y}^i, \und{Z}^\mu)\mapsto (\und{x}^a, \und{\dot{y}}^i = \und{y}^i = \und{Y}^i, \und{\dot{z}}^\mu = 2 \und{Z}^\mu).
{\new By plugging these expressions to \eqref{e:LL_kappa2} we find that
$(\tilde{U}^2, \tilde{V}^2 ) \in \LL(\kappa^2)$ if and only if
$$
\begin{cases}
Q^a_i y^i &=     Q^a_i \,\und{y}^i, \\
\und{y}^i &=  y^i,\\
2 \und{z}^\mu &=  Q^\mu_i\, 2 \dot{y}^i + Q^\mu_{ij} \, \und{y}^i y^j,
\end{cases}
$$
hence}
$\Rk^2$
is an affine bundle morphism $\Rk^2: \At{2} \ra E^2$ covering the identity $\id_{A}$ given by
\begin{equation}\label{e:R2_coord}
\Rk^2(x^a, y^i, \dot{y}^i) =  (\und{x}^a= x^a, \und{y}^i = y^i, \und{z}^\mu = Q^\mu_i \dot{y}^i + \frac12 Q^\mu_{(ij)} y^i y^j
),
\end{equation}
where
$Q^\mu_{ij} = Q^\mu_{(ij)} + Q^\mu_{[ij]}$ is the decomposition into symmetric and anty-symmetric part, namely
\begin{equation} \label{d:Q_symm_and_Q_asym}
Q^\mu_{(ij)} = \frac12 Q^\mu_{ij} + \frac12 Q^\mu_{ji}, \quad Q^\mu_{[ij]} = \frac12 Q^\mu_{ij} - \frac12 Q^\mu_{ji}.
\end{equation}
In order three,  additional equations for $\kappa^3$ appear. Let $(x^a, y^i, z^\mu, t^\alpha)$ be graded coordinates on a \grB\ $\sigma^3: E^3\ra M$ where the coordinates $t^\alpha$ have order 3. The additional equations for $\kappa^3$, extending those for $\kappa^2$,  are of the form (we have omitted { expressions } that do not account for $\LL(\kappa^3)$): $\dddot{x}^a = \dddot{x}^a(\und{x}, \und{y}, \und{z}, \und{t})$ and
$$
\dot{\und{t}}^\alpha =  Q^\alpha_i \dddot{y}^i + Q^\alpha_{ij} \und{y}^i \ddot{y}^j + Q^\alpha_{\nu i} \und{z}^\nu \dot{y}^i + \frac12 Q^\alpha_{ij, k} \und{y}^i\und{y}^j \dot{y}^k + { q^\alpha_i(\und{x}, \und{y}, \und{z}, \und{t}) y^i }
$$
for some functions $q^\alpha_i$ on $E^3$ of weight $3$.
Now we set to zero the coordinates of weight $(0, 1)$, i.e.,  $y^i=0$, $\und{\dot{x}}^a = 0$  and eliminate the coordinates of weight $(3, 0)$,  $\dddot{x}^a$ and $\und{t}^\alpha$. This way we get, in addition to \eqref{e:LL_kappa2}, the following  equations defining $\LL(\kappa^3)$:
\begin{equation}\label{e:LL_kappa3}
\LL(\kappa^3) : \begin{cases}
%\dot{x}^a & = Q^a_i \und{y}^i, \\
\ddot{x}^a &= Q^a_\mu \und{z}^\mu + \frac12 Q^a_{ij} \und{y}^i \und{y}^j, \\
%\und{\dot{y}}^k &=  \dot{y}^k,\\
%\und{\dot{z}}^\mu &= Q^\mu_i \ddot{y}^i + Q^\mu_{ij} \und{y}^i \dot{y}^j, \\
\dot{\und{t}}^\alpha &=  Q^\alpha_i \dddot{y}^i + Q^\alpha_{ij} \und{y}^i \ddot{y}^j + Q^\alpha_{\nu i} \und{z}^\nu \dot{y}^i + \frac12 Q^\alpha_{ij, k} \und{y}^i\und{y}^j \dot{y}^k.
\end{cases}
\end{equation}
which is a VB comorphism over $\sharp^2$:
$$
    \xymatrix{
 A^{[3]} \subset \T^{2} A \simeq \LL(\T^3 A) \ar[d] \ar@{-|>}[rr]^{\LL(\kappa^3)} &&  \pLinr(E^3) \supset E^3  \ar[d] \\
\T^2 M  && E^2 \ar[ll]_{\sharp^2}
}
$$
{\new
Let $U^3\in \At{3}$, $V^3\in E^3$ and let $\tilde{U}^3$ in  $\LL(\T^3 A)$ and $\tilde{V}^3$ in $\pLinr(E^3)$ denote their images subject to the inclusions $\At{3} \xhookrightarrow{} \LL(\T^2 A)$ and $\diag: E^3 \xhookrightarrow{} \pLinr(E^3)$.  We have
\begin{equation}\label{e:U3V3}
\begin{split}
(x^a, \dot{x}^a, \ddot{x}^a; \dot{y}^i, \ddot{y}^i, \dddot{y}^i)(\tilde{U}^3) = (x^a,  Q^a_i y^i, X^a;  y^i, 2 \dot{y}^i, 3 \ddot{y}^i)(U^3)\\
(x^a, \und{y}^i, \und{z}^\mu; \und{\dot{y}}^i, \und{\dot{z}}^\mu,  \und{\dot{t}}^\alpha)(\tilde{V}^3) = (x^a, \und{y}^i, \und{z}^\mu, \und{y}^i, 2 \und{z}^\mu, 3 \und{t}^\alpha) (V^3).
\end{split}
\end{equation}
}
where, due to the definition of $\At{k}$,
$$
    X^a = \ddot{x}^a(\tilde{U}^3) =  (Q^a_i y^i)^{\cdot} = \frac{\pa Q^a_i}{\pa x^b} \dot{x}^b y^i + Q^a_i \dot{y}^i = \frac12 \wh{Q}^a_{ij} y^i y^j + Q^a_i \dot{y}^i,
$$
where
\begin{equation}\label{df:Qhat}
\Qhat{Q}^a_{ij} = \frac{\pa Q^a_i}{\pa x^b} Q^b_j +  \frac{\pa
Q^a_j}{\pa x^b} Q^b_i.
\end{equation}
Now assume that $\tilde{U}^3$, $\tilde{V}^3$  are $\LL(\kappa^3)$-related. We shall show that the first equation for $\LL(\kappa^3)$ in \eqref{e:LL_kappa3}
is satisfied automatically. It amounts to show that $\ddot{x}^a(\tilde{U}^3)$ given above coincides with
$$
    %\ddot{x}^a(\tilde{U}^3) =
    Q^a_\mu \und{z}^\mu + \frac12 Q^a_{ij} \und{y}^i \und{y}^j (\text{on } \tilde{V}^3) = Q^a_\mu \left(Q^\mu_i \dot{y}^i + \frac12 Q^\mu_{(ij)} y^i y^j\right) + \frac12 Q^a_{ij} y^i y^j (\text{on }\tilde{U}^3).
$$
The last equality is due to \eqref{e:R2_coord} as the reductions of $\tilde{U}^3$, $\tilde{V}^3$ to order 2 are $\LL(\kappa^2)$-related. By comparing the coefficients at $\dot{y}^i$ and at $y^iy^j$ we see that the first equation in \eqref{e:LL_kappa3} is equivalent to the equations \eqref{e:QamuQmu_i} and \eqref{e:QamuQmu_ij_symm} considered in Appendix, which are true in any order-two AL HA.
The second equation for $\LL(\kappa^3)$ gives
$$
%\begin{split}
    \und{t}^\alpha = \und{t}^\alpha(V^3) = \frac13 \und{\dot{t}}^\alpha(\tilde{V}^3) =  \frac13 Q^\alpha_i \dddot{y}^i(\tilde{U}^3) + \frac13 Q^\alpha_{ij} \und{y}^i(\tilde{V}^3) \ddot{y}^j (\tilde{U}^3) +  %\\
    \frac 13 Q^\alpha_{\nu i} \und{z}^\nu (\tilde{V}^3) \dot{y}^i(\tilde{U}^3)  +
    \frac16 Q^\alpha_{ij, k} \und{y}^i \und{y}^j (\tilde{V}^3)\dot{y}^k (\tilde{U}^3)
%\end{split}
$$
from which, using \eqref{e:U3V3}, we find a complete formula for $\Rk^3$,
\begin{equation}\label{e:R3_coord}
\begin{split}
\Rk^3:(x^a, y^i, \dot{y}^i, \ddot{y}^i) \mapsto (\und{x}^a= x^a, \und{y}^i = y^i, \und{z}^\mu =  \frac12 Q^\mu_{(ij)} y^i y^j, \\
\und{t}^\alpha = Q^\alpha_i \ddot{y}^i +  (\frac{2}{3} Q^\alpha_{ij} + \frac{1}{3} Q^\alpha_{\nu i} Q^\nu_{j}) y^i\dot{y}^j + \frac{1}{6} (Q^\alpha_{\nu i} Q^\nu_{jk} + Q^\alpha_{ij, k}) y^i y^j y^k).
\end{split}
\end{equation}
\end{ex}
{
\begin{rem}\label{r:R2_skew_alg} In deriving the formula for the mapping $\Rk^2$ we did not use the assumption from Theorem~\ref{thm:Rk}  that $(E^2, \kappa^2)$ is AL. Actually, $\Rk^2$ is a well defined mapping for any skew HA $(E^2, \kappa^2)$.
\end{rem}
}

\begin{conj}\label{conj:Rk} Let $(\sigma^k: E^k\ra M, \kappa^k)$ be a  Lie HA.  Then $\Rk^k: \At{k}\ra E^k$ is a HA morphism.
\end{conj}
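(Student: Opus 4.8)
The plan is to unwind the definition of a higher algebroid morphism, reduce the claim to the $\Rk^k$-relatedness of algebroid lifts, and then run an induction on $k$ in which the genuinely new content appears only in the top weight. By Definition~\ref{df:higher_algebroid}, $\Rk^k$ is a HA morphism precisely when $(\T^k\Rk^1,\T\Rk^k):\kappa^{[k]}\mZM\kappa^k$ is a $\catZM$-morphism, i.e. $(\id_{\T^k A}\times\T\Rk^k)(\kappa^{[k]})\subseteq\kappa^k$; here I used $\Rk^1=\id_A$ and $A=E^1$, so $\T^k\Rk^1=\id_{\T^k A}$. Since Theorem~\ref{thm:Rk} already gives that $\Rk^k$ is a $\catGB[k]$-morphism intertwining the anchors ($\sharp^k\circ\Rk^k=\sharp^{[k]}$), the base points of the two comorphisms match, and, describing comorphisms through their action on sections exactly as in the proof of Theorem~\ref{th:HA_axioms_and_lifts}\eqref{i:p:AL_axiom}, the inclusion is equivalent to the assertion that for every $\xi\in\Sec(\T^k\sigma^1)$ the vector fields $\ZMmap{\kappa^{[k]}}(\xi)$ on $\At{k}$ and $\ZMmap{\kappa^k}(\xi)$ on $E^k$ are $\Rk^k$-related. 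As $\ZMmap{\kappa^{[k]}}(f\xi)=(\sharp^{[k]})^\ast f\cdot\ZMmap{\kappa^{[k]}}(\xi)$, $\ZMmap{\kappa^k}(f\xi)=(\sharp^k)^\ast f\cdot\ZMmap{\kappa^k}(\xi)$ and $(\sharp^{[k]})^\ast f=(\Rk^k)^\ast(\sharp^k)^\ast f$, this relatedness is $\Cf(\T^k M)$-linear in $\xi$; since the lifts $s^{(k-\alpha)}$ span $\Sec(\T^k\sigma^1)$, it suffices to check, for $s$ in a local frame of $A$ and each $0\le\alpha\le k$, that the algebroid lifts $s^{\langle -\alpha\rangle_{\kappa^{[k]}}}$ and $s^{\langle -\alpha\rangle_{\kappa^k}}$ are $\Rk^k$-related.

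I would prove this relatedness by induction on $k$, the base $k=1$ being trivial since $\Rk^1=\id_A$. For the inductive step, applying Lemma~\ref{l:comp_alg_lifts} to both HAs $(E^k,\kappa^k)$ and $(\At{k},\kappa^{[k]})$, whose order-$(k-1)$ reductions are $(E^{k-1},\kappa^{k-1})$ and $(\At{k-1},\kappa^{[k-1]})$, shows that $s^{\langle -\alpha\rangle_{\kappa^k}}$ on $E^k$ (resp. $s^{\langle -\alpha\rangle_{\kappa^{[k]}}}$ on $\At{k}$) is $\sigma^k_{k-1}$-projectable onto the corresponding order-$(k-1)$ lift, and $\Rk^k$ covers $\Rk^{k-1}$ (as in the proof of Theorem~\ref{thm:Rk}). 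By the inductive hypothesis these projections are $\Rk^{k-1}$-related, so the difference $\T\Rk^k\circ s^{\langle -\alpha\rangle_{\kappa^{[k]}}}-s^{\langle -\alpha\rangle_{\kappa^k}}\circ\Rk^k$ takes values in the vertical bundle of $E^k\ra E^{k-1}$, spanned by the fields $\pa_{z^\mu}$ dual to the top (weight $k$) coordinates $z^\mu$. Hence the whole problem collapses to a single top-order identity, namely $s^{\langle -\alpha\rangle_{\kappa^{[k]}}}\big((\Rk^k)^\ast z^\mu\big)=(\Rk^k)^\ast\big(s^{\langle -\alpha\rangle_{\kappa^k}}(z^\mu)\big)$ for each $\mu$.

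For $k=2$ this remaining identity can be verified directly. The extreme case $\alpha=k$ is free of computation: by Lemma~\ref{l:VF-k_and_core} the lift $s^{\langle -2\rangle_{\kappa^{[2]}}}$ on $\At{2}$ is the core vector field of $\jAc{2}(s)$, on $E^2$ it is a weight $-2$ field, and by Lemma~\ref{l:VF-k} their relatedness is equivalent to $\core{\Rk^2}$-relatedness of the underlying core sections, which holds because the top-core part of \eqref{e:R2_coord} identifies $\core{\Rk^2}$ with the structure map $\pa$ (given by $Q^\mu_i$). For $\alpha=0$ and $\alpha=1$ I would compute $e_i^{\langle -\alpha\rangle_{\kappa^2}}$ on $E^2$ from the local form \eqref{e:local_kappa2} of $\kappa^2$ and $e_i^{\langle -\alpha\rangle_{\kappa^{[2]}}}$ on $\At{2}$ from the induced $\kappa^{[2]}$, and then apply both to $(\Rk^2)^\ast z^\mu=Q^\mu_i\dot{y}^i+\tfrac12 Q^\mu_{(ij)}y^iy^j$ from \eqref{e:R2_coord}; the resulting equalities reproduce exactly the almost Lie (and Lie) structure equations for the $Q$'s established in the Appendix, for instance \eqref{e:QamuQmu_i} and \eqref{e:QamuQmu_ij_symm} together with their companions. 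Only the skew/AL data enter the $z^\mu$-component, which is consistent with Remark~\ref{r:R2_skew_alg} (where $\Rk^2$ already exists for skew HAs); the Lie hypothesis guarantees the full set of compatibility identities needed.

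The hard part is the intermediate weight $\alpha=1$. One might hope to bypass its direct verification by exploiting that, for a Lie HA, $\ZMmap{\kappa^k}$ restricted to $\Sec_{\leq 0}(\T^k\sigma^1)$ is a Lie-algebra homomorphism (Theorem~\ref{th:HA_axioms_and_lifts}\eqref{i:p:Lie_axiom}), so that relatedness would propagate along brackets from a generating set. However, by \eqref{e:Lie_alifts} the brackets of the weight $0$ and weight $-k$ lifts produce only weights that are multiples of $k$ (for $k=2$: even weights), never the weight $-1$; the weight $-1$ lift is therefore bracket-disconnected from the two tractable extremes and must be handled on its own using the structure equations. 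This is precisely the step that resists generalization: for $k>2$ the top-order identity at intermediate weights involves the higher structure functions, such as $Q^\mu_{ij,k}$, which (as noted in the introduction) carry no clean geometric meaning individually, so I am unable to close the induction in general — hence the statement remains a conjecture for $k>2$, while the argument above establishes it for $k=2$.
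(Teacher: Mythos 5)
Your proposal follows the same route as the paper's own proof: reduce the $\catZM$-morphism condition to the $\Rk^k$-relatedness of the algebroid lifts of frame sections, observe that (since $\Rk^1=\id_A$ and lifts are compatible with reduction to lower order, Lemma~\ref{l:comp_alg_lifts}) only the top-weight identity \eqref{e:R-related_VF} on the coordinates $z^\mu$ remains to be checked, and verify it in coordinates using \eqref{e:R2_coord}. Your inductive packaging of this reduction, and the weight $-2$ case via Lemmas~\ref{l:VF-k} and~\ref{l:VF-k_and_core}, are sound and match the paper's (terser) treatment.

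The one substantive inaccuracy is your description of what the deferred computation produces. Applying $E_k^{[-1]}$ and $E_k^{[0]}$ to $(\Rk^2)^\ast z^\mu = Q^\mu_i\dot y^i + \tfrac12 Q^\mu_{(ij)}y^iy^j$ does \emph{not} reproduce the almost Lie equations \eqref{e:QamuQmu_i}, \eqref{e:QamuQmu_ij_symm}; those concern the anchor components $Q^a_{\cdots}$ and are already consumed in Theorem~\ref{thm:Rk}. What actually comes out are the Lie HA equations: the weight $-1$ check is precisely \eqref{e:coord_beta}, i.e. $Q^\mu_{[ik]}=Q^\mu_j Q^j_{ik}$ (equivalently $\veps_1=0$, condition \eqref{i:Lie_ax:beta}), and the weight $0$ check is a combination of \eqref{e:coord_pa} and \eqref{e:coord_A_on_F}. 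This is exactly where the Lie hypothesis is spent, so your sentence "only the skew/AL data enter the $z^\mu$-component" is true only of the formula for $\Rk^2$ itself, not of the verification: for an AL HA which is not Lie (e.g.\ over a point, $\g$ abelian with $\beta\neq 0$, which is AL by Theorem~\ref{th:AL_HA_str_maps}), equation \eqref{e:coord_beta} fails and the weight $-1$ lifts are not $\Rk^2$-related, so no AL-level identities could close the argument. Your closing remark that "the Lie hypothesis guarantees the full set of compatibility identities needed" is the correct statement; it just needs to point at \eqref{e:coord_beta}, \eqref{e:coord_pa}, \eqref{e:coord_A_on_F} rather than at the AL anchor equations. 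With the citations corrected, your outline closes exactly as the paper's proof does.
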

 It suffices to prove that $(\T^k \Rk^1, \T \Rk^k): \kappa^{[k]}\mZM \kappa^k$ is a $\catZM$-morphism. Recall,  $\Rk^1$ is the identity on $E^1$, hence it remains to verify  that the diagram
\begin{equation}\label{diag:R_is_morphism_of_AHA}
\xymatrix{
\T^k A \ar@{-|>}[rr]^{\kappa^{k}} \ar[dd] \ar@{-|>}[rd]^{\kappa^{[k]}} && \T E^k \ar[dd] \\
& \T \At{k} \ar[ru]^{\T \Rk^k} \ar[dd] & \\
\T^k M && E^k \ar[ll]_{\sharp^k} \\
& \At{k} \ar[ul]^{\sharp^{[k]}} \ar[ur]^{\Rk^k} &
}
\end{equation}
is commutative. {\newMR We already know that the bottom triangle is commutative (see Theorem~\ref{thm:Rk}). Therefore, we now need to prove that
\begin{equation}\label{e:rho_and_R_on_fibers}
\left(\kappa^k\right)_{\und{Z}}(X) = \T \Rk^k (\left(\kappa^k\right)_{\und{Y}}(X))
\end{equation}
for any $\und{Y}\in \At{k}$ and $X\in\T^k A$ such that $\T^k \sigma^1 (X) = \sharp^{[k]}(\und{Y})$, {  where $\und{Z} = \Rk^k(\und{Y})\in E^k$.} In other words, this means that the algebroid $\langle\K\rangle$-lifts with respect to $(E^k, \kappa^k)$ and $(\At{k}, \kappa^{[k]})$ are $\Rk^k$ related for all $-k\leq \alpha
\leq 0$.
}
We shall prove Conjecture~\ref{conj:Rk}  for $k=2$  by direct computations. See Appendix, Subsection~\ref{sSec:AL_and_Lie_HA_eqns}. %, paragraph~\ref{p:conj:Rk}.

\subsection{Higher algebroids in order two}\label{sSec:HA_order_two}
In this subsection, we shall look closer at higher algebroids $(E^2, \kappa^2)$ of order two.  First, we shall describe the structure of the \grB\ $E^2$, see Lemma~\ref{l:structureE2}. Then,  we
shall derive a number of \emph{structure maps} which fully determine $(E^2, \kappa^2)$ and  reformulate the definition of a skew HA in terms of these structure maps and relations between them, see Theorem~\ref{th:skew_HA}. We shall examine skew and  Lie HAs in which the base $M$ is a point, see Theorem~\ref{thm:HA_point}.
We shall also find the relations between the structure maps and the conditions under which $(E^2, \kappa^2)$ becomes an almost Lie (Theorem~\ref{th:AL_HA_str_maps}) and a Lie HA (Theorem~\ref{th:Lie_axiom_maps}). Finally, we will describe the relation between order-two HAs and Lie algebroid representations up to homotopy, see Theorem~\ref{t:Rep_to_HA}.
We assume that  $(E^2, \kappa^2)$ is a \emph{skew} HA.

{\new  Throughout this subsection, we denote by $A$ the order-one reduction of $E^2$, i.e.,  $A= E^1$,  and  $C = \core{E^2}$ -- the  core of $E^2$. We set $(x^a, y^i, z^\mu)$ as  a system of graded coordinates on $E^2$ and fix  local frames $(e_i)$ of $\Sec(A)$ and $(c_\mu)$ of $\Sec(C)$ such that  \begin{equation}\label{e:loc_frames}
    y^i(e_j) = \delta^i_j,  \quad z^\mu|_C(c_\nu)=\delta^\mu_\nu.
\end{equation}
}

\subsubsection{The structure of the \grB\ of a skew HA of order two}

 Define \begin{equation}\label{df:pa0}
 \pa := \core{\Rk^2}: A  \ra C
  \end{equation}
   as the  core of the map $\Rk^2: \At{2} \ra E^2$, see Definition~\ref{df:Rk}, where the  core of $\At{2}$ is identified with $A$ under  the isomorphism $\jAc{2}: A \simeq \core{\At{2}}$, see \eqref{df:jAc}. .
Consider the map $\tilde{\Rk}^2$ defined on the product (over $M$) of \grBs\ $\At{2}$ and $C_{[2]}$ by
\begin{equation}\label{e:Rk_tilde}
    \tilde{\Rk}^2: \At{2}\times_M C_{[2]} \to E^2, \quad (a, c) \mapsto \Rk^2(a)  \plus c
\end{equation}
(We recall that $F_{[k]}$ stands for the \grB\ defined on the total space of the VB $F\ra M$ by assigning weight $k$ to linear functions on $F$.)
The map $\tilde{\Rk}^2$ is a surjective morphism of { \grBs, } hence $E^2$ can be identified as the quotient of  $ \At{2}\times_M C_{[2]}$ by  the  equivalence relations $\sim$, where
$$
    (a, c) \sim (a', c') \iff \Rk^2(a) \plus c = \Rk^2(a') \plus c'.
$$
{\new Since $\Rk^2$ covers the identity $\id_A$,  }  the elements $a$ and $a' \in \At{2}$  project to the same element in $A$. As $E^2\ra E^1$ is an affine bundle, we can write   $\Rk^2(a')\minus \Rk^2(a) = \core{\Rk^2}(a'-a) = c' - c \in C$.
In other words, $(a', c') \minus (a, c) = (a'-a, c'-c)$ is in the graph of the map $-\pa$,  which is a subset of   $ A\times_M C$, the  core of the \grB\  $\At{2}\times_M C_{[2]}$. Therefore, what we need to define $E^2$ is only the map $\pa$.
\begin{lem} \label{l:structureE2} Let $(E^2, \kappa^2)$ be a skew, order-two HA.
 \begin{enumerate}[(i)]
     \item \label{i:iso:th:structureE2} There is a canonical isomorphism of \grBs\
        $$
            E^2 \simeq \quotient{(\At{2}\times_M C_{[2]})}{\graphW(-\pa)}
          % ,E^2  \bigslant{(\At{2}\times_M C_{[2]})}{\graphW(-\pa)}.
        $$
        where $\pa:A\ra C$ is given in~\eqref{df:pa0}.
        \item \label{i:R2_and_quotient3} { A choice of local frames $(e_i)$ and $(c_{\mu})$ of $\Sec(A)$ and $\Sec(C)$, respectively, gives rise to a graded coordinate system $(x^a, y^i, w^\mu)$ for $E^2$ (considered as the  quotient of $\At{2}\times_M C_{[2]}$), defined by  $w^\mu = c_\mu^\ast + Q^\mu_i \dot{y}^i$. The composition $\At{2} \hookrightarrow \At{2}\times_M C_{[2]} \ra \quotient{(\At{2}\times_M C_{[2]})}{\graphW(-\pa)}$ coincides with the map~$\Rk^2$  which, in the introduced coordinates $(x^a, y^i, w^\mu)$, read as
            $$
                (\Rk^2)^\ast(x^a)= x^a, \quad (\Rk^2)^\ast(y^i)= y^i, \quad(\Rk^2)^\ast(w^\mu)= Q^\mu_i \dot{y}^i.
            $$}
      \end{enumerate}
\end{lem}
\begin{df}\label{df:adapted_coord} We call $(x^a, y^i, w^\mu)$  an \emph{adapted} system of graded coordinated on a HA $(E^2, \kappa^2)$. It is uniquely defined once we set a system of local frames $(e_i)$, $(c_\mu)$, and is characterised by the equality $Q^\mu_{(ij)} = 0$.
\end{df}
\begin{proof} Set $\tilde{E}^2 =  \At{2}\times_M C_{[2]}$, $V= \graphW(-\pa)$.
    We have already shown that there is  a well defined bijection between $E^2$ and the quotient
$\quotient{\tilde{E}^2}{\sim}$
{\new defined as the set of equivalence classes of the following equivalence relation: $e\sim e'$ if and only if there exists $v\in V$ such that $e' = e \plus v$. }  %and that  $\tilde{E}^2/\sim$ is the orbit space of the canonical action of $V$ on $\tilde{E}^2$.
    It is also evident that this bijection is an isomorphism of \grBs\ since it is a special case of the following more general construction: given a \grB\ $E^k$ of order $k$ and a vector subbundle $V\ra M$ of the  core  $\core{E^k}$,  the quotient $\quotient{E^k}{V}$ which is the orbit space of the action on $E^k$ of the subbundle $V \subset \core{E^k}$ of the  core,
    inherits a \grB\ structure from $E^k$.

    { Recall that $(y_i)$ and $(c_\mu^\ast)$ denote the dual frames to $(e_i)$ and $(c_\mu)$, respectively. Then  $(x^a, y^i, \dot{y}^i, c_\mu^\ast)$ forms a graded coordinate system on $\At{2}\times_M C_{[2]}$. The introduced equivalence relation on this space reads as: $(x^a, y^i, \dot{y}^i, c_\mu^\ast) \sim  (\und{x}^a, \und{y}^i, \dot{\und{y}}^i, \und{c}_\mu^\ast)$ if and only if $x^a =\und{x}^a$, $y^i = \und{y}^i$, $\und{c}_\mu^\ast - c_\mu^\ast = Q^\mu_i(\dot{y}^i - \dot{\und{y}}^i)$. } Therefore, the functions $w^\mu := c_\mu^\ast + Q^\mu_i \dot{y}^i$ are well-defined on the quotient $\quotient{\tilde{E}^2}{{\sim}}$, and $(x^a, y^i, w^\mu)$ is a graded coordinate system on this quotient.

    Let $(x^a, y^i, z^\mu)$ be as in \eqref{e:loc_frames}. The map $\tilde{\Rk}^2: \tilde{E}^2 \ra E^2$, defined in \eqref{e:Rk_tilde}, is given by
    $$
      \left(\tilde{\Rk}^2\right)^\ast (x^a) = x^a,   \left(\tilde{\Rk}^2\right)^\ast(y^i) = y^i, \left(\tilde{\Rk}^2\right)^\ast(z^\mu) = c_\mu^\ast + Q^\mu_i \dot{y}^i + \frac12 Q^\mu_{(ij)} y^i y^j = w^\mu +  \frac12 Q^\mu_{(ij)} y^i y^j.
    $$
    Hence, the isomorphism from point \eqref{i:iso:th:structureE2}, denoted by $I: \quotient{\tilde{E}^2}{{\sim}} \to E^2$,   is given by $I^\ast(z^\mu) = w^\mu + \frac12 Q^\mu_{(ij)} y^i y^j$, and the composition of the inclusion $\At{2} \hookrightarrow \At{2}\times_M C_{[2]}$ with $\tilde{\Rk}^2$ coincides with \eqref{e:R2_coord}, i.e., with the formula for $\Rk^2$, which proves the  claim  from  point  \eqref{i:R2_and_quotient3} and  completes the proof.
\end{proof}

\subsubsection{The structure maps of a skew HA of order two} \label{par:skew_HA-2}
The subspace $\VF_{-2}(E^2) \subset \VF(E^2)$ of vector fields of weight $-2$ is a locally free $\Cf(M)$-module canonically isomorphic to the space of sections of the  core bundle $C = \core{E^2}$. We shall often identify these spaces without further comment.  In coordinates as in \eqref{e:loc_frames}, the isomorphism takes $c_\mu$ to $\coreVF{c_\mu} = \pa_{z^\mu}$, see Lemma~\ref{l:VF-k}.

The  reduction of $\kappa^2$ to order one yields a skew algebroid,  whose structure maps will be denoted by $[\cdot, \cdot]$ and $\sharp :=\sharp^1: A \ra \T M$. We assume that $\kappa^2$ has a local form {\new introduced  in \eqref{e:local_kappa2}}.  Then
$$
    \sharp e_i = Q^a_i \pa_{x^a}, \quad [e_i, e_j] = Q^k_{ij} e_k,
$$
where $(e_i)$ is a  local frame of sections of $A\to M$ which is dual to the frame $(y^i)$. \commentMR{Spróbuj to napisać gdzieś odgórnie i nie powtarzać.}
The  core of  the anchor map $\sharp^2: E^2 \ra \T^2 M$ provides {\newMR a VB morphism }
\begin{equation} \label{df:sharpC}
\sharp^C: C\ra \T M,  \quad \sharp^C(c_\mu) = Q^a_\mu \pa_{x^a}.
\end{equation}
{\newMR In more detail, $\sharp^C$ is the composition of $\core{\sharp^2}: \core{E^2} \to \core{\T^2 M}$ with the isomorphism $\core{T^2 M} \simeq \T M$, see \eqref{e:jMk_coord}.}
%Using

The next mapping is a VB morphism $\tilde{\pa}: A\ra C$ defined by $\Cf(M)$-linear map
\begin{equation}\label{df:pa}
     s\mapsto   {\new \frac12 \aliftB{s}{-2}} \in \VF_{-2}(E^2)\simeq \Sec(C),
\end{equation}
 where $s\in\Sec(A)$, see \eqref{df:algebroid_lift} for algebroid lifts. (It will turn out soon that $\tilde{\pa} = \pa$, the core of the map $\Rk^2$.)
In a similar manner we define
\begin{equation}\label{df:beta}
\beta: \Sec(A)\times \Sec(A) \ra \Sec(C), \quad \beta(s_1, s_2) = {\new \frac12\, [\aliftB{s_1}{-1}, \aliftB{s_2}{-1}]} \in \VF_{-2}(E^2) \simeq \Sec(C),
\end{equation}
which is a skew-symmetric mapping and
\begin{equation}\label{df:gamma}
\Box: \Sec(A)\times \Sec(C)\ra \Sec(C),\quad  \Box_s v = [\aliftB{s}{0}, v]  \in \VF_{-2}(E^2)  \simeq \Gamma(C)
\end{equation}
  called \emph{the action of $A$ on $C$}.
 The system of equations \eqref{e:local_kappa2} describing $\kappa^2$ results in the following formulas for the algebroid lifts {\new $\aliftB{e_k}{\K}$:}
\begin{equation}\label{e:alg_lifts_coord}
\begin{cases}
\aliftB{e_k}{0} &= Q^a_k \pa_{x^a} + Q_{jk}^i y^j \pa_{y^i} + (Q^\mu_{\nu k} \, z^\nu + \frac{1}{2} \, Q_{ij, k}^\mu \, y^i y^j)\,\pa_{z^\mu}, \\
\aliftB{e_k}{-1} &=  \pa_{y^k} +  Q^\mu_{ik}\, y^i\,\pa_{z^\mu}, \\
\aliftB{e_k}{-2} &=  {\new 2} Q^\mu_k \, \pa_{z^\mu}.
\end{cases}
\end{equation}
\commentMR{Wyprowadzę ostatni wzór. According to \eqref{df:algebroid_lift} $\aliftB{s}{-2} = 2 \ZMmap{\kappa^2}(s^{(0)})$,  a $s^{(0)} = \ZMmap{\VZM}^2_0(s)$ \eqref{df:alpha_lifts}. Ze wzoru \eqref{e:e_i_lifts} odczytuję, że  $e_k^{(0)}$ ma
postać $y^i=\dot{y}^i = 0$, $\ddot{y}^i = {\new 1}$,  u nas jest $2\ZMmap{\kappa^2}(s^{(0)})$, więc współczynnik przy $Q^\mu_k$ powinien wynieść $1 = 2\cdot 1 = 2$. I jest wszystko o.k.}
\commentMR{Współczynnik 2 -> kolejne wzory mi się zgadzają.}
\commentMR{Note that $Q^a_{ij}$, $Q^a_\mu$ are not present in these formulas.}
From this, we can easily derive the coordinate expressions for the introduced mappings $\tilde{\pa}, \beta, \Box$:
\begin{equation}\label{e:coord_pa_beta_gamma}
\begin{cases}
\tilde{\pa}(e_i) &=  Q^\mu_i c_\mu,\\
\beta(e_i, e_j) &=  Q^\mu_{[ij]} c_\mu \\
\Box_{e_i} c_\nu &= - Q^\mu_{\nu i} c_\mu.
\end{cases}
\end{equation}
{\new where  $Q^\mu_{[ij]}$ is given in \eqref{d:Q_symm_and_Q_asym}, }
and the minus sign  in the last line arises from our preference for working with left actions.  Note that $\tilde{\pa}$ coincides with $\pa$ as $\core{\Rk^2}(e_i) = Q^\mu_i c_\mu$, see \eqref{e:R2_coord}.

The symmetric part $Q^\mu_{(ij)}$ of $Q^\mu_{ij}$ is involved\footnote{The assignment $(e_i, e_j) \mapsto Q^\mu_{(ij)} c_\mu$ {does not give rise to a globally defined map.} Change $(x^a, y^i, z^\mu)$ to $(x^a, y^i, z^\mu + \frac12 u^\mu_{ij})$ gives another assignment.}
in the canonical map $\Rk^2: \At{2}\ra E^2$ (see equations \eqref{e:R2_coord}).
It turns out that the remaining structure functions $Q^\mu_{ij, k}$ alone do not define any geometric mapping. {Instead,  it is} the functions
{
\begin{equation} \label{df:Qmuijk}
\tilde{Q}^\mu_{i j k} = Q^\mu_{ij, k}
-Q^l_{jk}Q^\mu_{li}-Q^l_{ik}Q^\mu_{lj}+Q^\nu_{ji}Q^\mu_{\nu k} - Q^a_k \frac{\pa Q^\mu_{ji}}{\pa x^a}
\end{equation}
}
that give a mapping
\begin{equation}\label{e:delta_coord}
\delta: \Sec(A)\times \Sec(A) \times \Sec(A) \ra \Sec(C), \delta(e_i, e_{j}, e_{k}) =  \frac12 \tilde{Q}^\mu_{ijk} c_\mu.
\end{equation}
A coordinate-free definition of $\delta$ is
\begin{equation}\label{df:delta}
 \delta(s_1, s_2, s) =  \frac12\, [\aliftB{s_1}{-1}, [\aliftB{s_2}{-1}, \aliftB{s}{0}]]  \in \VF_{-2}(E^2) \simeq \Sec(C).
\end{equation}
It is just a matter of direct computation of Lie brackets to show \eqref{e:delta_coord}.
Introduce
{
\begin{equation}\label{df:delta_sym}
\sym{\delta}_s(s_1, s_2):= \frac12 \delta (s_1, s_2, s) + \frac12 \delta(s_2, s_1, s)\, \text{ and } \quad \alt{\delta}_s(s_1, s_2):= \frac12 \delta (s_1, s_2, s) - \frac12 \delta(s_2, s_1, s),
\end{equation}
}
so $\delta =\alt{\delta} + \sym{\delta}$. The skew-symmetric part $\alt{\delta}_s$  of $\delta(\cdot , \cdot, s)$ satisfies
{
\begin{equation}\label{df:delta_asym}
\alt{\delta}_s(s_1, s_2) := \frac12 \delta(s_1, s_2, s) - \frac12 \delta(s_2, s_1, s) = {\new \frac14} [[\aliftB{s_1}{-1}, \aliftB{s_2}{-1}], \aliftB{s}{0}] = {\new -  \frac12}  \Box_s\beta(s_1, s_2).
\end{equation}
}
Further decomposition of $\sym{\delta}$ by means of the Schur decomposition $V\otimes \Sym^2 V = \Sym^3 V \oplus W$
(where $W$ is the kernel of the total symmetrization map) yields no additional information   as
$\sum_{g\in \mathbb{S}_3} \delta(s_{g(1)}, s_{g(2)}, s_{g(3)}) = 0$ due to the Jacobi identity for vector fields.
It will be convenient to work with
\begin{equation}\label{df:omega}
    \omega: \Sec(A)\times \Sec(A) \times \Sec(A) \ra \Sec(C), \quad \omega(s_1, s_2, s) = \delta(s_1, s_2, s) - \beta(s_1, [s_2, s])
\end{equation}
and its symmetric part
\begin{equation}\label{df:omega_sym}
    {\sym{\omega}_s(s_1, s_2) := \frac12 \omega(s_1, s_2, s) + \frac12 \omega(s_2, s_1, s)}
\end{equation}
instead of $\delta$ and $\sym{\delta}$. In local coordinates we have $\sym{\omega}_{e_k}(e_i, e_j) := {\frac12} \bar{\omega}_{ij,k}^\mu c_\mu$ where
\begin{equation}\label{e:bar_omega}
     \bar{\omega}_{ij, k}^\mu = Q^\mu_{ij, k} + Q^l_{kj}Q^\mu_{(il)} + Q^l_{ki}Q^\mu_{(jl)} + Q^a_j \frac{\pa Q^l_{ik}}{\pa x^a} Q^\mu_l + Q^a_i \frac{\pa Q^l_{jk}}{\pa x^a} + Q^a_k \frac{Q^\mu_{(ij)}}{\pa x^a}.
\end{equation}
{ Note that $\alt{\omega}:= \omega - \sym{\omega}$ satisfies
\begin{equation}\label{df:omega_alt}
    \alt{\omega}_s(s_1, s_2) = - \frac12 \Box_s \beta(s_1, s_2) - \frac12 \beta(s_1, [s_2, s]) + \frac12 \beta(s_2, [s_1, s]),
\end{equation}
due to \eqref{df:delta_asym}.}
\begin{ex} \label{ex:structure_maps_E[2]} We shall describe the structure maps of $(\At{2}, \kappa^{[2]})$ --  the second order prolongation $\At{2}$ of an AL algebroid $(A, \kappa)$.
In standard coordinates $(x^a, y^i, d x^a, d y^i)$ on $\T A$ it is given locally by the equations $d x^a  = Q^a_i y^i$, hence
$(x^a, y^i, dy^i)$ form a coordinate chart for $\At{2}$.
  The coordinate description of $\kappa^{[2]} \subseteq \T^2 A \times \T \At{2}$ is
\begin{equation}\label{e:local_kappa_E2}
\kappa^{[2]}:
\begin{cases}
\dot{x}^a = & Q^a_i \,\und{y}^i \\
\ddot{x}^a = & \frac{1}{2}\Qhat{Q}^a_{ij} \, \und{y}^i\und{y}^j + Q^a_i\,\und{dy}^i, \\
\dot{\und{x}}^a = & Q^{a}_i\,y^i \\
\dot{\und{y}}^i = & \dot{y}^i +  Q^i_{jk}\,\und{y}^j y^k \\
\left(\und{dy}^l\right)^{\cdot} = & \ddot{y}^l + Q^l_{ij} \, \und{dy}^i\,y^j + Q^l_{ij}\, \und{y}^i \dot{y}^j +\frac{1}{2} \wh{Q}^l_{ij,k} \und{y}^i\und{y}^j y^k,
\end{cases}
\end{equation}
where $\Qhat{Q}^a_{ij}$ are defined in \eqref{df:Qhat} and
\begin{equation}\label{df:Ql_ijk}
\wh{Q}^l_{ij,k} = \frac{\pa Q^l_{ik}}{\pa x^a} Q^a_j + \frac{\pa Q^l_{jk}}{\pa x^a} Q^a_i.
\end{equation}
{\new We find that $\pa: A \ra \core{\At{2}}$ defined  in \eqref{df:pa}, coincides with the identity on $A$, with respect to the isomorphism given in  Lemma~\ref{l:VF-k_and_core},
$$\pa = \id_A: A \ra \core{\At{2}} \simeq A.$$}
Moreover, $(\At{2}, \kappa^{[2]})$ is Lie, so $\ZMmap{\kappa^{[2]}}$ is a Lie algebra morphism. Hence, $\beta(s_1, s_2) =  \frac12 [\aliftB{s_1}{-1}, \aliftB{s_2}{-1}] = \frac12 \aliftB{[s_1, s_2]_{A}}{-2}$, see Theorem~\ref{th:HA_axioms_and_lifts}. Thus we may write $\beta(s_1, s_2) =  [s_1, s_2]_A$ up to the isomorphism $\core{\At{2}}\simeq A$. Similarly,  $\Box_s v =  [s, v]_A$, $\delta(s_1, s_2, s) =  [[s_1, s_2]_A, s]_A$ and   $\alt{\omega} = 0$ { from \eqref{df:omega_alt} and the Jacobi identity.} Moreover, $\sharp^C = \sharp$ by Lemma~\ref{l:VF-k_and_core}.
\end{ex}
\begin{rem}\label{r:str_maps_Ek} We can analogously define the following structure maps for any order $k$ HA $(E^k, \kappa^k)$:
$$
   \phi_{\K_1, \ldots, \K_n}(s_1, \ldots, s_n) = %\frac{(k!)^{n-1}}{(k-\K_1)!\cdots (k-\K_n)!}
   \frac{1}{k!} [\ldots [[\aliftB{s_1}{-\K_1}, \aliftB{s_2}{-\K_2}], \aliftB{s_3}{-\K_3}], \ldots, \aliftB{s_n}{-\K_n}] \in \Sec(\core{E^k}),
$$
where $s_i\in \Sec(A)$ and $0\leq \K_i$ are such that $\sum_{i=1}^n \K_i = k$. In particular, $s\mapsto \frac{1}{k!}
\aliftB{s}{-k}$ defines a VB  morphism  $$ \partial^k: A \to \core{E^k}, \quad \Sec(\core{E^k}) \simeq \VF_{-k}(E^k).$$
Moreover, we have the structure maps
$$
\Box: \Sec(A) \times \Sec(\core{E^k})\ra \Sec(\core{E^k}), \quad
\Box_s v = [\alift{s}{0}, v] \in \Sec(\core{E^k}) \\
$$
and
\begin{equation}\label{e:sharp_Ek}
\sharp^{E^k}: \core{E^k} \ra \T M
\end{equation}
defined as the core of the anchor map $\sharp^k: E^k \ra \T^k M$ composed with the isomorphism $\core{\T^k M} \simeq \T M$ given in \eqref{e:i_kM}.
If $(E^k, \kappa^k)$ is Lie then, due to Theorem~\ref{th:HA_axioms_and_lifts},
$\phi_{\K_1, \ldots, \K_n}(s_1, \ldots, s_n) =  \frac{1}{k!} \aliftB{\left([\ldots [[s_1, s_2]_A, s_3]_A, \ldots, s_n]_A\right)}{-k}$, so all the structure maps $\phi_{\K_1, \ldots, \K_n}$ with fixed $n$ coincide with $\phi_{k, 0, \ldots, 0}$.

Let us assume that $(A, \sharp, [\cdot, \cdot]_A)$ is a Lie algebroid. Then $(\At{k}, \kappa^{[k]})$ is a Lie HA.  The map $\phi_k: \Sec(A) \ra \Sec(\core{\At{k}})$ gives the identification $A\simeq \core{\At{k}} \subset \T^{k-1} A$ which coincides with {\new $\jAc{k-1}: A\ra \core{\At{k}} \subset \T^{k-1} A$,}
and
$$
\phi_{\K_1, \ldots, \K_n} (s_1, \ldots, s_n) = [\ldots [[s_1, s_2]_A, s_3]_A, \ldots, s_n]_A,
$$
while {$\Box_s v  = [s, v]_A$.}
\commentMR{$\Box: \Sec{\core{E^k}} \times \Sec{A} \ra \Sec{\core{E^k}}$ to nie to samo, co $\phi_{k, 0}$. Can we assume that at most one of $\K_1, \ldots, \K_n$ is zero?}
\end{rem}
{\new We introduce a few additional  maps, denoted by $\xi$, $\psi$, $\veps$, $\veps_0$, $\veps_1$,
associated with a skew HA of order two. It will turn out that if  $(E^2, \kappa^2)$  is AL   then all these maps, except for $\veps_1$,  vanish. If $(E^2, \kappa^2)$  is  Lie then also $\veps_1$ is zero. These maps will be used in formulation of tensor-like properties of the structure maps we have already introduced.}
\begin{df} \label{df:epss} Let $(E^2, \kappa^2)$ be  a skew HA, $s_1, s_2\in \Sec(A)$, $f\in \Cf(M)$. We define
\begin{align} \label{df:xi}
&\xi: \Sec(A) \times \Sec(A)\ra \VF(M), \quad \xi(s_1, s_2) = \sharp[s_1, s_2] - [\sharp s_1, \sharp s_2], \\
&\psi: \Sec(A)\times \Sec(A)\ra \VF(M), \quad \psi(s_1, s_2)(f) := {\new\frac12} \aliftB{s_1}{-1} \aliftB{s_2}{-1} ((\sharp^2)^\ast \ddot{f}) -
%\frac12
(\sharp s_1)(\sharp s_2)(f),  \label{df:psi} \\
&\veps = \sharp^C \circ \pa - \sharp: A \ra \T M, \label{df:eps}\\
&   \veps_{k}(s_1, s_2) =  [\aliftB{s_1}{-k}, \aliftB{s_2}{-2+k}]-\aliftB{[s_1, s_2]}{-2} \in \VF_{-2}(E^2) \simeq \Sec(C), \label{df:eps_k}
\end{align}
where $k=0$ or $1$.
\end{df}
%\begin{equation}\label{e:test1}
%2\cdot 1 = 4
%\end{equation}
\begin{lem} \label{l:tensor_psi} The maps $\xi$, $\psi$, $\veps$, $\veps_0$, $\veps_1$ introduced in  Definition~\ref{df:epss} have the following properties:
\begin{enumerate}[(i)]
\item {
The maps $\xi$ and $\veps_1$ are tensorial in both arguments, so they give rise to the VB morphisms $\xi: \bigwedge^2 A\ra \T M$ and $\veps_1: \bigwedge^2 A\ra C$, respectively. Moreover, $\frac12 \veps_1(s_1, s_2) = \beta(s_1, s_2) - \pa([s_1, s_2])$.}
\item The map { $(s_1, s_2)\mapsto \veps_0(s_1, s_2)$} is tensorial in $s_2$, bot not in $s_1$, in general. We have $\veps_0(f s_1, s_2)  - f \veps_0(s_1, s_2) = \veps(s_2)(f) \pa(s_1)$. Moreover, ${ \frac12 } \veps_0(s_1, s_2) =  \Box_{s_1} (\pa s_2) - \pa ([s_1, s_2])$.
\item $\psi(s_1, s_2)$ is a derivation, hence the codomain of $\psi$ is correctly defined. Moreover,  $\psi(s_1, s_2)$ is tensorial in $s_1$, but it  is not tensorial in $s_2$, in general. Namely,
\begin{equation}\label{e:psi}
    \psi(s_1, g s_2) = g \psi(s_1, s_2) + (\sharp s_1)(g) \cdot {\new \veps(s_2)}, \tag{$\mathrm{Eq_\psi}$} \notag
\end{equation}
\end{enumerate}
In coordinates,
\begin{equation}\label{e:psi_coord}
\psi(e_{k'}, e_k) = {\new \frac12} \left(Q^\mu_{k'k} Q^a_\mu + Q^a_{k'k} -  2 Q^b_{k'} \frac{\pa Q^a_k}{\pa x^b}\right)  \pa_{x^a}.
\end{equation}
The skew-symmetric part of $\psi$, {$\alt{\psi} (s_1, s_2) = \frac12 \psi(s_1, s_2) - \frac12 \psi(s_2, s_1)$, is expressed in terms of the other structure maps:}
\begin{equation}\label{e:psi_skew_coord}
    \alt{\psi}(s_1, s_2) = {\new \frac12} \sharp^C(\beta(s_1, s_2)) - {\new \frac12} [\sharp s_1, \sharp s_2].
\end{equation}
The symmetric part of $\psi$, $\sym{\psi} = \psi - \alt{\psi}$, writes  in coordinates as\footnote{
It is tempting to consider a mapping $(e_i, e_j)\mapsto  \Qhat{Q}^a_{ij}\pa_{x^a}$. However, one can easily check that it does not give rise to a globally defined map.}
\begin{equation}\label{e:psi_bar_coord}
\sym{\psi}(e_i, e_j) = {\new \frac12} \left(Q^a_\mu Q^\mu_{(ij)} + Q^a_{ij} - \Qhat{Q}^a_{ij}\right) \pa_{x^a},
\end{equation}
{ where $\Qhat{Q}^a_{ij}$ are defined in \eqref{df:Qhat}.}
{Moreover, the condition $\sharp^2 \circ \Rk^2 = \sharp^{[2]}$ (compare with Theorem~\ref{thm:Rk}) is equivalent to the conjunction} $\sym{\psi} =0$ and $\sharp^C \circ \pa = \sharp$.
\end{lem}
The proof is given in Appendix, subsection~\ref{sSec:Ap:tensor}.

{ It  turns out that the map $\sym{\psi}$ corresponds to a certain \grB\ morphism. A slightly more general result holds:

\begin{lem} \label{l:grBA[2]_to_C} (a) Let   $(A\ra M, \sharp: A\to \T M)$ be an anchored vector bundle, and let $\rho: A \to C$ be a VB morphism . Then,  symmetric maps
    $\Psi: \Sec(A) \times \Sec(A) \ra \Sec(C)$ satisfying
    \begin{equation} \label{e:Phi_Leibniz}
        \Psi(s_1, f s_2) = f \Psi(s_1, s_2) + (\sharp s_1)(f) \rho(s_2)
    \end{equation}
    are in a one-to-one correspondence with \grB\ morphisms $\Phi: \At{2} \ra C_{[2]}$.
    The corresponding \grB\  morphism $\Phi: \At{2} \to C_{[2]}$ has the local form
\begin{equation}\label{e:Phi_coord}
    \Phi(x^a, y^i, \dot{y}^i) = \left(\rho^\mu_i(x) \dot{y}^i + \frac12 \Psi^\mu_{ij}(x) y^i y^j\right) c_\mu,
\end{equation}
   where  $\rho(e_i) = \rho^\mu_i c_\mu$ and $\Psi(e_i, e_j) = \Psi^\mu_{ij}  c_\mu$, and $(e_i)$ (respectively, $(c_\mu)$) is a local frame of sections of the vector bundle $A$ (respectively, $C$).
\end{lem}
The proof is given in Appendix, subsection~\ref{sSec:Ap:tensor}.
}

The structure maps defined above,   $\beta$, $\Box$, $\sym{\psi}$, and $\sym{\omega}$ are not $\Cf(M)$-linear in general, but satisfy certain  tensor-like identities presented in the following result.
\begin{thm}[order-two skew HAs] \label{th:skew_HA} (a)  Let $(E^2, \kappa^2)$ be a skew higher algebroid of order two, $A = E^1$, $C=\core{E^2}$. Let $v\in \Sec(C)$, $s, s_1, s_2 \in \Sec(A)$, $f\in\Cf(M)$. Then
%\begin{enumerate}[(i)]
\begin{itemize}
        \item \label{i:HA_beta} The map $\beta$ is skew-symmetric and
        \begin{equation}\label{e:tensor_beta}
            \beta(s_1, f\,s_2) = f\,\beta(s_1, s_2) + (\sharp s_1)(f) \pa(s_2).\tag{$\mathrm{Eq_\beta}$} \notag
        \end{equation}
        \item \label{i:HA_gamma} The map $(s, v)\mapsto \Box_s v$ satisfies %\footnote{Note that the mapping $v\mapsto \Box_s v$ is a derivative endomorphism.}
            \begin{align}
                \Box_{f\,s}  v &= f \Box_{s}  v  - (\sharp^C v)(f)\,\pa(s), \label{e:gamma_1}\tag{$\mathrm{Eq^1_\Box}$} \notag \\
                \Box_{s} (fv) &= f \Box_{s} v  + (\sharp s)(f) v. \label{e:gamma_2}\tag{$\mathrm{Eq^2_\Box}$} \notag
            \end{align}
            %where
        %\end{enumerate}
        \item The symmetric map $\sym{\psi}$ satisfies
\begin{equation}\label{e:tensor_psi_sym}
\sym{\psi}(s_1, f\,s_2) = f\,\sym{\psi}(s_1, s_2) +  \frac{1}{2} (\sharp s_1)(f) \veps(s_2).\tag{$\mathrm{Eq}_{\sym{\psi}}$} \notag
\end{equation}
                \item The map $\sym{\omega}_s (s_1,  s_2)$ is symmetric in $s_1$, $s_2$ and satisfies
                    { \begin{equation}\label{e:tensor_bar_omega_1}\tag{$\mathrm{Eq^1_{\bar{\omega}}}$} \notag
                        \sym{\omega}_s(s_1,  f s_2) = f \sym{\omega}_s( s_1, s_2) - \frac12 (\sharp s_1)(f) \cdot {\new \veps_0(s, s_2)} + {\newMR \frac12 } \xi(s, s_1)(f) \pa(s_2),
                    \end{equation}
                    \begin{equation}\label{e:tensor_bar_omega_2}
                    \sym{\omega}_{fs}(s_1, s_2) = f \sym{\omega}_{s}(s_1, s_2) + \frac14\left(\sharp s_1(f) \veps_1(s_2, s)+\sharp s_2(f) \veps_1(s_1, s)\right) + { \sym{\psi}(s_1, s_2)(f) \pa(s)},\tag{$\mathrm{Eq^2_{\bar{\omega}}}$}\notag
                \end{equation}
        (The maps  $\veps$, $\veps_0$, $\veps_1$, $\xi$, and $\sym{\psi}$  are as in Definition~\ref{df:epss}.)
                 }
            \end{itemize}
            %\end{enumerate}
       (b)  Conversely,
            {\new let $(A, [\cdot, \cdot], \sharp)$ be a skew algebroid and $C\ra M$ be a vector bundle. }
            Then a system of the following maps:
         \begin{enumerate}[(i)]
            \item \label{i:str_map:pa} VB morphisms $\pa: A\ra C$ and $\sharp^C: C\to \T M$ covering the identity on $M$,
            \item \label{i:str_map:beta} { a skew-symmetric map $\beta: \Sec(A)\times \Sec(A) \ra \Sec(C)$ satisfying \eqref{e:tensor_beta}, }
            \item \label{i:str_map:sharp2:square} a map $\Box: \Sec(A)\times \Sec(C)\to \Sec(C)$ satisfying \eqref{e:gamma_1} and  \eqref{e:gamma_2},
            \item \label{i:str_map:psi_sym} a symmetric map $\sym{\psi}: \Sec(A) \times \Sec(A) \to \VF(M)$ satisfying \eqref{e:tensor_psi_sym},
            \item \label{i:str_map:omega} a map $\sym{\omega}: \Sec(A) \times \Sym^2 \Sec(A) \to \Sec(C)$ satisfying
                \eqref{e:tensor_bar_omega_1} and \eqref{e:tensor_bar_omega_2},
          \end{enumerate}
          define  a skew order-two HA on the \grB\ $E^2 = \quotient{(\At{2} \times_M C_{[2]})}{(\graphW(-\pa))}$ (see Lemma~\ref{l:structureE2}) uniquely.
           (Note that the maps $\veps$, $\veps_0$, $\veps_1$, $\xi$,  which appear in the Leibniz-type identities of the structure maps listed here, can be expressed in terms of  the aforementioned maps, see Definition~\ref{df:epss} and Lemma~\ref{l:tensor_psi}.)
    %\end{enumerate}
\end{thm}

\begin{proof}
The proof of part (a) -- regarding the tensor-like properties of the structure maps $\beta$, $\sharp^C$, $\Box$, $\sym{\psi}$, and $\sym{\omega}$ listed above -- is technical and has been moved to Appendix, Subsection~\ref{sSec:Ap:tensor}.

Proof of part (b): Let $(A, [\cdot, \cdot], \sharp)$ be a skew algebroid, and assume  the structure maps listed above, $\pa$, $\beta$, $\Box$, $\sym{\psi}$, and $\sym{\omega}$, are given.

Given the  VB morphisms $\pa: A\to C$ and $\sharp: A \to \T M$,  the construction of the \grB\  $E^2$ as the quotient of $\At{2}\times_M C_{[2]}$ is well-founded, see  Lemma~\ref{l:structureE2}. We shall now  present the construction of the \grB\ morphism $\sharp^2: E^2\ra \T^2 M$.

There is  a \grB\ morphism $\Phi: \At{2} \ra (\T M)_{[2]}$ corresponding to $\Psi = \sym{\psi}$ and $\rho  = \veps$, as explained in Lemma~\ref{l:grBA[2]_to_C}. Define a map
\begin{equation}\label{e:sharp2const}
\tilde{\sharp}^2: \At{2}\times_M C_{[2]} \ra \T^2 M, \quad (a^2, v) \mapsto \sharp^{[2]}(a^2) \plus (\sharp^C v + \Phi(a^2)),
\end{equation}
where $a^2\in \At{2}$ and $v\in C$  project to the same point in $M$, and $\plus$ denotes the action of the core bundle $\core{\T^2 M} \simeq \T M$ on $\T^2 M$. We shall show that this map factors through   the action of the graph of $-\pa$, the subbundle of the core bundle $A\times_M C$, giving rise to a map from the quotient \grB\ $E^2$, constructed in Lemma~\ref{l:structureE2}. It remains to show that
$$
    \sharp^{[2]}(a^2 \plus b) \plus (\sharp^C (v - \pa b) + \Phi(a^2\plus b))
$$
does not depend on $b\in A$. Indeed, the change in the core is equal to
$$\core{\sharp^{[2]}} (b) - (\sharp^C\circ \pa)(b) + \core{\Phi}(b) = \sharp  b  - \sharp^C\circ \pa b + \veps (b)= 0, $$
 since $\core{\sharp^{[2]}} = \sharp$ and by the definition of $\veps$. A direct calculation from the coordinate formulas \eqref{e:psi_bar_coord} and \eqref{e:Phi_coord} shows that the resulting  map $E^2 \to \T^2 M$ is indeed given by the desired formula:
\begin{equation*}
\begin{split}
    \pullback{(\sharp^2)}(\ddot{x}^a) = \pullback{(\sharp^{[2]})}(\ddot{x}^2) + \pullback{(\sharp^C)}(\dot{x}^a) + \pullback{\Phi}(\dot{x}^a) =  \\
    Q^a_i \dot{y}^i + \frac12 \Qhat{Q}^a_{ij} y^i y^j + Q^a_\mu c_\mu^\ast + (Q^\mu_i Q^a_\mu - Q^a_i) \dot{y}^i + \frac12 (Q^a_{ij} - \Qhat{Q}^a_{ij}) y^i y^j =
    Q^a_\mu w^\mu + \frac12 Q^a_{ij} y^iy^j,
\end{split}
\end{equation*}
where $(x^a, y^i, w^\mu)$ is the adapted coordinate system on the quotient,  so $c_\mu^\ast = w^\mu - Q^\mu_i \dot{y}^i$ and $Q^\mu_{(ij)} = 0$, see Definition~\ref{df:adapted_coord}.

We  show now how to recover the comorphism $\kappa^2$, which covers the \grB\ $\sharp^2$ and governs the HA structure on the \grB\ $E^2$.
All the local structure functions $(Q^{\cdots}_{\cdots}) = Q^a_i$, $Q^a_{ij}$, $Q^a_\mu$, $Q^i_{jk}$, $Q^\mu_i$, $Q^\mu_{ij}$, $Q^\mu_{\nu i}$, $Q^\mu_{ij, k})$ can be derived from the structure maps listed above once we fix a graded coordinate system $(x^a, y^i, z^\mu)$ on $E^2$. (All these functions are defined locally, over an open subset $U\subset M$.) Without loss of generality, we may assume that $(x^a, y^i, z^\mu)$ is an adapted coordinate system (Definition~\ref{df:adapted_coord}), so $Q^\mu_{(ij)} = 0$.

The local structure functions $Q^a_i$, $Q^a_\mu$, and $Q^a_{ij}$ are derived from the map  $\sharp^2$. Next, $Q^\mu_{[ij]} = Q^\mu_{ij}$ and $Q^\mu_{\nu i}$ are derived from the maps $\beta$ and $\Box$, respectively,  by means of \eqref{e:coord_pa_beta_gamma}. Finally, $Q^\mu_{ij, k}$ is determined from $\sym{\omega}$, see \eqref{e:bar_omega}.

{
The structure functions $(Q^{\cdots}_{\cdots})$ establish a HA structure $(E^2_U, \kappa^2_U)$ over the base $U$, through the equations \eqref{e:local_kappa2}, where $E^2_U = (\sigma^2)^{-1}(U)$.  Moreover, the comorphism $\kappa^2_U$ determines all the structure maps  $\pa_U$, $\sharp^C_U$,  $\beta_U$, $\Box_U$, $\psi_U$, and $\sym{\omega}_U$ which are defined on sections of the vector bundles $\sigma^1_U: A_U\rightarrow U$ and $\core{\sigma^2_U}: C_U\rightarrow U$.
Also the other structure maps present in the formulation of our theorem, the maps  $\veps_U$, $(\veps_0)_U$, $(\veps_1)_U$ and $\xi$ are determined by $\kappa^2_U$, as explained in Definition~\ref{df:epss}.

These maps are consistent with the restrictions of the corresponding maps given at the outset as the latter are local operators and satisfy the same Leibniz-type identities.
For example, $\Box_U(e_i, c_\mu) = \Box|_U(e_i, c_\mu)$ by \eqref{e:coord_pa_beta_gamma}, where on the RHS, $\Box|_U$ denotes the given structure map $\Box$  restricted to $\Sec_U(A)\times \Sec_U(C)$. Moreover, both $\Box_U$ and $\Box|_U$ satisfy the same Leibniz-type identities given in \eqref{e:gamma_1} and \eqref{e:gamma_2}, because $\pa_U$ coincides with $\pa|_{\Sec(A)}$, and similarly for $\sharp$ and $\sharp^C$. Therefore,  $(\Box_U)(s|_U,  v|_U) = \Box_s v$, for any $s\in \Sec(A)$ and $v\in \Sec(C)$.

Therefore, if $U\cap U'\neq \emptyset$, then $(\kappa^2_U)\Big{|}_{U'\cap U}$ and  $(\kappa^2_{U'})\Big{|}_{U\cap U'}$ coincide with $\kappa^2_{U\cap U'}$, which is defined by the restrictions of the structure maps to the sections over $U\cap U'$.  Therefore,  $\kappa^2$ is globally well-defined.
}
\end{proof}
  %\begin{equation}\label{e:test2}
%2\cdot 2 = 4
%end{equation}
\begin{rem} \label{rm:skewHAs}
The map $\beta$ in the formulation of part (b) of  Theorem~\ref{th:skew_HA} can be replaced by the VB morphism $\veps_1: \bigwedge^2 A \rightarrow C$, defined in \eqref{df:eps_k}. Indeed, the map $\beta$ is related to $\veps_1$ via the formula given in Lemma~\ref{l:tensor_psi}, $\beta(s_1,s_2) = \frac12 \veps_1(s_1, s_2) + \pa([s_1, s_2])$.  Hence, the Leibniz-type identity \eqref{e:tensor_beta} follows from the Leibinz rule of the bracket $[\cdot, \cdot]$ on $\Sec(A)$. Note  also that the anchor $\sharp: A \to \T M$ is uniquely determined by the bracket $[\cdot, \cdot]$ on $\Sec(A)$.
\end{rem}

\subsubsection{HAs over a point}
We shall study   HAs $(\sigma^k: E^k\ra M, \kappa^k)$ in which the base $M = \{\pt\}$ is a point.  Any such  structure is fully described by a weight-respecting  mapping (see Theorem~\ref{th:HA_axioms_and_lifts})
\begin{equation}\label{e:kappa_point}
\ZMmap{\kappa^k}: \T^k \g \to \VF_{\leq 0}(E^k)
\end{equation}{
where the algebra $(\g, [\cdot, \cdot])$ is defined as the order-one reduction of $(E^k, \kappa^k)$. Let $(e_i)$ be a basis of the vector space $E^1= \g$, $(y^i)$ be the corresponding dual basis and let $(y^i, z^\mu)$ be a graded coordinate system for $E^k$ in which the weight $\w(y^i)=1$ and  $\w(z^\mu)\geq 2$. To define a HA $(\sigma^k: E^k\ra M, \kappa^k)$
it amounts to provide vector fields $\aliftB{e_m}{-\K} \in \VF_{-\K}(E^k)$ for $0\leq \K\leq k$ such that
{\begin{align} \label{e:alg_lift_HA}
    \aliftB{e_m}{0} &= \sum_i Q^i_{lm} y^l \pa_{y^i} + \sum_{\mu} f^\mu_{m}(y, z) \pa_{z^\mu}, \\
    \aliftB{e_m}{-1}  &=  \pa_{y^m}  + \sum_{\mu} g^\mu_{m}(y, z) \pa_{z^\mu},
\end{align}
}
where $Q^i_{lm}$ are the structure constants for $(\g, [\cdot, \cdot])$, i.e.,  $[e_l, e_m] = Q^i_{lm} e_i$, {\new see Theorem~\ref{th:HA_axioms_and_lifts}. It follows that } $f^\mu_m$ (resp. $g^\mu_m$) are homogeneous functions on $E^k$ of weight $\w(z^\mu)$ (resp., $\w(z^\mu)-1$).
The obtained (general) HA is AL if and only if the bracket $[\cdot, \cdot]$ is skew-symmetric.

\subparagraph{Order two.}
The  map $\Rk^k:  \T^{k-1}\g \ra E^k$ covers $\Rk^1 = \id_{\g}$, hence it gives a canonical section of the bundle projection $\sigma^k_1: E^k\ra E^1 = \g$. In case $k=2$, $\sigma^2_1: E^2\ra  \g$ is an affine bundle projection, hence the mapping $\Rk^2: \T \g = \g_{[1]} \oplus \g_{[2]} \ra E^2$ yields a canonical splitting
$$E^2 = \g_{[1]} \times  C_{[2]},$$
where $C=\core{E^2}$, and $\Rk^2(x, 0) = (x, 0)$ where $x\in \g$.
We are going to describe the structure of the {\new graded, finite dimensional  Lie algebra $\VF_{\leq0}:= \VF_{\leq0}(E^2)$.}
In standard  graded coordinates $(y^i, z^\mu)$  on $\g\times C$,  vector fields of non-positive weight $\K$, where$-2\leq \K \leq 0$,  have the following form
$$
X_0 = c_{j}^i y^j \pa_{y^i} + (c^\mu_{\nu} \, z^\nu + \frac{1}{2} \, c_{ij}^\mu \, y^i y^j)\,\pa_{z^\mu}, \quad
X_{-1} = c^i \pa_{y^i} +  c_{i}^\mu\, y^i\,\pa_{z^\mu} , \quad X_{-2} =  c^\mu \, \pa_{z^\mu}
$$
where $c^{\cdots}_{\cdots}$ are some constants, and $X_{\alpha} \in \VF_{\alpha}(E^2)$. The Lie algebra $\VF_{\leq 0}$ acts faithfully on the linear subspace $\cA_{\leq 2} \subseteq \Cf(\g \times C)$, spanned by homogenous functions of weight $\leq 2$. It has a $\R$-basis consisting of the functions $1, y^i, y^iy^j, z^\mu$ and we have  $\cA_{\leq 2}  \simeq \R\oplus \g^\ast\oplus C^\ast \oplus  \Sym^2 \g^\ast$. By examining the action of the vector fields $X_0$, $X_1$, $X_2$, we easily find the following decomposition (compare with a more general Lemma~\ref{l:structure_VF_Ek}),
\begin{equation}\label{e:VF_leq2}
   \VF_0\simeq \End(\g)\oplus \End(C) \oplus \Hom(\Sym^2 \g, C), \quad
    \VF_{-1}\simeq  \g \oplus \Hom(\g, C), \quad
    \VF_{-2} \simeq C.
\end{equation}
The formula for the Lie bracket on $\VF_{\leq 0}$ will be given in the proof of Theorem~\ref{thm:HA_point}, see \eqref{e:VF_alg}.

We shall describe  algebroid lifts $\aliftB{e}{-\K}$, where $\K\in\{0, 1, 2\}$, by means of the structure maps of $(E^2, \kappa^2)$. Then it will be straightforward to verify the condition given in  Remark~\ref{r:Lie_axiom}, ensuring that   $(E^2, \kappa^2)$  is a Lie HA.

\begin{thm} \label{thm:HA_point} The structure of a skew, order-two HA  over a point is fully determined by the linear maps $[\cdot, \cdot]: \bigwedge^2\g \ra \g$, $\pa: \g\ra C$, $\beta: \bigwedge^2\g \ra C$, $\Box: {\new \g \otimes C} \ra C$, $\sym{\omega}: \g \otimes \Sym^2 \g\ra C$. The associated { algebroid lifts $e \mapsto \aliftB{e}{-\K}\in \VF_{-\alpha}$ are } given  (with respect to the isomorphisms listed in \eqref{e:VF_leq2}) by
\begin{align*}
    \aliftB{e}{0} &=   [\cdot, e] \oplus {\new \Box_{-e}  (\cdot)} \oplus {\newMR 2 \sym{\omega}_e(\cdot, \cdot),} \\
    \aliftB{e}{-1} &=  e \oplus  \beta(\cdot, e), \\
   \aliftB{e}{-2} &=   \pa(e)
\end{align*}
\commentMR{Może dałoby się z tej definicji usunąć trochę minusów i 2?}
A skew HA $(\g\times C,  [\cdot, \cdot], \pa, \beta, \Box, \sym{\omega})$ is Lie if and only if $\g$ is a Lie algebra, $\Box$ equips $C$ with a $\g$-module structure,  $\pa: {\g} \ra C$ is a $\g$-module morphism, $\sym{\omega} =0$ and the mapping $\beta$ is given by
$$
\beta(x_1, x_2) = \pa([x_1, x_2]).
$$
{Hence,   order-two Lie higher algebroids over a point are in a one-to-one correspondence with  $\g$-module morphisms $\pa: {\g} \ra {C}$.} % \quad \und{\delta}(x, y_1, y_2) = \pa([[x, y_1], y_2] + [[x, y_2], y_1]).
\end{thm}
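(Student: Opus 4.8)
The plan is to separate the purely descriptive content (the formulas for the algebroid lifts and the assertion that the five listed maps determine the structure) from the characterization of the Lie condition, and to treat the latter by reading the bracket identities of Remark~\ref{r:Lie_axiom} off the finite-dimensional graded Lie algebra $\VF_{\leq 0} := \VF_{\leq 0}(E^2)$.

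First I would specialize the general order-two results to $M = \{\pt\}$. Since $\T M = 0$, the anchors $\sharp$ and $\sharp^2$ vanish and every tensor-like correction term in Theorem~\ref{th:skew_HA}(a) drops out, so that $[\cdot,\cdot]$, $\pa$, $\beta$, $\Box$ and $\sym{\omega}$ are honest linear maps of the stated types; Theorem~\ref{th:skew_HA}(b) then shows that these data define a skew HA on $E^2 = \g_{[1]}\times C_{[2]}$ and that every such HA arises this way. Reading \eqref{e:alg_lifts_coord} with all base coordinates absent and reinterpreting the coefficients through \eqref{e:coord_pa_beta_gamma}, \eqref{df:pa} and \eqref{e:bar_omega} yields the displayed expressions for $\aliftB{e}{0}$, $\aliftB{e}{-1}$, $\aliftB{e}{-2}$ in terms of $[\cdot,e]$, $\Box_{-e}$, $\sym{\omega}_e$, $\beta(\cdot,e)$ and $\pa(e)$. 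Note that over a point the AL axiom is automatic (both sides of the relatedness condition in Theorem~\ref{th:HA_axioms_and_lifts}(ii) land in $\T\T^2 M = 0$), so only the Lie condition remains to be analyzed.

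Next I would compute the Lie bracket on $\VF_{\leq 0}$ in the decomposition \eqref{e:VF_leq2}; this supplies the bracket formula announced in the statement. By Remark~\ref{r:Lie_axiom} the HA is Lie precisely when $[\aliftB{e_i}{-\alpha},\aliftB{e_j}{-\beta}] = \aliftB{[e_i,e_j]}{-\alpha-\beta}$ for all $0\leq\alpha,\beta\leq 2$, the pairs with $\alpha+\beta>2$ being vacuous. Projecting the four essential relations onto the components of \eqref{e:VF_leq2} I expect the following: the $\End(\g)$-part of $(\alpha,\beta)=(0,0)$ is the Jacobi identity, so $\g$ is a Lie algebra; the $\End(C)$-part of $(0,0)$ reads $[\Box_{e_i},\Box_{e_j}] = \Box_{[e_i,e_j]}$, so $\Box$ is a representation of $\g$ on $C$; the relation $(0,2)$ gives $\Box_{e_i}\pa(e_j) = \pa([e_i,e_j])$, i.e. $\pa$ is a morphism from the adjoint module to $C$; and $(1,1)$, by the definitions of $\beta$ and of $\pa$ through $\aliftB{\cdot}{-2}$, collapses to $\beta = \pa\circ[\cdot,\cdot]$. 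For the remaining relation $(0,1)$ I would use the already-derived identity $[\aliftB{s_2}{-1},\aliftB{s}{0}] = \aliftB{[s_2,s]}{-1}$ to evaluate $\delta(s_1,s_2,s) = \tfrac12[\aliftB{s_1}{-1},[\aliftB{s_2}{-1},\aliftB{s}{0}]] = \beta(s_1,[s_2,s])$, whence $\omega = 0$ and in particular $\sym{\omega} = 0$. This establishes necessity.

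For sufficiency I would assume $\g$ Lie, $\Box$ a representation, $\pa$ a module morphism, $\beta = \pa\circ[\cdot,\cdot]$ and $\sym{\omega}=0$, and verify each bracket identity directly from the explicit lifts. The only step that is not purely formal is checking $\alt{\omega}=0$: substituting $\beta = \pa\circ[\cdot,\cdot]$ and the intertwining relation $\Box_s\pa = \pa\,\ad_s$ into \eqref{df:omega_alt} reduces $\alt{\omega}_s(s_1,s_2)$ to $\tfrac12\pa$ of a Jacobiator, which vanishes; combined with $\sym{\omega}=0$ this gives $\omega=0$ and hence the $(0,1)$ identity, the other identities following from the component relations above. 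The stated bijection is then immediate, since a Lie HA over a point is exactly the package consisting of a Lie algebra $\g$, a $\g$-module $C$ (via $\Box$) and a module morphism $\pa:\g\to C$, the maps $\beta$ and $\sym{\omega}$ being forced. The main obstacle I anticipate is the explicit bracket computation on $\VF_{\leq 0}$ and, within it, tracking the $\Hom(\Sym^2\g,C)$ component carrying $\sym{\omega}$ through the $(0,0)$ and $(0,1)$ brackets; the identity $\delta = \beta(\cdot,[\cdot,\cdot])$ is the device that keeps this manageable.
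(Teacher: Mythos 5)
Your proposal is correct and follows essentially the same route as the paper's proof: both rest on the explicit bracket computation in the finite-dimensional graded Lie algebra $\VF_{\leq 0}(\g\times C)$ (the paper's formulas \eqref{e:VF_alg}) together with a case-by-case reading of the Lie axiom from Remark~\ref{r:Lie_axiom}, where the $(0,0)$, $(0,2)$ and $(1,1)$ cases yield the Jacobi identity, the $\g$-module structure on $C$, the module-morphism property of $\pa$ and $\beta=\pa\circ[\cdot,\cdot]$, and the mixed $(0,1)$ case forces $\sym{\omega}=0$. Your minor variations — invoking Theorem~\ref{th:skew_HA}(b) specialized to a point for the descriptive part (instead of the paper's direct re-verification that $\wt{\pa},\wt{\beta},\wt{\Box},\wt{\sym{\omega}}$ coincide with the input maps), and settling the $(0,1)$ case through $\omega=0$ and the identity \eqref{df:omega_alt} plus the Jacobi identity rather than through the paper's explicit component equation following \eqref{e:10} — are sound and computationally equivalent to what the paper does.
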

The proof is straightforward but somewhat lengthy, so it has been  moved to Appendix, Subsection~\ref{sSec:Ap:tensor}.

\subparagraph{Order greater than two.}
 The \grB\ hosting a higher Lie algebroid over a point of order greater than two { need not  split in a canonical way.} A simple example is provided by a non-split graded space $E^k$ such that $E^1 = \{0\}$ -- a vector space of dimension 0. In this case,   the VB comorphism $\ZMmap{\kappa^k}$ must be the zero map, i.e.,  $(\kappa^k)_a: \T^k \{0\}  \ra \T_a E^k$ is the zero map for any $a\in E^k$, as the domain is zero-dimensional.  A concrete example of this is  $E^4  = \T_q^2 M$, where $q\in M$ is a fixed point on a manifold $M$, and  the linear coordinates
$(\dot{x}^a)$ on $\T_q M$  are  assigned weight $2$, while the weight of { $\ddot{x}^a$ is~$4$.}

Following   the example given in \cite[Section 6]{MJ_MR_HA_comorph_2018}, a graded Lie algebra %(\GLA, in short)
 $\bigoplus_{i=0}^{k-1} \g_i$, where  $\g_0 = \g$, equipped  with a  graded Lie algebra morphism $A: \T^{k-1} \g \ra \bigoplus_{i=0}^{k-1} \g_i$ such that $A^0 = \id_{\g_0}$, gives rise to a  split Lie higher algebroid of order $k$.

\subsubsection{AL HAs of order two}
There are a few relations among  the structure maps {\new of a skew HA } $(E^2, \kappa^2)$  that we introduced above,  ensuring that  it is an  almost Lie algebroid.

\begin{thm}[order-two AL HAs] \label{th:AL_HA_str_maps}
Let $(E^2, \kappa^2)$ be a skew order-two HA.  Then $(E^2, \kappa^2)$ is AL if and only if
\leqnomode
\begin{align}
    &\tag{$\mathrm{AL_A}$} \label{i:E1} \text{$A$ is an almost Lie algebroid, i.e., $\xi =0$},\\
    &\tag{$\mathrm{AL_\pa}$} \label{i:AL_sharpEF} \sharp = \sharp^C\circ \pa, \text{i.e., } \veps=0, \\
    &\tag{$\mathrm{AL_\psi}$} \label{i:AL_psi} \psi = 0,\\
    &\tag{$\mathrm{AL_\Box}$} \label{i:AL_nabla}  \sharp^C(\Box_s v) = [\sharp s , \sharp^C v],\\
    &\tag{$\mathrm{AL_\omega}$} \label{i:AL_omega} \sharp^C\circ \omega = 0.
\end{align}
\reqnomode
\end{thm}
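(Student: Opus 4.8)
The plan is to characterize the almost Lie condition via Theorem~\ref{th:HA_axioms_and_lifts}\eqref{i:p:AL_axiom}, which states that $(E^2,\kappa^2)$ is AL if and only if the vector fields $\aliftB{s}{\alpha}\in\VF_\alpha(E^2)$ and $\aliftB{\sharp(s)}{\alpha}\in\VF_\alpha(\T^2 M)$ are $\sharp^2$-related for every $s\in\Sec(A)$ and $-2\leq\alpha\leq0$. By Remark~\ref{r:Lie_axiom}, it suffices to check this locally on frame sections $e_i$, for the three weights $\alpha=0,-1,-2$. First I would recall the explicit formulas \eqref{e:alg_lifts_coord} for the lifts $\aliftB{e_k}{0}$, $\aliftB{e_k}{-1}$, $\aliftB{e_k}{-2}$ on $E^2$, and write the analogous lifts $\aliftB{\sharp(e_k)}{\alpha}$ on $\T^2 M$ using that $(\T^2M,\kappa^2_M)$ is the tangent prolongation of $\T M$ (so its structure functions are the trivial ones: $\sharp=\id$, bracket of coordinate fields zero). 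The $\sharp^2$-relatedness then becomes, weight by weight, a finite list of equations on the structure functions $Q^{\cdots}_{\cdots}$, which I would then translate back into the coordinate-free identities \eqref{i:E1}--\eqref{i:AL_omega}.

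The bookkeeping would proceed by weight. The weight $-2$ condition involves only $\aliftB{e_k}{-2}=2Q^\mu_k\pa_{z^\mu}$ and, on the $\T^2M$ side, the core lift; being $\sharp^2$-related forces the compatibility of the top core of $\sharp^2$ with $\sharp^C\circ\pa$, which is exactly $\veps=\sharp^C\circ\pa-\sharp=0$, i.e.\ \eqref{i:AL_sharpEF}. The weight $-1$ condition compares $\aliftB{e_k}{-1}=\pa_{y^k}+Q^\mu_{ik}y^i\pa_{z^\mu}$ with its image under $\T\sharp^2$; pushing forward and matching the $\pa_{x^a}$- and second-order-coordinate components produces both the anchor-compatibility of $\Box$ (namely $\sharp^C(\Box_s v)=[\sharp s,\sharp^C v]$, giving \eqref{i:AL_nabla}) and the vanishing of the two-argument defect $\psi$ (giving \eqref{i:AL_psi}, via the coordinate formula for $\psi(e_{k'},e_k)$ already recorded in Lemma~\ref{l:tensor_psi666}). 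The weight $0$ condition on $\aliftB{e_k}{0}$ then yields, after subtracting the contributions already accounted for at lower weight, the remaining identity $\sharp^C\circ\omega=0$, which is \eqref{i:AL_omega}; the reduction to order one of the whole relatedness is precisely the AL axiom for $\kappa^1$, i.e.\ \eqref{i:E1}.

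The main obstacle I anticipate is organizing the weight $0$ computation cleanly: the lift $\aliftB{e_k}{0}$ carries the structure functions $Q^i_{jk}$, $Q^\mu_{\nu k}$ and $Q^\mu_{ij,k}$ simultaneously, and its $\sharp^2$-image mixes first and second derivatives of the anchor through $\Qhat{Q}^a_{ij}$ as in \eqref{df:hat}. Disentangling the genuinely new relation $\sharp^C\circ\omega=0$ from the already-established conditions \eqref{i:E1}--\eqref{i:AL_nabla} requires carefully using the definition \eqref{df:omega} of $\omega$ (which subtracts the $\beta(s_1,[s_2,s])$ term) together with the symmetrization conventions \eqref{df:omega_sym}; one must verify that exactly the combination appearing in $\omega$, and no residual piece, survives after the lower-weight identities are imposed. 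I would handle this by first establishing the three lower-weight equivalences, then substituting them into the weight $0$ equations so that what remains is manifestly $\sharp^C$ applied to the defining expression of $\omega$. For the converse direction, I would run the same coordinate computation backwards: assuming \eqref{i:E1}--\eqref{i:AL_omega}, verify the $\sharp^2$-relatedness of $\aliftB{e_k}{\alpha}$ and $\aliftB{\sharp e_k}{\alpha}$ for each $\alpha$, which by Theorem~\ref{th:HA_axioms_and_lifts}\eqref{i:p:AL_axiom} and Remark~\ref{r:Lie_axiom} gives the AL property. Since part of this is the coordinate-heavy case $\alpha=0$, I would defer those explicit calculations to Appendix~\ref{sSec:AL_and_Lie_HA_eqns}, as the paper announces.
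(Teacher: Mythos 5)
Your overall strategy---reduce the AL property to $\sharp^2$-relatedness of the frame lifts via Theorem~\ref{th:HA_axioms_and_lifts}\eqref{i:p:AL_axiom} and Remark~\ref{r:Lie_axiom}, compute weight by weight, and translate the resulting equations on the structure functions into the five stated conditions---is sound, and it is exactly how the paper handles the computational core in Appendix, Subsection~\ref{sSec:AL_and_Lie_HA_eqns}. However, your weight bookkeeping contains a genuine error that breaks the forward direction as you state it: \eqref{i:AL_nabla} cannot come out of the weight $-1$ condition. The lift $\aliftB{e_k}{-1}=\pa_{y^k}+Q^\mu_{ik}y^i\pa_{z^\mu}$ involves only the structure functions $Q^\mu_{ij}$ (those of $\beta$); the functions $Q^\mu_{\nu k}$ encoding $\Box$ occur solely in the weight-zero lift $\aliftB{e_k}{0}$, see \eqref{e:alg_lifts_coord}. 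Pushing $\aliftB{e_k}{-1}$ forward by $\T\sharp^2$ and comparing with $\aliftB{(\sharp e_k)}{-1}$ yields exactly one family of equations, \eqref{e:Qa_kj}, whose skew part is \eqref{i:AL_beta} and whose symmetric part is $\sym{\psi}=0$; that is, weight $-1$ is equivalent to \eqref{i:AL_psi} alone. Consequently the weight-zero condition must deliver \emph{three} identities, not one: the anchor--bracket compatibility of \eqref{i:E1} (equation \eqref{e:QakQk_ij}), the condition \eqref{i:AL_nabla} (equation \eqref{e:QanuQnu_mui}), and the $\omega$-identity \eqref{i:AL_omega} (equation \eqref{e:QamuQmu_ijk}). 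So the step ``weight $-1$ gives \eqref{i:AL_nabla}'' would fail, and your plan to obtain only $\sharp^C\circ\omega=0$ at weight zero ``after subtracting the lower-weight contributions'' is likewise off, since \eqref{i:AL_nabla} is itself one of the weight-zero consequences that must be extracted there (and, in the converse direction, is needed as input to verify weight-zero relatedness).

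For comparison, the paper's forward implication avoids the weight-zero coordinate expansion altogether: it uses that Lie brackets of $\sharp^2$-related vector fields are $\sharp^2$-related, obtaining \eqref{i:AL_nabla} from the relatedness of $[\aliftB{s}{0},\coreVF{v}]$ with $[\aliftB{(\sharp s)}{0},\tfrac12\aliftB{(\sharp^C v)}{-2}]$ and \eqref{i:AL_omega} from the triple bracket defining $\delta$, while \eqref{i:AL_sharpEF} and \eqref{i:AL_psi} are deduced from \eqref{i:AL_R} (Theorem~\ref{thm:Rk}) together with \eqref{i:AL_beta}, via Remark~\ref{r:AL_HA_axioms} and Lemma~\ref{l:tensor_psi666}. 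Only the converse at weight zero is done by the coordinate computation you propose, and there the genuinely hard step is deriving \eqref{e:QamuQmu_ijk} from the hypotheses, which the paper does through the identity $\sharp^C\circ\sym{\delta}=\sym{\delta}_{\T^2 M}\circ\sharp^{\times 3}$. If you correct the weight assignment, your all-coordinates plan can be completed, but you would be re-deriving by hand what the bracket-relatedness argument gives for free.
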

{\newMR
\begin{cor}\label{cor:AL_HAs} An order-two AL HA  on a \grB\ $E^2\ra M$ is defined by:
\begin{itemize}
    \item an AL algebroid structure on the vector bundle $A\ra M$;
    \item VB morphisms $\pa$, $\sharp^C$, and $\veps_1$,
    \item maps $\Box$ and $\sym{\omega}$ that satisfy the aforementioned Leibniz-type identities,
\end{itemize}
such that conditions  \eqref{i:AL_nabla} and  \eqref{i:AL_sharpEF} are satisfied, and the images of the maps  $\veps_1$ and $\sym{\omega}$ lie in the kernel of the VB morphism $\sharp^C$.
\end{cor}
}
{\newMR
\begin{proof} This corollary  follows directly from Theorem~\ref{th:AL_HA_str_maps} and Remark~\ref{rm:skewHAs}:

Assume $(E^2, \kappa^2)$ is  an AL HA. Then  $\alt{\psi} =0$ follows from \eqref{i:AL_psi}, and the identity
 \begin{equation}
    \label{i:AL_beta} \sharp^C(\beta(s_1, s_2)) = \sharp [s_1, s_2] \tag{$\mathrm{AL_\beta}$} \notag
        \end{equation}
follows from  \eqref{e:psi_skew_coord} and \eqref{i:E1}.  Therefore,
$\frac12 \sharp^C \circ \veps_1 = \sharp^C \circ \beta - \sharp^C\circ \pa \circ [\cdot, \cdot] = (\sharp- \sharp^C\circ \pa)\circ [\cdot, \cdot] = 0$, due to \eqref{i:AL_sharpEF}.
Clearly,
\begin{equation}
    \label{i:AL_omega_bar} \sharp^C\circ \sym{\omega} = 0.  \tag{$\mathrm{AL_{\bar{\omega}}}$} \notag %\label{i:Lie_ax:omega_bar}
\end{equation}
follows from \eqref{i:AL_omega}.

Conversely, from Theorem~\ref{th:AL_HA_str_maps} and Remark~\ref{rm:skewHAs}, it follows that the maps listed in Corollary~\ref{cor:AL_HAs} define a skew HA  (with $\sym{\psi} =0$).

The identity \eqref{i:AL_beta} holds by the assumption $\sharp^C \circ \veps_1 = 0$, \eqref{i:E1}, and  \eqref{i:AL_sharpEF}.  Hence $\alt{\psi} = 0$, and \eqref{i:AL_psi} follows from the assumption $\sym{\omega} = 0$.

We have, $\sharp^C\circ \alt{\omega} (s_1, s_2, s) =  \frac12  \sharp\circ \Jac(s_1, s_2, s)$, from
\eqref{df:omega_alt}, \eqref{i:AL_beta}, and  \eqref{i:AL_nabla}, where
\begin{equation} \label{e:Jac}
    \Jac(X_1, X_2, X) = [X_1, [X_2, X]]  - [X_2, [X_1, X]] - [[X_1, X_2], X].
\end{equation}
 Moreover,  $\sharp\circ \Jac(s_1, s_2, s) = \Jac(\sharp s_1, \sharp s_2, \sharp s) = 0$ since $A$ is an  AL algebroid. Therefore, $\sharp^C\circ \alt{\omega} =0$ and \eqref{i:AL_omega} follows from the assumption $\sharp^C \circ \sym{\omega} = 0$.
\end{proof}
}
\begin{rem}
The equation \eqref{i:AL_sharpEF} implies that $\psi$ is $\Cf(M)$-linear in both arguments. Also the difference between left and right hand side in \eqref{i:AL_nabla} is $\Cf(M)$-linear in $s$ and $v$ thanks to \eqref{i:AL_sharpEF}. Moreover, in this case, also $\sym{\omega}$ and $\omega$ are $\Cf(M)$-linear in all arguments, see \eqref{e:tensor_bar_omega_1}, \eqref{e:tensor_bar_omega_2}. Therefore, it is enough to check the condition listed in Theorem~\ref{th:AL_HA_str_maps}   for arguments  from local frames $(e_i)$, $(c_\mu)$ of $\Sec(A)$ and $\Sec(C)$, respectively.
\end{rem}
\commentMR{Why the link \eqref{e:sharp2const} does not work correctly? Take another link:  \eqref{e:test2} and \eqref{e:test1}. This is very strange. When I uncomment the equation {e:test2} then {e:sharp2const} works correctly, but {e:test2} - not.}
\begin{rem} \label{r:sharp2}  Note that in the AL case,  \eqref{e:sharp2const} simplifies to $\tilde{\sharp}^2(a^2, v) = \sharp^{[2]}(a^2) \oplus \sharp^C(v)$, giving rise to the $\nd{2}$-order anchor map $\sharp^2: E^2 \ra \T^2 M$.
\end{rem}
\begin{proof}[Proof of Theorem~\ref{th:AL_HA_str_maps}]
Let $(E^2, \kappa^2)$ be an almost Lie HA. Point \eqref{i:E1} is  part of the definition of an AL algebroid. {\newMR
We shall show \eqref{i:AL_beta}. From  Theorem~\ref{thm:Rk}~\eqref{it:Rk_rho} and Lemma~\ref{l:tensor_psi} we obtain  $\sym{\psi} = 0$ and \eqref{i:AL_sharpEF}.  We shall show \eqref{i:AL_beta} from which the condition $\sym{\psi}$ follows, hence \eqref{i:AL_psi}, see again Lemma~\ref{l:tensor_psi}.}

It is well known that if $X_i\in \VF(M)$, $Y_i\in \VF(N)$, and $f: M\ra N$ is a differentiable mapping such that the vector fields $X_i, Y_i$ are $f$-related for $i=1,2$ then  $[X_1, X_2]$ and $[Y_1, Y_2]$-are $f$-related, as well.
Hence, using  Theorem~\ref{th:HA_axioms_and_lifts}, we find that the vector fields $[\aliftB{s_1}{-1}, \aliftB{s_2}{-1}]\in \VF(E^2)$ and
$[\aliftB{(\sharp s_1)}{-1}, \aliftB{(\sharp s_2)}{-1}]\in \VF(\T^2 M)$ are $\sharp^2$-related.
On the other hand, %$\alift{[s_1, s_2]}{-2}$
$\frac12 [\aliftB{s_1}{-1}, \aliftB{s_2}{-1}] $ is a vector field of weight $-2$ corresponding to the section  $ \beta(s_1, s_2) \in \Sec(C)$ while %, due to Example~\ref{ex:structure_maps_E[2]},
$[\aliftB{(\sharp s_1)}{-1}, \aliftB{(\sharp s_2)}{-1}] = \aliftB{[\sharp s_1, \sharp s_2]}{-2}$ as $(\T^2 M, \kappa^2_M)$ is a Lie HA (see Example~\ref{ex:kappa^k_M}).
The latter corresponds to the section  $2 [\sharp s_1, \sharp s_2]\in \Sec(\T M)\simeq \Sec(\core{\T^2 M})$, see Lemma~\ref{l:VF-k_and_core}. Hence, using Lemma~\ref{l:VF-k} and \eqref{df:sharpC}, we find that $\sharp^C \circ \beta(s_1, s_2) = \core{\sharp^2}(\beta(s_1, s_2)) = [\sharp s_1, \sharp s_2]$. This completes the proof of \eqref{i:AL_beta}.

We shall prove the next two identities,  \eqref{i:AL_nabla} and \eqref{i:AL_omega}, in a similar way.

Consider $v \in \Sec(C)$ as the vector field $\coreVF{v}$ on $E^2$.  Then  the vector fields $v$ and $\frac12 \aliftB{\sharp^C(v)}{-2}\in \VF_{-2}(\T^2 M)$ are $\sharp^2$-related, see Lemmas~\ref{l:VF-k}, ~\ref{l:VF-k_and_core} and the definition of $\sharp^C$. Due to the AL-assumption on $(E^2, \kappa^2)$, the vector fields $\aliftB{s}{0}\in \VF(E^2)$ and $\aliftB{(\sharp s)}{0}\in \VF(\T^2 M)$ are $\sharp^2$-related, so the corresponding Lie brackets, i.e.,  the vector fields ${\new \Box_s v = [\aliftB{s}{0}, v]}\in \VF_{-2}(E^2)$ and ${\new [\aliftB{(\sharp s)}{0}, \frac12 \aliftB{(\sharp^C v)}{-2}]}\in \VF_{-2}(\T^2M)$ are $\sharp^2$-related, as well. Since $(\T^2 M, \kappa^2_M)$ is Lie, the latter vector field corresponds to $[\sharp s, \sharp^C v] \in \VF(M)$  and  \eqref{i:AL_nabla} follows.

For \eqref{i:AL_omega},  due to the AL-assumption,  ${\new 2} \delta(s_1, s_2, s) =  [\aliftB{s_1}{-1}, [\aliftB{s_2}{-1} \aliftB{s}{0}] ]\in \VF_{-2}(E^2)$ is $\sharp^2$-related with
$$ [\aliftB{(\sharp s_1)}{-1},[\aliftB{(\sharp s_2)}{-1}, \aliftB{(\sharp s)}{0}]]  =  [\aliftB{(\sharp s_1)}{-1},[\aliftB{(\sharp s_2)}{-1}, \aliftB{(\sharp s)}{0}]]
=  \aliftB{[\sharp s_1, [\sharp s_2, \sharp s]]}{-2} \in \VF_{-2}(\T^2 M).$$
Hence, the sections $\delta(s_1, s_2, s)$ and $\frac12 [\sharp s_1, [\sharp s_2, \sharp s]]$ are $\sharp^C$-related. Therefore,
$$\sharp^C\circ \omega(s_1, s_2, s) = \sharp^C\circ \delta(s_1, s_2, s) - \sharp^C \circ \beta(s_1, [s_2, s]) = \frac12[\sharp s_1, [\sharp s_2, \sharp s] - \frac12 \sharp [s_1, [s_2,  s]]  = 0 $$
due to \eqref{i:AL_beta} and \eqref{i:E1}.

On the other hand, we assume that the structure maps of $(E^2, \kappa^2)$  satisfy the conditions \eqref{i:E1}-\eqref{i:AL_omega} and shall show that the vector fields $\aliftB{s}{\K}$ and $\aliftB{(\sharp s)}{\K}$, both of weight $\K$, are $\sharp^2$-related for   $\K  =-2, -1, 0$. This implies that $(E^2, \kappa^2)$  is almost Lie due to Theorem~\ref{th:HA_axioms_and_lifts}.

The case $\K=-2$ is simple: $\pa(s) =  \frac12 \alift{s}{-2}$ and $\frac12 \aliftB{(\sharp s)}{-2}$ are $\sharp^2$-related due to the relation \eqref{i:AL_sharpEF}.

Let $\K=-1$.
The vector field $\aliftB{s}{-1}\in\VF_{-1}(E^2)$ is projectable onto $E^1$ and its projection $(\T\sigma^2_1) \aliftB{s}{-1}$ coincides with the $(E^1, \kappa^1)$-algebroid lift {$\aliftB{s}{-1}$}, see Lemma~\ref{l:comp_alg_lifts}. Similarly, $\aliftB{(\sharp s)}{-1} \in \VF_{-2}(\T^2 M)$ is also a vector field projectable onto $\T M$ and it coincides with the tangent algebroid $(\T M, \kappa_M)$-lift $\aliftB{(\sharp s)}{-1}$. Thus the case $\K=-1$ reduces to the condition that $(E^1, \kappa^1)$ is AL, and we are done.

 The proof in the case $\K = 0$ is given in Appendix, Subsection~\ref{sSec:AL_and_Lie_HA_eqns}
 where we perform direct calculations using coordinates.
\end{proof}

\subsubsection{Lie HAs of order two}
In the following result, we provide conditions (referred to as the axioms of Lie HAs) on the structure maps introduced earlier, ensuring that a given AL HA  $(E^2, \kappa^2)$ is a Lie HA.

\begin{thm}[Lie HAs of  order two] \label{th:Lie_axiom_maps}
Let $(E^2, \kappa^2)$ be an AL HA. Then $(E^2, \kappa^2)$ is Lie if and only if
\leqnomode
%\text{The algebroid $(A, [\cdot, \cdot], \sharp)$, which is the reduction of $(E^2, \kappa^2)$ to order one, is a Lie %algebroid,} \\
\begin{align}
&\tag{$\mathrm{Lie_A}$} \label{i:Lie_ax:A1} \text{$A$ is a Lie algebroid,}\\
&\tag{$\mathrm{Lie_\Box}$}  \label{i:Lie_ax:A_on_C} \Box_{[s_1, s_2]} v = \Box_{s_1}\Box_{s_2} v - \Box_{s_2}\Box_{s_1} v, \\
&\tag{$\mathrm{Lie_\pa}$}  \label{i:Lie_ax:pa} \pa([s_1, s_2]) = \Box_{s_1} \pa(s_2), \text{i.e.,  $\veps_0 =0$,}\\
&\tag{$\mathrm{Lie_\beta}$} \label{i:Lie_ax:beta} \beta(s_1, s_2) =  \pa([s_1, s_2]) , \text{i.e.,  $\veps_1 =0$,}\\
&\tag{$\mathrm{Lie_\omega}$}  \label{i:Lie_ax:omega} \omega = 0.
\end{align}
\reqnomode
\end{thm}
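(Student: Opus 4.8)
The plan is to reduce the Lie property to a finite list of bracket identities between algebroid lifts and then to match these with the five structural conditions. By the Lie-axiom characterization of Theorem~\ref{th:HA_axioms_and_lifts} together with Remark~\ref{r:Lie_axiom}, an almost Lie $(E^2,\kappa^2)$ is Lie exactly when, for a local frame $(e_i)$ of $A$,
\begin{equation}\label{e:plan_bracket}
[\aliftB{e_i}{-\alpha},\aliftB{e_j}{-\beta}]=\aliftB{[e_i,e_j]}{-\alpha-\beta}\qquad(0\le\alpha,\beta\le2).
\end{equation}
Because $\aliftB{s}{-\gamma}=0$ for $\gamma>2$ and $\VF_w(E^2)=0$ for $w<-2$, only the pairs with $\alpha+\beta\le2$ carry content, and skew-symmetry of the bracket leaves the four cases $(\alpha,\beta)\in\{(0,0),(1,0),(2,0),(1,1)\}$. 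Throughout I would use the dictionary $\pa(s)=\tfrac12\aliftB{s}{-2}$, $\beta(s_1,s_2)=\tfrac12[\aliftB{s_1}{-1},\aliftB{s_2}{-1}]$, $\Box_s v=[\aliftB{s}{0},v]$ and $\delta(s_1,s_2,s)=\tfrac12[\aliftB{s_1}{-1},[\aliftB{s_2}{-1},\aliftB{s}{0}]]$, all read under $\VF_{-2}(E^2)\simeq\Sec(C)$ (Lemma~\ref{l:VF-k}), together with the fact that $A=E^1$ is a Lie algebroid, which for the forward direction is automatic since the order-one reduction of a Lie HA is Lie (Definition~\ref{df:higher_algebroid}).

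For the forward implication I would assume \eqref{e:plan_bracket} and extract the conditions one case at a time. The pair $(1,1)$ gives $2\beta=2\pa\circ[\cdot,\cdot]$, i.e.\ \eqref{i:Lie_ax:beta}; the pair $(2,0)$ gives $\Box_s\pa(s')=\pa([s,s'])$, i.e.\ \eqref{i:Lie_ax:pa}. Applying $[\,\cdot\,,v]$ with $v\in\VF_{-2}(E^2)$ to the $(0,0)$ identity and using the Jacobi identity for vector fields turns its left-hand side into $\Box_{s_1}\Box_{s_2}v-\Box_{s_2}\Box_{s_1}v$ and its right-hand side into $\Box_{[s_1,s_2]}v$, which is \eqref{i:Lie_ax:A_on_C}; and feeding the $(1,0)$ identity $[\aliftB{s_2}{-1},\aliftB{s}{0}]=\aliftB{[s_2,s]}{-1}$ into the definition of $\delta$ gives $\delta(s_1,s_2,s)=\beta(s_1,[s_2,s])$, hence $\omega=0$ by \eqref{df:omega}, which is \eqref{i:Lie_ax:omega}. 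Condition \eqref{i:Lie_ax:A1} is the already-noted fact that $A$ is Lie.

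For the converse I would assume \eqref{i:Lie_ax:A1}--\eqref{i:Lie_ax:omega} and re-establish \eqref{e:plan_bracket}, working in order of increasing total weight so that earlier cases feed the later ones. The cases $(1,1)$ and $(2,0)$ are \eqref{i:Lie_ax:beta} and \eqref{i:Lie_ax:pa} read backwards through the dictionary. For $(1,0)$, the difference $D:=[\aliftB{s_1}{-1},\aliftB{s_2}{0}]-\aliftB{[s_1,s_2]}{-1}$ has vanishing projection to $E^1$ by \eqref{i:Lie_ax:A1} and Lemma~\ref{l:comp_alg_lifts}, so (Lemma~\ref{l:structure_VF_Ek}) it lies in the $\Hom(A,C)$-component of $\VF_{-1}(E^2)$; bracketing with $\aliftB{s_3}{-1}$ produces $2\omega(s_3,s_1,s_2)=0$, and since this bracket recovers such a field completely, $D=0$. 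For $(0,0)$, the difference $D:=[\aliftB{s_1}{0},\aliftB{s_2}{0}]-\aliftB{[s_1,s_2]}{0}$ again projects to zero on $E^1$ by \eqref{i:Lie_ax:A1}, so it decomposes into an $\End(C)$-part and a $\Hom(\Sym^2A,C)$-part, cf.\ \eqref{e:VF_leq2}. The $\End(C)$-part is killed by $[D,v]=0$ ($v\in\VF_{-2}(E^2)$), which is precisely \eqref{i:Lie_ax:A_on_C} via Jacobi; once it is gone, the $\Hom(\Sym^2A,C)$-part is detected by $[\aliftB{s_3}{-1},D]$, which I would show equals $\aliftB{J}{-1}$ with $J$ the Jacobiator $[[s_3,s_1],s_2]-[[s_3,s_2],s_1]-[s_3,[s_1,s_2]]$ — this uses the already-proven $(1,0)$ relation — and $J=0$ by \eqref{i:Lie_ax:A1}. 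Hence $D=0$ and \eqref{e:plan_bracket} holds in all four cases, giving the Lie property by Theorem~\ref{th:HA_axioms_and_lifts}.

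The main obstacle is the bookkeeping underlying this detection principle: one must pin down, via the weight decomposition of $\VF_{\le0}(E^2)$ (Lemma~\ref{l:structure_VF_Ek} and \eqref{e:VF_leq2}), exactly which iterated bracket against $\aliftB{\cdot}{-1}$ isolates the $\End(C)$ and $\Hom(\Sym^2A,C)$ components, and keep track of the factors of $2$ coming from $\pa=\tfrac12\aliftB{}{-2}$ and $\beta=\tfrac12[\aliftB{}{-1},\aliftB{}{-1}]$. Establishing the $(0,0)$ case cleanly also hinges on ordering the argument so that the $(1,0)$ relation is already available; should an invariant treatment of the nested brackets prove unwieldy, the identities can instead be verified in the adapted coordinates of Definition~\ref{df:adapted_coord} using \eqref{e:alg_lifts_coord}, as deferred to Appendix~\ref{sSec:AL_and_Lie_HA_eqns}. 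Finally, the tensorial identities of Theorem~\ref{th:skew_HA} and the AL relations of Theorem~\ref{th:AL_HA_str_maps} guarantee that checking \eqref{e:plan_bracket} on frames suffices and that the extracted maps are the intended global structure maps.
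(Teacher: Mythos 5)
Your proposal is correct and follows essentially the same route as the paper's own proof: both reduce the Lie condition to the finite list of bracket identities $[\aliftB{s_1}{i},\aliftB{s_2}{j}]=\aliftB{[s_1,s_2]}{i+j}$ via Theorem~\ref{th:HA_axioms_and_lifts}, read off the five conditions in the forward direction, and in the converse show the defect fields in weights $-1$ and $0$ vanish using the decomposition of Lemma~\ref{l:structure_VF_Ek} and bracketing against lifts. The only cosmetic deviation is in the $(0,0)$ case, where you kill the $\End(C)$-component by bracketing with $\VF_{-2}(E^2)$ and detect the $\Hom(\Sym^2 A,C)$-component with a single bracket against $\aliftB{s_3}{-1}$, while the paper restricts to $C$ and uses a double bracket --- the same detection principle either way.
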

\begin{rem}\label{r:Lie_ax} The condition  \eqref{i:Lie_ax:omega} can be replaced with
\begin{equation}
\label{i:Lie_ax:omega_bar} \sym{\omega} = 0.
\tag{$\mathrm{Lie_{\bar{\omega}}}$}
\end{equation}
Indeed, vanishing of $\alt{\omega} = \omega  - \sym{\omega}$ follows from \eqref{df:omega_alt},
 \eqref{i:Lie_ax:pa} and  \eqref{i:Lie_ax:beta}. Thus,  \eqref{i:Lie_ax:omega} follows from \eqref{i:Lie_ax:pa}, \eqref{i:Lie_ax:beta} and \eqref{i:Lie_ax:omega_bar}.
\commentMR{The conditions \eqref{i:Lie_ax:pa} and  \eqref{i:Lie_ax:beta} can not be omitted from the formulation of the above -- they are not implied by   \eqref{i:Lie_ax:omega} and Theorem~\ref{th:skew_HA}, because it can happen that $\xi=0$ but $\veps_0\neq 0$. }
\end{rem}
\begin{rem}\label{r:LieHA:axioms}
It is a  straightforward calculation to show that, in { an AL} HA, the mapping  $\curv_{\Box}(s_1, s_2, v) := \Box_{s_1} \Box_{s_2} v - \Box_{s_2} \Box_{s_1} v - \Box_{[s_1, s_2]} v$, as well as the Jacobiator \eqref{e:Jac}, is a tensor.   {\new Moreover, if $(E^2, \kappa^2)$ is AL, then also the difference between LHS and RHS of the remaining conditions \eqref{i:Lie_ax:pa}, \eqref{i:Lie_ax:beta} and \eqref{i:Lie_ax:omega} is also tensorial.} Hence, it is enough to verify all the conditions given in Theorem~\ref{th:Lie_axiom_maps} on sections from local frames of the VBs $A$ and $C$.
\end{rem}
{
\begin{rem}\label{r:LieHA}
The structure of a Lie HA $(E^2, \kappa^2)$ is fully determined by the Lie algebroid structure on $A\to M$, along with the maps $\pa$, $\Box$, and $\sharp^C$, such that the following compatibility conditions hold: \eqref{i:AL_sharpEF}, \eqref{i:AL_nabla}, \eqref{i:Lie_ax:A_on_C}, and \eqref{i:Lie_ax:pa}. {\newMR  Indeed, we define a skew HA on the \grB\ $E^2$  described in Lemma~\ref{l:structureE2}  by setting $\beta$ via \eqref{i:Lie_ax:beta}, so that $\veps_1=0$, and
$\sym{\psi} = 0$, $\sym{\omega} = 0$, see Theorem~\ref{th:skew_HA}.   The resulting skew HA is AL (see Corollary~\ref{cor:AL_HAs}), and Lie as \eqref{i:Lie_ax:omega} follows from \eqref{i:Lie_ax:omega_bar}.
}\end{rem}
}

\begin{proof}
Assume that $(E^2, \kappa^2)$ is a Lie HA, hence  $(A, \kappa)$ is a Lie algebroid, hence \eqref{i:Lie_ax:A1} holds.
According to Theorem~\ref{th:HA_axioms_and_lifts}, an almost Lie HA $(E^2, \kappa^2)$ is Lie if and only if
\begin{equation}\label{e:kappa_2_bracket}
\aliftB{[s_1, s_2]}{i+j} = [\aliftB{s_1}{i}, \aliftB{s_2}{j}]
\end{equation}
for $s_1, s_2\in \Sec(A)$, and $(i, j) = (0, 0), (-1, 0), (-1, -1)$ and  $(-2, 0)$.
We shall show first that \eqref{e:kappa_2_bracket} implies { the remaining} conditions { \eqref{i:Lie_ax:A_on_C} -- \eqref{i:Lie_ax:omega}.}

The condition \eqref{i:Lie_ax:A_on_C} can be rewritten in the form
{ $$
    [\aliftB{[s_1, s_2]}{0}, v] = [\aliftB{s_1}{0}, [\aliftB{s_2}{0}, v]] - [\aliftB{s_2}{0}, [\aliftB{s_1}{0}, v]]
$$
}
and it follows from \eqref{e:kappa_2_bracket} with $(i, j)=(0,0)$ and the Jacobi identity for vector fields.  The { conditions \eqref{i:Lie_ax:pa} and \eqref{i:Lie_ax:beta}} can be equivalently written as \eqref{e:kappa_2_bracket} with $(i, j)=(-1, -1)$ and $(i, j)=(-2, 0)$. Indeed,
$$\beta(s_1, s_2) = \frac12 [\aliftB{s_1}{-1}, \aliftB{s_2}{-1}] = \frac12 \aliftB{[s_1, s_2]}{-2} = \pa([s_1, s_2]) =  [\aliftB{s_1}{0}, \frac12 \aliftB{s_2}{-2}] = \Box_{s_1}\pa(s_2).$$
Finally, \eqref{i:Lie_ax:omega} reads as {\new
$$
    [\aliftB{s_1}{-1}, [\aliftB{s_2}{-1}, \aliftB{s}{0}]] = [\aliftB{s_1}{-1}, \aliftB{[s_2, s]}{-1}]
$$}
(see \eqref{df:omega}), and this equality is true thanks to  \eqref{e:kappa_2_bracket} with $(i, j)=(-1,0)$.

Conversely, assume that the conditions \eqref{i:Lie_ax:A1}-\eqref{i:Lie_ax:omega} hold. Then, for $\eqref{e:kappa_2_bracket}_{(i, j)=(0, -2)}$, we write
$$
    [\aliftB{s_1}{0},  \frac12 \aliftB{s_2}{-2}] = \Box_{s_1}\pa(s_2) \stackrel{\eqref{i:Lie_ax:pa}}{=} \pa([s_1, s_2]) = \frac12 \aliftB{[s_1, s_2]}{-2}.
$$
Similarly, for  $\eqref{e:kappa_2_bracket}_{(i, j)=(-1, -1)}$:
$$
     \frac12 [\alift{s_1}{-1}, \aliftB{s_2}{-1}] = \beta(s_1, s_2)  \stackrel{\eqref{i:Lie_ax:beta}}{=}  \pa([s_1, s_2]) = \frac12 \aliftB{[s_1, s_2]}{-2}.
$$
The case { $(i, j)=(0, -1)$} is more complicated. Denote $\Delta : = [\aliftB{s_1}{0}, \aliftB{s_2}{-1}] - \aliftB{[s_1, s_2]}{-1}$,  so $\Delta\in \VF_{-1}(E^2)$. We shall show first that $\Delta$ is annihilated by $\T\sigma^2_1$.
We have
$$
    (\T \sigma^2_1) [\aliftB{s_1}{0}, \aliftB{s_2}{-1}]_{E^2} = [(\T \sigma^2_1)\aliftB{s_1}{0}, (\T \sigma^2_1) \aliftB{s_2}{-1}]_{E^1} = [s_1^{\langle 0\rangle_{\kappa^1}}, s_2^{\langle -1\rangle_{\kappa^1}}].
$$
{\new (We have used the compatibility of algebroid lifts with respect to $\kappa^2$ and  its order-one reduction $\kappa^1$, as guaranteed by Lemma~\ref{l:comp_alg_lifts}.)}
The latter is  $[s_1, s_2]^{\langle -1\rangle_{\kappa^1}}$ (as  $(A, \kappa^1)$ is Lie) and this coincides with the projection of $\aliftB{[s_1, s_2]}{-1} \in  \VF_{-1}(E^2)$ onto $E^1$. Hence, the vector field $\Delta $ is vertical with respect to the projection $\sigma^2_1:E^2\ra E^1$, as we claimed. {\newMR Hence,  $\Delta\in  \VFvert_{-1}(E^2) \simeq \Hom(A, C)$ by Lemma~\ref{l:structure_VF_Ek}\eqref{i:VF_E2:VF_-1}, i.e.,  $\Delta$ can be considered a VB morphism $A\ra C$.}

We know that $\omega = 0$, hence
From  condition \eqref{i:Lie_ax:omega} we find that for  any section $s\in \Sec(A)$ we have $[\Delta, \aliftB{s}{-1}]  = 0$.
For $X\in \VFvert_{-1}(E^2) \simeq \Hom(A, C)$ and $s \in \VF_{-1}(E^2) \simeq \Sec(C)$, the Lie bracket $[X, s]$ of vector fields on $E^2$ reads as
    $$
    [X, \aliftB{s}{-1}]_{E^2} =  -  X\circ s \in \Sec(C) = \VF_{-2}(E^2),
$$
see Lemma~\ref{l:structure_VF_Ek}.
We take $X:=\Delta$. Vanishing of $\Delta\circ s$ for any $s\in\Sec(A)$ implies $\Delta=0$.
\smallskip

We follow a similar idea in  the case $(i, j)=(0, 0)$. Consider $\heartsuit = \aliftB{[s_1, s_2]}{0} - \aliftB{[s_1, s_2]}{0}$, so $\heartsuit \in \VF_{-2}(E^2)$, and refer to the exact sequence \eqref{e:short_ex_sec_VF_0} %ex_sec_VF_0 e:short_ex_sec_VF}
in Lemma~\ref{l:structure_VF_Ek}. We aim to show that   $\heartsuit$ is in the kernel of the projection $\pi: X\mapsto (\T \sigma^2_1, X|_C)$. Indeed, the vector fields $\aliftB{s_1}{0}, \aliftB{s_2}{0}$ are tangent to the submanifold $C\subset E^2$ (as they have  weight $0$ and $C$ is given in $E^2$ by the equations $y^i=0$), so $[\aliftB{s_1}{0}, \aliftB{s_2}{0}]|_C = [\aliftB{s_1}{0}|_C, \aliftB{s_2}{0}|_C]$. Thus,  $\heartsuit|_C = 0$. Analogously to the case $(i, j)= (-1, 0)$, we have $(\T\sigma^2_1)\heartsuit = 0$, {\new as $(\T \sigma^2_1) [\aliftB{s_1}{0}, \aliftB{s_2}{0}]_{E^2} =  [s_1^{\langle 0\rangle_{\kappa^1}}, s_2^{\langle 0\rangle_{\kappa^1}}]_{E^1} = [s_1, s_2]^{\langle 0\rangle_{\kappa^1}}$.} Hence we know, that $\heartsuit \in \Hom(\Sym^2 A, \core{E^2}) \subset \VF_0(E^2)$, i.e.,  %in graded local coordinates $(x^a, y^i, z^\mu)$
it has a form
$$
    \heartsuit = \frac12 c^\mu_{ij}(x) y^i y^j \pa_{z^\mu}.
$$
for some functions  $c^\mu_{ij}$ on $M$. Next, we notice that for any section $s\in \Sec(A)$ we have
$$
    [\heartsuit, \aliftB{s}{-1}] = 0.
$$
Indeed, $[\aliftB{[s_1, s_2]}{0}, \aliftB{s}{-1}] \stackrel{(i, j)=(0, -1)}{=} \aliftB{[[s_1, s_2], s]}{-1}$ and
\begin{equation*}
\begin{split}
    [[\aliftB{s_1}{0}, \aliftB{s_2}{0}], \aliftB{s}{-1}] = [\aliftB{s_1}{0}, [\aliftB{s_2}{0}, \aliftB{s}{-1}]] -
    [\aliftB{s_2}{0}, [\aliftB{s_1}{0}, \aliftB{s}{-1}]] \stackrel{(i, j)=(0, -1)}{=} \\
    [\aliftB{s_1}{0}, \aliftB{[s_2, s]}{-1}] -
    [\aliftB{s_2}{0}, \aliftB{[s_1, s]}{-1}] =
    \aliftB{[s_1, [s_2, s]]}{-1} - \aliftB{[s_2, [s_1, s]]}{-1} = \aliftB{[[s_1, s_2], s]}{-1}.
\end{split}
\end{equation*}
Therefore, $[[\heartsuit, \aliftB{s_1}{-1}], \aliftB{s_2}{-1}] = 0$ for any $s_1, s_2\in \Sec(A)$. {\newMR On the other hand, for any  $\chi \in \Hom(\Sym^2 A, C)$,   we have
$$[[\chi, \aliftB{s_1}{-1}], \aliftB{s_2}{-1}]  = \chi(s_1, s_2)\in \Sec(C),$$
up to isomorphisms given in Lemma~\ref{l:structure_VF_Ek}. }Therefore, $\heartsuit =0$.
\end{proof}

\subsubsection{HAs of order two and representations up to homotopy of Lie algebroids}

The notion of the representation up to homotopy of Lie algebroids was introduced in \cite{AbCr2012}. Some recollection on this subject is given in Appendix, Subsection~\ref{sSec:representations}. In  our case of  interest (2-term representations), the definition given in \cite{AbCr2012} boils down to the following data:
 a Lie algebroid $(A\ra M, [\cdot, \cdot], \sharp)$,
a 2-term complex $F_0 \xrightarrow{\pa} F_1$ of vector bundles over $M$ concentrated in degrees $0$ and  $1$,
   $A$-connections $\nabla^i$ on $F_i$, for $i=0, 1$, and
   $A$-form $K\in \Omega^2(A; \Hom(F_1, F_0))$ such that
\begin{enumerate}[(i)]
    \item \label{i:pa_nabla} $\nabla_s^1 \circ \pa =  {\new \pa} \circ \nabla_s^0: \Sec(F_0)\ra \Sec(F_1)$
    for any $s\in \Sec(A)$;
    \item \label{i:pa_K} $\curv_{\nabla^0} = - K \circ \pa$, and  $\curv_{\nabla^1} = - \pa \circ K$ where $\curv_{\nabla}$ denotes the curvature of an $A$-connection $\nabla$, see \eqref{df:curv};
     \commentMR{Czy nie lepiej pozbyć się minusów i przejść do notacji jak u Ortiz? Nie, podążam [AC] choć tam jest misptint: chodzi o Remark 3.7, gdzie pownno być $R_{\nabla^E} = - K \comp \pa$ i $R_{\nabla^F} = - \pa \comp  K$.
    Ostatnie równanie to: $\dd_\nabla \circ \dd_\nabla + \wh{\pa} \circ \wh{K} + \wh{K} \circ \wh{\pa} = 0 $, czyli warunek $[D, D]=0$ z $n=2$, czyli $ \curv_{\nabla^0} = - K \comp \pa$ and $\curv_{\nabla^1} = - \pa \comp K$. }
    \item \label{i:d_K} the covariant derivative of $K$ vanishes, i.e.,  $\dd_{\nabla^{\Hom}} K = 0$ where $\nabla^{\Hom}$ is the $A$-connection on $\Hom(F_1, F_0)$ induced by $\nabla^0$ and $\nabla^1$, see \eqref{eqn:conn_Hom}.
    \end{enumerate}
    \commentMR{Znaki są inne czy te same co u \cite{AbCr2012}? Znaki są rzeczywiście przeciwne do tych w \cite{AbCr2012}???}
    \commentMR{Wydaje mi się, że powinny być jak powyżej. Mamy $\pa[K] \in \Omega^2(A, \und{\End}^0(F))$ zadane wzorem   $$\pa[K] (s_1, s_2) = (-1)^2 \pa(K(s_1, s_2)): v\mapsto \pa(K(s_1, s_2)(v)) - (-1)^1 K(s_1, s_2)(\pa v).$$ Mamy również $\pa \comp K= \pa \circ K$ i $K\comp \pa = K\circ \pa$.}
    (Note that $K\circ \pa = K \comp \pa$ and $\pa \circ K= \pa\comp K$, where $\comp$ denotes an operation on $A$-forms induced by the composition of maps, see \eqref{e:wedge_product}.) All this data can be gathered together to a so called \emph{the structure operator} $D: \Omega(A; F) \to \Omega(A; F)$,
    determined by the triple $(\pa, \nabla = (\nabla^0, \nabla^1), K)$ (also denoted by $D$)  defined by means of the wedge product, as $D := \wh{\pa} + d_\nabla + \wh{K}$, see \eqref{df:str_operator}.
    The  compatibility conditions \eqref{i:pa_nabla} - \eqref{i:d_K} can be shortly written as $D \circ D=0$, see Appendix.
    A morphism $(E_0\xrightarrow{\pa^E }E_1; \nabla^E, K^E)$ to $(F_0\xrightarrow{\pa^F}F_1; \nabla^F, K^F)$  consists of  a morphism of complexes $\Phi_0: (E, \pa^E) \to (F, \pa^F)$  (i.e.,  $\Phi_0 \circ \pa^E = \pa^F \circ \Phi_0$ \commentMR{$n=0$}) and a 1-form $\Phi_1\in \Omega^1(A; \Hom(E_1, F_0))$ such that
 \begin{enumerate}[(i)]
     \item $\pa^{\Hom} \Phi_1 + \dd_{\nabla} \Phi_0 = 0$, \commentMR{$n=1$, 1-forms}
     \item $\dd_{\nabla} \Phi_1 + K^F \comp \Phi_0 - \Phi_0 \comp K^E = 0$.  \commentMR{2-forms. Nie ma $\pa \Phi_2$, bo $\Phi_2=0$. }  %$\dd_{\nabla} \Phi_1 + K^F \circ \Phi_0 - \Phi_0 \circ K^E = 0$.
 \end{enumerate}
 These conditions  %comes from \eqref{e:uph_morph} %
 can be shortened to $[\wh{\Phi}, D]=0$
 and     can be rewritten in a more explicit form as
    \begin{equation}  {\new - } \Phi_1(s; \pa e) - \Phi_0(\nabla_s^{E_0} e) + \nabla_s^{F_0} \Phi_0(e) = 0 \text{ for  $e\in \Sec(E_0)$;} \label{i:Phi_0}
    \end{equation}
    \begin{equation}
     {\newMR - } \pa(\Phi_1(s; v)) - \Phi_0(\nabla_s^{E_1} v) + \nabla_s^{F_1} \Phi_0(v) = 0 \text{ for $v\in \Sec(E_1)$};  \label{i:Phi_1}
     \end{equation}
    \begin{equation} \label{i:Phi_2} \begin{split}
    K^F(s_1, s_2; \Phi_0(v)) - \Phi_0(K^E(s_1, s_2; v))   - \Phi_1([s_1, s_2]; v)
     {\newMR -} \Phi_1(s_2; \nabla_{s_1}^{E_1}v)  +  \nabla_{s_1}^{F_0} \Phi_1(s_2; v)  \\
      {\newMR + }  \Phi_1(s_1; \nabla_{s_2}^{E_1}v) - \nabla^{F_0}_{s_2} \Phi_1(s_1; v) =0,  \text{ for } s_1, s_2\in \Sec(A), v\in \Sec(E_1).
    \end{split}
     \end{equation}
\commentMR{The first two conditions go with $n=1$, while the third one go with 2-forms. All signs has been verified (MR notes). Note also that the above formulas define maps which are linear in $v$. }
The advantage of the framework of representations u.t.h. of Lie algebroids is that it is more flexible and contains  generalizations of some important  concepts from the theory of Lie algebras. The example is the adjoint representation.
It is modelled on the complex
$$
     A \xrightarrow{\sharp} \T M,
$$
and the $A$-connections on this complex is induced by  a  linear connection $\nabla: (X, s) \mapsto \nabla_X s$ on the vector bundle $A\to M$ in the following way\footnote{Two choices  of connections on $A$ leads to isomorphic representations.}
\begin{align}
    \nabla^{A}_{s_1} s_2 &= \nabla_{\sharp s_2} s_1 + [s_1, s_2], \label{e:nabla_adjA}\\
    \nabla^{\T M}_{s} X &= \sharp (\nabla_X s) + [\sharp s, X]_{\tau_M}.\label{e:nabla_adjTM}
\end{align}
The curvatures of the $A$-connections $\nabla^{A}$ and $\nabla^{\T M}$ are expressed in the terms of  the following $2$-form, called \emph{the basic curvature}  $R_\nabla^{\mathrm{bas}} \in \Omega^2(A; \Hom(\T M, A))$, as $\curv_{\nabla^A} = - R_\nabla^{\mathrm{bas}} \circ \sharp$, $\curv_{\nabla^{\T M}} = - \sharp \circ R_\nabla^{\mathrm{bas}}$, where \commentMR{Może zamienić $\circ$ na $\comp$ i $\sharp$ potraktować jako $0$-form? To nie ma znaczenia, bo znak $(-1)^{qi}$ w wedge product formula tutaj zawsze wynosi $1$.}
\begin{equation}\label{e:R_bas}
    R_\nabla^{\mathrm{bas}} (s_1, s_2; X) = \nabla_X [s_1, s_2] - [\nabla_X s_1, s_2] - [s_1, \nabla_X s_2] - \nabla_{\nabla_{s_2}^{\T M} X} s_1 + \nabla_{\nabla_{s_1}^{\T M} X} s_2,
\end{equation}
see \cite{AbCr2012}.
The structure operator for the adjoint representation of a Lie algebroid is denoted by { $ \ad_\nabla = (\sharp, (\nabla^A, \nabla^{\T M}),  \curvR^{\bas})$.}

\subparagraph{From order-two Lie HA to 2-term  representations.}
Let $(E^2, \kappa^2)$ be a {\new Lie} HA of order two.   Recall that it is determined by the Lie algebroid structure on the vector bundle $A\ra M$ (being  the order-one reduction of $E^2$), and the structure maps $\pa$, $\Box$, $\sharp^C$, see Remark~\ref{r:LieHA}.  We shall define a Lie algebroid representation u.t.h. on the complex
$
    A \xrightarrow{\pa} C
$,
$A$ in degree 0, $C$ -- degree 1.
Our construction mimics the adjoint representation {\new of a Lie algebroid.}

\begin{df} \label{df:HA_repr} Let us choose a linear connection $\nabla$ on the vector bundle $\sigma: A \ra M$ and define:
    \begin{itemize}
        \item an $A$-connection $\nabla^C$ on $C$:
            $$
                \nabla^C_s v := \Box_s v + \pa\circ \nabla_{\sharp^C(v)} s,
            $$
            where $s\in \Sec(A)$, $v\in \Sec(C)$;
        \item an $A$-connection $\nabla^A$ on $A$:
        $$
            \nabla^A_{s_1}{s_2} := \nabla_{\sharp s_2 } s_1 + [s_1, s_2];
        $$
        \item an two-form $K  \in \Omega^2(A, \Hom(C, A))$:
        $$
            K(s_1, s_2; v) : =\nabla_{\sharp^Cv} [s_1, s_2] - [\nabla_{\sharp^Cv} s_1, s_2] - [s_1, \nabla_{\sharp^Cv} s_2] - \nabla_{\sharp^C(\nabla_{s_2}^C v)} s_1 + \nabla_{\sharp^C(\nabla_{s_1}^C v)} s_2.
        $$
    \end{itemize}
    \end{df}
We assume that in both constructions, the adjoint representation and the representation on the complex $\pa: A \ra C$, we have chosen the same linear connection on $A$. Then, the $A$-connections $\nabla^A$  defined above and  in the adjoint representation, also coincide. Moreover,
\begin{equation}\label{e:K}
    K = \curvR^{\bas}_\nabla \circ \sharp^C,
\end{equation}
where $\curvR^{\bas}_\nabla$ is given in \eqref{e:R_bas}.
Indeed, by comparing the formulas for $K$ and  $\curvR^{\bas}_\nabla$, for \eqref{e:K} we need to show that
\begin{equation}\label{e:nabla_C}
\sharp^C\circ \nabla^C_s v = \nabla_s^{\T M}(\sharp^C v)
\end{equation}
This can be rewritten as
$$
\sharp^C (\Box_s v) + \sharp^C \circ \pa \, \nabla_{\sharp^C v} s = \sharp \left(\nabla_{\sharp^C v} s\right) + [\sharp s, \sharp^C v]
$$
and it is true due to the AL assumption (see  \eqref{i:AL_nabla}, \eqref{i:AL_sharpEF} in Theorem~\ref{th:AL_HA_str_maps}).

{\new
\begin{lem} \label{l:HA_to_Rep} An order-two Lie higher algebroid $(E^2, \kappa^2)$
gives rise, as explained in Definition~\ref{df:HA_repr}, to a representation u.t.h. of the Lie algebroid $A$ (the order-one reduction of $(E^2, \kappa^2)$) on the complex
\begin{equation} \label{e:rep}
A\xrightarrow{\pa} C
\end{equation}
with the structure operator given by $D = (\pa, (\nabla^A, \nabla^C), K)$. %, where $\nabla^{\Hom} =$.
Two choices of the connection on the vector bundle $\sigma: A\ra M$ result in isomorphic representations. Moreover,  $\id_A \oplus \sharp^C$ gives rise to a  morphism from  the constructed representation  $(A_{[0]}\oplus C_{[1]}, D)$ to the adjoint representation $(A_{[0]} \oplus (\T M)_{[1]}, \ad_\nabla)$ of $A$.
\end{lem}
}
\begin{proof}
It is straightforward to check that $\nabla^C$ is an  $A$-connection. Indeed, using tensor-like properties of $\Box$ described in Theorem~\ref{th:skew_HA},  we get
$$
    \nabla^C_{fs} v = {\new f (\Box_s v)} - (\sharp^C v)(f)\, \pa s + f \, \pa \nabla_{\sharp^C v} s + \pa \left((\sharp^C v)(f)\,  s\right) = f \nabla_s^C v.
$$
Similarly, we check that $\nabla^C_s: \Sec(C) \ra \Sec(C)$ is  a derivative endomorphism,
 $$
    \nabla^C_{s} (f v) - f \nabla^C_{s} v =  \Box_{f s} v - f \Box_s v + \pa\circ\left(\nabla_{f\sharp^c v} s - f \nabla_{\sharp^C v} s\right) = (\sharp s)(f) v.
 $$
 The $A$-connections $\nabla^A$ and $\nabla^C$ are compatible with $\pa: A\to C$.  Indeed,
$$
    \pa \nabla^A_{s_1}s_2 = \pa \left([s_1, s_2]+ \nabla_{\sharp s_2} s_1\right)= \Box_{s_1}(\pa s_2) + \pa \nabla_{\sharp^C(\pa s_2)} s_1 = \nabla^C_{s_1}{\pa s_2}.
$$
Here, we used \eqref{i:AL_sharpEF} and \eqref{i:Lie_ax:pa}  which are true in any Lie HA. In  analogy to \cite[Proposition~2.11]{AbCr2012} we shall prove that
    \begin{enumerate}[(i)]
        \item $\curv_{\nabla^A} = -K\circ \pa$ and  $\curv_{\nabla^C} = - \pa \circ K$;
        \item $\dd_{\nabla^{\Hom}} K =0$, i.e.,   $K$ is closed with respect to the $A$-connection {\new $\nabla^{\Hom}$} on $\Hom(C, A)$ induced by $\nabla^A$ and $\nabla^C$.
    \end{enumerate}
Recall that the curvature of $\nabla^A$ is  $- \curvR^{\bas} \circ \sharp$, hence
$$
    \curv_{\nabla^A}(s_1, s_2; s) = - \curvR^{\bas}_\nabla(s_1, s_2; \sharp s) = - \curvR^{\bas}_\nabla(s_1, s_2;  \sharp^C(\pa s)),
$$
i.e., $\curv_{\nabla^A} = - (R_\nabla^{\bas}\circ \sharp^C) \circ \pa = - K \circ \pa$ due to \eqref{e:K}.
For the curvature of $\nabla^C$ we apply $\pa$ to \eqref{e:R_bas} and get
     \begin{align*}
  \pa \circ \curvR^{\bas}_\nabla(s_1, s_2; \sharp^C v) &=  \underbrace{\pa \nabla_{\sharp^C v} [s_1, s_2]}_\mathrm{I} - \underbrace{\pa [\nabla_{\sharp^C v} s_1, s_2]}_{\mathrm{II}}
    - \underbrace{\pa [s_1, \nabla_{\sharp^C v} s_2]}_{\mathrm{II}'}  - \underbrace{\pa \nabla_{\nabla_{s_2}^{\T M} \sharp^C v} s_1}_{\mathrm{III}} +  \underbrace{\pa \nabla_{\nabla_{s_1}^{\T M} \sharp^C v} s_2}_{\mathrm{III}'}
\end{align*}
On the other hand,
\begin{align*}
    \nabla^C_{[s_1, s_2]} v &= \underbrace{\Box_{[s_1, s_2]} v}_{\mathrm{J}} + \underbrace{\pa \nabla_{\sharp^C v} [s_1, s_2]}_{\mathrm{I}}, \\
    \nabla_{s_1}^C \nabla_{s_2}^C v &=  \nabla_{s_1}^C \left(\Box_{s_2}v + \pa \nabla_{\sharp^C v} s_2 \right) =  \\
     &= \underbrace{\Box_{s_1} \Box_{s_2} v}_{\mathrm{J}'} + \underbrace{\pa \nabla_{\sharp^C(\Box_{s_2}v)} s_1}_{\mathrm{III}_1} + \underbrace{\Box_{s_1}\left(\pa \nabla_{\sharp^C v} s_2\right)}_{\mathrm{II}} + \underbrace{\pa \nabla_{\sharp^C (\pa \nabla_{\sharp^C v} s_2)} \, s_1}_{\mathrm{III}_2}
\end{align*}
We have {analogous}  expressions $(\mathrm{J}'')$, $(\mathrm{III}_1')$, $(\mathrm{II}')$ and $(\mathrm{III}_2')$ for $\nabla_{s_2}^C \nabla_{s_1}^C v$. We should show that
$$
     \pa \circ \curvR^{\bas}_\nabla(s_1, s_2; \sharp^C v) +   \nabla_{s_1}^C \nabla_{s_2}^C v - \nabla_{s_2}^C \nabla_{s_1}^C v - \nabla^C_{[s_1, s_2]} v = 0.
$$
We see that the expressions $(\mathrm{J})$, $(\mathrm{J}')$ and $(\mathrm{J}'')$
cancel, due to \eqref{i:Lie_ax:A_on_C}. Similarly for  the two expressions denoted by $(\mathrm{I})$. Next, the expressions $(\mathrm{II})$  cancel due to  \eqref{i:Lie_ax:pa}, and similarly for $(\mathrm{II}')$. Finally,  for $(\mathrm{III})$ we have
$$
\nabla_{\nabla_{s_2}^{\T M} \sharp^C v} \, s_1  =   \underbrace{\nabla_{\sharp \nabla_{\sharp^C v} s_2} \, s_1}_{\mathrm{III}_2} +  \underbrace{\nabla_{[\sharp s_2, \sharp^C v]} s_1}_{\mathrm{III}_1},
$$
hence $(\mathrm{III})$ equals $(\mathrm{III}_1) + (\mathrm{III}_2)$. Similarly, $(\mathrm{III}')$  cancels with  the sum of $(\mathrm{III}_1')$ and $(\mathrm{III}_2')$. Here we used Theorem~\ref{th:AL_HA_str_maps}: \eqref{i:AL_nabla} and \eqref{i:AL_sharpEF}.
\smallskip

We shall prove the second claim that the 2-form $K$ is closed. We shall use \eqref{e:K} and the equality $\dd_{\nabla^{\bas}} \curvR^{\bas}=0$  which is proved in \cite{AbCr2012}.
We have
\begin{align*}
    \dd_{\nabla^{\Hom}} K (s_1, s_2, s_3; v) &=  \sum_{\text{cyclic}}  \left(\nabla^{\Hom}_{s_1} K(s_2, s_3)\right)(v)  - K([s_1, s_2], s_3; v) = \\
    & = \sum_{\text{cyclic}} \underbrace{\nabla_{s_1}^A  \left( K(s_2, s_3; v) \right)}_{\curvR^{\bas}_\nabla(s_2, s_3; \sharp^C v)} - \underbrace{K(s_2, s_3)(\nabla^C_{s_1} v)}_{\curvR^{\bas}_\nabla(s_2, s_3)(\sharp^C \nabla^C_{s_1} v)} - \curvR^{\bas}_\nabla([s_1, s_2], s_3; \sharp^C v)    = \\
     &=\dd_{\nabla^{\bas}} \curvR^{\bas}_\nabla(s_1, s_2, s_3; \sharp^C v) =  0,
\end{align*}
by \eqref{e:nabla_C}.
The proof that $D = (\pa, \nabla, K)$ is a structure operator is completed.

Let us assume that we have chosen two linear connections $\nabla$ and $\wt{\nabla}$ on the vector bundle $\sigma: A\ra M$. We define $\Phi_0 = \id_{A\oplus C}$  and $\Phi_1(s) (v) = \nabla_{\sharp^C v} s - \wt{\nabla}_{\sharp^C v} s$, $\Phi_1\in \Omega^1(A, \Hom(C, A))$. Note that $\Hom(C, A) = \End^{-1}(A_{[0]}\oplus C_{[1]})$. Then $\Phi_0 + \Phi_1$ establishes an isomorphism between the representations u.t.h. of the Lie algebroid $A$, induced from a given HA $(E^2, \kappa^2)$, defined by means of the linear connections $\nabla$ and $\wt{\nabla}$, respectively. Indeed, the equation \eqref{i:Phi_0} writes as
$$
    \Phi_1(s_1; \pa s_2) = \nabla^A_{s_1} s_2 - \wt{\nabla}^A_{s_1} s_2.
$$
The RHS is $\nabla_{\sharp s_2} s_1 - \wt{\nabla}_{\sharp s_2} s_1$  and the same is LHS as
 $\sharp^C \circ \pa  = \sharp$. The second equation \eqref{i:Phi_1} writes as
$$
    \pa(\Phi_1(s; v)) = \nabla^C_{s} v - \wt{\nabla}^C_{s} v
$$
 and both sides are equal to $\pa \circ \left(\nabla_{\sharp^C v} s - \wt{\nabla}^C_{\sharp^C v} s\right)$  due to the definitions of $\Phi_1$ and the $A$-connection on $C$ (see Definition~\ref{df:HA_repr}). The third equation \eqref{i:Phi_2} is a consequence of a similar result for the adjoint representation. Namely, if $\Psi_0 = \id_{A\oplus \T M}$, {\newMR $\Psi_1(s; X) = \nabla_X s - \wt{\nabla}_X s$,} $\Psi_1 \in \Omega^1(A,  \und{\End}^{-1}(A\oplus \T M))$, then $\Psi_0 + \Psi_1$ is an isomorphism $(A\oplus \T M, \ad_{\nabla}) \ra  (A\oplus \T M, \ad_{\wt{\nabla}})$ between the adjoint representations of the Lie algebroid $A$ associated with the linear connections $\nabla$ and $\wt{\nabla}$, respectively. We have
 $$\Phi_1(s; v) = \Psi_1(s; \sharp^C v), $$
 hence, from \eqref{e:K} and \eqref{e:nabla_C}, we find that
 $$
    \Phi_1(s_2; \nabla^C_{s_1} v) = \Psi_1(s_2; \sharp^C \nabla^C_{s_1} v) = \Psi_1(s_2; \nabla_{s_1}^{\T M} \sharp^C v).
 $$
Hence, \eqref{i:Phi_2} can be written in our case as:
\begin{align*}
    K_{\wt{\nabla}}(s_1, s_2; v) - K_{\nabla}(s_1, s_2; v) = \underbrace{\Phi_1([s_1, s_2]; v)}_{\Psi_1([s_1, s_2]; \sharp^C v)} + \Phi_1(s_2; \nabla^C_{s_1}v) {\newMR - } \Phi_1(s_1; \nabla^C_{s_2}v) + \\
        \wt{\nabla}^A_{s_2}(\Phi_1(s_1; v)) - \wt{\nabla}^A_{s_1}(\Phi_1(s_2; v)),
\end{align*}
and it follows from the same equation \eqref{i:Phi_2} applied to the adjoint representation, i.e., with  $K$, $v$, and $\Phi$ replaced with $\curvR^{\bas}$, $\sharp^C v$, and $\Psi$, respectively.

For the last statement, we clearly see that $\Phi_0 = (\id_A, \sharp^C): A \oplus C\to A\oplus \T M$ is a morphism of complexes, due to $\sharp = \sharp^C \circ \pa$ (see Theorem~\ref{th:AL_HA_str_maps}). We set $\Phi_1 = 0$ and find that  equation \eqref{i:Phi_0} holds automatically,   equation \eqref{i:Phi_1} is true due to \eqref{e:nabla_C}, and \eqref{i:Phi_2} reduces to \eqref{e:K}.
\end{proof}

\subparagraph{Recovering HA.}
Assume we are given a Lie algebroid $(A\to M, [\cdot, \cdot], \sharp)$, the structure operator $D=(\pa, (\nabla^A, \nabla^C), K)$, which provides a representation u.t.h. of $A$ on the complex $\pa: A\to C$. Let $\Phi$ be a morphism to the adjoint representation $(A, \ad_{\nabla})$, where $\nabla$ is a chosen linear connection on $A$. We assume that $\Phi_0|_A = \id_A$ and $\Phi_1 =0$, where $\Phi_0$, $\Phi_1$ are the components of $\Phi$:
    $$
        \xymatrix{
            A\ar[d]_{\id_A}\ar[rr]^{\pa} && C\ar[d]^{\sharp^C := \Phi_0|_C} \\
            A \ar[rr]^\sharp && \T M
            }
    $$
    %where $\sharp^C$ is defined as $\Phi_0|_C : C \to \T M$.
    We shall show how to recover the structure of a Lie HA  on the \grB\ $E^2$ constructed in Lemma~\ref{l:structureE2} by means of the VB morphism $\pa: A\to C$ given already.
    The structure map $\sharp^C$ of the HA are taken from the diagram above, as  $\sharp^C = \Phi_0|_C$.
    The structure map $\Box: \Sec(A)\times \Sec(C) \ra \Sec(C)$ is recovered by means of the formula given in Definition~\ref{df:HA_repr},
        $$
        \Box_s v  = \nabla^C_s v - \pa\circ \nabla_{\sharp^C(v)} s.
        $$
    We easily check the tensor-like properties of the action $(s, v)\mapsto \Box_s v$:
        \begin{itemize}
        \item $\Box_{fs} v - f (\Box_s v)  = - \pa\circ (\nabla_{\sharp^C v} (fs) - f  \nabla_{\sharp^C v} s) = - (\sharp^C v)(f)\,\pa(s)$,
        as $\nabla_{s}^C v$ is $\Cf(M)$-linear in $s$.
        \item $\Box_s(fv) - f\, (\Box_s v) = \nabla^C_s(f v) - f\nabla_s^C v = (\sharp s)(f) \, v$  due to the properties of $A$-connections.
        \end{itemize}
     We shall show that the compatibility conditions given in Theorem~\ref{th:Lie_axiom_maps}, which ensure  Lie HA structure are satisfied.

    Obviously,  \eqref{i:AL_sharpEF} is true due to the commutativity of the diagram above.
    Since $\Phi_1 =0$ the conditions \eqref{i:Phi_0},  \eqref{i:Phi_1}, \eqref{i:Phi_2} simplify to:
    \begin{enumerate}[(i)]
        \item The $A$-connections on the vector bundle $\sigma:A \to M$, being  part of the structure operators $D$ and $\ad_\nabla$, coincide;
         \item $\sharp^C \left(\nabla^C_s v\right)  = \nabla^{\T M}_s \left(\sharp^C v\right)$;
         \item \label{i:R^bas_vs_K} $\curvR^{\bas}_{\nabla}(s_1, s_2; \sharp^C v) = K(s_1, s_2; v)$.
    \end{enumerate}

    We have $$\sharp^C(\Box_s v) = \sharp^C\left(\nabla^C_s v - \pa\circ \nabla_{\sharp^C v} s\right) =  \nabla^{\T M}_s \left(\sharp^C v\right) - (\sharp^C \circ \pa) \nabla_{\sharp^C v} s = [\sharp s, \sharp^C v]$$
    due to the formula \eqref{e:nabla_adjTM} for $\nabla^{\T M}$. It proves  \eqref{i:AL_nabla}.
    Next,
    \begin{equation*}
    \begin{split}
    \Box_{s_1} \pa(s_2) =  \nabla^C_{s_1} (\pa s_2) - \pa \nabla_{(\sharp^C \circ \pa)s_2} s_1 = \\
    \nabla^C_{s_1} (\pa s_2) - \pa \left(\nabla^A_{s_1}s_2 - [s_1, s_2]\right) = \pa [s_1, s_2] + \left(\nabla_{s_1}^C \pa s_2 - \pa \nabla^A_{s_1} s_2\right) =  \pa [s_1, s_2],
    \end{split}\end{equation*}
    by the compatibility $\nabla^C$ with $\nabla^A$, so \eqref{i:Lie_ax:pa} is true. It remains to prove that $\Box$ satisfies \eqref{i:Lie_ax:A_on_C}, see Remark~\ref{r:LieHA}.
    \begin{align*}
        \Box_{[s_1, s_2]}  v  &=    \nabla^C_{[s_1, s_2]} v -  \pa \circ \nabla_{\sharp^C v} [s_1, s_2], \\
        \Box_{s_1}\Box_{s_2} v &= \Box_{s_1}\left(\nabla^C_{s_2} v - \pa\circ \nabla_{\sharp^C v}s_2\right)
            =\nabla^C_{s_1}\left(\nabla^C_{s_2} v - \pa \circ \nabla_{\sharp^C v} s_2\right) - \pa \circ \nabla_{\new \sharp^C  \Box_{s_2} v} s_1,
     \end{align*}
     hence, using \eqref{i:Lie_ax:pa}, we get %$\sharp^C(\nabla_s v) =  [\sharp s, \sharp^C v]$
     \begin{equation*}
     \begin{split}
       -\curv_{\Box} (s_2, s_2; v) :=  \Box_{[s_1, s_2]} v  - \Box_{s_1}\Box_{s_2}v + \Box_{s_2} \Box_{s_1}v = -\curv_{\nabla^C}(s_1, s_2; v)  \\
       - \pa \circ \left(\nabla_{\sharp^C v} [s_1, s_2] - \nabla_{[\sharp s_2, \sharp^C v]} s_1 + \nabla_{[\sharp s_1, \sharp^C v]} s_2\right)
         {\new +} \nabla^C_{s_1}(\pa \circ \nabla_{\sharp^C v} s_2) {\new - } \nabla^C_{s_2}(\pa \circ  \nabla_{\sharp^C v} s_1).
     \end{split}
     \end{equation*}
     We replace $-\curv_{\nabla^C}(s_1, s_2; v)$ with  $\pa \circ K(s_1, s_2; v) =  \pa \circ \curvR^{\bas}_\nabla(s_1, s_2; \sharp^C v)$, see \eqref{i:R^bas_vs_K}, and  $\nabla^C \circ \pa$ with $\pa \circ \nabla^A$, and  find   that $-\curv_{\Box} (s_2, s_2; v) = \pa \circ \triangle(s_1, s_2; \sharp^C v)$, where
     $$
        \triangle(s_1, s_2; X) = \curvR^{\bas}_\nabla(s_1, s_2; X) - \left(\nabla_X[s_1, s_2] - \nabla_{[\sharp s_2, X]} s_1 + \nabla_{[\sharp s_1, X]} s_2\right) + \left(\nabla_{s_1}^A \nabla_X s_2  - \nabla^A_{s_2}\nabla_X s_1\right).
     $$
     We expand the last bracket using the formula \eqref{e:nabla_adjA} for $\nabla^A$ and  replace $\curvR^{\bas}_\nabla(s_1, s_2)(X)$ with \eqref{e:R_bas}, and  after cancelling similar terms we get{\new
     \begin{equation*}
        \begin{split}
            \triangle(s_1, s_2)(X) =  - \nabla_{\nabla_{s_2}^{\T M} X} s_1 + \nabla_{\nabla_{s_1}^{\T M} X} s_2 +\left(\nabla_{[\sharp s_2, X]} s_1 - \nabla_{[\sharp s_1, X]} s_2\right) + \\
            \left( \nabla_{\sharp\nabla_X s_2} s_1  - \nabla_{\sharp\nabla_X s_1} s_2 \right) = 0
        \end{split}
     \end{equation*}
      thanks to the formula for $\nabla^{\T M}$ given in \eqref{e:nabla_adjTM}.  The proof of \eqref{i:Lie_ax:A_on_C} is completed. }  We have obtained the following result.
    \begin{thm} \label{t:Rep_to_HA}  Let $(A\to M, [\cdot, \cdot], \sharp)$ be a Lie algebroid and let us fix a vector bundle  $C\to M$, a linear connection  $\nabla$ on $A$ and a VB morphism $\pa: A \to C$ over $\id_M$.

    Assume, in addition, that  we are given a representation u.t.h.  of the Lie algebroid $A$ on the complex $\pa: A\ra C$,  and a morphism $\Phi = (\Phi_0, \Phi_1)$ from this to the adjoint representation $(A, \ad_{\nabla})$ such that $\Phi_1=0$ and $\Phi_0|_A = \id_A$. Here, $\Phi_i \in \Omega^i(A; \End^i(A_{[0]} \oplus C_{[1]}))$, $i=0, 1$,  are the components of $\Phi$.

    { Then,  there exists a unique HA  structure on the \grB\ $E^2$ constructed  in Lemma~\ref{l:structureE2},
    such that the representation u.t.h. of the Lie algebroid $A$, and the morphism $\Phi$, described in Lemma~\ref{l:HA_to_Rep}, are the given ones.  This
    establishes a one-to-one correspondence between order-two Lie HA structures on the \grB\ $E^2$
    and morphisms $\Phi$ of the above form.}
    \end{thm}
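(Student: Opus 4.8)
The plan is to prove Theorem~\ref{t:Rep_to_HA} by establishing a two-way correspondence whose two directions have, for the most part, already been built in the preceding pages. The key observation is that Proposition~\ref{t:HA_to_Rep} gives a well-defined map from order-two Lie HA structures on $E^2$ to the data $(\pa, (\nabla^A, \nabla^C), K, \Phi)$, and the long computation just completed (recovering $\Box$, verifying \eqref{i:AL_nabla}, \eqref{i:Lie_ax:pa}, \eqref{i:Lie_ax:A_on_C}) furnishes the inverse construction. So the theorem amounts to checking that these two assignments are mutually inverse and that the recovered object is genuinely a \emph{Lie} HA, not merely a collection of maps satisfying the displayed identities.

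First I would assemble the structure maps of the prospective HA from the given representation-theoretic data. The \grB\ $E^2 = \quotient{(\At{2}\times_M C_{[2]})}{\graphW(-\pa)}$ is already fixed by $\pa$ via Lemma~\ref{l:structureE2}. The anchor-related map is $\sharp^C := \Phi_0|_C$ and the order-one anchor is $\sharp = \sharp^C\circ\pa$ (forced by $\Phi_0$ being a chain map), while $\Box$ is defined by $\Box_s v = \nabla^C_s v - \pa\circ\nabla_{\sharp^C(v)}s$ as in the computation above. One then sets $\beta$ through \eqref{i:Lie_ax:beta}, namely $\beta(s_1,s_2)=\pa([s_1,s_2])$, puts $\omega=0$, and defines $\sharp^2$ through the diagram~\eqref{diag:sharp2}, exactly as prescribed in Remark~\ref{r:LieHA}. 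The content of the preceding paragraphs is precisely that these maps satisfy \eqref{i:AL_sharpEF}, \eqref{i:AL_nabla}, \eqref{i:Lie_ax:A_on_C} and \eqref{i:Lie_ax:pa}; by Remark~\ref{r:LieHA} this data then assembles into a skew HA that is both AL and Lie, using Theorems~\ref{th:AL_HA_str_maps} and~\ref{th:Lie_axiom_maps}. I would emphasize that the only nontrivial AL-condition, \eqref{i:AL_psi}, holds because \eqref{i:AL_sharpEF} together with \eqref{i:AL_R} (guaranteed by the construction of $\sharp^2$) forces $\psi=0$ via Lemma~\ref{l:tensor_psi666}.

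Next I would verify that the two constructions are mutually inverse. Starting from a Lie HA, Proposition~\ref{t:HA_to_Rep} produces $(\pa,\nabla^A,\nabla^C,K,\Phi)$ with $\Phi_0=\id\oplus\sharp^C$ and $\Phi_1=0$; feeding this back through the recovery construction must return the \emph{same} structure maps. This is where the role of the auxiliary connection $\nabla$ on $A$ must be handled carefully: the map $\Box$ recovered from $\nabla^C$ via $\Box_s v = \nabla^C_s v - \pa\circ\nabla_{\sharp^C(v)}s$ is independent of $\nabla$ because the $\nabla$-dependent terms in $\nabla^C$ (built in Definition~\ref{df:HA_repr}) cancel exactly against the subtracted term. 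The same cancellation shows that $\beta=\pa\circ[\cdot,\cdot]$ and $\omega=0$ are recovered, so round-tripping HA $\to$ representation $\to$ HA is the identity. For the other direction, starting from a representation with $\Phi_1=0$, one builds the HA, then re-extracts $(\nabla^A,\nabla^C,K)$ by Definition~\ref{df:HA_repr}; here $\nabla^A$ is recovered by \eqref{e:nabla_adjA} (it is the adjoint-representation connection, fixed by $\Phi_0|_A=\id_A$), $\nabla^C$ from $\Box$ and $\nabla$, and $K=\curvR^{\bas}_\nabla\circ\sharp^C$ by \eqref{e:K}. Checking that these agree with the originally given $D$ reduces to the identity $\curvR^{\bas}_\nabla(s_1,s_2)(\sharp^C v)=K(s_1,s_2)(v)$, which is exactly condition~\eqref{i:R^bas_vs_K} extracted from $\Phi$ being a morphism with $\Phi_1=0$.

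The main obstacle, and the step I would treat most carefully, is the well-definedness and uniqueness claim in the presence of the connection choice. Because a representation on $\pa: A\to C$ is genuine data (not depending on $\nabla$), while the recovery of $\Box$ and of $\sharp^2$ uses $\nabla$, I must confirm that the resulting HA does not depend on $\nabla$ and that different connections give the \emph{same} HA rather than merely isomorphic ones. The key is that $\Box$, $\beta$, $\omega$, $\sharp^C$, $\sharp^2$ and $\pa$ are all $\nabla$-independent once assembled (the cancellation noted above plus the fact that $\sharp^2$ is characterized uniquely by $\core{\sharp^2}=\sharp^C$ and \eqref{i:AL_R}, per Remark~\ref{r:sharp2}), so the HA is canonical. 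Uniqueness of the HA structure on $E^2$ realizing a given $(\text{representation},\Phi)$ then follows because an order-two Lie HA is completely determined by $(A,[\cdot,\cdot],\sharp,\pa,\Box,\sharp^C)$ (Remark~\ref{r:LieHA}), and each of these is read off from the representation data in a forced way. Collecting these observations yields the asserted bijection, completing the proof.
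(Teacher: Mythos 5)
Your proposal is correct and takes essentially the same route as the paper: the paper's own proof is exactly the "Recovering HA" computation (setting $\sharp^C=\Phi_0|_C$, $\Box_s v=\nabla^C_s v-\pa\circ\nabla_{\sharp^C(v)}s$, and verifying \eqref{i:AL_sharpEF}, \eqref{i:AL_nabla}, \eqref{i:Lie_ax:pa}, \eqref{i:Lie_ax:A_on_C}) combined with the reconstruction of Remark~\ref{r:LieHA}, after which the theorem is stated as a summary. The only difference is that you make explicit the round-trip (mutual-inverse) and connection-independence checks — via the defining cancellation for $\Box$ and condition \eqref{i:R^bas_vs_K} — which the paper leaves implicit, treating them as immediate from the forced form of the recovered structure maps.
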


 \subsubsection{HAs, VB-alegbroids and representations up to homotopy}
    A brief account of VB-algebroids is given in Preliminaries.

    Recall that the constructions from Definition~\ref{df:HA_repr} and the adjoint representation depend on the choice of  a linear connection on the vector bundle $A\to M$. However, there is a way  to avoid this choice. The motivation comes from description of 2-term representations in a framework of VB-algebroids, as discovered in \cite{Gr-Meh2010}. In this framework, the adjoint representation of $A$ is the VB-algebroid $(\T A; \T M, A; M)$ -- the tangent prolongation of the algebroid $A$.
      \begin{cor} \label{cor:HA_VB-alg} A Lie HA $(E^2, \kappa^2)$ can be described by means of the following data:
        \begin{enumerate}[(i)]
            \item \label{i:HA_VB:DVB}  a VB-algebroid structure on a DVB $D$ whose side bundles are $C$ and $A$ and the  core is  also $A$;
            \item \label{i:HA_VB:Psi} a VB-algebroid morphism $\Psi$ from $D$ to $\T A$ (the adjoint representation of $A$) such that $\Psi$ is the identity on the side bundle $A$ and also on  the core bundle $A$:
    \begin{equation} \label{diag:VB-alg-Psi} %
     \xymatrix{D \ar[dd] \ar[rr] && A \ar[dd] \\
        & A \ar[rd] \ar@{_{(}->}[ul]& \\
        C \ar[rr] && M
    }
    \quad
    \xymatrix{ & \\ \ar[r]^{\Psi} & \\ & }
        \quad
    \xymatrix{ \T A \ar[dd] \ar[rr]^{\tau_A} && A \ar[dd] \\
        & A \ar[rd]  \ar@{_{(}->}[ul] & \\
        \T M \ar[rr] && M
    }
    \end{equation}
        \end{enumerate}
    \end{cor}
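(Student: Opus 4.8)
The strategy is to pass through the equivalence between 2-term representations up to homotopy of a Lie algebroid $A$ and VB-algebroids with base $A$, established in \cite{Gr-Meh2010}, and to exploit the fact that the connection-dependence appearing in Theorem~\ref{t:Rep_to_HA} matches exactly the choice of a decomposition of a VB-algebroid. Recall from \cite{Gr-Meh2010} that a decomposition (horizontal lift, cf. \eqref{e:ses_Sec_lin}) of a VB-algebroid $(D; E, A; M)$ with core $C_D$ produces a 2-term representation up to homotopy of $A$ on the complex $C_D\to E$; two decompositions yield isomorphic representations, and conversely every 2-term representation arises this way. I would first observe that fixing the linear connection $\nabla$ on $A$ is precisely the datum of a decomposition of the tangent prolongation $\T A$, and that with this decomposition the representation associated with $\T A$ is exactly the adjoint representation $\ad_\nabla$; this is the content of the identification of $\ad_\nabla$ with $\T A$ recalled before the statement.

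Next I would identify the VB-algebroid $D$ of part~\ref{i:HA_VB:DVB}. By the converse direction of the Gracia-Saz--Mehta correspondence, the representation up to homotopy on the complex $\pa\colon A\to C$ produced in Theorem~\ref{t:Rep_to_HA}, with structure operator $(\pa,(\nabla^A,\nabla^C),K)$ as in Definition~\ref{df:HA_repr}, is the one attached, through the decomposition determined by $\nabla$, to a VB-algebroid whose base Lie algebroid is $A$, whose side bundle is $E=C$ and whose core is $A$; this is exactly the DVB displayed on the left of \eqref{diag:VB-alg-Psi}. Its isomorphism class does not depend on $\nabla$: this follows from the part of Proposition~\ref{t:HA_to_Rep} asserting that two choices of connection give isomorphic representations, together with the fact that isomorphic 2-term representations correspond to isomorphic VB-algebroids. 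Hence $D$ is canonically (up to isomorphism) associated with the Lie HA $(E^2,\kappa^2)$, and its VB-algebroid bracket encodes the Lie algebroid structure of $A$ and the structure maps $\Box$, $\pa$ independently of any connection.

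I would then translate the morphism $\Phi=(\Phi_0,\Phi_1)$ of Theorem~\ref{t:Rep_to_HA} into the VB-algebroid morphism $\Psi$ of part~\ref{i:HA_VB:Psi}. Under \cite{Gr-Meh2010} a morphism of VB-algebroids induces a morphism of the associated representations, and the homotopy ($1$-form) component of the latter measures the failure of the morphism to intertwine the chosen decompositions. The hypothesis $\Phi_1=0$ therefore says precisely that $\Phi_0=\id_A\oplus\sharp^C$ intertwines the decompositions induced by the common connection $\nabla$ on both sides, so that it lifts to a genuine VB-algebroid morphism $\Psi\colon D\to\T A$ rather than one merely up to homotopy. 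The normalizations $\Phi_0|_A=\id_A$ and $\Phi_0|_C=\sharp^C$ then read off as: $\Psi$ is the identity on the core $A$ and on the side bundle $A$, and equals $\sharp^C$ on the side bundle $C$, which is exactly \eqref{diag:VB-alg-Psi}.

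The main obstacle is to verify that the resulting $\Psi$ is genuinely independent of $\nabla$, even though the morphism $\Phi$ representing it is not. The point is that a change of connection alters the decompositions of $D$ and of $\T A$ simultaneously, and conjugating $\Psi$ by the corresponding gauge isomorphisms reproduces the transformation law of $\Phi$ recorded in the proof of Proposition~\ref{t:HA_to_Rep}; since $\Psi$ is defined at the level of the decomposition-free VB-algebroids, it is left unchanged. Checking this compatibility, namely that the assignment (representation together with $\Phi$ satisfying $\Phi_1=0$) $\mapsto$ ($D,\Psi$) and its inverse are mutually inverse and respect the normalizations, is the only part requiring genuine bookkeeping; once it is in place, Theorem~\ref{t:Rep_to_HA} transfers the one-to-one correspondence verbatim to the VB-algebroid picture of \eqref{diag:VB-alg-Psi}.
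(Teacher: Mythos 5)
Your proposal is correct and relies on the same machinery as the paper---the Gracia-Saz--Mehta correspondence and its morphism counterpart (Theorem~\ref{th:VB-2-term_repr}, \cite{Gr-Meh2010, DJO15}), the matching of a linear connection $\nabla$ on $A$ with a decomposition of $\T A$, the pivotal observation that $\Phi_1=0$ is equivalent to $\Psi$ intertwining the induced decompositions, and Theorem~\ref{t:Rep_to_HA}---but you run the construction in the opposite direction. The paper starts from the abstract pair $(D,\Psi)$: since $\Psi$ is the identity on the core, it is fiberwise bijective, so the $\nabla$-decomposition of $\T A$ pulls back to a \emph{unique} decomposition of $D$ compatible with $\Psi$; this produces a representation whose morphism to $\ad_\nabla$ has vanishing $1$-form component (after also checking that the core $A$-connections of $D$ and $\T A$ agree, which the paper verifies using that $\Psi$ is an algebroid morphism respecting cores and decompositions), and Theorem~\ref{t:Rep_to_HA} then yields the HA; connection-independence is settled by a concrete computation showing that $\Box_s v=\nabla^C_s v-\pa\circ\nabla_{\sharp^C(v)}s$ is unchanged, via the change-of-decomposition formula of \cite[Remark~2.12]{GJMM18}. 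You instead go from the HA to $(D,\Psi)$ through the converse direction of the correspondence, so your $D$ is born decomposed and $\nabla$-dependent, and is canonical only up to the gauge isomorphisms you describe; the commuting square of representation morphisms hiding behind your phrase ``reproduces the transformation law of $\Phi$'' is exactly where the paper's explicit computation reappears, so this step is genuine work rather than formal bookkeeping. What each approach buys: the paper's direction makes $D$ an honest, connection-free object from the outset and reduces well-definedness to one formula for $\Box$; yours obtains $(D,\Psi)$ almost for free from Theorem~\ref{t:Rep_to_HA}, at the price of pinning $D$ down only up to canonical isomorphism and deferring the substantive verification to the gauge-compatibility argument you acknowledge but do not carry out.
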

    We proceed with the proof by recalling   the correspondence between 2-term representations and VB-algebroids. Details are nicely presented in~\cite{GJMM18}.

    Let $(D; \sigma_E, \sigma_A; M)$  be  a DVB  with the core $C$, as in \eqref{e:VBalg}.
       As shown in \cite{Gr-Meh2010}, a VB-algebroid structure on the DVB $(D\ra E; A\ra M)$, as in \eqref{e:VBalg},  together with a horizontal lift $\theta_A: \Sec(A) \ra \Sec^{\lin}_E(D)$, i.e., a splitting of the short exact sequence \eqref{e:ses_Sec_lin},  gives rise to a representation u.t.h. of the Lie algebroid $A$. (Recall that such horizontal lifts are in bijective correspondence with  decompositions $D\ra E\times_M A\times_M C$ of the DVB $D$, and with the inclusions $\sigma: E\times_M A \ra D$.) We shall review this construction. First of all, it is  a representation on the 2-term complex $\pa: C\to E$, where $\pa$  is the core of the anchor map $\sharp^D: D\to \T E$. (Note that $\core{D} =C$ and $\core{\T E}   =E$.)  The $A$-connections on $C$ and $E$, denoted by $\nabla^{\mathrm{core}}$ and $\nabla_a^{\mathrm{side}}$  are the following (see \cite{GJMM18}):
    \begin{equation}\label{e:A-conn_VB-alg}
        \left(\nabla^{\mathrm{core}}_{a} c\right)^\dag = [\theta_A(a), c^\dag]_D, \quad \wt{\nabla_a^{\mathrm{side}}} = \sharp_D(\theta_A(a)),
    \end{equation} %(see Appendix \ref{sSec:representations}).
    where $a\in \Sec(A)$, $c\in \Sec(C)$ and $\xi \mapsto \wt{\xi}$ denotes 1-1 correspondence between derivative endomorphism of $\Sec(E)$ and linear vector fields on $E$. The last component, $A$-form $K \in \Omega^2(A; \Hom(E, C))$ is defined as
        \begin{equation}\label{e:K_VB_alg}
            K(a_1, a_2) = \theta_A([a_1, a_2]_A) - [\theta_A(a_1), \theta_A(a_2)]_D.
        \end{equation}
    A DVB morphism $\Psi$ between {\newMR decomposed } vector bundles, $E \times_M A \times_M C \to E' \times_M A' \times_M C'$, covering $\id_M$,  is uniquely defined by restrictions of $\Psi$ to the side bundles $A, E$ and the core $C$ and a 1-form $\chi \in \Omega^1(A, \Hom(E, C')) = \Sec(A^\ast \otimes E^\ast \otimes C')$,
    \begin{equation}\label{e:Psi_VB_alg}
      \Psi(e, a, c) = (\Psi|_E(e), \Psi|_A(a), \Psi|_C(c) +  \chi(a, e)).
    \end{equation}
    If $A=A'$ and $\Psi|_A = \id_A$ then $\Psi$ defines a graded VB morphism $\Phi_0: C_{[0]} \oplus E_{[1]} \ra  C'_{[0]} \oplus E'_{[1]}$, $\Phi_0 = \Psi|_C \oplus \Psi|_E$. Note that $\Hom(E, C') = \Hom_{-1}(C_{[0]} \oplus E_{[1]}, C'_{[0]} \oplus E'_{[1]})$.
\begin{thm}\label{th:VB-2-term_repr}  Let $A\to M$ be a Lie algebroid.
\begin{enumerate}[(i)]
    \item \cite{Gr-Meh2010} Let $D$ be a DVB as in \eqref{e:VBalg},  and $\theta_A$ be a horizontal lift. (It gives rise to a decomposition $D \simeq E\times_M A\times_M C$.)
        Then the formulas \eqref{e:A-conn_VB-alg} and  \eqref{e:K_VB_alg} establish a one-to-one correspondence  between algebroid structures on $D\to E$ that provide a VB-algebroid structure on the DVB  $D$ and 2-term representations u.t.h. of the Lie algebroid $A$ on the complex $\pa: C\to E$.
             \item \cite{DJO15}  Let the decomposed DVBs $D: A\times_M E \times_C$,  $D' = A'\times_M E' \times_C'$ carry VB-algebroid structures and assume that the Lie algebroids $A$,  $A'$ are the same. Then a DVB morphism $\Psi: D \to D'$ such that $\Psi|_A = \id_A$, {\newMR is } a VB-alegbroid morphism if and only if {\newMR $\Phi = (\Phi_0, \Phi_1)$, where $\Phi_0= \id_A \oplus \Psi|_C$, and $\Psi_1 = \chi$, is a morphism between the associated 2-term representations. }
\end{enumerate}
\end{thm}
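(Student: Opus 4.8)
The plan is to recover both correspondences from the single observation that the Lie algebroid bracket on $D\to E$ is completely determined, via the Leibniz rule, by its values on the generating sections---the core sections $c^\dagger$ and the linear sections $\theta_A(a)$---together with the anchor $\sharp_D$. Fixing a decomposition $D\simeq E\times_M A\times_M C$, equivalently a horizontal lift $\theta_A$, trivializes the short exact sequence \eqref{e:ses_Sec_lin}, so every linear section is uniquely $\theta_A(a)+\phi$ with $\phi\in\Hom(E,C)$, and $\Sec_E(D)$ is generated over $\Cf(E)$ by $\{\theta_A(a),\,c^\dagger\}$. All the identities below will therefore be checked by evaluating on these generators, with no global argument needed beyond the local frame computation.

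First I would treat part (i). Starting from a VB-algebroid structure, one reads the three pieces of the representation directly off the bracket and anchor: the side connection from $\widetilde{\nabla^{\mathrm{side}}_a}=\sharp_D(\theta_A(a))$, using that $\sharp_D$ sends linear sections to linear vector fields on $E$, i.e.\ to derivative endomorphisms of $\Sec(E)$; the core connection from $(\nabla^{\mathrm{core}}_a c)^\dagger=[\theta_A(a),c^\dagger]_D$, the bracket of a linear and a core section being again a core section; and the curvature form $K(a_1,a_2)=\theta_A([a_1,a_2]_A)-[\theta_A(a_1),\theta_A(a_2)]_D$, which lies in the core since both terms are linear sections over the same base section. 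The boundary $\partial\colon C\to E$ is $\core{\sharp_D}$. The content is then that the Lie algebroid axioms for $D\to E$ are equivalent, term by term, to the defining conditions \eqref{i:pa_nabla}--\eqref{i:d_K} of a two-term representation: Leibniz on $D$ forces $\nabla^{\mathrm{core}}$ and $\nabla^{\mathrm{side}}$ to be genuine $A$-connections and forces the compatibility \eqref{i:pa_nabla} of $\partial$ with them, while evaluating the Jacobi identity on triples $(\theta_A(a_1),\theta_A(a_2),c^\dagger)$ and $(\theta_A(a_1),\theta_A(a_2),\theta_A(a_3))$ yields exactly the two curvature identities \eqref{i:pa_K} and the condition $\dd_{\nabla^{\mathrm{Hom}}}K=0$ of \eqref{i:d_K}. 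Conversely, given a representation one defines the bracket on generators by \eqref{e:A-conn_VB-alg} and \eqref{e:K_VB_alg} and extends by Leibniz; the same computation shows the algebroid axioms hold, so the assignment is a bijection.

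Then for part (ii) I would expand a DVB morphism $\Psi$ with $\Psi|_A=\id_A$ as in \eqref{e:Psi_VB_alg}, with its defining one-form $\chi\in\Omega^1(A;\Hom(E,C'))$, and set $\Phi_0=\id_A\oplus\Psi|_C$ and $\Phi_1=\chi$. Being a VB-algebroid morphism means that $\Psi$ intertwines the Lie algebroid structures of $D\to E$ and $D'\to E'$, i.e.\ preserves anchors and the brackets of core and linear sections. Preservation on core sections together with compatibility with $\partial$ gives that $\Phi_0$ is a chain map, which is equation \eqref{i:Phi_1}; matching the images of linear sections and their anchors gives \eqref{i:Phi_0}; and matching $\Psi[\theta_A(a_1),\theta_A(a_2)]_D$ against $[\Psi\theta_A(a_1),\Psi\theta_A(a_2)]_{D'}$ produces the curvature-compatibility condition \eqref{i:Phi_2}, with $\chi$ entering precisely where $\Phi_1$ appears.

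The main obstacle---and the only place genuine care is required---is the precise sign and index bookkeeping in translating the Jacobi identity and the morphism conditions into the representation-theoretic identities. The curvature relations \eqref{i:pa_K} carry the signs fixed in \cite{AbCr2012} (including the $-K\comp\partial$, $-\partial\comp K$ convention), and these must be matched against the signs produced by the graded Lie bracket of core and linear sections; likewise the sign in \eqref{i:Phi_2} must agree with the one coming from $[\Psi\theta_A(a_1),\Psi\theta_A(a_2)]_{D'}$. Since both halves are already established in the literature---\cite{Gr-Meh2010} for (i) and \cite{DJO15} for (ii)---I would present the argument as a verification organized around these generator computations and defer the lengthiest sign checks to those sources.
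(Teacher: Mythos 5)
The paper never proves this theorem: it is stated as a recollection of known results, with part (i) attributed to \cite{Gr-Meh2010}, part (ii) to \cite{DJO15}, and the details explicitly deferred to \cite{GJMM18}. Your generator-based argument --- the bracket on $D\to E$ is determined via the Leibniz rule by its values on core sections $c^\dagger$ and linear sections $\theta_A(a)$, and a decomposition converts those values into the data $(\nabla^{\mathrm{core}},\nabla^{\mathrm{side}},K)$ of \eqref{e:A-conn_VB-alg}--\eqref{e:K_VB_alg} --- is precisely the argument used in those references, so your approach is the standard one and is consistent with the paper's purely citational treatment.

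Two pieces of bookkeeping in your sketch are off, though neither breaks the method. In (i) you claim that the Jacobi identity on the triples $(\theta_A(a_1),\theta_A(a_2),c^\dagger)$ and $(\theta_A(a_1),\theta_A(a_2),\theta_A(a_3))$ yields both curvature identities and $\dd_{\nabla^{\Hom}}K=0$ --- three identities from two triples. In fact the mixed triple gives $\curv_{\nabla^{\mathrm{core}}}(a_1,a_2)=-K(a_1,a_2)\circ\pa$, the all-linear triple gives $\dd_{\nabla^{\Hom}}K=0$, and the remaining identity $\curv_{\nabla^{\mathrm{side}}}(a_1,a_2)=-\pa\circ K(a_1,a_2)$ is obtained by applying the anchor to $[\theta_A(a_1),\theta_A(a_2)]_D=\theta_A([a_1,a_2]_A)-K(a_1,a_2)$, using anchor--bracket compatibility and the fact that $\sharp_D$ sends $\phi\in\Hom(E,C)$ to the linear vector field corresponding to the endomorphism $\pa\circ\phi$ of $E$; likewise the compatibility \eqref{i:pa_nabla} comes from the anchor applied to $[\theta_A(a),c^\dagger]_D$, not from the Leibniz rule alone. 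In (ii) your matching of conditions to equations is scrambled: the chain-map property of $\Phi_0$ is the separate condition obtained from anchor preservation on core sections; equation \eqref{i:Phi_0} arises from comparing $\Psi[\theta_A(a),c^\dagger]_D$ with the bracket of the images (linear--core brackets, which your list omits entirely); equation \eqref{i:Phi_1} arises from comparing anchors of linear sections; only \eqref{i:Phi_2}, coming from linear--linear brackets, is matched correctly in your sketch.
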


    \begin{proof}[Proof of Corollary~\ref{cor:HA_VB-alg}] Let us assume that we are given a VB-algebroid morphism $\Psi$, as above. Denote $\sharp^C  := \Psi|_C: C \to \T M$. Let $\nabla$ be any linear connection on $A$. This corresponds to a decomposition $\sum^{\nabla}: A \times_M \T M \to \T A$ of the DVB $\T A$. Thanks to presence and properties of $\Psi$, the DVB $D$ has a decomposition, induced by $\nabla$, as well. Indeed, $\Psi$ is an affine bundle morphism from $D$ to $\T A$ covering  $\und{\Psi} = \id_A\times \sharp^C: A\times_M C  \to A\times_M \T M$. It is fiber-wise bijective since $\Psi$ is the identity on the core bundle $A$. Hence, there exists  a unique decomposition  $\sum^D: A\times_M C \to D$ such that $\Phi \circ \sum^D  =\sum^{\nabla} \circ \und{\Phi}$.

    In our case, the VB-algebroid structure on the DVB $D$, given in \eqref{diag:VB-alg-Psi},   induces a representation of the Lie algebroid $A$ on the complex $\pa: A \to C$.
    Besides, $\Psi$ as a morphism of VB-algebroids, induces a morphism $\Phi = (\Phi_0, \Phi_1)$ of 2-term representations, as described in Theorem~\ref{th:VB-2-term_repr}. In our case, $\Phi_1\in \Omega^1(A; \Hom(C, A))$ vanishes, since $\Psi$ respects the decompositions of $D$ and $\T A$. Therefore,  $\Phi$ is of the form described in Theorem~\ref{t:Rep_to_HA}, i.e.,     $\Phi = (\Phi_0, \Phi_1)$, $\Phi_1 = 0$, $\Phi_0|_{A} = \id_A$.

    We shall prove that the $A$-connection on $A$ in the complex $A\to C$  is the same as in the adjoint representation.
    According to \eqref{e:A-conn_VB-alg}, these  $A$-connections on the core bundles of $D$ and $\T A$, denoted by $\nabla^{\mathrm{core}(D)}$ and $\nabla^{\mathrm{core}(\T A)}$, respectively, are given by
    $$
        \left(\nabla^{\mathrm{core}(D)}_{a_1} a_2\right)^\dag = [\theta^D_A(a_1), a_2^\dag]_D,  \quad \left(\nabla^{\mathrm{core}(\T A)}_{a_1} a_2\right)^\dag = [\theta^D_A(a_1), a_2^\dag]_{\T A}
    $$
    where $a_1, a_2\in \Sec(A)$.
    We have $\Psi([\theta_A^D(a_1), a_2^\dag]_D) = [\theta_A^{\T A}(a_1), a_2^\dag]_{\T A}$ since $\Psi: D\to \T A$ is a Lie algebroid morphism (covering the projection $E\to M$), the corresponding decompositions of DVBs $D$ and $\T A$ are $\Psi$-related ($\Psi\circ \theta^D_A = \theta_A^{\T A} $) and $\Psi$ induces the identity on the core bundles. It follows that $\nabla^{\mathrm{core}(D)} = \nabla^{\mathrm{core}(\T A)}$.

    Hence, due to Theorem~\ref{t:Rep_to_HA}, we get a HA structure on a certain canonically constructed \grB\ $E^2$, defined in Lemma~\ref{l:structureE2}.
    We shall  prove that the obtained HA $(E^2, \kappa^2)$ does not depend on the choice of the linear connection $\nabla$ on $A$.
        It amounts  to showing that the structure map
        $$
            \Box_s v  =\nabla^C_s v - \pa\circ \nabla_{\sharp^C(v)} s
        $$
        does not depends on the choice of $\nabla$. (The $A$-connection  $\nabla^C$ on $C\to M$ associated with the VB-algebroid  $(D; C, A; M)$ in \eqref{diag:VB-alg-Psi} is the $A$-connection denoted by $\Delta^{\mathrm{side}}$ in \eqref{e:A-conn_VB-alg}.)
    Let $\wt{\nabla}$ be another linear connection on $A$, so $\wt{\nabla} - \nabla =: \varphi \in \Hom(A\otimes \T M, A)$. From  \cite[Remark~2.12]{GJMM18}, we find that
    $$
        \wt{\nabla}^C_s v - \nabla^C_s v = \pa \circ \phi(s, v) \in \Sec(C),
    $$
    where $\pa: A\to C$ is as above and $\phi\in \Hom(A\otimes C, A)$ is the difference of the decompositions of the DVB $D$ induced by the linear connections $\wt{\nabla}$ and $\nabla$. We have $\Phi(\sum^D(s, v)) = \sum^{\T A} (s, \sharp^C(v))$, hence $\phi(s, v) = \varphi(s, \sharp^C(v))$. Moreover,
    $$
        \pa \circ \left(\wt{\nabla}_{\sharp^C(v)} s - \nabla_{\sharp^C(v)} s \right) = \pa\circ \varphi(s, \sharp^C(v)),
    $$
    what finishes the proof of our claim, $\wt{\Box}_s v = \Box_s v$, as $\wt{\Box}_s v - \Box_s v = \wt{\nabla}^C_s v - \nabla^C_s v$, see Definition~\ref{df:HA_repr}.
    \end{proof}

\begin{ex}\label{ex:repr_from_A2} We shall describe the representation u.t.h. of  $A$ associated with the HA $(\At{2}, \kappa^{[2]})$ -- the $\nd{2}$ prolongation of a Lie algebroid $(A\ra M, [\cdot, \cdot], \sharp)$. From Example~\ref{ex:structure_maps_E[2]}, it follows from that this is a representation on the complex $\pa= \id_A: A\ra {\new \core{\At{2}}}  \simeq A$, and  the $A$-connections defined in Definition~\ref{df:HA_repr}, denoted by $\nabla^A$ and $\nabla^C$, coincide. Moreover,  the 2-form $K$ given in  Definition~\ref{df:HA_repr} is  the curvature of the $A$-connection $\nabla^A$. Indeed, we know from \cite{AbCr2012} that $\curv(\nabla^A) = {\new - } R_\nabla^{\mathrm{bas}} \circ \sharp$ while $K = R_\nabla^{\mathrm{bas}} \circ \sharp^C$, see \eqref{e:K}, so $\curv(\nabla^A) = {\new - } K$ as $\sharp = \sharp^C$ in our case.

Now consider the linearisation $\pLinr(\At{2})$ of the \grB\ $\At{2}$ as a DVB, where we shall recognize a VB-algebroid structure and a morphism to the adjoint representation corresponding to the HA structure on $\At{2}$, as described in Corollary~\ref{cor:HA_VB-alg}.
It was shown in \cite[Theorem~2.3.8]{BGG_2015} that $\pLinr(\At{k}) \simeq A \times_{\T M} \T \At{k-1}$ and that it carries a natural weighted algebroid structure. In the special case $k=2$,  we find that
$$
\pLinr(\At{2}) \simeq A\times_{\T M} \T A = \{(a, X) \in A\times \T A: \sharp a = (\T \sigma) X  \},
$$
and the DVB $\pLinr(\At{2})$ carries a canonical structure of a  VB-algebroid. Note that the side bundles and the core of $\pLinr(\At{2})$  are naturally identified with the VB $A\ra M$.
 Moreover, the Lie algebroid structure on the vector bundle $\pr_1: A\times_{\T M} \T A \ra A$, where $\pr_1$ is the projection onto the first factor,   is a special case of the construction called  \emph{the prolongation of a Lie algebroid}, see \cite{Martinez_geom_form_mech_lie_alg_2001} and \cite{Bruce_Grabowska_Grabowski_2016}.
 The morphism $\Psi$  of VB-algebroids has a straightforward form,  $\Psi: A\times_{\T M}\T  A \ra \T A$ is induced by the projection onto the second factor:
$$
    \xymatrix{A\times_{\T M} \T A \ar[dd]^{\pr_1} \ar[rr]^{\tau_A \circ \pr_2} && A \ar[dd] \\
        & A \ar[rd] \ar@{_{(}->}[ul] & \\
        A \ar[rr]  && M
    }
    \quad \xymatrix{ & \\ \ar[r]^{\Psi} & \\ & }
        \quad
    \xymatrix{ \T A \ar[dd] \ar[rr]^{\tau_A} && A  \ar[dd] \\
        & A  \ar[rd]  \ar@{_{(}->}[ul] & \\
        \T M \ar[rr] && M
    }
$$
\end{ex}

We shall illustrate now the procedure of reconstructing an HA from a given representation a Lie algebroid  and a morphism to the adjoint representation.

\begin{ex}\label{ex:HA_from_id} We shall reconstruct a HA $(E^2, \kappa^2)$ out of the adjoint representation of a Lie algebroid $(\sigma:A \to M, [\cdot, \cdot], \sharp)$ %(A\xrightarrow{\sharp} \T M, D^{\mathrm{adj}})$,  %D^\Radj)
and the morphism $\Phi$ being the identity on $A_{[0]} \oplus (\T M)_{[1]}$. According to Lemma~\ref{l:structureE2}, $E^2$  is the quotient  $E^2= \quotient{\At{2}\times_M (\T M)_{[2]}}{\sim}$ where the relation $\sim$ is induced by the graph of $-\sharp: \core{\At{2}} \simeq A \ra \T M = \core{T^2 M}$. Note that order-one reduction of $E^2$ is $A$, and its   core  is $\T M$. We can geometrically describe the \grB\ $E^2$ as follows:

 Take $(X, v)$ and  $(Y, w)$ in $\At{2}\times_M (\T M)_{[2]}$. Then $(X, v) \sim (Y, w)$  if and only if $\tau_A(X) = \tau_A(Y)$, $(\T \sigma) X = (\T \sigma) Y$, and $\sharp (Y-X)  =v-w$, where $Y - X$ is consider as an element of $A$ via  the isomorphism between  $A$ and  $\V_M A \subset \T A$.
(Recall, $\At{2}$ consists of  $X\in \T A$ such that $(\T \sigma) X = \sharp \tau_A(X)  \in \T M$.)

 The structure maps of the HA  $(E^2, \kappa^2)$  are easy to describe: $\pa = \sharp$, $\sharp^C  =\id_{\T M}$ and $\Box_s v = \nabla^{\T M}_s v - \pa \nabla_v s  = [\sharp s, v]$, according to the definition of $\nabla^{\T M}$, see \eqref{e:nabla_adjTM}. Since $(E^2, \kappa^2)$ is a Lie HA we have $\omega =0$, and $\beta(s_1, s_2) = \sharp [s_1, s_2]$.
 \end{ex}

\subsection{Final remarks and questions}

The results presented in this paper (e.g. Theorems~\ref{t:Rep_to_HA},~\ref{thm:HA_point})  are the source of new examples of order-two \grBs\ and HAs, eg. Example~\ref{ex:HA_from_id}, and raise questions about the classification of HAs under certain natural assumptions.  This represents one potential direction for further development based on the findings of this paper.

Another avenue of research on HAs involves exploring how HAs of order
$\geq 3$ are related to representations u.t.h. of Lie algebroids.

In light of the paper \cite{BO19}  on the  integration of 2-term representations of Lie algebroids, a natural question arises about the integration of HAs. What higher-order groupoids are and how they relate to HAs?

Recall that HAs were introduced as geometric-algebraic structures providing a proper language to formulate a geometric formalism of higher-order variational calculus (generalizing the first-order case). We hope this work will encourage further developments in the area of HAs and higher-order geometric mechanics.

%\input{aha-appendix.tex} %%%%%%%%%%%%%%%%%%%%%%%%%%%%%%%%%%%%%%%%%%%%%%%%%%%%%

%Appendix
\section{Appendix}\label{sec:app}

{In what follows, $(x^a, y^i_w)$ denotes graded coordinates on a graded bundle $E^k\ra M$. In the case $k=2$,
we continue using the notation from Subsection~\ref{sSec:HA_order_two} and Example~\ref{ex:kappa2_coord}.
 In particular, $A = E^1$, $C= \core{E^2}$, and $(e_i)$, $(c_\mu)$ denote local frames of the  VBs $A$ and $C$, respectively; $(x^a, y^i, z^\mu)$ are graded coordinates on $E^2$ compatible with the chosen frames  $(e_i)$, $(c_\mu)$. }

\subsection{Vector fields of non-positive weight on  \grBs }

In the following lemma, we study  the structure of the space of vector fields of non-negative weight on a \grB\ $E^k$.
\begin{lem}\label{l:structure_VF_Ek} Let $\sigma^k: E^k\ra M$ be a
\grB\ of order $k$ and let $\VF_{\leq 0}(E^k) = \bigoplus_{j=-k}^0\VF_j(E^k)$ denotes the Lie algebra of non-positively graded vector fields on $E^k$.

\begin{enumerate}[(i)]
\item \label{i:VF0_on_E} The Lie subalgebra $\VF_0(E)$ of \emph{linear} vector fields on the total space $E$ of a  vector bundle $\sigma: E\ra M$ coincides with the Lie algebra of derivative endomorphisms of the dual bundle, $\VF_0(E) \simeq \derEnd({E}^\ast)$.  \commentMR{Może przypomnieć nazwę derivative endomorphism?}
\item \label{i:VF0_on_Ek} A vector field $X\in \VF_0(E^k)$  of  weight zero is projectable onto $E^j$ for any $0\leq j\leq k$, in particular on $M = E^0$. \commentMR{Może ustalić $j$ i poprawić weight zero na weight $\leq j$?}
\item \label{i:VF0_on_E2} $\VF_{0}(E^2)$ is an abelian extension by $\Sec(\Hom(\Sym^2 E^1, \core{E^2}))$ of  the Lie subalgebra of $\VF_0(E^1)\oplus \VF_0(\core{E^2})$ consisting of pairs $(X_1, X_2)$ such that
    $X_1$ and $X_2$ project onto the same vector field on $M$:
    \begin{equation}\label{e:short_ex_sec_VF_0}
0 \to  \Hom(\Sym^2 E^1, \core{E^2}) \to \VF_{0}(E^2) \xrightarrow{\pi}
\VF_{0}(E^1) \times_{\VF(M)} \VF_{0}(\core{E^2}) \to 0
\end{equation}
where the projection $\pi$ is given by $\pi(X) = ((\T \sigma^2_1)(X), X|_{\core{E^2}})$ and the kernel of $\pi$ can be canonically identified with the space of VB morphisms $\Sym^2 E^1 \to \core{E^2}$.
\item \label{i:VFshort_ex_sec} There is a short exact sequence of graded Lie algebras
\begin{equation}\label{e:short_ex_sec_VF}
0 \to  \VFvert_{<0}(E^k) \to \VF_{<0}(E^k) \to \VF_{<0}(E^{k-1}) \to 0
\end{equation}
where $\VFvert_{<0}(E^k)$ denotes the subspace of $\VF_{<0}(E^k)$ of those vector fields which are vertical with respect to the projection $\sigma^k_{k-1}: E^k\ra E^{k-1}$. %\commentMR{Czy nie ma kolizji w tym oznaczeniu?}
\item \label{i:VF_E2:VF_-1} In case $k=2$, the homogeneous part of weight $-1$ of \eqref{e:short_ex_sec_VF} reads as
$$
0 \to  \VFvert_{-1}(E^2)\simeq \Hom(E^1, \core{E^2}) \to
\VF_{-1}(E^2) \to \VF_{-1}(E^1) \simeq \Sec(E^1) \to 0
$$
\end{enumerate}
\end{lem}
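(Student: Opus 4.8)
The plan is to reduce everything to the coordinate description of homogeneous vector fields and then recast each assertion in invariant terms. Fix adapted graded coordinates $(x^a, y^i_w)$ on $E^k$, so that $\Delta = \sum_{i} w\, y^i_w\, \pa_{y^i_w}$ and the fields $\pa_{x^a}$, $\pa_{y^i_w}$ have weights $0$ and $-w$. A field $X\in\VF_j(E^k)$ then has the form $X = \sum_a f_a\,\pa_{x^a} + \sum_{i,w} g^i_w\,\pa_{y^i_w}$ with $f_a$ homogeneous of weight $j$ and $g^i_w$ homogeneous of weight $j+w$. Two elementary consequences form the backbone of the whole lemma: when $j<0$ there are no functions of negative weight, so $f_a=0$ and every field of negative weight is vertical over $M$; and a homogeneous function of weight $d$ involves only coordinates of weight $\le d$.

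For \eqref{i:VF0_on_E} I would restrict a weight-$0$ field on a vector bundle $\sigma\colon E\to M$ to fibrewise-linear functions, which are exactly the functions $\ell_\phi$ attached to sections $\phi\in\Sec(E^\ast)$. Since $X$ preserves weight it sends $\ell_\phi$ to $\ell_{D\phi}$ and $\pullback{\sigma}f$ to $\pullback{\sigma}(\hat X f)$; the Leibniz identity applied to $X(\pullback{\sigma}f\cdot \ell_\phi)$ then gives $D(f\phi)=(\hat X f)\phi+fD\phi$, i.e. $D\in\derEnd(E^\ast)$, and $X\mapsto D$ is the asserted isomorphism. For \eqref{i:VF0_on_Ek} the coordinate form gives $f_a=f_a(x)$ and $g^i_w$ of weight $w$, hence $g^i_w$ depends only on coordinates of weight $\le w$; applying $X$ to a coordinate of weight $\le j$ lands in functions pulled back from $E^j$, so $X$ is $\sigma^k_j$-projectable.

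For \eqref{i:VF0_on_E2}, \eqref{i:VFshort_ex_sec} and \eqref{i:VF_E2:VF_-1} I would read off the homogeneous pieces explicitly. In the $k=2$ chart $(x^a,y^i,z^\mu)$ a weight-$0$ field is $X = X^a(x)\pa_{x^a}+X^i_j(x)y^j\pa_{y^i}+\big(X^\mu_\nu(x)z^\nu+\tfrac12 X^\mu_{ij}(x)y^iy^j\big)\pa_{z^\mu}$ with $X^\mu_{ij}=X^\mu_{ji}$. The projection $\pi$ sends $X$ to its reduction $(\T\sigma^2_1)X=X^a\pa_{x^a}+X^i_jy^j\pa_{y^i}$ on $E^1$ and to its restriction $X|_C=X^a\pa_{x^a}+X^\mu_\nu z^\nu\pa_{z^\mu}$ to $C=\core{E^2}$, both well defined since $X$ preserves $E^1$-functions and is tangent to the submanifold $C=\{y^i=0\}$, and both covering $X^a\pa_{x^a}$ on $M$. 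The kernel consists of the fields $\tfrac12 X^\mu_{ij}y^iy^j\pa_{z^\mu}$, which I would identify invariantly with $\Hom(\Sym^2E^1,C)$ and show to be an abelian ideal by a one-line bracket computation; surjectivity of $\pi$ onto the fibre product follows by lifting $(X_1,X_2)$ with $X^\mu_{ij}=0$. This yields \eqref{e:short_ex_sec_VF_0}. For \eqref{i:VFshort_ex_sec} the projection is $\sigma^k_{k-1}$; by the backbone observations every negative-weight field is vertical over $M$ and $\sigma^k_{k-1}$-projectable, with kernel exactly the fields vertical over $E^{k-1}$, and \eqref{i:VF_E2:VF_-1} is simply the weight-$(-1)$ homogeneous slice for $k=2$, where $\VFvert_{-1}(E^2)=\{X^\mu_j y^j\pa_{z^\mu}\}\simeq\Hom(E^1,C)$ and the image $\{X^i(x)\pa_{y^i}\}$ is $\VF_{-1}(E^1)\simeq\Sec(E^1)$ via Lemma~\ref{l:VF-k}.

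The main obstacle will be the two global points rather than the local bookkeeping: first, that the identifications of the kernels with $\Hom(\Sym^2E^1,\core{E^2})$ and $\Hom(E^1,\core{E^2})$ are chart-independent, which I would verify by checking that a change of adapted frame acts on the symmetric (resp.\ linear) coefficient arrays exactly as on sections of the corresponding $\Hom$-bundles; and second, upgrading the local lifts to global surjectivity in \eqref{e:short_ex_sec_VF} and \eqref{e:short_ex_sec_VF_0}. For the latter I would note that each $\VF_j(E^k)$ is the module of sections of a graded vector bundle over $M$, so local surjectivity of the $\Cf(M)$-linear projection together with a partition of unity gives global surjectivity. Confirming that all three maps in each sequence are graded Lie-algebra homomorphisms is then a routine bracket check respecting the weight filtration.
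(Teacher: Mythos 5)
Your proposal is correct and follows essentially the same route as the paper's own proof: reading off homogeneous vector fields in adapted graded coordinates, projecting/restricting to get the maps in the exact sequences, and identifying the kernels with the appropriate $\Hom$-bundles. The only additions are that you spell out the standard derivative-endomorphism argument for part (i), which the paper simply cites as well known, and that you flag the routine chart-independence and partition-of-unity checks that the paper leaves implicit.
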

\begin{proof}
    Point \eqref{i:VF0_on_E}  is  well known, see e.g. \cite{K-S_M2002} or \cite[Remark 2.1]{ETV2011}.
    For the proof of \eqref{i:VF0_on_Ek}, write a vector field $X\in \VF_0(E^k)$ in a general local  form
    $$
        X  = f^a(x) \pa_{x^a} + f^i(x, y) \pa_{y^i_w},
    $$
    where functions $f_i(x, y)$ are homogenous of weight $w = \w(y^i_w)$. It follows that the function $f_i(x, y)$ does not depend on coordinates of weights greater than $\w(y^i)$, so it is the pullback of a function on $E^{\w(i)}$.

    Obviously,  $\T \sigma^k$ annihilates $\pa_{y^i_w}$ and  defines a projection  of the vector field $X$ onto $M$,  which is $f_a(x) \pa_{x^a}$.  A very similar proof works for the projections $\sigma^k_j: E^k\ra E^j$ where $j>0$.
     For the proof of \eqref{i:VF0_on_E2} write $X\in \VF_0(E^2)$ in the form
        $$
            X =  f^a(x) \pa_{x^a} + f^i_j(x) y^j \pa_{y^i} + \left(f^\mu_\nu(x) z^\nu + f^\mu_{ij} (x) y^i y^j\right) \pa_{z_\mu}.
        $$
      It follows that the vector field $X$ restricted to the submanifold $\core{E^2}$ is tangent to it and $X{|}_{\core{E^2}} =   f^a(x) \pa_{x^a} +f^\mu_\nu(x) z^\nu \pa_{z_\mu}$. Besides, $(\T \sigma^2_1) X = f^a(x) \pa_{x^a} + f^i_j(x) y^j \pa_{y^i}$, hence  $X{|}_{\core{E^2}}$ and $(\T \sigma^2_1) X$ project to the same vector field on $M$. Moreover, the kernel of the projection $\pi$ consists of vector fields of the form $ f^\mu_{ij} (x) y^i y^j  \pa_{z_\mu}$ which can be identified with a VB morphism from $\Hom(C^\ast, \Sym^2 (E^1)^\ast) = \Hom(\Sym^2 E^1, C)$, where $C = \core{E^2}$.

      For \eqref{i:VFshort_ex_sec} it is enough  to notice that a vector field $X \in \VF(E^k)$ of weight $\leq -1$ has a well defined projection on $E^{k-1}$. Point \eqref{i:VF_E2:VF_-1} is a direct consequence of (iv).
\end{proof}

\subsection{Leibniz-type identities of the structure maps of HAs}\label{sSec:Ap:tensor}

\paragraph{Proof of Lemma~\ref{l:grBA[2]_to_C}.} Let $(e_j)$ and $(\und{e}_j)$ be local frames of sections of the vector bundle $A\ra M$, related by $e_j  = T^i_j(x)\und{e}_i$. Let $(c_\mu)$ be a frame of the vector bundle  $C\ra M$.
The \grB\  morphism $\Phi: \At{2} \to C_{[2]}$ has the local form
$$
    \Phi(x^a, y^i, \dot{y}^i) = (\Phi^\mu_i(x) \dot{y}^i + \frac12 \Phi^\mu_{ij}(x) y^i y^j) c_\mu,
$$
where $\Phi^\mu_{ij} =  \Phi^\mu_{ji}$.
On the other hand, a map  $\Psi: \Sec(A) \times \Sec(A) \ra \Sec(C)$ satisfying the Leibniz-type identity \eqref{e:Phi_Leibniz} is locally determined by the VB morphism $\rho$ and local functions $\Psi^\mu_{ij}$, where $\Psi^\mu_{ij} = \Psi^\mu_{ji}$, as follows:
$$
    \Psi(e_i, e_j) = \Psi^\mu_{ij}(x) c_\mu.
$$
In the given correspondence, $\rho$ corresponds to  the core VB morphism $\core{\Phi}: A\to C$, via the isomorphism $\core{\At{2}} \simeq A$. To complete the proof, we shall show that the change, $(e_j) \mapsto (\und{e}_j)$, of local frames of $\Sec(A)$ results in the same transition functions for the local functions $(\Phi^\mu_{ij})$ as for   $(\Psi^\mu_{ij})$. By calculating the differential of $\und{y}^i$, we find that the local coordinates $(x^a, y^i, \dot{y}^i)$ on $\At{2}$ transform as $\und{x}^a=x^a$, $\und{y}^i = T^i_j y^j$,
$$\und{\dot{y}}^i = T^i_j \dot{y}^j +  \frac12 \alpha^i_{jk} y^j y^k,  \text{where } \alpha^i_{jk} =\frac{\pa T^i_j}{x^a} Q^a_k + \frac{\pa T^i_k}{x^a} Q^a_j.
$$
It follows that if $\und{\Phi}^\mu_i \und{\dot{y}}^i + \frac12 \und{\Phi}^\mu_{ij} \und{y}^i \und{y}^j = \Phi^\mu_j \dot{y}^j + \frac12 \Phi^\mu_{jk} y^j y^k$ then $\Phi^\mu_j = \und{\Phi}^\mu_i T^i_j$, and
$$
    \Phi^\mu_{jk} = \und{\Phi}^\mu_i \alpha^i_{jk} + \und{\Phi}^\mu_{j', k'} T^{j'}_j T^{k'}_k.
$$
On the other hand,
\begin{equation*}
\begin{split}
    \Psi(e_j, e_k) = \Psi(T^{j'}_j \und{e}_{j'},  T^{k'}_k \und{e}_{k'}) = T^{j'}_{j} T^{k'}_k \und{\Psi}^\mu_{j'k'} c_\mu + \frac12 (\sharp e_{j})(T^{k'}_k) \rho(\und{e}_{k'}) + \frac12 (\sharp e_{k})(T^{j'}_j) \rho(\und{e}_{j'}) = \\
    c_\mu \left(T^{j'}_{j} T^{k'}_k \und{\Psi}^\mu_{j'k'} +\frac12 \alpha_{jk}^i \und{\rho}^\mu_i\right).
   \end{split}
\end{equation*}
Therefore, the transformations for $\Psi^\mu_{ij}$ are the same as those for  $\Phi^\mu_{ij}$, as we claimed. \qed

 The following three lemmas concern the calculus with algebroid lifts introduced {\newMR in \eqref{df:algebroid_lift}. }
The first one, Lemma~\ref{l:alifts1},  is the most general -- we do not assume any HA structure.
\begin{lem} {\new Let $k\in \N$ and $M$ be a smooth manifold.} \label{l:alifts1}
    \begin{enumerate}[(i)]
    \item \label{i:alifts:Xk} If $X\in\VF(M)$, $f\in\Cf(M)$ then
        \begin{equation} \label{e:alifts:Xk}
            \pullback{(\tau^k_M)} X(f) =  \frac{1}{k!} \aliftB{X}{-k}(f^{(k)}),
         \end{equation}
        where $\aliftB{X}{-k}\in \VF_{-k}(\T^k M)$ is $(\T^k M, \kappa^k_M)$-lift of $X$ in weight $-k$.
    \item \label{i:alifts:v_rhoF} Let $\rho^k: E^k \to \T^k M$ be  any morphism of \grBs\ covering $\id_M$.  Let $v\in \Sec(\core{E^k})$, $f\in \Cf(M)$. Then
        \begin{equation}\label{e:rhoVF}
            (\coreVF{v})(\pullback{(\rho^k)} f^{(k)}) = \core{\rho^k} (v)(f),
        \end{equation}
        where $\coreVF{v}$ is the image of $v$ in $\VF_{-k}(E^k)$ (see Lemma~\ref{l:VF-k}) and $\core{\rho^k} (v) \in \Sec(\core{\T^k M})\simeq \VF(M)$ is understood as a vector field on $M$ thanks to the isomorphism  $\jM{k}: \T M \to \core{\T^k M}$ given in \eqref{e:i_kM}.
    \end{enumerate}
\end{lem}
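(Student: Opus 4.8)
The plan is to verify both identities in local coordinates, exploiting the explicit coordinate formulas for the algebroid lifts and the core vector fields that have already been established in the preliminaries. Both statements are essentially bookkeeping identities relating the $(k)$-lift of a function (an object on $\T^k M$ of weight $k$) to the weight $-k$ vector fields coming from the core. The combinatorial coefficients are the only delicate point, and they are fixed precisely by the normalisations built into \eqref{df:V} and \eqref{df:algebroid_lift}.

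For part \eqref{i:alifts:Xk}, I would first recall that on $\T^k M$, in adapted coordinates $(x^{a,(\K)})_{0\leq \K\leq k}$ induced from $(x^a)$ on $M$, the $(k)$-lift $f^{(k)}$ of $f\in\Cf(M)$ is a homogeneous function of weight $k$ whose leading term is $\sum_a \frac{\pa f}{\pa x^a}\, x^{a,(k)}$ plus terms quadratic and higher in the lower-weight coordinates $x^{a,(\K)}$ with $\K<k$. On the other hand, by Lemma~\ref{l:VF-k_and_core} (with $A=\T M$) the algebroid lift $\frac{1}{k!}\aliftB{X}{-k}$ corresponds, under $\jM{k}\colon \T M\to\core{\T^k M}$, to $X$ itself, and by \eqref{e:jMk_coord} and the description \eqref{e:Sec_Ek VFk} of core vector fields, $\frac{1}{k!}\aliftB{X}{-k}= X^a(x)\,\pa_{x^{a,(k)}}$ in these coordinates. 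Applying this weight $-k$ field to $f^{(k)}$ annihilates all terms except the leading one and returns $\sum_a X^a(x)\frac{\pa f}{\pa x^a}=X(f)$, which, pulled back by $\tau^k_M$, is exactly the left-hand side of \eqref{e:alifts:Xk}. The only thing to check carefully is that the quadratic-and-higher terms in $f^{(k)}$ genuinely involve only coordinates $x^{a,(\K)}$ with $\K<k$ and so are killed by $\pa_{x^{a,(k)}}$; this follows from the Leibniz/Faà~di~Bruno structure of the lift, since the top coordinate $x^{a,(k)}$ can appear at most linearly in $f^{(k)}$.

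For part \eqref{i:alifts:v_rhoF}, the argument is functorial: since $\rho^k\colon E^k\to\T^k M$ is a morphism of \grBs\ covering $\id_M$, the pullback $\pullback{(\rho^k)}f^{(k)}$ is a weight $k$ function on $E^k$, and $\coreVF{v}$ is a weight $-k$ field, so $(\coreVF{v})(\pullback{(\rho^k)}f^{(k)})$ is a weight $0$ function, i.e.\ (the pullback of) a function on $M$. The key observation is that $\coreVF{v}$ acting on a pulled-back function depends only on how $\rho^k$ behaves on the top core: writing $v=\sum_\mu v^\mu(x)c_\mu$ and using $\coreVF{v}=\sum_\mu v^\mu(x)\pa_{z^\mu_k}$ from \eqref{e:Sec_Ek VFk}, only the $\pullback{(\rho^k)}$-image of the top-weight coordinates of $\T^k M$ (namely $x^{a,(k)}$) contributes, and that image is governed precisely by $\core{\rho^k}$. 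Concretely, I would compute $\pullback{(\rho^k)}(x^{a,(k)})$ modulo terms of lower-weight-coordinate degree $\geq 2$, identify its $z^\mu_k$-linear part with the matrix of $\core{\rho^k}$, and then differentiate $\pullback{(\rho^k)}f^{(k)}$ by $\coreVF{v}$; the result is $\sum_a (\core{\rho^k}(v))^a\,\frac{\pa f}{\pa x^a}$, which under $\jM{k}$ is exactly $\core{\rho^k}(v)(f)$.

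The main obstacle, such as it is, lies entirely in keeping the combinatorial coefficients straight and in confirming that only the top-weight coordinate of $\T^k M$ survives the pairing with a core (weight $-k$) vector field. I expect a clean way to organise this is to reduce part \eqref{i:alifts:v_rhoF} to part \eqref{i:alifts:Xk}: apply $\eqref{e:rhoVF}$ first to $\rho^k=\id_{\T^k M}$ to re-derive \eqref{e:alifts:Xk}, and then deduce the general case by naturality, using that $\core{\rho^k}$ intertwines $\coreVF{v}$ on $E^k$ with $\coreVF{(\core{\rho^k}v)}$ on $\T^k M$ (which is the $\phi$-relatedness statement of Lemma~\ref{l:VF-k} applied to $\phi=\rho^k$). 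This reduces everything to the single computation on $\T^k M$, where the isomorphism $\jM{k}$ and the formula \eqref{e:jMk_coord} make the coefficient tracking transparent.
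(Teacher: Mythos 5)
Your proof is correct, but it takes a genuinely different route from the paper's. For part (i) the paper works directly with representatives: it unwinds the definition of $\aliftB{X}{-k}$ as the vertical lift composed with $\kappa^k_M$, identifies $\frac{1}{k!}\aliftB{X}{-k}$ at $\tclass{k}{\gamma}$ with the curve $u\mapsto\tclass{k}{t\mapsto\phi^X_{ut^k/k!}(\gamma(t))}$ built from the flow of $X$, and differentiates; you instead import Lemma~\ref{l:VF-k_and_core} (with $A=\T M$) together with \eqref{e:jMk_coord} to write $\frac{1}{k!}\aliftB{X}{-k}=X^a(x)\,\pa_{x^{a,(k)}}$, and then use the observation that $x^{a,(k)}$ enters $f^{(k)}$ only linearly with coefficient $\pa f/\pa x^a$ --- note that the weight argument alone already forces this, since the coefficient of a weight-$k$ coordinate inside the weight-$k$ homogeneous function $f^{(k)}$ must have weight zero. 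For part (ii) the divergence is larger: the paper reduces to a point by the weight argument, represents $\core{\rho^k}(v_m)$ by a curve $\gamma$ in $M$, and computes $\left.\frac{\dd}{\dd t}\right|_{t=0}\left.\frac{\dd^k}{\dd s^k}\right|_{s=0} f(\gamma(t s^k/k!))$ for $f=x^a$; you instead reduce (ii) to (i) by naturality, invoking the relatedness statement of Lemma~\ref{l:VF-k} to see that $\coreVF{v}$ on $E^k$ and $\coreVF{(\core{\rho^k}\circ v)}$ on $\T^k M$ are $\rho^k$-related, and then evaluating both on $f^{(k)}$, which turns \eqref{e:rhoVF} into an instance of \eqref{e:alifts:Xk} pulled back along $\rho^k$. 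Your route is shorter and essentially computation-free, outsourcing all combinatorics to Lemmas~\ref{l:VF-k} and~\ref{l:VF-k_and_core}; since both are established in the Preliminaries independently of the present lemma, no circularity arises. What the paper's route buys in exchange is self-containedness (only the definitions of the lifts are needed) and an explicit description of the flow of the algebroid lift, which has some independent interest. One small remark: your fallback ``concrete'' computation of $\pullback{(\rho^k)}(x^{a,(k)})$ modulo quadratic terms is superfluous once the naturality reduction is in place.
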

\begin{proof} \eqref{i:alifts:Xk} Recall that the vector field $\frac{1}{k!}\aliftB{X}{-k}$ is constructed in two steps. First,  we take the vertical lift $X^{(0)}\in \Sec_{\T^k M}(\T^k E)$  of $X$ where $E=\T M$,  and then {\new we compose it with  $\kappa^k_M: \T^k \T M \ra \T \T^k M$, see \eqref{df:algebroid_lift}}. We shall describe $\frac{1}{k!}\aliftB{X}{-k}$ by means of its flow.

We take $E=\T M$ and $\alpha = k$ in \eqref{diag:vertical_lift_ZM} and read from \eqref{df:V} that the vertical lift $X^{(0)}$ sends $\tclass{k}{\gamma}\in \T^k M$ to $\tclass{k}{t\mapsto {\new \frac{1}{k!}} t^k X_{\gamma(t)}} \in \T^k_{\gamma(t)} \T M$. Hence, the vector field $\frac{1}{k!}\aliftB{X}{-k} \in \VF(\T^k M)$ is given by
$$
\frac{1}{k!} \aliftB{X}{-k}_{\tclass{k}{\gamma}} = \tclass{1}{u \mapsto \tclass{k}{t\mapsto \phi^X_{u t^k/k!}(\gamma(t))}} \in \T_{{\tclass{k}{\gamma}}}\T^k M,
$$
{\new where $(t, x)\mapsto \phi^X_t(x)$, $x\in M$, is the flow of the vector field $X$. Hence, }
\begin{multline*}
     \frac{1}{k!}\aliftB{X}{-k}(f^{(k)})  = \left. {\frac{\mathrm{d}}{\mathrm{d}u}} \right|_{u=0} f^{(k)}(\tclass{k}{t\mapsto \frac{1}{k!} \phi^X_{u t^k/k!}(\gamma(t))}) =
    \left. \dbydtk{k}  \right|_{t=0} t^k/k! \, \left.{\frac{\mathrm{d}}{\mathrm{d}u}} \right|_{u=0} f(\phi^X_u(\gamma(t))) = \\
    = \left. {\frac{\mathrm{d}}{\mathrm{d}u}} \right|_{u=0} f(\phi^X_u(\gamma(0))) = X(f)(\gamma(0))
\end{multline*}
as we claimed.

\eqref{i:alifts:v_rhoF}:
First of all, note that  the function $(\coreVF{v})(\pullback{(\rho^k)} f^{(k)})\in \Cf(E^k)$ has weight $-k+k=0$; hence, it is the pullback of a function on the base $M$, and it is enough  to verify the equality \eqref{e:rhoVF} at a point $m\in M$.

The tangent vector $(\coreVF{v})_m \in \T_m E^k$ is represented by the curve $t\mapsto t\cdot_{\core{E^k}} v_m$, which is equal to $h_{\sqrt[k]{t}}^{E^k}(v_m)$ if $t\geq 0$, where $h^{E^k}$ is the homogeneity structure on $E^k$. Hence,
$$
    \text{\LHS\ of \eqref{e:rhoVF}} = \left.\frac{\dd}{\dd t}\right|_{t=0}  f^{(k)}(\rho^k(t\cdot_{\core{E^k}} v_m)).
$$
Assume that the image of $\core{\rho^k}(v_m)$ in $\core{\T_m M}$ is represented by a curve $\gamma:\R \to M$, $\gamma(0)=m$, i.e.,
 $$
 \text{\RHS\ of \eqref{e:rhoVF}} =  \left.\frac{\dd}{\dd s}\right|_{s=0} f(\gamma(s)).
 $$
 Then $\rho^k(v_m) = \core{\rho^k}(v_m) = \tclass{k}{s\mapsto \gamma(s^k/k!)}$ as $\thh{k}$-velocity in  $\core{\T^k M} \subset \T^k M$, hence
 $$
 \rho^k(t\cdot_{\core{E^k}} v_m) = t\cdot_{\core{\T^k M}} \rho^k(v_m) = \tclass{k}{s\mapsto \gamma(t \cdot s^k/k!)}.$$
 Therefore,
$$
 \text{\LHS\ of \eqref{e:rhoVF}} = \left.\frac{\dd}{\dd t}\right|_{t=0} \left.\frac{\dd^k}{\dd s^k}\right|_{s=0} f(\gamma(t s^k/k!)).
$$
 Let $(x^a)$ be local coordinates on $M$ around $m$ such that $x^a(m)=0$. It is enough to prove \eqref{e:rhoVF} for $f=x^a$.  If $\gamma^a(t) := x^a(\gamma(t)) = c^a t + o(t)$ then  $\left.\frac{\dd^k}{\dd s^k}\right|_{s=0} \gamma^a(t s^k/k!) = c^a t + o(t)$. Hence, the left and right hand sides of \eqref{e:rhoVF} coincide with $c^a$.
\end{proof}

\begin{lem} \label{l:alifts_AL}  Let $(E^k, \kappa^k)$ be an AL HA.
\begin{enumerate}[(i)]
    \item \label{i:alifts:s_k} For $s\in \Sec(E^1)$ and $f\in \Cf(M)$, the following identities hold
        $$
             \frac{1}{k!} \aliftB{s}{-k}((\sharp^k)^\ast f^{(k)}) =  {\new (\sigma^k)^\ast} \sharp^1(s)(f) =
             \frac{1}{\K!}\aliftB{s}{-\K}((\sharp^k)^\ast f^{(\K)}) %(\sigma^k)^\ast\sharp(s)(f).
        $$
        for any $0\leq \K\leq k-1$.
    \item  \label{i:alifts:rho_rhoF}   $\sharp^1 = \sharp^{\core{E^k}}\circ \pa^k$  where $\sharp^{\core{E^k}}: \core{E^k} \ra \T M$ and $\pa^k: E^1 \ra \core{E^k}$ are the VB morphisms given in  Remark~\ref{r:str_maps_Ek}.
\end{enumerate}
\end{lem}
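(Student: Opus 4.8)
The plan is to derive both identities from the almost-Lie characterisation of Theorem~\ref{th:HA_axioms_and_lifts}\eqref{i:p:AL_axiom} together with Lemma~\ref{l:alifts1}, after first upgrading Lemma~\ref{l:alifts1}\eqref{i:alifts:Xk} from the weight $-k$ to an arbitrary weight $-\alpha$. Concretely, I would first establish the auxiliary identity
$$
\frac{1}{\alpha!}\,\aliftB{X}{-\alpha}(f^{(\alpha)}) = (\tau^k_M)^\ast\big(X(f)\big), \qquad X\in\VF(M),\ f\in\Cf(M),\ 0\leq \alpha\leq k,
$$
where $\aliftB{X}{-\alpha}$ is the $(\T^k M,\kappa^k_M)$-lift of $X$ in weight $-\alpha$. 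The case $\alpha=k$ is exactly Lemma~\ref{l:alifts1}\eqref{i:alifts:Xk}, and the general case follows by the same flow argument: unwinding the definition of the vertical comorphism $\VZM^k_{k-\alpha}$ in \eqref{df:V} and of the flip $\kappa^k_M$, one obtains the flow description $\aliftB{X}{-\alpha}_{\tclass{k}{\gamma}} = \tclass{1}{s\mapsto \tclass{k}{t\mapsto \phi^X_{st^\alpha}(\gamma(t))}}$, with $\phi^X$ the flow of $X$. Exchanging the $s$- and $t$-derivatives then gives
$$
\aliftB{X}{-\alpha}(f^{(\alpha)})(\tclass{k}{\gamma}) = \left.\frac{\dd}{\dd s}\right|_{s=0}\left.\dbydtk{\alpha}\right|_{t=0} f\big(\phi^X_{st^\alpha}(\gamma(t))\big) = \left.\dbydtk{\alpha}\right|_{t=0} t^\alpha\,X(f)(\gamma(t)) = \alpha!\,X(f)(\gamma(0)),
$$
since $\dbydtk{\alpha}|_{t=0}\,t^\alpha g(t) = \alpha!\,g(0)$; this is the claimed formula.

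For part \eqref{i:alifts:s_k} I would then invoke the almost-Lie hypothesis: by Theorem~\ref{th:HA_axioms_and_lifts}\eqref{i:p:AL_axiom} the vector fields $\aliftB{s}{-\alpha}\in\VF_{-\alpha}(E^k)$ and $\aliftB{\sharp^1(s)}{-\alpha}\in\VF_{-\alpha}(\T^k M)$ are $\sharp^k$-related for every $0\leq\alpha\leq k$. Evaluating this relatedness on the function $f^{(\alpha)}\in\Cf(\T^k M)$, dividing by $\alpha!$, applying the auxiliary identity with $X=\sharp^1(s)$, and using $\tau^k_M\circ\sharp^k=\sigma^k$ (because $\sharp^k$ is a morphism of \grBs\ over $\id_M$), I obtain
$$
\frac{1}{\alpha!}\,\aliftB{s}{-\alpha}\big((\sharp^k)^\ast f^{(\alpha)}\big) = (\sharp^k)^\ast(\tau^k_M)^\ast\big(\sharp^1(s)(f)\big) = (\sigma^k)^\ast\big(\sharp^1(s)(f)\big).
$$
Specialising to $\alpha=k$ and to $0\leq\alpha\leq k-1$ yields all three expressions in \eqref{i:alifts:s_k}.

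Part \eqref{i:alifts:rho_rhoF} then falls out by comparing \eqref{i:alifts:s_k} with Lemma~\ref{l:alifts1}\eqref{i:alifts:v_rhoF}. Taking $\rho^k=\sharp^k$ and $v=\pa^k(s)$, so that $\coreVF{v}=\tfrac{1}{k!}\aliftB{s}{-k}$ by the definition of $\pa^k$ in Remark~\ref{r:str_maps_Ek}, equation \eqref{e:rhoVF} reads
$$
\frac{1}{k!}\,\aliftB{s}{-k}\big((\sharp^k)^\ast f^{(k)}\big) = (\sigma^k)^\ast\big((\sharp^{\core{E^k}}\circ\pa^k)(s)(f)\big),
$$
where I use that $\core{\sharp^k}$ composed with $\jM{k}^{-1}$ is precisely $\sharp^{\core{E^k}}$. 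Comparing with the $\alpha=k$ instance of \eqref{i:alifts:s_k} and using injectivity of $(\sigma^k)^\ast$ gives $(\sharp^{\core{E^k}}\circ\pa^k)(s)(f)=\sharp^1(s)(f)$ for all $f$ and all $s$, whence $\sharp^1=\sharp^{\core{E^k}}\circ\pa^k$ as VB morphisms.

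The only genuinely computational point is the flow formula for $\aliftB{X}{-\alpha}$ on $\T^k M$ and the derivative exchange; everything else is formal bookkeeping with $\sharp^k$-relatedness and pullbacks. Thus the main obstacle is carefully identifying $\aliftB{X}{-\alpha}$ through \eqref{df:V} and the construction of $\kappa^k_M$, but this merely repeats, with $\alpha$ in place of $k$, the argument already carried out in Lemma~\ref{l:alifts1}\eqref{i:alifts:Xk}.
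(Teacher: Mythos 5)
Your proof is correct, but it diverges from the paper's in how it handles the case $0\leq\K\leq k-1$ of part \eqref{i:alifts:s_k}. The paper only proves the flow identity of Lemma~\ref{l:alifts1}\eqref{i:alifts:Xk} for the extreme weight $-k$, establishes the first equality exactly as you do (AL-relatedness at weight $-k$ plus that lemma and $\tau^k_M\circ\sharp^k=\sigma^k$), and then obtains the remaining equalities by \emph{reduction}: by Lemma~\ref{l:comp_alg_lifts} the field $\aliftB{s}{-\K}\in\VF_{-\K}(E^k)$ projects onto $E^{\K}$ to the top lift $s^{\langle-\K\rangle_{\kappa^{\K}}}$ of the order-$\K$ reduction $(E^{\K},\kappa^{\K})$ (which is again AL), so the already-proven case applies with $k$ replaced by $\K$ and pulls back along $\sigma^k_{\K}$. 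You instead upgrade Lemma~\ref{l:alifts1}\eqref{i:alifts:Xk} to all weights, proving $\frac{1}{\alpha!}\aliftB{X}{-\alpha}(f^{(\alpha)})=(\tau^k_M)^\ast X(f)$ by the flow description $\aliftB{X}{-\alpha}_{\tclass{k}{\gamma}}=\tclass{1}{u\mapsto\tclass{k}{t\mapsto\phi^X_{ut^{\alpha}}(\gamma(t))}}$ (which is indeed what \eqref{df:V}, \eqref{df:alpha_lifts} and \eqref{df:algebroid_lift} give, consistent with the paper's $\alpha=k$ formula), and then invoke AL-relatedness at every weight in a single stroke. Your route buys a self-contained argument at level $k$ — in particular it avoids Lemma~\ref{l:comp_alg_lifts}, the fact that reductions of AL HAs are AL, and the compatibility of anchors with reductions, all of which are implicit in the paper's two-line reduction step — at the cost of redoing the derivative-exchange computation in slightly greater generality; the paper's route is shorter given its stock of lemmas and reuses them instead. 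For part \eqref{i:alifts:rho_rhoF} the content of your argument and the paper's coincide: the paper organizes it as a commutative diagram whose left square is the AL axiom, whose right square is Lemma~\ref{l:VF-k}, and whose bottom row is the identity by Lemma~\ref{l:VF-k_and_core}, while you collapse the right square and bottom row into a single application of Lemma~\ref{l:alifts1}\eqref{i:alifts:v_rhoF} with $\rho^k=\sharp^k$, $v=\pa^k(s)$, and conclude by injectivity of $(\sigma^k)^\ast$; this is a legitimate and slightly more direct packaging of the same facts.
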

\begin{proof}
Proof of \eqref{i:alifts:s_k}: Denote $X=\sharp^1 s \in \VF(M)$ for time being.   We know from Theorem~\ref{th:HA_axioms_and_lifts} that the vector fields $\aliftB{s}{-k}$ and $\aliftB{X}{-k}$ are $\sharp^k$-related, hence for any {\new function} $\psi\in \Cf(\T^k M)$ we have $\aliftB{s}{-k}((\sharp^k)^\ast \psi) = (\sharp^k)^\ast \aliftB{X}{-k}(\psi)$. We take $\psi = f^{(k)}$, use \eqref{e:alifts:Xk} and get
$$
\frac{1}{k!} \aliftB{s}{-k}((\sharp^k)^\ast f^{(k)}) = \frac{1}{k!}(\sharp^k)^\ast \aliftB{X}{-k}(f^{(k)}) = (\sharp^k)^\ast (\tau^k_M)^\ast  X(f) =  (\sigma^k)^\ast X(f)
$$
 {\new as $\sigma^k = \tau^k_M\circ \sharp^k$.} This proves the first equality.

The second one follows from Lemma~\ref{l:comp_alg_lifts}. Indeed, consider the reduction of $(E^k, \kappa^k)$ to weight $\alpha$. We find that the vector field $\aliftB{s}{-\K}\in \VF_{-\K}(E^k)$ is projectable onto $E^{\K}$ and its projection is
$s^{\langle-\K\rangle_{\kappa^{\K}}}$ hence the equality $(\sigma^k)^\ast \sharp^1(s)(f) = \frac{1}{\K!}\aliftB{s}{-\K}((\sharp^k)^\ast f^{(\K)})$ follows from the previous one by replacing $k$ with $\K$.

Proof of \eqref{i:alifts:rho_rhoF}: The claim follows from the commutativity of the following diagram
$$
\xymatrix{
\Sec(E^1) \ar[rr]^{\pa_{E^k}} \ar[d]_{\sharp} && \VF_{-k}(E^k) \ar@{-|>}[d]^{\sharp^k} \ar[rr]^\simeq && \Sec(\core{E^k})\ar[d]^{\core{\sharp^k}}  & \\
\VF(M) \ar[rr]^{\pa_{\T^k M}} && \VF_{-k}(\T^k M) \ar[rr]^\simeq && \Sec(\core{T^k M}) \ar[r]^\simeq  &  \VF(M)
}
$$
where the arrow in the middle, labelled by $\sharp^k$,  denotes a relation: $(X, Y)\in  \sharp^k$ if the vector fields $X\in \VF(E^k)$ and $Y\in \VF(\T^k M)$ are $\sharp^k$-related. Actually, this relation restricted to the lowest degree $-k$ becomes a mapping $\sharp^k: \VF_{-k}(E^k) \ra \VF_{-k}(\T^k M)$. Moreover,  $\pa_{E^k}=\pa^k$ and ${\pa_{\T^k M}}$ are defined by means of algebroid lifts, as in Remark~\ref{r:str_maps_Ek}. It follows from Lemma~\ref{l:VF-k_and_core} that the composition of maps in the lower row is the identity on $\VF(M)$.  All maps in the diagram  are $\Cf(M)$-linear, hence they give rise  to VB morphisms. The square on the left is commutative due to Theorem~\ref{th:HA_axioms_and_lifts} and the AL assumption. The square on the right is also commutative and it is a more general fact: $\sharp^k$ can be replaced there with any \grB\ morphism $\rho^k: E^k \ra F^k$, as stated in Lemma~\ref{l:VF-k}.
\end{proof}
Recall, that for $k=2$, the formula \eqref{e:fs_lift} gives
\begin{equation} \label{e:fs2}
\begin{split}
   \aliftB{(f s)}{-2} &= f \aliftB{s}{-2} = 2 f \pa(s), \\
   \aliftB{(fs)}{-1} &= f \aliftB{s}{-1}  + (\pullback{\sharp}\dot{f}) \aliftB{s}{-2}, \\
        \aliftB{(fs)}{0} &= f \aliftB{s}{0}  +   (\pullback{\sharp} \dot{f}) \aliftB{s}{-1} +  \frac12 ((\sharp^2)^\ast \ddot{f}) \aliftB{s}{-2}.
 \end{split}
\end{equation}
 We shall need the following lemma for proving tensor-like properties of some structure maps associated with a skew HA $(E^2, \kappa^2)$.
\begin{lem} \label{l:alifts_skew} Let $(E^2, \kappa^2)$ be a skew HA. %, $C = \core{E^2}$, $A=E^1$.
Let  $f\in \Cf(M)$, $s, s_1, s_2\in \Sec(A)$ and $v\in \Sec(C) \simeq \VF_{-2}(E^2)$. Then
    \begin{enumerate}[(i)]
        \item \label{e:sharpFv} $(\sharp^C v) (f) =  \coreVF{v}((\sharp^2)^\ast \ddot{f})$, where $\coreVF{v}$ is given in Lemma~\ref{l:VF-k}, \commentMR{Czy nie powinno być $\frac12 (\sharp^2)^\ast \ddot{f}$? Nie, jest dobrze.}
        \item \label{e:sharpFpa} $\frac12 \aliftB{s}{-2}((\sharp^2)^\ast \ddot f) = (\sharp^C \circ \pa )(s)(f)$,
        \item \label{e:sharp_lift1} $(\sharp s)(f) = \aliftB{s}{-1}(\pullback{\sharp} \dot{f}) = \aliftB{s}{0}(f)$,
        \item \label{e:sharp_lift2ddf} $\aliftB{s_1}{-1} \aliftB{s_2}{0} (\pullback{\sharp} \dot{f}) = \left(\sharp [s_1, s_2] +  \sharp s_2 \circ \sharp s_1\right) (f)$.
    \end{enumerate}
\end{lem}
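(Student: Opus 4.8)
The plan is to verify each of the four identities in Lemma~\ref{l:alifts_skew} by reducing to the already-established machinery for algebroid lifts, mixing coordinate computations (using the local form \eqref{e:alg_lifts_coord} of the lifts $\aliftB{e_k}{\K}$) with the coordinate-free results of Lemmas~\ref{l:alifts1} and~\ref{l:alifts_AL}. Since all four equalities are $\Cf(M)$-linear (or tensorial after the correction terms are accounted for) in the section arguments, it suffices to check them on frame sections $s=e_i$, $v = c_\mu$, which makes the coordinate approach both legitimate and efficient.

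For part~\eqref{e:sharpFv}, I would unwind the definition \eqref{df:sharpC} of $\sharp^C$ together with Lemma~\ref{l:alifts1}\eqref{i:alifts:v_rhoF} applied to $\rho^k=\sharp^2$ and $k=2$: the formula \eqref{e:rhoVF} reads $\coreVF{v}((\sharp^2)^\ast f^{(2)}) = \core{\sharp^2}(v)(f)$, and since $\sharp^C$ is by definition $\core{\sharp^2}$ composed with the isomorphism $\core{\T^2 M}\simeq \T M$, this is exactly the claim (note $f^{(2)} = \ddot f$ in the adapted notation). Part~\eqref{e:sharpFpa} then follows immediately by combining \eqref{e:sharpFv} with the definition \eqref{df:pa} of $\pa$, namely $\pa(s)=\frac12\aliftB{s}{-2}\in\VF_{-2}(E^2)\simeq\Sec(C)$; substituting $v=\pa(s)$ into \eqref{e:sharpFv} gives the result. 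For part~\eqref{e:sharp_lift1}, I would read off from \eqref{e:alg_lifts_coord} that $\aliftB{e_k}{0} = Q^a_k\pa_{x^a}+\dots$ and $\aliftB{e_k}{-1}=\pa_{y^k}+\dots$; applying these to $\pullback{\sharp}\dot f$ and to $f$ respectively, and using $\sharp(e_k)=Q^a_k\pa_{x^a}$, both sides collapse to $(\sharp s)(f)$. Alternatively, one can derive \eqref{e:sharp_lift1} from Lemma~\ref{l:alifts_AL}\eqref{i:alifts:s_k} with $\K=0,1$ when the HA is AL, but since the lemma is stated only for skew HAs I would keep the direct coordinate verification using \eqref{e:alg_lifts_coord}.

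The computation in part~\eqref{e:sharp_lift2ddf} will be the main obstacle, as it is the only genuinely second-order statement and involves a nested application of two lifts of different weights. The strategy is to compute $\aliftB{s_2}{0}(\pullback{\sharp}\dot f)$ first: by part~\eqref{e:sharp_lift1} this equals $\pullback{\sharp}\big((\sharp s_2)(f)\big)$ up to the appropriate pullback, i.e.\ it is (the pullback to $E^2$ of) the function $(\sharp s_2)(f)$ regarded via $\dot{}$-lifting. I would then apply $\aliftB{s_1}{-1}$ to this, again invoking part~\eqref{e:sharp_lift1} but now with $f$ replaced by $(\sharp s_2)(f)$, to obtain $\big(\sharp s_1\big)\big((\sharp s_2)(f)\big) = \sharp s_1\circ\sharp s_2(f)$. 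The remaining discrepancy between $\sharp s_1\circ\sharp s_2$ and the claimed $\sharp[s_1,s_2] + \sharp s_2\circ\sharp s_1$ is controlled by the bracket relation; here I would either carry out the bracket of vector fields directly in coordinates using \eqref{e:alg_lifts_coord} (noting the appearance of the structure functions $Q^i_{jk}$ governing $[e_i,e_j]$) or track carefully the order of composition, since $\aliftB{s_1}{-1}$ and $\aliftB{s_2}{0}$ do not commute and their commutator contributes precisely the $\sharp[s_1,s_2]$ term. The bookkeeping of which composition order each term lands in, and the correct placement of the weight-$\dot{}$ versus weight-$\ddot{}$ lifts, is where errors are most likely, so I would double-check the final identity against the explicit coordinate expressions before concluding.
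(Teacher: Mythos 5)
Your parts (i)--(iii) are correct, and for (i)--(ii) your route is actually cleaner than the paper's: the paper proves (i) by reducing to a frame $(c_\mu)$ and computing $(\sharp^2)^\ast\ddot f$ in coordinates, whereas your appeal to Lemma~\ref{l:alifts1}\eqref{i:alifts:v_rhoF} with $\rho^2=\sharp^2$ gives (i) immediately; and since $\coreVF{\pa(s)}=\frac12\aliftB{s}{-2}$ by the very definition \eqref{df:pa}, your deduction of (ii) by substituting $v=\pa(s)$ into (i) replaces the paper's second coordinate check. Part (iii) is the same frame-plus-coordinates verification as in the paper, and you are right that Lemma~\ref{l:alifts_AL} cannot be invoked, as it needs the AL hypothesis.

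Part (iv) is where your primary strategy genuinely fails, and it fails exactly because the lemma is stated for \emph{skew} HAs. Part (iii) applies only to pullbacks of functions on $M$; the function $\pullback{\sharp}\dot{f}$ has weight one (it is pulled back from $E^1$), so (iii) says nothing about $\aliftB{s_2}{0}(\pullback{\sharp}\dot{f})$, and your intermediate claim that this equals the lift of $(\sharp s_2)(f)$ is false in skew generality. Concretely, using \eqref{e:alg_lifts_coord} one finds
\begin{equation*}
\aliftB{e_k}{0}\bigl(\pullback{\sharp}\dot{f}\bigr) - \pullback{\sharp}\dot{g}
= y^i\,\frac{\pa f}{\pa x^a}\Bigl(Q^j_{ik}Q^a_j - \Qcheck{Q}^a_{ik}\Bigr),
\qquad g := (\sharp e_k)(f),
\end{equation*}
and the right-hand side is the coefficient expression of $\sharp[e_i,e_k]-[\sharp e_i,\sharp e_k]$, which vanishes precisely when the AL equation \eqref{e:QakQk_ij} holds. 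Consequently your nested use of (iii) would output $\sharp s_1\circ\sharp s_2(f)$, which differs from the correct answer $\sharp[s_1,s_2](f)+\sharp s_2\circ\sharp s_1(f)$ by exactly $(\sharp[s_1,s_2]-[\sharp s_1,\sharp s_2])(f)$ --- nonzero for a general skew HA. Your proposed rescue, that the commutator of $\aliftB{s_1}{-1}$ and $\aliftB{s_2}{0}$ "contributes precisely the $\sharp[s_1,s_2]$ term," presupposes $[\aliftB{s_1}{-1},\aliftB{s_2}{0}]=\aliftB{[s_1,s_2]}{-1}$, i.e.\ \eqref{e:Lie_alifts}, which is part of the Lie axiom and is likewise unavailable here.

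What does work is your fallback, which is the paper's proof: show that both sides of (iv) are $\Cf(M)$-linear in $s_1$ and that their \emph{difference} (neither side separately) is $\Cf(M)$-linear in $s_2$ --- this uses (iii), the expansions \eqref{e:fs2} and weight counting --- and then compute $\aliftB{e_{k'}}{-1}\aliftB{e_k}{0}(\pullback{\sharp}\dot{f})$ directly from \eqref{e:alg_lifts_coord}, where the bracket term $Q^i_{k'k}Q^a_i\,\pa f/\pa x^a=\sharp[e_{k'},e_k](f)$ appears on its own rather than as a commutator correction. So for (iv) the coordinate computation must be the argument itself, not a sanity check on a conceptual shortcut.
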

\begin{rem}
We consider $f\in \Cf(M)$ and $\pullback{\sharp}(\dot{f})$ as functions on $E^2$ {\new using} the pullbacks of $f$ by $\sigma^2: E^2\ra M$ and $\sigma^2_1: E^2\ra E^1$, respectively. Note that  if $(E^2, \kappa^2)$ is AL, then $\sharp^C\circ \pa = \sharp$, hence \eqref{e:sharpFpa} coincides in this case with Lemma~\ref{l:alifts_AL} \eqref{i:alifts:s_k} with $k=2$.  It is tempting to add in \eqref{e:sharp_lift1} the equality $(\sharp s)(f) = \frac12 \aliftB{s}{-2}(\left(\sharp^2\right)^\ast \ddot{f})$, but this requires the assumption \eqref{i:AL_sharpEF}, see Lemma~\ref{l:alifts_AL}. In the AL case, point \eqref{e:sharp_lift2ddf} simplifies to $\aliftB{s_1}{-1} \aliftB{s_2}{0} (\pullback{\sharp} \dot{f}) =    \sharp s_1(\sharp s_2 (f))$.
\end{rem}
\begin{proof}
    In general, the $(\alpha)$-lift $f^{(\alpha)} \in \Cf(\T^k M)$ has weight $0\leq \alpha\leq k$, the anchor map $\sharp^k: E^k\ra \T^k M$ preserves the gradings on $E^k$ and $\T^k M$, {\new while } the vector field $\aliftB{s}{\beta}\in \VF(E^k)$, where $s\in \Sec(E^1)$, has weight $-k\leq \beta\leq 0$. Hence $\aliftB{s}{\beta}(f^{(\alpha)}) = 0$ whenever $\alpha+\beta<0$.

%Proof of \eqref{e:sharpFv}:

Clearly, the equation \eqref{e:sharpFv} is $\Cf(M)$-linear in $v$, so it is enough to show \eqref{e:sharpFv} for $v$ from a  frame $(c_\mu)$ of local sections of $C\ra M$.  We have {\new  $\ddot f = \frac{\pa f}{\pa x^a} \ddot{x}^a + \frac{\pa^2 f}{\pa x^a \pa x^b} \dot{x}^a \dot{x}^b$, so }
%\commentMR{Fix notation $e_i, c_\mu$ and $(x^a, y^i, z^\mu)$ in the previous Section, once and for all.}
\begin{equation}\label{e:sharp_ddot}
  (\sharp^2)^\ast(\ddot{f}) = \frac{\pa f}{\pa x^a} \left(Q^a_\mu z^\mu + \frac12 Q^a_{ij} y^i y^j\right) +{\new \frac{\pa^2 f}{\pa x^a \pa x^b} Q^a_i Q^b_j y^i y^j,}
\end{equation}
hence {\new $\coreVF{c_\mu}((\sharp^2)^\ast(\ddot{f})) = Q^a_\mu   \frac{\pa f}{\pa x^a} = \sharp^C(c_\mu) (f)$, see \eqref{df:sharpC}, } and  \eqref{e:sharpFv} follows immediately.

 Similarly, {\new using \eqref{e:fs2}, we find that} \eqref{e:sharpFpa} and \eqref{e:sharp_lift1} are $\Cf(M)$-linear in $s$.
Thus it is enough to verify these equalities  for $s=e_k$. This is straightforward:  we use the formulas $\eqref{e:alg_lifts_coord}$ for $\aliftB{e_k}{\alpha}$ and find that $\frac12 \aliftB{e_k}{-2}((\sharp^2)^\ast \ddot f) = Q^{\mu}_kQ^a_{\mu} \frac{\pa f}{\pa x^a}$, which coincides with $(\sharp^C \circ \pa )(e_k)(f) = Q^\mu_k \sharp^C(c^\mu)(f)$ due to \eqref{e:coord_pa_beta_gamma}, thereby proving \eqref{e:sharpFpa}. Next, $\pullback{\sharp} \dot f = \frac{\pa f}{\pa x^a} Q^a_i y^i$, and all three expressions in  \eqref{e:sharp_lift1} are equal to $Q^a_k \frac{\pa f}{\pa x^a}$.

 It remains to prove \eqref{e:sharp_lift2ddf}.  We claim that the left and right hand sides of \eqref{e:sharp_lift2ddf} are $\Cf(M)$-linear in $s_1$, and their  difference is also $\Cf(M)$-linear in {\new $s_2$}. Indeed,  {consider \eqref{e:sharp_lift2ddf} with $s_1$ replaced with {\new $g s_1$} and expand $\aliftB{(g s_1)}{-1}$ as in \eqref{e:fs2}. Note that } $\aliftB{s_2}{0}(\pullback{\sharp} \dot f)$ has weight 1  and so it is killed by $\aliftB{s_1}{-2}$, hence $\aliftB{(g \,s_1)}{-1} \aliftB{s_2}{0} (\pullback{\sharp} \dot{f})  = g\, \aliftB{s_1}{-1} \aliftB{s_2}{0} (\pullback{\sharp} \dot{f})$ while
$$
    \sharp[gs_1, s_2] + \sharp s_2 \circ \sharp (g s_1) = \left(g  \sharp[s_1, s_2] - (\sharp s_2) (g) \, \sharp s_1\right) + \sharp s_2 (g)\, \sharp s_1)  + g \sharp s_2 \circ \sharp s_1=  g\, \cdot  \mathrm{RHS}_{\eqref{e:sharp_lift2ddf}}.
$$
Similarly, using point \eqref{e:sharp_lift1} and a weight argument, we obtain
\begin{align*}
   \aliftB{s_1}{-1} \aliftB{(g s_2)}{0} (\pullback{\sharp} \dot{f}) &=  \aliftB{s_1}{-1} \left(  g \aliftB{s_2}{0} +  \pullback{\sharp}(\dot{g})\aliftB{s_2}{-1} + \frac12 (\sharp^2)^\ast (\ddot{g}) \aliftB{s_2}{-2}\right)  (\pullback{\sharp} \dot{f}) = \\
  & (g \aliftB{s_1}{-1} \aliftB{ s_2}{0} +  \aliftB{s_1}{-1}(\pullback{\sharp} \dot{g}) \, \aliftB{s_2}{-1})(\pullback{\sharp} \dot{f}) = g \cdot \mathrm{LHS}_{\eqref{e:sharp_lift2ddf}} + (\sharp s_1)(g) \cdot (\sharp s_2)(f),
\end{align*}
and, in the same way, $\left(\sharp[s_1, gs_2] + \sharp (g s_2) \circ (\sharp s_1)\right)(f)  = g \cdot  \mathrm{RHS}_{\eqref{e:sharp_lift2ddf}} + (\sharp s_1)(g) \cdot (\sharp s_2)(f)$.  It proves our claim, and thus it is enough to check \eqref{e:sharp_lift2ddf} with $s_1 = e_{k'}$ and $s_2 = e_{k}$. We have
$$
\aliftB{e_k}{0} (\pullback{\sharp} \dot{f}) = \aliftB{e_k}{0}\left(\frac{\pa f}{\pa x^a} Q^a_i y^i\right) =  Q^b_k y^i \frac{\pa}{\pa x^b} (\frac{\pa f}{\pa x^a} Q^a_i) + Q^i_{jk}  Q^a_i y^j \frac{\pa f}{\pa x^a},
$$
hence $\pa_{z^\mu}$ kills above expressions. By applying $\aliftB{e_{k'}}{-1}$ we get
\begin{multline}
\aliftB{e_{k'}}{-1} \aliftB{e_k}{0} (\pullback{\sharp} \dot{f}) =  Q^b_k \frac{\pa}{\pa x^b} (\frac{\pa f}{\pa x^a} Q^a_{k'}) +  Q^i_{\kp k} Q^a_i \pa_{x^a} %\stackrel{\eqref{e:QakQk_ij}}{=}
 = \sharp e_k (\sharp e_{k'} (f)) + (\sharp [e_{k'}, e_k])(f), % \\ = \sharp(e_{k'})\sharp(e_{k}) (f).
\end{multline}
and we are done.
\end{proof}

%\label{p:tensor_psi} %tensor-like properties
\paragraph{Proof of Lemma~\ref{l:tensor_psi} and Theorem~\ref{th:skew_HA} part (a).}
    {\new The formulas $\frac12 \veps_1(s_1, s_2) = \beta(s_1, s_2) - \pa([s_1, s_2])$ and ${\newMR \frac12 } \veps_0(s_1, s_2) =  \Box_{s_1} (\pa s_2) - \pa ([s_1, s_2])$ come from the definitions of the corresponding maps, compare \eqref{df:eps_k} with \eqref{df:beta}, \eqref{df:pa} and \eqref{df:gamma}.  {\newMR The other properties of the maps  $\veps_0$ and $\veps_1$ } given in  Lemma~\ref{l:tensor_psi} follow immediately from the properties of the maps $\beta$ and $\Box$ given in Theorem~\ref{th:skew_HA}, which we are going to prove first.}
\begin{itemize}
\item Proof of \eqref{e:tensor_beta}: As the Lie bracket of vector fields is skew symmetric, so $\beta(s_1, s_2)= -\beta(s_2, s_1)$. There are no vector fields on $E^2$ of weight less than $-2$. {\newMR Hence,
    using \eqref{e:fs2}, we get }
\begin{multline}
{\beta}(s_1, f s_2)  = {\new \frac12 } [\aliftB{s_1}{-1}, f \aliftB{s_2}{-1}] + {\new \frac12 }[ \aliftB{s_1}{-1}, (\pullback{\sharp} \dot{f}) \aliftB{s_2}{-2}] = \\
 {\new \frac12 } f [\aliftB{s_1}{-1},  \aliftB{s_2}{-1}]
+  {\new \frac12 } \aliftB{s_1}{-1}(\pullback{\sharp} \dot{f}) \aliftB{s_2}{-2} = f {\beta}(s_1, s_2) + (\sharp s_1)(f) \pa(s_2),
\end{multline}
by a weight argument and Lemma~\ref{l:alifts_skew}\eqref{e:sharp_lift1}.

\item Proof of \eqref{e:gamma_1} and  \eqref{e:gamma_2}: We expand $\aliftB{(f s)}{0}$ as in \eqref{e:fs2}
 and find that for $v\in \VF_{-2}(E^2)\simeq \Sec(C)$ and $s\in \Sec(E)$ we have
\begin{align*}
\Box_{fs} v &= [f \aliftB{s}{0}, v] + {\new [(\pullback{\sharp} \dot{f}) \aliftB{s}{-1}, v] } + [{\new \frac12} (\sharp^2)^\ast \ddot{f}  \aliftB{s}{-2}, v]  = \\
       &= f [\aliftB{s}{0}, v] {\new -} v(\pullback{\sharp^2} \ddot{f}) \aliftB{s}{-2} \stackrel{\text{Lemma \eqref{l:alifts_skew}}}{=} f \cdot (\Box_s v) - (\sharp^C v)(f) \, \pa(s),
\end{align*}
as $v(f)$, $v(\pullback{\sharp}(\dot{f}))$,  $[\aliftB{s}{-1}, v]$, $[\aliftB{s}{-2}, v]$ vanish by inspecting weights.
 Similarly for \eqref{e:gamma_2}:
$$
\Box_s f v = [\aliftB{s}{0}, f v] = f [\aliftB{s}{0}, v] {\new + } \aliftB{s}{0}(f) v = f (\Box_s v) {\new +  }  (\sharp s)(f) v
$$
by Lemma~\ref{l:alifts_skew}\eqref{e:sharp_lift1}.

\item Proof of the properties of the map $\psi$ given in Lemma~\ref{l:tensor_psi}:

  Let  $h := \frac12 (\sharp^2)^\ast \ddot{f}$ for time being, so  $h$ is a function on $E^2$ of weight two.
      From \eqref{e:fs2} and the definition \eqref{df:psi} of $\psi$ %\eqref{e:fs_lift}
      we get
      $$\left(\psi(g s_1, s_2) - g \psi(s_1, s_2)\right)(f) = {\newMR \frac12 }(\pullback{\sharp} \dot{g})\, \aliftB{s_1}{-2} \aliftB{s_2}{-1}(h) = 0$$
      by a weight argument, hence $\psi$ is tensorial in its first argument. Also
 \begin{multline*}\left(\psi(s_1, g s_2)- g \psi(s_1, s_2)\right)(f) =
 {\newMR \frac12 }\aliftB{s_1}{-1}\left(g\, \aliftB{s_2}{-1}(h) + \pullback{\sharp} \dot{g}\, \aliftB{s_2}{-2} (h)\right) - {\newMR \frac12 } g  \aliftB{s_1}{-1}\aliftB{s_2}{-1}(h)  \\ -  (\sharp s_1\, g)\, (\sharp s_2)(f) =
 {\newMR \frac12 } \aliftB{s_1}{-1}(\pullback{\sharp} \dot{g})\cdot \aliftB{s_2}{-2}(h) - (\sharp s_1)(g)\, (\sharp s_2)(f) =
  (\sharp s_1) (g)\left(\sharp^C\circ \pa - \sharp\right){\newMR (s_2)}(f),
 \end{multline*}
 by Lemma~\ref{l:alifts_skew}\eqref{e:sharpFpa} and \eqref{e:sharp_lift1}, hence we get \eqref{e:psi}.
    Set $A(f) = \aliftB{s_1}{-1} \aliftB{s_2}{-1} (h)$, $B(f) = (\sharp s_1)\circ (\sharp s_2)(f)$, so $\psi(s_1, s_2) = A -  B$. By inspecting  weights  and using $(fg)^{(2)} = f \ddot{g} + \ddot{f} g + 2 \dot{f}\dot{g}$ and Lemma~\ref{l:alifts_skew}\eqref{e:sharp_lift1} we find that
    $$
        A(fg) = A(f) g  +f A(g) + (\sharp s_1) (f) (\sharp s_2)(g) +  (\sharp s_1) (g) (\sharp s_2)(f),
    $$
    while
    $$
        B(fg) = f B(g) + B(f) g + (\sharp s_1) (f) (\sharp s_2)(g) +  (\sharp s_1) (g) (\sharp s_2)(f)
    $$
    hence $\psi(s_1, s_2)$ is a  derivation. The coordinate formula \eqref{e:psi_coord} for $\psi(e_{k'}, e_k)$ follows directly from \eqref{e:alg_lifts_coord} and \eqref{e:sharp_ddot}:
    \begin{align*}
        \aliftB{e_{k'}}{-1}  \aliftB{e_{k}}{-1} (h) &= \aliftB{e_{k'}}{-1}  \left(\frac12 Q^\mu_{ik} Q^a_\mu \frac{\pa f}{\pa x^a} y^i + \frac12 Q^a_{ik}  \frac{\pa f}{\pa x^a} y^i + \frac{\pa^2 f}{\pa x^a \pa x^b} Q^a_i Q^b_k y^i \right) = \\ &=\frac12\left(Q^\mu_{k' k}Q^a_\mu + Q^a_{k'k}\right) \frac{\pa f}{\pa x^a} +  \frac{\pa^2f}{\pa x^a \pa x^b} Q^a_{k'} Q^b_k, \\
        (\sharp e_{k'})(\sharp e_k)  (f) &=  Q^b_{k'} Q^a_k  \frac{\pa^2f}{\pa x^a \pa x^b} +  Q^b_{k'} \frac{\pa Q^a_k}{\pa x^b} \frac{\pa f}{\pa x^a}.
    \end{align*}
   The formula \eqref{e:psi_bar_coord} for $\sym{\psi}$ follows immediately from \eqref{e:psi_coord}.
   The skew-symmetric part of $\psi$ is derived from \eqref{df:psi}:
    $$\alt{\psi}(s_1, s_2)(f) = {\new \frac14} [\aliftB{s_1}{-1}, \aliftB{s_2}{-1}]((\sharp^2)^\ast \ddot{f}) - {\new \frac12} [\sharp s_1, \sharp s_2](f)$$
    and this coincides with the formula \eqref{e:psi_skew_coord} due to the definition of $\beta$ and Lemma~\ref{l:alifts_skew}\eqref{e:sharpFv}.   The direct computation of  $\sharp^2 \circ \Rk^2$  using \eqref{e:R2_coord}
    gives
    $$
    \sharp^2 \circ \Rk^2 (x^a, y^i, \dot{y}^i) = \left(x^a, \dot{x}^a = Q^a_i y^i, \ddot{x}^a = \frac12 Q^a_{ij} y^i y^j + Q^a_\mu\left(Q^\mu_i \dot{y}^i + \frac12 Q^\mu_{(ij)} y^i y^j\right)\right)
    $$
    and comparing it with
    $$\sharp^{[2]}(x^a, y^i, \dot{y}^i) = (x^a, Q^a_i y^i, \frac{1}{2}\Qhat{Q}^a_{ij} \, y^i y^j + Q^a_i\, \dot{y}^i)$$
    as read from Example~\ref{ex:structure_maps_E[2]},  gives the desired {\newMR equivalence:  $\sharp^2 \circ \Rk^2 = \sharp^{[2]}$ if and only if } $\sym{\psi}=0$ and $\sharp = \sharp^C \circ \pa$, see \eqref{df:Qhat} and \eqref{e:psi_bar_coord}.

   \item Proof of \eqref{e:tensor_bar_omega_1} and \eqref{e:tensor_bar_omega_2}:
The map $\delta$ defined in \eqref{df:delta} satisfies
\begin{equation}
            \delta_s(f s_1, s_2) = f \delta_s(s_1, s_2) +   (\sharp [s, s_2])(f)\, \pa(s_1), \label{e:delta_1} \tag{$\mathrm{Eq^1_\delta}$}
            \end{equation}
            \begin{equation}
            \delta_s(s_1,  fs_2) = f \delta_s(s_1, s_2) - (\sharp s)(f)\beta(s_1, s_2) { - (\sharp s_1)(f) \, \Box_{s} \pa(s_2) - (\sharp [s_1, s] + \sharp s \circ \sharp s_1)(f) \, \pa(s_2)},  \label{e:delta_2}\tag{$\mathrm{Eq^2_\delta}$}
            \end{equation}
            %\end{align}
            \begin{equation}
            \delta_{fs}(s_1, s_2) =  f \delta_s(s_1, s_2) + (\sharp s_1)(f)\beta(s_2, s) + (\sharp s_2)(f)\beta(s_1, s) + \left((\sharp s_1)\circ (\sharp s_2) +  \psi(s_1, s_2)\right)(f) \, \pa(s). \label{e:delta_3}\tag{$\mathrm{Eq^3_\delta}$}
\end{equation}
where $\delta_s(s_1, s_2) = \delta(s_1, s_2, s)$.
We expand $\aliftB{(f\,s)}{0}$  as in \eqref{e:fs2} and using Lemma~\ref{l:alifts_skew} we get
\begin{align*}
&[\aliftB{s_1}{-1}, [\aliftB{s_2}{-1}, f \aliftB{s}{0}]] = f [\aliftB{s_1}{-1},  [ \aliftB{s_2}{-1}, \aliftB{s}{0}]], \\
& [\aliftB{s_1}{-1}, [\aliftB{s_2}{-1}, \pullback{\sharp} \dot{f} \aliftB{s}{-1}]]  =
  [\aliftB{s_1}{-1}, 2\pullback{\sharp} \dot{f} \beta(s_2, s) +  \sharp s_2(f)\, \aliftB{s}{-1}] =  2 \sharp s_1(f) \beta(s_2, s) +  2   \sharp s_2(f) \beta(s_1, s), \\
  & [\aliftB{s_1}{-1}, [\aliftB{s_2}{-1}, (\frac12 \pullback{(\sharp^2)} (\ddot{f})) \aliftB{s}{-2}]] =  \aliftB{s_1}{-1} \aliftB{s_2}{-1}\frac12 \sharp_2^\ast (\ddot{f}) \aliftB{s}{-2} =  \left(\psi(s_1, s_2)(f) + (\sharp s_1)(\sharp s_2)(f)\right)\, {\new } 2 \pa(s),
\end{align*}
where $\psi$ is defined in \eqref{df:psi}. Summing up these three equalities, we get  \eqref{e:delta_3}.
The equalities \eqref{e:delta_1} and \eqref{e:delta_2} can be derived in a very similar way and we omit the proof.
 The direct use of the definition of $\omega$ (see \eqref{df:omega}) and the properties of $\delta$ and $\beta$
    lead to {\newMR  %\begin{equation*}
           \begin{align*}
                \omega_s(f s_1,  s_2) &= f \omega_s (s_1, s_2),  \\
                \omega_s(s_1,  f s_2) &-  f \omega_s(s_1, s_2) =  (\sharp s_1)(f)\left(\pa([s, s_2] - \Box_s \pa(s_2))\right) + \left(\sharp[s, s_1] - [\sharp s, \sharp s_1]\right)(f) \pa(s_2) =  \\
                &- (\sharp s_1)(f) \veps_0(s, s_2) + \xi(s, s_1) \pa s_2, \\
                \omega_{fs}(s_1, s_2) &-  f \omega_s(s_1, s_2) =  (\sharp s_1)(f)\left(\pa([s, s_2])-\beta(s, s_2)\right) +  \psi(s_1, s_2)(f) \, \pa(s) = \\
                &\frac12  (\sharp s_1)(f) \veps_1(s_2, s) + \psi(s_1, s_2)(f) \, \pa(s),
            \end{align*}
            %\end{equation*}
            }
where $\omega_s(s_1, s_2) = \omega(s_1, s_2, s)$ and $\veps_0$, $\veps_1$, $\xi$  and $\psi$ are as in Definition~\ref{df:epss}. From this, the equations   \eqref{e:tensor_bar_omega_1} and \eqref{e:tensor_bar_omega_2} follow immediately.
   \end{itemize}
\qed

\paragraph{Proof of Theorem~\ref{thm:HA_point}.}
 \label{proof:HA_point}
First, we shall describe the structure of the Lie algebra $\VF_{\leq 0}(\g \times C)$ with respect to the decomposition given in \eqref{e:VF_leq2}.
We write $\phi \oplus \psi \oplus \chi \in \VF_0$, $x\oplus f \in \VF_{-1}$, $v\in \VF_{-2}$, { where } $\phi\in \End(\g)$, $\psi\in \End(C)$, $\chi\in \Hom(\Sym^2 \g, C)$, $x\in \g$, $f\in \Hom(\g, C)$, and $v\in C$. We have

{\scriptsize
\begin{equation}\label{e:VF_alg}
\begin{split}
[\VF_0, \VF_0]: & [\phi_1, \phi_2] = \phi_2\circ \phi_1 - \phi_1\circ \phi_2,   [\psi_1, \psi_2] =\psi_2\circ \psi_1 - \psi_1\circ \psi_2,  [\chi_1, \chi_2]= 0 \\
& [\phi, \chi](x_1, x_2) = \chi(\phi(x_1), x_2) + \chi(x_1, \phi(x_2)),   [\psi, \chi](x_1, x_2) = - \psi(\chi(x_1, x_2)),  [\phi, \psi] = 0,  \\
[\VF_0, \VF_{-1}]: & [\phi, x]= - \phi(x), [\phi, f] = f\circ \phi, [\psi, x] =0, [\psi, f] = - \psi\circ f, [\chi, x] = - \chi(x, \cdot), [\chi, f] = 0\\
[\VF_{-1}, \VF_{-1}]: & [x_1, x_2]=0, [f_1, f_2]=0, [f, x]=-f(x), \\
[\VF_0, \VF_{-2}]: & [\phi, v] = 0, [\psi, v] = -\psi(v), [\chi, v] = 0.
\end{split}
\end{equation}
}

{\newMR For example, the formula for $[\phi, x]$ can be derived as follows.  A vector $x= (x^i) \in \g$ is idenified with the vector field $x^i \pa_{y^i}$, and an endomorphism $\phi: \g\to \g$, such that $\pullback{\phi}(y^i) =  \phi^i_j y^j$, is identified with the vector field $\phi = \phi^i_j y^j \pa_{y^i}$. Then, $[\phi, x] = - \phi^i_j x^j \pa_{y^i} =  - \phi(x)$. In a similar way we derive the remaining formulas.}

The formulas for algebroid lifts $\aliftB{e}{\K}$, where $\K=-2, -1, 0$, given in the formulation of our theorem, define vector fields  which have the form as in \eqref{e:alg_lift_HA} since the projection of $\aliftB{e}{-1}$ onto $\g$ is $e$ and the bracket $[\cdot, \cdot]$ is skew-symmetric. Therefore,  these vector fields define an AL higher algebroid.

 Conversely,  let $(\g\times C, \kappa^2)$ be  a skew HA  defined by means of algebroid lifts $\aliftB{e}{\K}$ given above. Let us temporarily denote by $\wt{\pa}$, $\wt{\beta}$, $\wt{\Box}$, $\wt{\sym{\omega}}$ the maps associated with $\kappa^2$, defined in  Subsection~\ref{sSec:HA_order_two} by formulas {\new \eqref{df:pa}, } %$s\mapsto \frac12 \aliftB{s}{-2}$}
 \eqref{df:beta}, \eqref{df:gamma}, \eqref{df:omega_sym}, respectively. We shall show that $\wt{\pa}=\pa$, $\wt{\beta}=\beta$, $\wt{\Box}=\Box$, $\wt{\sym{\omega}}=\sym{\omega}$.

The definitions of $\wt{\pa}$ and $\pa$ coincide, hence $\wt{\pa} = \pa$. For the proof of the equality $\wt{\beta}  =\beta$  we have
$$
\wt{\beta}(x, y) = \frac12 [\aliftB{x}{-1}, \aliftB{y}{-1}] = \frac12 [x\oplus
\beta(\cdot, x), y \oplus \beta(\cdot, y)] =  \beta(x, y)
$$
due to the skew-symmetry of $\beta$ and the formulas \eqref{e:VF_alg} for
the bracket on $\VF_{-1}\oplus \VF_{-1}$. For $\wt{\Box} = \Box$ we write
$$
{\new \wt{\Box}_x v = [\aliftB{x}{0}, v]= [\underbrace{[\cdot, x]}_{\phi} \oplus
\underbrace{\Box_{-x} (\cdot)}_{\psi} \oplus \underbrace{2 \sym{\omega}_x(\cdot, \cdot)}_{\chi}, v] =   -\psi(v) = \Box_x v},
$$
due to the formulas for the bracket restricted to $\VF_{-2}\oplus
\VF_{0}$. The proof of $\wt{\delta} = \delta$ is a bit longer. First, we calculate
\begin{equation}\label{e:10}
\begin{split}
[\aliftB{y}{-1}, \aliftB{z}{0}] &= [
y\oplus  \beta(\cdot, y), [\cdot, z] \oplus
\Box_{-z} (\cdot) \oplus  2 \sym{\omega}_z(\cdot, \cdot)] =  \\
 & \underbrace{[y, z]}_X \oplus  \underbrace{ \beta([\cdot,  -z], y) + {\new
\Box_{-z} \beta(\cdot, y)} + {\new 2 \sym{\omega}_z(y, \cdot)}}_F,
\end{split}
\end{equation}
hence
\begin{align*}
\wt{\delta}(x, y, z) &= {\new \frac12} [\aliftB{x}{-1}, [\aliftB{y}{-1}, \aliftB{z}{0}]] =  \frac12 [x \oplus \beta(\cdot, x), X\oplus F] =  \frac12 F(x)
 - \frac12 \beta(X, x) = \\
& - \frac12 \beta([y, z], x) + \frac12 \beta([z, x], y) -   \frac12 \Box_z{\beta(x, y)}  +
\sym{\omega}_z (y, x).
\end{align*}

{\new Thus $\wt{\sym{\omega}_x}(y, z)$, which is obtained by symmetrizing $\wt{\delta}(x,  y, z) - \beta(x, [y, z])$ in $x, y$, coincides, due to the skew-symmetry of $\beta$,  with the symmetrization of $\sym{\omega}_z(x, y) + \frac12 \beta([y, z], x) + \frac12 \beta([z, x], y)$, and the latter  simplifies to $\sym{\omega}_z(x, y)$, as was claimed. }

We  now examine the Lie condition for HAs given in Remark~\ref{r:Lie_axiom}.
From \eqref{e:VF_alg} we get
\begin{multline*}
[\aliftB{x}{0}, \aliftB{y}{0}] =  [\underbrace{[-x, \cdot]}_{\phi_1} \oplus \underbrace{\Box_{-x} (\cdot)}_{\psi_1} \oplus \underbrace{2  \sym{\omega}_x(\cdot, \cdot)}_{\chi_1}, \underbrace{[-y, \cdot]}_{\phi_2} \oplus  \underbrace{\Box_{- y} (\cdot)}_{\psi_2} \oplus \underbrace{2 \sym{\omega}_y(\cdot, \cdot)}_{\chi_2}] = \\
 \left([y, [x, \cdot ]] - [x, [y, \cdot]] \right) \oplus
 [\Box_y , \Box_x]    \oplus   \chi,
\end{multline*}
for some $\chi\in \Hom(\Sym^2 \g, C)$. From the  condition $[\aliftB{x}{0}, \aliftB{y}{0}]  =  \aliftB{[x, y]}{0}$ we read that
$\g$ is a Lie algebra, $C$ is a left $\g$-module. We shall show that $\sym{\omega} = 0$, from which it follows that the identity obtain by comparing the $\Hom(\Sym^2 \g, C)$-components is satisfied automatically.

The equation  $[\aliftB{x}{-1}, \aliftB{y}{-1}] = \aliftB{[x, y]}{-2}$ and $[\aliftB{x}{0}, \aliftB{y}{-2}] = \aliftB{[x, y]}{-2}$ write as
\begin{equation}\label{e:tmp1}
\Box_x \pa(y) = \pa([x, y]) \text{ and  } \beta(x, y) = \pa([x, y]).
\end{equation}
Finally, for $(i, j) = (0, -1)$, the Lie condition from \eqref{e:10} is given by
$$
\beta(\cdot, [x, y]) = \beta([\cdot,  -x], y) +
\Box_{-x} \beta(\cdot, y) +  \sym{\omega}_x(y, \cdot).
$$
This, along with the equalities in \eqref{e:tmp1} and the Jacobi identity, yields $ \sym{\omega} = 0$, and completes the proof.
\qed

\subsection{Equations for AL and Lie HAs} \label{sSec:AL_and_Lie_HA_eqns}

We shall use Theorem~\ref{th:HA_axioms_and_lifts} to write equations for structure functions corresponding to almost Lie {\new HAs.}
 The obtained equations will be  used to complete the proof of Theorem~\ref{th:AL_HA_str_maps}. % and \ref{th:Lie_axiom_maps}.

\paragraph{AL HAs.} Let $(E^2, \kappa^2)$ be an AL HA and let $(e_k)$, $(c_\mu)$ be as in Subsection~\ref{sSec:HA_order_two}. The vector fields
$\aliftB{e_k}{\K}\in \VF_\K(E^2)$ for $\K = 0, -1, -2$ are given in \eqref{e:alg_lifts_coord}.
The formulas for $(\T^2 M, \kappa^2_M)$-algebroid lifts $\aliftB{\und{e}_k}{\K} := \aliftB{(\sharp e_k)}{\K} = \aliftB{(Q^a_k \pa_{x^a})}{\K}
\in \VF_\K(\T^2 M)$ are easily derived from {\new \eqref{e:fs_lift}} {\new by } notting that $\aliftB{\pa_{x^a}}{\K}$ is equal to $\pa_{x^a}$, $\pa_{\dot{x}^a}$ and $2 \pa_{\ddot{x}^a}$ for $\K=0, -1, -2$, respectively. Thus
\begin{equation}\label{e:alifts_T2M}
\begin{cases}
\aliftB{\und{e}_k}{0} &= Q^a_k \pa_{x^a} + \frac{\pa Q^a_k}{\pa {x^b}} \dot{x}^b \pa_{\dot{x}^a} + \left(\frac{\pa Q^a_k}{\pa x^b} \ddot{x}^b + \frac{\pa^2Q^a_k}{\pa {x^b} \pa {x^c}} \dot{x}^b \dot{x}^c \right) \pa_{\ddot{x}^a}, \\
\aliftB{\und{e}_k}{-1} &=  Q^a_k \pa_{\dot{x}^a} + {\new 2} \frac{\pa Q^a_k}{\pa x^b} \dot{x}^b \pa_{\ddot{x}^a}, \\
\aliftB{\und{e}_k}{-2} &=  2 Q^a_k \, \pa_{\ddot{x}^a}.
\end{cases}
\end{equation}
(Note that $(Q^a_k)^{(1)} = \frac{\pa Q^a_k}{\pa {x^b}} \dot{x}^b $ and
$(Q^a_k)^{(2)} =  \frac{\pa Q^a_k}{\pa x^b} \ddot{x}^b + \frac{\pa^2Q^a_k}{\pa {x^b} \pa {x^c}} \dot{x}^b \dot{x}^c$. The above formulas for $\aliftB{\und{e}_k}{\K}$ can also be obtained from  \eqref{e:alg_lifts_coord} and \eqref{e:local_kappa_E2}.)
We check whether vector fields $\aliftB{e_k}{\K}$ and $\aliftB{\und{e}_k}{\K}$ are $\sharp^2$-related, where $\sharp^2(x^a, y^i, z^\mu) = (x^a, \dot{x}^a = Q^a_k y^k, \ddot{x}^a = Q^a_\mu z^\mu + \frac12 Q^a_{ij} y^i y^j)$. Straightforward calculations leads to the following system of equations {\new (referred to as AL HA equations)}:
\begin{subnumcases}{}
Q^a_\mu Q^\mu_k  =  Q^a_k   \label{e:QamuQmu_i} \\
Q^a_{kj} + Q^a_\mu Q^\mu_{jk}  = 2\,\frac{\pa Q^a_k}{\pa x^b} Q^b_j    \label{e:Qa_kj} \\
Q^a_k Q_{ij}^k = \Qcheck{Q}^a_{ij} \label{e:QakQk_ij} \\
Q^a_\nu Q^\nu_{\mu i} + \frac{\pa Q^a_{\mu}}{\pa x^b} Q^b_i = \frac{\pa Q^a_i}{\pa x^b} Q^b_\mu   \label{e:QanuQnu_mui} \\
  Q^a_\mu Q^\mu_{ij,k} +  \frac{\pa Q^a_{ij}}{\pa x^b} Q^b_k + Q^a_{li} Q^l_{jk} + Q^a_{lj} Q^l_{ik} =   \frac{\pa Q^a_k}{\pa x^b} Q^b_{ij} + 2\, \frac{\pa^2 Q^a_k}{\pa x^b \pa x^c} Q^b_i Q^c_j  \label{e:QamuQmu_ijk}
\end{subnumcases}
where
\begin{equation}\label{df:Qcheck} %{e:def_check_Qaij}
\Qcheck{Q}^a_{ij} := Q^b_i \frac{\pa Q^a_j}{\pa x^b}- Q^b_j \frac{\pa Q^a_i}{\pa x^b},
\end{equation}
{\newMR (The equations \eqref{e:QamuQmu_i},  \eqref{e:Qa_kj} correspond to the cases $\K= -2$ and $\K=-1$, respectively; while  \eqref{e:QakQk_ij} \eqref{e:QanuQnu_mui} and \eqref{e:QamuQmu_ijk} correspond to the case $\K=0$. Note also that the equations \eqref{e:QamuQmu_i}, \eqref{e:Qa_kj},  \eqref{e:QakQk_ij}, and \eqref{e:QanuQnu_mui}  follows immadiately from \eqref{i:AL_sharpEF}, \eqref{i:AL_psi}, \eqref{i:E1}, and \eqref{i:AL_nabla}, respectively.)}
Note that
\begin{equation}\label{e:Qcheck}
    [\sharp e_i, \sharp e_j] =  \Qcheck{Q}^a_{ij} \pa_{x^a}.
\end{equation}
The equation  \eqref{e:Qa_kj} can be replaced with
\begin{subnumcases}{}
 Q^a_\mu Q_{[ij]} = \Qcheck{Q}^a_{ij}, \label{e:QamuQmu_ij_skew}\\
Q^a_\mu Q^\mu_{(ij)} + Q^a_{ij} = \wh{Q}^a_{ij} \label{e:QamuQmu_ij_symm}
\end{subnumcases}
where
$\Qhat{Q}^a_{ij}$ is given in \eqref{df:Qhat}.

\paragraph{Completion of the proof of Theorem~\ref{th:AL_HA_str_maps}.} \label{p:completion_thm_K=0}
%\begin{proof}[{}]
We shall prove that if a skew HA $(E^2, \kappa^2)$ satisfy the conditions %(\ref{i:E1}-\ref{i:AL_omega})
listed in Theorem~\ref{th:AL_HA_str_maps},
then it is almost Lie. It amounts to proving
that the vector fields $\alift{e_k}{\K}$ and $\alift{\und{e}_k}{\K}$, see \eqref{e:alg_lifts_coord} and \eqref{e:alifts_T2M},  are $\sharp^2$-related for $\K=-2, -1, 0$. We have already proved this for $\K=-2, -1$, so it remains to prove this for $\K = 0$, i.e.,  to verify the equations (\ref{e:QakQk_ij},\ref{e:QanuQnu_mui} \ref{e:QamuQmu_ijk}).

The equation \eqref{e:QakQk_ij} {\new  means $\sharp [e_i, e_j] = \Qcheck{Q}^a_{ij} \pa_{x^a}$, which is true since the algebroid $(E^1, \kappa^1)$ is AL. Next, the condition \eqref{e:QanuQnu_mui} means $\sharp^C \Box_{e_i}c_\mu = [\sharp e_i,  \sharp^C c_\mu]$, and it follows }
from \eqref{i:AL_nabla}.

The proof of \eqref{e:QamuQmu_ijk}   is a bit more involved.
We {\new claim} that
\begin{equation}\label{e:HA_rhoF_delta_symm}
    \sharp^C \circ \sym{\delta} = \sym{\delta}_{\T^2 M} \circ \sharp^{\times 3}: A\times A\times A\ra \T M,
\end{equation}
where $\sym{\delta}$ is given in \eqref{df:delta_sym} and $\sym{\delta}_{\T^2 M}$ is the same structure map %$\sym{\delta}$
but  associated with the HA $(\T^2 M, \kappa^2_M)$. Indeed, $\sharp^C\circ \omega =  0 $ implies $\sharp^C\circ \sym{\omega} =  0 $ and from $\sym{\delta} = \sym{\omega} + \frac12 (\beta(s_1, [s_2, s]) + \beta(s_2, [s_1, s]))$ we find that {\new
$$
    \sharp^C\circ \sym{\delta}_s (s_1, s_2) = \frac12 \sharp^C\circ \left(\beta(s_1, [s_2, s]) + \beta(s_2, [s_1, s])\right) \stackrel{\eqref{i:AL_beta}}{=} \sharp ([[s, s_1], s_2]+ [[s, s_2], s_1]) =  \left(\sym{\delta}_{\T^2 M}\right)_{\sharp s}(\sharp s_1, \sharp s_2).
$$
}
We shall show that \eqref{e:HA_rhoF_delta_symm}
gives \eqref{e:QamuQmu_ijk}.
 We shall { work } with an adapted coordinate system $(x^a, y^i , w^\mu)$ for $(E^2, \kappa^2)$ (see Definition~\ref{df:adapted_coord}), so $Q^\mu_{(ij)} =0$.  The general idea is to express $\sym{\delta}_{e_k}(e_i, e_j)$ entirely in terms of the structure functions $Q^a_i$ and its derivatives and  then compare with $\left(\sym{\delta}_{\T^2 M}\right)_{\sharp e_k}(\sharp e_i, \sharp e_j)$,
which is easily seen to be of this form. From the expression for $\tilde{Q}^\mu_{ijk}$ in \eqref{e:delta_coord}
we find that
$$
    \sym{\delta}_{e_k}(e_i, e_j) =  \frac12\left( Q^\mu_{ij, k} - Q^l_{ik}Q^\mu_{lj} - Q^l_{jk}Q^\mu_{li} \right) c_\mu,
$$
hence
$$
2 \cdot \sharp^C \circ \sym{\delta}_{e_k} (e_i, e_j) = Q^a_\mu \left( Q^\mu_{ij, k} - Q^l_{ik}Q^\mu_{lj} - Q^l_{jk}Q^\mu_{li} \right) \pa_{x^a}.
$$
We replace $Q^a_\mu Q^l_{jk}Q^\mu_{li}$ with
$$
  Q^a_\mu Q^l_{jk}Q^\mu_{li}   \stackrel{\eqref{e:Qa_kj}}{=} Q^l_{jk}(-Q^a_{li} + 2 Q^b_l \frac{\pa Q^a_i}{\pa x^b}) \stackrel{\eqref{e:QakQk_ij}}{=}  - Q^l_{jk} Q^a_{li} + 2 \Qcheck{Q}^b_{jk} \frac{\pa Q^a_i}{\pa x^b}
$$
and similarly for  $Q^a_\mu Q^l_{ik}Q^\mu_{lj}$ and get
\begin{equation}\label{e:tmp2}
  2 \cdot \sharp^C \circ \sym{\delta}_{e_k}(e_i, e_j) = (Q^a_\mu Q^\mu_{ij, k} + Q^l_{jk} Q^a_{li} +  Q^l_{ik} Q^a_{lj}- 2 (\Qcheck{Q}^b_{jk} \frac{\pa Q^a_i}{\pa x^b} + \Qcheck{Q}^b_{ik} \frac{\pa Q^a_j}{\pa x^b})) \pa_{x^a}
\end{equation}
As $Q^a_{ij}= \wh{Q}^a_{ij}$ by \eqref{e:QamuQmu_ij_symm}  the condition \eqref{e:QamuQmu_ijk} can be equivalently written as
$$
2\cdot \sharp^C \circ \sym{\delta}_{e_k}(e_i, e_j) = \left( 2 \frac{\pa^2 Q^a_k}{\pa x^b \pa x^c} Q^b_i Q^c_j + \wh{Q}^b_{ji} \frac{\pa Q^a_k}{\pa x^b}  - Q^b_k \frac{\pa \wh{Q}^a_{ij}}{\pa x^b}- 2 (\Qcheck{Q}^b_{jk} \frac{\pa Q^a_i}{\pa x^b} + \Qcheck{Q}^b_{ik} \frac{\pa Q^a_j}{\pa x^b})\right) \pa_{x^a},
$$
It remains to show that the last expressions coincides with {  $[\sharp e_i, [\sharp e_j, \sharp e_k]] + [\sharp e_j, [\sharp e_i, \sharp e_k]]$.  This a direct calculation of the brackets of vector fields. Namely, from \eqref{e:Qcheck} we get
$$
[\sharp e_i, [\sharp e_j, \sharp e_k]] = \left(Q^b_i \frac{\pa \Qcheck{Q}^a_{jk}}{\pa x^b} - \Qcheck{Q}^b_{jk} \frac{\pa Q^a_i}{\pa x^b}\right)\pa_{x^a}.
$$
On the other hand,  the following identity holds
$$
 2 \frac{\pa^2 Q^a_k}{\pa x^b \pa x^c} Q^b_i Q^c_j + \wh{Q}^b_{ji} \frac{\pa Q^a_k}{\pa x^b}  - Q^b_k \frac{\pa \Qhat{Q}^a_{ij}}{\pa x^b} =  Q^b_i \frac{\pa \Qcheck{Q}^a_{jk}}{\pa x^b} + \Qcheck{Q}^b_{jk} \frac{\pa Q^a_i}{\pa x^b}+
 Q^b_j \frac{\pa \Qcheck{Q}^a_{ik}}{\pa x^b} + \Qcheck{Q}^b_{ik} \frac{\pa Q^a_j}{\pa x^b}
$$
which can be  verified by expanding  $\Qhat{Q}^a_{ij}$ and $\Qcheck{Q}^a_{jk}$ using \eqref{df:Qhat} and \eqref{df:Qcheck}, and then grouping and cancelling similar terms.
On the LHS is the part of the expression \eqref{e:tmp2}  involving second derivatives. After plugging  the  RHS to \eqref{e:tmp2} we shall easily recognize the desired formula.
}
\qed
%\end{proof}

\paragraph{Lie HAs.}
{\new For completeness we provide a system of equations ensuring that a given AL HA is Lie.
They are obtained by examining the conditions listed  in Theorem~\ref{th:Lie_axiom_maps} on local frames $(e_i)$ and $(c_\mu)$ of the VBs $A$ and $C$, respectively, see Remark~\ref{r:LieHA:axioms}. The can be also obtained by examining the condition given in Remark~\ref{r:Lie_axiom}.} %\ref{i:Lie_axiom}

\begin{subnumcases}{}
%\begin{align}
\sum_{\text{cyclic } i,j,k} Q_{ij}^l Q_{lk}^m = 0, \label{e:coord_Jac} \\
Q^\mu_{\nu k} Q^\nu_i =  Q^\mu_l Q^l_{ik} + Q^a_k \frac{\pa Q^\mu_i}{\pa x^a}, \label{e:coord_pa} \\
Q^\mu_{[ik]} = Q^\mu_j Q^j_{ik}, \label{e:coord_beta} \\
Q^\mu_{\ip i, k} = Q^j_{ik} Q^\mu_{j\ip} - Q^\nu_{i \ip} Q^\mu_{\nu k} - Q^j_{k \ip} Q^\mu_{ij} +
Q^a_k \frac{\pa Q^\mu_{i \ip}}{\pa x^a}, \label{e:coord_A_on_F} \\
Q^j_{k\kp} Q^\mu_{\nu j} + \frac{\pa Q^j_{k\kp}}{\pa x^a} Q^a_\nu Q^\mu_j = Q^a_k \frac{\pa Q^\mu_{\nu \kp}}{\pa x^a} + Q^\rho_{\nu k} Q^\mu_{\rho \kp} -
Q^a_{\kp} \frac{\pa Q^\mu_{\nu k}}{\pa x^a} + Q^\rho_{\nu \kp} Q^\mu_{\rho k}\label{e:coord_omega}
%\end{align}
\end{subnumcases}
{\newMR The equation \eqref{e:coord_Jac} corresponds to the Jacobi identity, while   \eqref{e:coord_pa},  \eqref{e:coord_beta},  \eqref{e:coord_A_on_F}, and \eqref{e:coord_omega} correspond to \eqref{i:Lie_ax:pa}, \eqref{i:Lie_ax:beta}, \eqref{i:Lie_ax:omega}, and \eqref{i:Lie_ax:A_on_C}, respectively.
}

\paragraph{Proof of Conjecture~\ref{conj:Rk} in the case $k=2$.} \label{p:conj:Rk}
%\begin{proof}[{}]
{
Let $(e_i)$ be a local basis of sections of $\sigma: A \ra M$ and denote  $E_i^{[\K]} := e_i^{\langle \K\rangle_{\kappa^{[2]}}}$ -- the $(\At{2}, \kappa^{[2]})$--algebroid lifts, $\alpha=0, -1, 2$.
Using \eqref{e:alg_lifts_coord} and Example~\ref{ex:structure_maps_E[2]} we find that
\begin{equation}\label{e:lifts_E[2]}
\begin{cases}
E_k^{[0]} &= Q^a_k \pa_{x^a} + Q_{jk}^i y^j \pa_{y^i} + (Q^m_{n k} \, \dot{y}^n + \frac{1}{2} \, \wh{Q}_{ij, k}^m \, y^i y^j)\,\pa_{{\dot{y}}^m}, \\
E_k^{[-1]} &=  \pa_{y^k} +  Q_{ik}^m\, y^i\,\pa_{{\dot{y}}^m}, \\
E_k^{[-2]} &=  2 \pa_{{\dot{y}}^k}
\end{cases}
\end{equation}
 It remains to show that the vector fields $E_k^{[\alpha]}\in \VF_{\alpha}(\At{2})$ and $\aliftB{e_k}{\alpha}\in \VF_{\alpha}(E^2)$, given in \eqref{e:alg_lifts_coord}, are $\Rk^2$-related.  As $\Rk^1$ is the identity on $A$ we only need to show that
\begin{equation} \label{e:R-related_VF}
(\Rk^2)^\ast \aliftB{e_k}{\alpha} (z^\mu) = E_k^{[\alpha]}((\Rk^2)^\ast z^\mu) =  E_k^{[\alpha]}(Q^\mu_i \dot{y}^i + \frac12 Q^\mu_{(ij)} y^i y^j)
\end{equation}
for $\alpha = 0, -1, -2$ where $\Rk^2: \At{2} \ra E^2$ is given in \eqref{e:R2_coord}.
For $\alpha  = -2$, this equation is satisfied automatically. For $\alpha =-1$, it results in equation \eqref{e:coord_beta}. For $\alpha = 0$, the equation \eqref{e:R-related_VF} can be expressed  as a combination of \eqref{e:coord_pa} and \eqref{e:coord_A_on_F}.
}
\qed
%\end{proof}

\subsection{Representations up to homotopy of Lie algebroids} \label{sSec:representations}

We shall review some calculus and sign conventions concerning representations up to homotopy of Lie algebroids. We follow the presentation given in \cite{AbCr2012}.

Let $(\sigma: A\ra M, [\cdot, \cdot], \sharp)$ be a Lie algebroid. Then $\Omega(A) = \Sec(\bigwedge^\cdot A^\ast)$  is known as  \emph{the algebra of $A$-differential forms}. In the case the tangent algebroid, $A= \T M$, it is simply the algebra of differential forms on the manifold $M$. There is an algebroid de Rham differential, called $A$-differential $d_A$, which is a derivation of $\Omega(A)$ such that
\begin{enumerate}[(i)]
  \item  $d_A(f): s\mapsto (\sharp s)(f)$,  for   $s\in \Sec(A)$, $f\in \Cf(M)= \Omega^0(A)$;
   \item $d_A(\omega): (s_1, s_2) \mapsto \omega([s_1, s_2]) - (\sharp s_1)\left( \langle \omega, s_2\rangle \right)+ (\sharp s_2)\left(\langle \omega, s_1\rangle\right)$,   where  $s_1, s_2\in \Sec(A)$, $\omega\in \Sec(A^\ast) = \Omega^1(A)$.
\end{enumerate}
It is well known that a Lie algebroid can be equivalently described by means of $d_A$ --- a degree 1 {derivation} on $\Omega(A)$, see \cite{Vaintrob_1997}.

Let $F$ be a vector bundle over the same base $M$. An $A$-connection on $F$ is a mapping $\nabla: \Sec(A) \times \Sec(F) \ra \Sec(F)$, $(s, v)\mapsto \nabla_s v$ such that
$$
    \nabla_{f s} v = f \nabla_s v, \quad \nabla_s (f v) = f \nabla_s v + (\sharp s)(f)(v)
$$
for $f\in \Cf(M), v\in \Sec(F), s\in \Sec(A)$.
Recall that \emph{the curvature} of an $A$-connection $\nabla$ on $F$ is the tensor given by
\begin{equation} \label{df:curv}
    \curv_\nabla(s_1, s_2)(v) = \nabla_{s_1} \nabla_{s_2} v -  \nabla_{s_2} \nabla_{s_1} v - \nabla_{[s_1, s_2]}v,
\end{equation}
where $s_1, s_2\in \Sec(A)$, $v\in \Sec(F)$. \commentMR{Czyli jak u \cite{AbCr2012}.}

The space of \emph{$F$-valued $A$-differential forms} is defined as $\Omega(A; F) = \Sec(\bigwedge A^\ast \otimes F)$. %It is a graded $\Omega(A)$-module.
In the setting of representations u.t.h., the vector bundle $F$ is $\Z$-graded, i.e.,  $F = \bigoplus_{i\in \Z} F^i$, where $F^i$ is, so called, the vector bundle of \emph{homogenous vectors of degree $i$}. %, and we assume that $F^i$  has finite rank.
Given graded vector bundles $F$, $G$ over the same manifold $M$, let $\und{\Hom}(F, G) = \bigoplus_{k\in \Z} \und{\Hom}^k(F, G)$, where $\und{\Hom}^k(F, G)$ denotes the  bundle %whose space of sections consists
homomorphism from $F$ to $G$ that increase the degree by
$k$. In other words, the fiber $\left(\und{\Hom}^k(F, G)\right)_x$ over $x\in M$ is a collection of linear maps $T_i: F^i_x \ra G^{i+k}_x$. In the special case $F=G$ we write $\und{\End}(F)$ for $\und{\Hom}(F, F)$.
An element  $\omega\in \Omega^i(A; F^j)$ is said to be of total degree $|\omega| = i + j$. There is an important operation, called
\emph{the wedge product}
$$
    \Omega^p(A; E^i) \otimes \Omega^q(A; F^j) \to \Omega^{p+q}(A; G^{i+j}), \quad (\alpha, \beta) \mapsto \alpha \wedge_h \beta,
$$
associated with a degree preserving graded vector bundle morphism $h: E\otimes F\to G$. It is given by
\begin{equation}\label{e:wedge_product}
(\alpha \wedge_h \beta)(s_1, s_2, \ldots, s_{p+q}) = \sum_\sigma (-1)^{qi} \sgn (\sigma) h(\alpha(s_{\sigma(1)}, \ldots, s_{\sigma_p}), \beta(s_{\sigma(k+1)}, \ldots, s_{\sigma(p+q)})),
\end{equation}
where the summation is over all $(p+q)$-shuffles,  $s_1, s_2, \ldots, s_{p+q}\in \Sec(A)$.
The left  $\Omega(A)$-module structure on $\Omega(A; F)$ is given by the wedge product associated with the
isomorphism $h_L: \R \otimes F \xrightarrow{\simeq} F$ and is denoted by $\omega.\eta := \omega \wedge_{h_L} \eta$.
On the other hand, the isomorphism $h_R: F \otimes \R \xrightarrow{\simeq} F$ gives rise to the right $\Omega(A)$-module structure on $\Omega(A; F)$, $\eta.\omega = \eta \wedge_{h_R} \omega$ that makes $\Omega(A; F)$ a symmetric $\Omega(A)$-bimodule,
$$
    \omega.\eta = (-1)^{|\omega| |\eta|} \eta.\omega,
$$
thanks to the sign $(-1)^{qi}$ in \eqref{e:wedge_product}.
\commentMR{
Rachunek: $(-1)^{qp +jp} (-1)^{p(q+j)}  = 1$, $\omega \in \Omega^p(A; \R)$, $(-1)^{qp}$ to znak permutacji $q+1, q+2, \ldots q+p, 1, 2, \ldots, q$, a $(-1)^{p(q+j)} = (-1)^{|\omega| |eta|}$. }
We shall  frequently encounter the case of the wedge product associated with the composition of homomorphisms $\circ: \Hom(G, H) \otimes \Hom(F, G) \ra \Hom(F, H)$   which will be denoted by
$\alpha \comp \beta := \alpha\wedge_\circ \beta$.  \commentMR{Czy nie zamienic $\comp$ na $\circ$?} We have
\begin{equation}\label{e:comp_and_dot}
    (\alpha \comp \beta)\comp \gamma = \alpha \comp (\beta \comp \gamma).
\end{equation}
for $\Hom$-valued $A$-forms $\alpha$, $\beta$, $\gamma$.
The operation $\comp$ turns  $\Omega(A; \und{\End(F)})$ into a graded associative algebra. The graded commutator on $\Omega(A; \und{\End}(F))$ is defined by $[\alpha, \beta] = \alpha \comp \beta - (-1)^{|\alpha||\beta|} \beta\comp \alpha$.
Another  case  is the wedge product associated with the evaluation map $\ev: \Hom(F, G) \otimes {\new F} \ra G$ which will be  denoted in the same way as
$\alpha \comp \eta := \alpha\wedge_{\ev}\eta$ where  $\alpha\in \Omega(A, \und{\Hom}(F, G))$,  $\eta\in \Omega(A; F)$, since the evaluation map is  a special case of the composition of maps, thanks to the isomorphism $F \simeq \Hom(\R, F)$.
\begin{df}\label{df:repr_hom} \cite{AbCr2012} A \emph{representation up to homotopy} of a Lie algebroid $A$ consists
of a $\Z$-graded vector bundle $F = \oplus_{i\in \Z} F^i$ and an operator, called \emph{the structure operator},
$$
    D : \Omega(A; F) \ra \Omega(A; F)
$$
of total degree one which satisfies $D\circ D =0$ and the \emph{graded derivation rule}
$$
    D(\omega.\eta) = \dd_A(\omega).\eta + (-1)^k \omega.D(\eta)
$$
for $\omega\in \Omega^k(A)$, $\eta\in \Omega(A; F)$. A morphism $\Phi: (F, D_F) \to (G, D_G)$ linking two representations u.t.h. is a degree zero $\Omega(A)$-module map $\Phi: \Omega(A; F) \to \Omega(A; G)$ which commutes with the structure operators $D_F$ and $D_G$.
\end{df}

There is a one-to-one correspondence  between $A$-forms $\omega \in \Omega(A; \und{\Hom}(F, G))$ of total degree $n$ and operators $\wh{\omega}: \Omega(A; F) \to \Omega(A; G)$ of degree $n$ which are $\Omega(A)$-linear in the graded sense. The operator $\wh{\omega}$ is given by $\wh{\omega}(\eta) = \omega\comp \eta$. The equation \eqref{e:comp_and_dot} implies that $\wh{\alpha \comp \beta} = \wh{\alpha} \circ \wh{\beta}$, where $\beta\in \Omega(A; \und{\Hom}(F, G))$, $\alpha\in \Omega(A; \und{\Hom}(G, H))$.

A \emph{cochain complex} $(F, \pa)$ is a $\Z$-graded vector bundle $F$ equipped with an endomorphism $\pa \in \und{\End}^1(F)$ such that $\pa \circ \pa =0$, i.e.,  a differential on $F$. Such a differential can be consider as a $0$-form with values in $\und{\End}(F)$, and gives rise to an operator $\wh{\pa}: \Omega^p(A; F) \to \Omega^p(A, F)$,  $\wh{\pa}(\eta) = \pa \comp \eta$. It satisfies  $\wh{\pa} \circ \wh{\pa} = 0$ and
 $$
 \wh{\pa}(\eta)(x_1, x_2, \ldots, x_p) = (-1)^p \pa(\eta(x_1, x_2, \ldots, x_p)),
 $$
The sign $(-1)^p$ comes from \eqref{e:wedge_product}. Given two complexes $(F, \pa^F)$, $(G, \pa^G)$ and $\omega\in \Omega^p(A; \und{\Hom}^i(F, G))$ we get the induced $p$-form $\pa^{\Hom} \omega$ defined as
$$
\pa^{\Hom} \omega = \pa^G \comp \omega - (-1)^{|\omega|} \omega \comp \pa^F
$$
which takes  values in $\und{\Hom}^{i+1}(F, G)$, where $|\omega| = p+i$ is the total degree of $\omega$. We have  the complex $(\und{\Hom}(F, G), \pa^{\Hom})$ with the differential $\pa^{\Hom}$ obtained by specializing to the case  $p=0$ which reads as
$$
    (\pa^{\Hom} T)(v) = \pa^G(T(v)) - (-1)^{|T|} T(\pa^F(v)),
$$
where $v\in F$, $T\in \und{\Hom}(F, G)$, and $|T|$ stands for the degree of $T$.
%$\wh{\pa} T \in \Omega^p(A; \und{\Hom}^{k+1}(F, G))$
%which will be denoted by $\pa[T]$.
It follows immediately from \eqref{e:comp_and_dot} that
$$
    \wh{\pa^{\Hom} \omega} = \wh{\pa^G} \circ \wh{\omega} - (-1)^{|\omega|} \wh{\omega} \circ \wh{\pa^F} : \Omega(A; F) \ra \Omega(A; G).
$$
In the case $F=G$ we can write $\wh{\pa^{\Hom} \omega} = [\wh{\pa^F}, \wh{\omega}]$.

{ Besides, an  $A$-connection $\nabla$ on a vector bundle $F$
induces an operator $\dd_\nabla$ on $\Omega(A; F)$ of degree one defined by means of Koszul formula. This formula can be derived from
the conditions: \begin{enumerate}[(i)]
\item $(\dd_\nabla \eta) (s) = \nabla_s \eta$ for  $0$-forms $\eta \in \Omega^0(A; F) =  \Sec(F)$;
\item the graded derivation rule: $\dd_\nabla(\omega.\eta) = \dd_A(\omega).\eta + (-1)^{|\omega|}  \omega.(\dd_\nabla \eta)$. \end{enumerate}
Given $A$-connections $\nabla^F$, $\nabla^G$ on the graded vector bundles $F, G$, respectively, we get an $A$-connection
on $\und{\Hom}(F, G)$ given by
\begin{equation}\label{eqn:conn_Hom}
    (\nabla^{\Hom} T)(v) = \nabla^G(T(v))  -  T (\nabla^F v) %(-1)^{|T|}
\end{equation}
where $v\in \Sec(F)$, and  $T$ is  a section of $\und{\Hom}(F, G)$. The corresponding operator $\dd_\nabla$ on $\Omega(A; \und{\Hom}(F, G))$ is given by \commentMR{weight of $T$ is not important?}
$$
\wh{\dd_\nabla \omega} = \dd_{\nabla^G} \circ \wh{\omega} {\new -} (-1)^{|\omega|} \wh{\omega} \circ \dd_{\nabla^F}, \quad \omega\in \Omega(A;  \Hom(F, G)).$$
To prove it one shows that $\dd_\nabla$
satisfies  the graded derivation rule and  that it reduces to the formula \eqref{eqn:conn_Hom} when $T  = \omega$ is a $0$-form.
}

The structure operator $D$  of a representation u.t.h. can be decomposed into a sequence of $\und{\End}(F)$-valued $A$-forms and an $A$-connection giving an equivalent description.  A precise statement is the following:

\begin{prop}\cite[Proposition 3.2 and Definition 3.3]{AbCr2012} The structure operator $D$ on a $\Z$-graded vector bundle $F$ can be equivalently given by a series of maps:
    \begin{itemize}
        \item A degree $1$ operator $\pa$ on $F$ making $(F, \pa)$ a complex, i.e.,  $\pa\circ \pa= 0$.
        \item An $A$-connection $\nabla^F$ on $(F, \pa)$, i.e.,  $\pa (\nabla^F v) = \nabla^F (\pa v)$ for $v\in \Sec(F)$.
        \item A 2-form $\omega_2\in \Omega^2(A; \und{\End}^{-1}(F))$ such that $\pa^{\Hom} \omega_2 + \curv_{\nabla^F} = 0$.\commentMR{Zgadza się.}
        \item A sequence $(\omega_2, \omega_3, \ldots )$ of $\und{\End}(F)$-valued $A$-forms,  $\omega_i\in \Omega^i(A; \und{\End}^{1-i}(F))$\footnote{Note that $\omega_i$ {\new has total } degree 1}%, where $i=3, 4, \ldots$
            such that for each $n\geq 3$
            \begin{equation}\label{e:repr_str_eq}
                 0 = \pa^{\Hom}  \omega_n + \dd_\nabla \omega_{n-1} + \sum_{i=2}^{n-2} \omega_i \comp \omega_{n-i} \in \Omega^n(A; \und{\End}^{2-n}(F)).
            \end{equation}
    \end{itemize}
    A morphism $\Phi$ from $(F, D_F)$ to $(G, D_G)$ is given by a sequence of $A$-forms $\Phi_i \in \Omega^i(A; \und{\Hom}^{-i}(F, G))$ such that $\Phi_0$ is a map of complexes and for each $n\geq 1$
    {
    \begin{equation}\label{e:repr_str_Phi}
     0 = \pa^{\Hom} \Phi_n + \dd_\nabla \Phi_{n-1} + \sum_{i=2}^{n-2} (\omega_i^G \comp \Phi_{n-i} - \Phi_{n-i} \comp \omega_i^F) \ \in \Omega^n(A; \und{\Hom}^{1-n}(F, G)).
    \end{equation}}
\end{prop}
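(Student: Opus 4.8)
The final statement is the Proposition (\cite[Proposition 3.2 and Definition 3.3]{AbCr2012}) that repackages a structure operator $D$ of total degree one on $\Omega(A;F)$ as the data $(\pa,\nabla^F,\omega_2,\omega_3,\dots)$ satisfying the listed hierarchy of equations, together with the analogous statement for morphisms.

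\textbf{Overall approach.} The plan is to exploit the correspondence, already recalled in the excerpt, between $A$-forms $\omega\in\Omega^p(A;\und{\Hom}^i(F,G))$ and $\Omega(A)$-linear (in the graded sense) operators $\wh{\omega}:\Omega(A;F)\to\Omega(A;G)$ of degree $|\omega|=p+i$, via $\wh{\omega}(\eta)=\omega\comp\eta$, together with the fact that $\wh{\alpha\comp\beta}=\wh{\alpha}\circ\wh{\beta}$. The key structural observation is that an operator $D$ of total degree one satisfying the graded derivation rule $D(\omega.\eta)=\dd_A(\omega).\eta+(-1)^k\omega.D(\eta)$ for $\omega\in\Omega^k(A)$ decomposes canonically as
$$
D=\wh{\pa}+\dd_\nabla+\wh{\omega_2}+\wh{\omega_3}+\cdots,
$$
where $\dd_\nabla$ is the unique piece that fails to be $\Omega(A)$-linear (it is the genuine connection term), and every other summand $\wh{\omega_i}$ is $\Omega(A)$-linear, hence of the form $\omega\comp(-)$ for a uniquely determined $\und{\End}(F)$-valued form. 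So the first step is to extract these components from $D$.

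\textbf{Key steps in order.} First I would show existence and uniqueness of the decomposition. Applying $D$ to a $0$-form $v\in\Omega^0(A;F)=\Sec(F)$ and sorting the output by $A$-form degree gives a degree-$1$ bundle endomorphism $\pa:=$ (the $\Omega^0$-component of $D|_{\Sec(F)}$), a would-be connection piece $\nabla^F$ (the $\Omega^1$-component), and higher forms $\omega_i\in\Omega^i(A;\und{\End}^{1-i}(F))$ for $i\ge2$ (the $\Omega^i$-components). The derivation rule forces $\nabla^F$ to satisfy the two Leibniz identities for an $A$-connection, so $\nabla^F$ is genuinely a connection and $D-\wh{\pa}-\dd_\nabla$ is $\Omega(A)$-linear; its homogeneous pieces then correspond to the $\omega_i$ by the $A$-form/operator dictionary. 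Conversely, given such data the formula above defines an operator of total degree one satisfying the derivation rule — this direction is immediate from the derivation property of $\dd_A$, $\dd_\nabla$ and the definition of $\comp$.

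\textbf{Translating $D\circ D=0$.} The main computation is to expand $D\circ D=0$ and collect terms by total $A$-form degree. Using $\wh{\pa}\circ\wh{\pa}=0$, the identity $\wh{\pa^{\Hom}\omega}=[\wh{\pa},\wh{\omega}]$, and the formula $\wh{\dd_\nabla\omega}=\dd_{\nabla}\circ\wh{\omega}-(-1)^{|\omega|}\wh{\omega}\circ\dd_\nabla$ recalled in the excerpt, the degree-$0$ part reproduces $\pa\circ\pa=0$, the degree-$1$ part gives compatibility $\pa\circ\nabla^F=\nabla^F\circ\pa$, the degree-$2$ part gives $\pa^{\Hom}\omega_2+\curv_{\nabla^F}=0$ (recognizing $\dd_\nabla\circ\dd_\nabla$ as $\wh{\curv_{\nabla^F}}$), and the degree-$n$ part for $n\ge3$ yields exactly \eqref{e:repr_str_eq}. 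The morphism statement is handled identically: a degree-zero $\Omega(A)$-module map $\Phi:\Omega(A;F)\to\Omega(A;G)$ decomposes as $\sum_i\wh{\Phi_i}$ with $\Phi_i\in\Omega^i(A;\und{\Hom}^{-i}(F,G))$, and expanding $\Phi\circ D_F=D_G\circ\Phi$ degree by degree produces \eqref{e:repr_str_Phi}, with the degree-$0$ equation saying $\Phi_0$ is a chain map.

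\textbf{Main obstacle.} The genuinely delicate point is the bookkeeping of Koszul signs. The sign $(-1)^{qi}$ in the wedge product \eqref{e:wedge_product} and the graded-commutator conventions must be tracked carefully so that $\dd_\nabla\circ\dd_\nabla$ really equals $\wh{\curv_{\nabla^F}}$ with the correct sign and so that the cross terms $\omega_i\comp\omega_{n-i}$ in \eqref{e:repr_str_eq} appear with coefficient $+1$ and in the asserted range. Everything else is a routine, if lengthy, degree-by-degree expansion; I would organize it as a single verification that $D\circ D$ is the operator associated with the left-hand sides of the structure equations, so that $D\circ D=0$ is equivalent to all of them vanishing simultaneously.
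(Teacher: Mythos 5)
Your proposal is correct and follows essentially the same route as the paper: the paper does not reprove this result (it is cited from \cite[Proposition 3.2, Definition 3.3]{AbCr2012}) but justifies it in the Remark immediately following, which records precisely your key identities $\wh{\pa^{\Hom}\omega_i}=[\wh{\pa},\wh{\omega_i}]$ and $\wh{\dd_\nabla\omega_i}=\dd_{\nabla^F}\circ\wh{\omega_i}+\wh{\omega_i}\circ\dd_{\nabla^F}$ and observes that the structure equations are exactly the degree-by-degree components of $D\circ D=0$ for $D=\wh{\pa^F}+\dd_{\nabla^F}+\wh{\omega_2}+\cdots$, while the morphism equations say that $\wh{\Phi}=\wh{\Phi_0}+\wh{\Phi_1}+\cdots$ intertwines $D_F$ and $D_G$. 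Your additional care with extracting $(\pa,\nabla^F,\omega_i)$ from $D$ via the Leibniz rule and the form/operator dictionary is the same argument as in \cite{AbCr2012}, so there is nothing to flag.
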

\begin{rem} Note that  $\wh{\pa^{\Hom} \omega_i} = \wh{\pa^F} \circ \wh{\omega_i} + \wh{\omega_i}  \circ  \wh{\pa^F}$ and $\wh{\dd_\nabla \omega_i}  = \dd_{\nabla^F} \circ \wh{\omega_i} + \wh{\omega_i} \circ \dd_{\nabla^F}$ (as $|\omega_i|=1$) and the structure equation \eqref{e:repr_str_eq} is equivalent to $D\circ D = 0$ where
\begin{equation} \label{df:str_operator}
    D = \wh{\pa^F} + \dd_{\nabla^F} + \wh{\omega_2} + \ldots:\Omega(A; F) \ra \Omega(A; F).
 \end{equation}
 Similarly,  as $|\Phi| = 0$, we have $\wh{\pa^{\Hom} \Phi_n} = \wh{\pa^G} \circ \wh{\Phi}_n - \wh{\Phi}_n \circ \wh{\pa^F}$, $\wh{\dd_{\nabla} \Phi_{n-1}} = \dd_{\nabla^G} \circ \wh{\Phi}_{n-1} -  \wh{\Phi}_{n-1} \circ \dd_{\nabla^F}$ and the equation \eqref{e:repr_str_Phi} means that the operator $\wh{\Phi}= \wh{\Phi_0} + \wh{\Phi_1} + \ldots$ intertwines the structure operators $D_F$ and $D_G$.
\end{rem}

%END

\small
\bibliographystyle{alpha}
\bibliography{references}

\end{document}